\def\thm@space@setup{%
  \thm@preskip=2ex \thm@postskip=2ex
}
\numberwithin{equation}{section}
\theoremstyle{plain}
\newtheorem{thm}{Theorem~}[section] 
\newtheorem{lem}[thm]{Lemma~}
\newtheorem{prop}[thm]{Proposition~}
\newtheorem{cor}[thm]{Corrolary~}
\theoremstyle{remark}
\newtheorem{rmk}[thm]{Remark~}
\theoremstyle{definition}
\newtheorem{defn}[thm]{Definition~}
\newtheorem{notation}[thm]{Notation~}
\newtheorem{cond}[thm]{Condition~}
\newcommand{\calM}{\mathcal{M}}
\newcommand{\calD}{\mathcal{D}}
\newcommand{\calH}{\mathcal{H}}
\newcommand{\calP}{\mathcal{P}}
\newcommand{\calC}{\mathcal{C}}
\newcommand{\calF}{\mathcal{F}}
\newcommand{\CC}{\mathbb{C}}
\newcommand{\ZZ}{\mathbb{Z}}
\newcommand{\RR}{\mathbb{R}}
\newcommand{\LL}{\mathbb{L}}
\newcommand{\PP}{\mathbb{P}}
\newcommand{\FF}{\mathbb{F}}
\newcommand{\QQ}{\mathbb{Q}}
\newcommand{\BB}{\mathbb{B}}
\newcommand{\DD}{\mathbb{D}}
\newcommand{\Prd}{\mathscr{P}}
\newcommand{\sO}{\mathscr{O}}
\newcommand{\sA}{\mathscr{A}}
\newcommand{\C}{\mathcal{C}}
\newcommand{\D}{\mathcal{D}}
\newcommand\PGL{\mathrm{PGL}}
\newcommand\PSL{\mathrm{PSL}}
\newcommand\Hom{\mathrm{Hom}}
\newcommand\id{\mathrm{id}}
\newcommand\Ker{\mathrm{Ker}}
\newcommand\Co{\mathrm{Co}}
\newcommand\Suz{\mathrm{Suz}}
\newcommand\mo{\mathrm{mod}}
\newcommand\sig{\mathrm{sig}}
\newcommand\rank{\mathrm{rank}}
\newcommand\I{\mathrm{I}}
\newcommand\II{\mathrm{II}}
\newcommand\SL{\mathrm{SL}}
\newcommand\IV{\mathrm{IV}}
\newcommand\diag{\mathrm{diag}}
\newcommand\GL{\mathrm{GL}}
\newcommand\Sym{\mathrm{Sym}}
\newcommand\Hol{\mathrm{Hol}}
\newcommand\Gr{\mathrm{Gr}}
\newcommand\Span{\mathrm{Span}}
\newcommand\AGL{\mathrm{AGL}}
\newcommand\ASL{\mathrm{ASL}}
\newcommand\PSU{\mathrm{PSU}}
\newcommand\PU{\mathrm{PU}}
\newcommand\ord{\mathrm{ord}}
\newcommand\QD{\mathrm{QD}}
\DeclareMathOperator{\NS}{NS}
\DeclareMathOperator{\Aut}{Aut}
\title{Automorphisms and Periods of Cubic Fourfolds}
 \author[R. Laza]{Radu Laza}
\address{Stony Brook University,  Stony Brook, NY 11794, USA}
\email{radu.laza@stonybrook.edu}
 \author[Z. Zheng]{Zhiwei Zheng}
\address{Tsinghua University, Beijing, China}
\email{zheng-zw14@mails.tsinghua.edu.cn}
\date{}
\begin{document}
\bibliographystyle{amsalpha}

\begin{abstract} 
We classify the symplectic automorphism groups for cubic fourfolds. The main inputs are the global Torelli theorem for cubic fourfolds and the  classification of the fixed-point sublattices of the Leech lattice. Among the highlights of our results, we note that there are $34$ possible groups of symplectic automorphisms, with $6$ maximal cases. The six maximal cases correspond to $8$ non-isomorphic, and isolated in moduli, cubic fourfolds; six of them previously identified by other authors. Finally, the Fermat cubic fourfold has the largest possible order ($174,960$) for the automorphism group (non-necessarily symplectic) among all smooth cubic fourfolds. 
\end{abstract}

\maketitle

\section{Introduction}
Cubic fourfolds are some of the most intensely studied objects in algebraic geometry in connection to rationality questions and to constructing compact hyper-K\"ahler manifolds. What sets the cubic fourfolds apart is that they are Fano fourfolds whose middle cohomology is of level $2$ with $h^{3,1}=1$ (i.e.,  up to a Tate twist looks like the cohomology of a $K3$ surface). Consequently, the moduli space of cubic fourfolds behaves very similarly to the moduli space of polarized $K3$ surfaces. Specifically, Voisin \cite{voisin} proved a global Torelli theorem for cubic fourfolds. Later, Hassett \cite{hassett} identified some natural  Noether--Lefschetz divisors $\calC_d$ (for $d\in \ZZ_+$ with $d\equiv 0,2 \pmod 6$) in the moduli space of cubic fourfolds, and conjectured that the image of the period map is the complement of $\calC_2$ and $\calC_6$. This was subsequently verified by Laza \cite{gitcubic,laza2010moduli} and Looijenga \cite{lcubic}. More recently, the second author \cite{zheng2017} proved a stronger version of the Torelli theorem: the automorphisms of cubic fourfolds are detected by (polarized) Hodge isometries.

The purpose of this paper is to use the period map to study and classify the possible {\it symplectic} automorphism groups (Definition ~\ref{definition: symplectic}) for cubic fourfolds. The model for our study is the well-known case of $K3$ surfaces. Namely, a consequence of the Torelli theorem for $K3$ surfaces is that there is a close connection between the automorphism group $\Aut(Y)$ of a $K3$ surface $Y$ and the Hodge isometries on $H^2(Y,\ZZ)$. Nikulin \cite{nikulin} started a systematic investigation of the possible finite automorphism groups for $K3$ surfaces by means of lattice theory (\cite{nikulin1980integral}). This study culminated with the celebrated result of Mukai \cite{mukaiaut} relating the classification of the finite groups of symplectic automorphisms acting on $K3$ surfaces with certain subgroups of the Mathieu group $M_{23}$. Kond\=o \cite{kondo} simplified Mukai's proof by relating this classification problem to the isometries of the Niemeier lattices. Kond\=o's approach avoids the Leech lattice (the unique Niemeier lattice containing no roots), but it turns out that a related construction that involves only the Leech lattice $\LL$ behaves more uniformly and adapts to higher dimensions (\cite{gaberdiel2012symmetries}, \cite{huybrechtsaut}). In particular, one sees that all the symplectic automorphism groups $G$ occurring are subgroups of the Conway group $\Co_0(=O(\LL))$ satisfying a certain rank condition on the fixed-point sublattice $\LL^G$.

The higher dimensional analogue of the $K3$ surfaces are the hyper-K\"ahler manifolds (simply connected, compact K\"ahler manifold, carrying a unique holomorphic symplectic $2$-form). Due to Verbitsky's Torelli Theorem and recent results on Mori cones of hyper-K\"ahler manifolds (e.g., \cite{bayer2014mmp}, \cite{bayer2015mori}, \cite{HT_Aut}), the approach to automorphisms via lattices that works for $K3$ surfaces can be extended to the case of hyper-K\"ahler manifolds of $K3^{[n]}$ type, leading to a flurry of activity on the subject. In particular, we note the work of Mongardi \cite{mongardi2013thesis,mongardiaut}  who started a systematic study of the symplectic automorphisms of hyper-K\"ahler manifolds of $K3^{[n]}$ type. Around the same time, H\"ohn and Mason \cite{HM1} have completed the classification of the fixed-point sublattices $\LL^G$ of $\LL$ with respect to  subgroups $G$ of $\Co_0$ (the case $G$ is cyclic was previously done by Harada--Lang \cite{harada1990leech}). Using this classification, in subsequent work \cite{HM2}, H\"ohn and Mason have completed Mongardi's analysis for hyper-K\"ahler manifolds of $K3^{[2]}$, obtaining an analogue of Mukai's results in the $4$-dimensional case. There are $15$ maximal groups (\cite[Table 2]{HM2}) that are listed in Table  \ref{table: MSS aut of HK} in our paper.

The cubic fourfolds are intricately related to hyper-K\"ahler fourfolds of $K3^{[2]}$ type. 
Specifically, Beauville--Donagi \cite{beauville1985variety} proved that the Fano variety $F(X)$ of lines on a smooth cubic fourfold is in fact a hyper-K\"ahler fourfold of $K3^{[2]}$ type. An interesting aspect here is that by varying the cubic fourfold, one obtains a locally complete moduli for polarized hyper-K\"ahler manifolds of $K3^{[2]}$ type (i.e.,  $20$ moduli vs. $19$ moduli coming from $K3$ surfaces). In the context of automorphism groups, this leads to the construction of exotic automorphisms for hyper-K\"ahler manifolds of $K3^{[2]}$ type (i.e., not induced from $K3$ surfaces).

Via the Fano variety construction,  the classification of the automorphisms of cubic fourfolds is closely related (but some differences arise due to the polarization) to the classification for hyper-K\"ahler manifolds of $K3^{[2]}$ type, and the above mentioned results. In particular, we note that H\"ohn--Mason  \cite[Table 11]{HM2} have shown that $6$ of the $15$ maximal groups arising in the classification of automorphisms for the $K3^{[2]}$ case are actually realized by some smooth cubic fourfolds. In a different direction, using more geometric arguments, L. Fu \cite{fu2016classification} classified  all possible symplectic automorphism groups of cubic fourfolds which are cyclic  of prime-power order. He also  gave the corresponding normal forms for the associated cubic fourfolds (for some earlier results and other examples see \cite{gonzalez} and \cite{mongardi2013thesis, mongardi2013symplectic}). Building on these  results, we complete (and give a systematic account of) the classification of the possible groups of symplectic automorphisms for cubic fourfolds. Specifically, we classify all possible groups $G=\Aut^s(X)$ of symplectic automorphisms for cubic fourfolds, and for many of them, we give the corresponding normal forms.

\begin{notation}  
We follow the standard notation from group theory for finite groups. For reader's convenience, we recall in Appendix \ref{appendix: finite groups} the relevant notations and definitions. Briefly, we mention that $p^n$ corresponds to $(\ZZ/p\ZZ)^n$,  $L_p(k)$ corresponds to $\PGL(k,\FF_p)$, $\QD_{16}$ (which is denoted $\Gamma_3 a_2$ in \cite{HM2}) is the 
semidihedral group of order 16, $M_9, M_{10}, M_{3,8}$ are the Mathieu groups (see \S\ref{smathieu}), and $A_{m,n}$ is the subgroup of $S_m\times S_n\subset S_{m+n}$ consisting of elements of even signature. We use  $N:Q$ to denote a semidirect product $N\rtimes Q$, and  $N.Q$ for an extension of $Q$ by $N$.
\end{notation}

\begin{thm} 
\label{theorem: main}
Let $X$ be a smooth cubic fourfold with symplectic automorphism group $G=\Aut^s(X)$. Let $S:=S_G(X)$ be the covariant lattice (i.e.,  the orthogonal complement of the invariant sublattice of $H^4(X, \ZZ)$ under the induced action of $G$). Then one of the following situations holds:
\begin{enumerate}

\item[(0)] $\rank(S)=0$, $G=1$.
\item $\rank(S)=8$, $G=2$ and $S=E_8(2)$. For an appropriate choice of coordinates,  $X$ is given by 
\begin{equation*}
X=V\left(F_1(x_1, x_2, x_3, x_4)+x_5^2 L_1(x_1, x_2, x_3, x_4)+x_5 x_6 L_2(x_1, x_2, x_3, x_4)+ x_6^2 L_3(x_1, x_2, x_3, x_4)\right).
\end{equation*}
With respect to these coordinates,  $G$ is generated by $g=\frac{1}{2}(0,0,0,0,1,1)$.

\item $\rank(S)=12$, $G=2^2$ or $G=3$.

\begin{enumerate}[(a)]
\item If $G=2^2$, for an appropriate choice of coordinates, 
\begin{equation*}
X=V\left(F_1(x_1, x_2, x_3)+x_4^2 L_1(x_1, x_2,x_3)+x_5^2 L_2(x_1, x_2, x_3)+ x_6^2 L_3(x_1, x_2, x_3)+x_4 x_5 x_6\right),
\end{equation*}
and 
$G$ is generated by $g_1=\frac{1}{2}(0,0,0,0,1,1)$ and $g_2=\frac{1}{2}(0,0,0,1,1,0)$.
\item If $G=3$, then $X$ is either 
\begin{equation*}
X=V\left(F_1(x_1, x_2, x_3, x_4)+x_5^3+x_6^3+x_5 x_6 L_1(x_1, x_2, x_3, x_4)\right),
\end{equation*}
in which case $G$ is generated by $g=\frac{1}{3}(0,0,0,0,1, 2)$, or 
\begin{equation*}
X=V\left(F_1(x_1, x_2)+F_2(x_3,x_4)+F_3(x_5,x_6)+\Sigma_{i=1,2; j=3,4; k=5,6} (a_{ijk}x_i x_j x_k)\right),
\end{equation*}
with $G$ generated by $g=\frac{1}{3}(0,0,1, 1,2, 2)$.
\end{enumerate}

\item $\rank(S)=14$, $G=4$ or $S_3$.
\begin{enumerate}[(a)]
\item If $G=4$, for an appropriate choice of coordinates, the defining equations of the corresponding cubic fourfolds belong to 
\begin{equation*}
\Span\{x_1 N_1(x_3,x_4), x_2 N_2(x_3, x_4), F_1(x_1, x_2), x_5 x_6 L_1(x_1, x_2), x_5^2 L_2(x_3, x_4), x_6^2 L_3(x_3, x_4)\}.
\end{equation*} 
With respect to these coordinates, $G$ is generated by $g=\frac{1}{4}(0,0,2,2, 1, 3)$.
\item If $G=S_3$, we can choose coordinate $x_1, \cdots, x_6$ of $\CC^6$, such that the action of $S_3$ on $(\CC^6)^\vee$ is by permuting $(x_1, x_2), (x_3, x_4), (x_5, x_6)$ simultaneously, and the defining equations of the corresponding cubic fourfolds are invariant under such an action. 
\end{enumerate}

\item $\rank(S)=15$, $G=D_8$.

\item $\rank(S)=16$, $G=A_{3,3}$, $D_{12}$, $A_4$, or  $D_{10}$. 
\begin{enumerate}[(a)]
\item  If $G=D_{12}$, then the defining equations of the corresponding cubic fourfolds either belong to
\begin{equation*}
\Span \{x_1^2 x_3, x_1^2 x_4, x_1 x_2 x_3, x_1 x_2 x_4, x_2^2 x_3, x_2^2 x_4, x_3^3, x_3^2 x_4, x_3 x_4^2, x_3 x_5 x_6, x_4^3, x_4 x_5 x_6, x_5^3, x_6^3\},
\end{equation*}
while an order $6$ element of $G$ is $\frac{1}{6}(3, 3, 0, 0, 2, 4)$, or belong to
\begin{equation*}
\Span \{x_1^3, x_1 x_2^2, x_1 x_3 x_5, x_1 x_3 x_6, x_2 x_4 x_5, x_2 x_4 x_6, x_3^3, x_3 x_4^2, x_5^3, x_5^2 x_6, x_5 x_6^2, x_6^3\},
\end{equation*}
while an order $6$ element of $G$ is $\frac{1}{6}(0, 3, 2, 5, 4, 4)$. Moreover, a generic cubic fourfold admitting such an order $6$ automorphism has symplectic automorphism group $D_{12}$.

\item If $G=D_{10}$, then for an appropriate choice of coordinates, 
\begin{equation*}
X=V\left(F_1(x_1, x_2)+x_3 x_6 L_1(x_1, x_2)+x_4 x_5 L_2(x_1, x_2)+x_3^2 x_5+x_3 x_4^2+x_4 x_6^2+x_5^2 x_6\right).
\end{equation*}
An order $5$ element in $G$ is $g=\frac{1}{5}(0,0,1, 2, 3, 4)$. Moreover, any smooth cubic fourfolds with a symplectic automorphism of order $5$ have this form, and a generic such cubic fourfold has symplectic automorphism group equal to $D_{10}$.
\end{enumerate}

\item $\rank(S)=17$, $G=S_4$ or $Q_8$. 

\item $\rank(S)=18$, $G=3^{1+4}:2$,  $A_{4,3}$, $A_5$, $3^2.4$, $S_{3,3}$, $F_{21}$, $\Hol(5)$\footnote{There is a typo in H\"ohn-Mason list \cite{HM1}: they wrote $\Hol(4)$, and claimed it has order $20$. The correct group is $\Hol(5)\cong \AGL_1(\FF_5)\cong C_4:C_5$.} or $\QD_{16}$.

\begin{enumerate}[(a)]
\item If $G=3^{1+4}:2$, then for an appropriate choice of coordinates, the defining equations of the corresponding cubic fourfolds belong to
\begin{center}
$\Span\{$\it{monomials in} $x_1, x_2, x_3$, \it{monomials in} $x_4, x_5, x_6\}$,
\end{center} 
An element of order $3$ in $G$ is $\frac{1}{3}(0, 0, 0, 1, 1, 1)$. Moreover, any smooth cubic fourfold with a symplectic automorphism which can be diagonalized as $\frac{1}{3}(0, 0, 0, 1, 1, 1)$ has this form, and a generic such cubic fourfold has symplectic automorphism group equal to $3^{1+4}:2$.
\item If $G=F_{21}$, then 
\begin{equation*}
X=V\left(x_1^2 x_2+ x_2^2 x_3+ x_3^2 x_4+ x_4^2 x_5+ x_5^2 x_6+x_6^2 x_1+ a x_1 x_3 x_5+b x_2 x_4 x_6\right).
\end{equation*} 
The automorphisms $g_1=\frac{1}{7} (1, 5, 4, 6, 2, 3)$ and $g_2:\ x_i\longmapsto x_{i+2}$ generate $F_{21}$. Moreover, any smooth cubic fourfold with symplectic automorphism of order $7$ has this form, and a generic such cubic fourfold has symplectic automorphism group equal to $F_{21}$.
\item If $G=\QD_{16}$, for an appropriate choice of coordinates, the defining equations of the corresponding cubic fourfolds belong to 
\begin{equation*}
\Span\{x_1^3, x_1 x_2^2, x_2 x_3^2, x_2 x_4^2, x_1 x_3 x_4, x_4 x_5^2, x_3 x_6^2, x_2 x_5 x_6\}.
\end{equation*} 
An element of order $8$  in $G$ is $g=\frac{1}{8}(0, 4, 2, 6, 1, 3)$. Moreover, any smooth cubic fourfold with a symplectic automorphism of order $8$ has this form, and a generic such cubic fourfold has symplectic automorphism group equal to $\QD_{16}$.
\end{enumerate}

\item $\rank(S)=19$, $G=3^{1+4}:2.2$, $A_6$, $L_2(7)$, $S_5$, $M_9$, $N_{72}(\cong 3^2. D_8)$, or $T_{48}(=M_{3,8}\cong Q_8:S_3)$\footnote{The notation $N_{72}$ and $T_{48}$ was introduced by Mukai \cite{mukaiaut} in his classification of symplectic automorphism groups for $K3$ surfaces.}. Except for the case $G=3^{1+4}:2.2$, $1$-parameter families  of cubics with automorphism group $G$ can be obtained by smoothing fake cubic fourfolds (they correspond to degree $2$ or $6$ $K3$ surfaces) with maximal symplectic symmetry, see \S\ref{subsec_k3cubic}. 

\item $\rank(S)=20$, $G=3^4:A_6$, $A_7$, $3^{1+4}:2.2^2$, $M_{10}$, $L_2(11)$ or $A_{3,5}$. More information on these cases is included in Theorem \ref{theorem: maximal uniqueness}.
\end{enumerate}
Moreover, all the $34$ pairs $(G, S)$ in above cases can arise from smooth cubic fourfold with $G$ the symplectic automorphism group. In fact, the dimension of the moduli space of cubic fourfolds with associated pair $(G,S)$ is $20-\rank(S)$. 

\noindent (Here, $F_i, N_i, L_i$ denote cubic, quadric, and linear polynomials respectively. We denote by $\frac{1}{n}(k_1,\dots,k_6)$ the diagonal matrix $(\zeta^{k_1},\dots,\zeta^{k_6})\in \SL(6)$, where $\zeta$ is a primitive $n$-root of unity.)
\end{thm}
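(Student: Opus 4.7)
The plan is to combine the strong Torelli theorem of Zheng \cite{zheng2017} with lattice-theoretic techniques in the spirit of Kond\=o and H\"ohn--Mason, reducing the classification to an analysis of subgroups of the Conway group $\Co_0$, and then to realize each surviving candidate pair by an explicit cubic fourfold.

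First I would translate the geometric problem into lattice theory. Let $\Lambda:=H^4_{\mathrm{prim}}(X,\ZZ)$, an even lattice of signature $(20,2)$ and discriminant $3$. By the strong Torelli theorem, $G=\Aut^s(X)$ acts faithfully on $\Lambda$ by Hodge isometries that fix the polarization and act trivially on $H^{3,1}(X)$. Consequently, the covariant sublattice $S=S_G(X):=(\Lambda^G)^\perp$ lies inside the $(2,2)$-part of primitive cohomology, so $S$ is negative definite of rank at most $20$, and $\Lambda^G$ has signature $(20-\rank(S),2)$ and contains the polarization class. In particular, $G$ acts on $S$ with no nonzero fixed vector.

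Next I would embed $(G,S)$ into the Leech lattice and prune the H\"ohn--Mason list. The decisive lattice-theoretic input is that $S$ contains no $(-2)$-vectors: a $(-2)$-class in the $(2,2)$-part of primitive cohomology corresponds exactly to a period in $\calC_2\cup\calC_6$ (cf.\ \cite{hassett,laza2010moduli,lcubic}), which is excluded for smooth $X$. A gluing argument using Nikulin's theory of discriminant forms \cite{nikulin1980integral} should then produce a primitive embedding $S\hookrightarrow\LL$ equivariant for an extension $G\hookrightarrow\Co_0$, with $S=\LL_G$ the coinvariant lattice of this action; this places $(G,S)$ on the H\"ohn--Mason classification \cite{HM1}. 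Conversely, I would test each candidate $(G,\LL_G)$ from \cite{HM1} against two requirements: existence of a primitive embedding $\LL_G\hookrightarrow\Lambda$ with orthogonal complement of the correct signature and containing the polarization, and the absence of forced $(-2)$-classes that would push the period into $\calC_2\cup\calC_6$. The $34$ pairs in the theorem should be exactly the survivors; for each surviving pair, surjectivity of the period map onto the complement of $\calC_2\cup\calC_6$, applied to the $G$-invariant period subdomain, produces a family of cubic fourfolds of the expected dimension $20-\rank(S)$.

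Finally I would supply the explicit normal forms. For each $G$ on the list, one decomposes $V=H^0(\PP^5,\sO(1))^\vee$ into $G$-isotypic components via character theory, computes the space $(\Sym^3 V^\vee)^G$ of $G$-invariant cubic forms, and verifies that a generic member is smooth with symplectic automorphism group exactly $G$. The cyclic prime-power cases are already treated by Fu \cite{fu2016classification}; the remaining groups can be reduced to this situation by first fixing the normal form of a large cyclic subgroup (often of order equal to the exponent of $G$) and then imposing the extra symmetries. The main obstacle I foresee is the pruning step: ruling out those H\"ohn--Mason pairs whose $G$-invariant period subdomain is necessarily contained in $\calC_2\cup\calC_6$, while simultaneously producing for each of the $34$ surviving cases an explicit cubic realizing it. This is especially delicate in the rank-$19$ and rank-$20$ cases, where the families are $0$- or $1$-dimensional and one must carefully exclude accidental enlargements of $\Aut^s(X)$ beyond the predicted group.
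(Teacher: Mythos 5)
Your overall strategy (strong Torelli, Leech pair, embedding into $\LL$, pruning the H\"ohn--Mason list, realization via surjectivity of the period map, and normal forms built on Fu's prime-power classification) is the same as the paper's. But there is a genuine gap in the pruning step: you treat the excluded locus $\calC_2\cup\calC_6$ as if it were detected by $(-2)$-classes (equivalently, norm $2$ vectors in the paper's sign convention, where the algebraic part of primitive cohomology is \emph{positive} definite, not negative definite as you write). That is only true for $\calC_6$. The divisor $\calC_2$ corresponds to \emph{long roots}: norm $6$ vectors of divisibility $3$ in $\Lambda_0$. Whether $S$ contains a long root is not an intrinsic property of the abstract lattice $S$ (and is certainly not implied by the absence of norm $2$ vectors); it depends on the chosen primitive embedding $S\hookrightarrow\Lambda_0$, via the divisibility condition. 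So your test ``no forced $(-2)$-classes'' neither rules out candidates whose every $S$-polarized period lies in $\calC_2$, nor gives a way to verify that a surviving candidate has generic period outside $\calC_2$. Without handling this, the surjectivity argument at the end does not produce a smooth cubic, and the list of ``survivors'' is not justified.

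The paper's way around this is precisely its key technical device, which your proposal lacks: regard the cubic as $E_6$-Borcherds polarized and prove (Theorem \ref{theorem: criterion lattice from cubic fourfold}) that a Leech pair $(G,S)$ arises from a smooth cubic iff $S\oplus E_6$ embeds into the unimodular lattice $\BB=\II_{26,2}$ with $S$ primitive. This has two payoffs that your direct approach ``test primitive embeddings $S\hookrightarrow\Lambda_0$'' does not obviously have: (a) existence is decided cleanly by Nikulin's existence theorem for a rank $2$ negative definite complement with prescribed discriminant form (crucial in the rank $19$--$20$ cases, where the complement is definite and the naive ``indefinite'' uniqueness arguments fail); and (b) the long-root exclusion is proved automatically, by the argument that a long root $\delta\in S$ together with $E_6$ generates an $E_7\subset\BB$, and moving $\delta$ by some $g\in G$ (using that $G$ fixes no nonzero vector of $S$) produces an $E_8$ and hence an $A_2\subset S$, contradicting the Leech-pair condition. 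You would either need to reproduce this argument or find a substitute criterion controlling divisibility-$3$ norm $6$ vectors in the image of $S$ in $\Lambda_0$; as written, your proposal has no mechanism for it. (A smaller inaccuracy: the polarization class $\eta_X$ is orthogonal to the primitive cohomology, so it cannot be required to lie in $\Lambda^G\subset H^4_{\mathrm{prim}}$; the polarization is encoded instead by the embedding $\Lambda_0\hookrightarrow\BB$ with complement $E_6$, or by Hassett's saturation construction.)
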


\begin{rmk} 
The $6$ maximal cases of Theorem \ref{theorem: main}(9) were already identified by H\"ohn--Mason \cite[Table 11]{HM2} (including explicit realizations for each case), but it is not shown that they are the only possible cases for $\rank(S)=20$. This is indeed the case, but as we note in Theorem \ref{theorem: maximal uniqueness} below, in two of the six cases there are two non-isomorphic cubic fourfolds realizing the pair $(G,S)$.  
\end{rmk}

\begin{rmk}
A direct corollary of Theorem \ref{theorem: main} is that the possible orders $n$ of symplectic automorphisms $g$ for smooth cubic fourfolds are $1,2,3,4,5,6,7,8,9,11,12,15$. Furthermore, we obtain all geometric realizations for cubic fourfolds with a given order $n$ symplectic automorphism (see \S\ref{subsec-min}, esp. Theorem \ref{prop_HL}). This is a strengthening of results of Fu \cite{fu2016classification} (see also \cite{gonzalez}) who discussed the prime-power order case.
\end{rmk}

Let us briefly review the key ingredients for the proof of Theorem \ref{theorem: main}. Suppose $G$ is a finite group acting symplectically on a smooth cubic fourfold $X$. Then $G$ acts on the middle cohomology  $H^4(X,\ZZ)$, which in turn determines the covariant lattice  $S_G(X)$. Following Mongardi (with the main ideas going back to Nikulin and Kond\=o), we see that the pair $(G, S_G(X))$ can be embedded into $(\Co_0, \LL)$. More precisely, there is a primitive embedding of $S_G(X)$ into the Leech lattice $\LL$ such that the action of $G$ on $S_G(X)$ extends to a faithful action on $\LL$ with $G$ acting trivially on the orthogonal complement of $S_G(X)$ in $\LL$ (see Proposition \ref{lemma: embedding into leech lattice} and Lemma \ref{lemma: leech type pairs from cubic fourfold and K3 surface}). This leads to the abstract notion of {\it Leech pair} $(S,G)$ (Definition  \ref{def:leechpair}). Our classification theorem is essentially equivalent to the classification of Leech pairs that can arise from groups of symplectic automorphisms of cubic fourfolds. In the context of the work of Mongardi and H\"ohn-Mason, the main difference is that we are dealing with polarized hyper-K\"ahler manifolds of $K3^{[2]}$ type (specifically, $F(X)$ is of $K3^{[2]}$ type with a degree $6$ polarization). Instead of dealing directly with the natural polarization, we are using the so-called {\it Kond\=o--Scattone trick}. Namely, we note that  the primitive cohomology $\Lambda_0=H^4(X,\ZZ)_{prim}$ of a cubic fourfold $X$ admits a unique primitive embedding into the Borcherds lattice (i.e.,  $\II_{26,2}$, the unique even unimodular lattice of signature $(26,2)$) with orthogonal complement $E_6$ (Lemma \ref{lem:borcherdse6}). This allows us to view $X$ (or equivalently $F(X)$) as being {\it $E_6$ Borcherds polarized}\footnote{This should  to be understood in the context of $M$-polarized $K3$ surfaces in the sense of Dolgachev \cite{dolgachev_M}, but here we use the Borcherds lattice, instead of the $K3$ lattice, as the ambient lattice. Regarding the $K3$ surfaces (or hyper-K\"ahler manifolds) as being Borcherds polarized is a powerful arithmetic trick well-known to experts. The first author learned about it from Kond\=o long time ago. Presumably, the first use of this construction occurs in the thesis of Scattone \cite{scattone}. }. Using this perspective, we are able to formulate  a lattice theoretic criterion (Theorem \ref{theorem: criterion lattice from cubic fourfold}) for a Leech pair $(G, S)$  to arise as $(G, S_G(X))$ for $G=\Aut^s(X)$ for a smooth cubic fourfold $X$. Finally, using this criterion, the classification of fixed-point sublattices in $\LL$ (\cite{HM1}, \cite{harada1990leech}), and a case by case analysis, we are able to complete the proof of Theorem \ref{theorem: main}. One complication that we deal with is the possibility that a Leech pair $(G,S)$ (which is compatible with the $E_6$ Borcherds polarization) might lead to some fake cubic fourfolds,  i.e.,  either singular cubics (with ADE singularities) or degenerations to the secant to the Veronese surface (see \cite{laza2010moduli}). These form the divisors $\calC_6$ and $\calC_2$ excluded from the image of the period map (see Theorem \ref{theorem: thmperiods}). Geometrically and motivically, the divisors $\calC_6$ and $\calC_2$  are naturally associated with $K3$ surfaces $Y$ (or hyper-K\"ahler $Y^{[2]}$) of degree $6$ and $2$ respectively.  It turns out (see \S\ref{subsec_k3cubic}) that any symplectic automorphism of a degree $2$ or $6$ $K3$ surface can be lifted to an automorphism of a singular cubic fourfold $X_0$, which can then be smoothed, while preserving the automorphism. In particular, all $\rank (S)=19$ cases, except for the $3^{1+4}:2.2$ case, of Theorem \ref{theorem: main}(8) can be recovered by starting with a degree $2$ or $6$ $K3$ surface with a maximal group of symplectic automorphisms.

\begin{rmk}[Automorphisms of low degree $K3$ surfaces]
\label{remark: polarized K3}
The Kond\=o--Scattone trick can also be applied to polarized $K3$ surfaces. Namely, the primitive middle cohomology of a degree $d$ $K3$ surface can be embedded (up to a Tate twist) into the Borcherds lattice. The complement of this embedding is a rank $7$ positive lattice $M$ with discriminant form $(-\frac{1}{d})$.   For  the low degree cases, degree $2$, $4$, and $6$, $M$ can be chosen to be $E_7$,  $D_7$, and $E_6\oplus A_1$ respectively. Similarly, the elliptic $K3$ surfaces can be viewed as $E_8$ Borcherds polarized. For these cases, our arguments can be easily adapted. In particular, in section \ref{section: K3}, we discuss briefly the case of degree $2$ and $6$ $K3$ surfaces, as they are closely related to cubic fourfolds. 
\end{rmk}

 \begin{rmk}[Automorphisms of low dimensional cubics] The possible automorphism groups for cubic surfaces were classified by Segre \cite{segre1942cubic} (see \cite{hosoh} for a modern and corrected account). From our perspective, the salient point is that,  for smooth cubic surfaces (and similarly cubic threefolds), the induced action of the automorphism groups on the middle cohomology is faithful. This realizes the automorphism group of a cubic surface as a subgroup of $W(E_6)$. For cubic threefolds, we are not aware of a systematic study of their automorphism groups (see \cite{gonzalez,gonzalez2}, \cite{adler1978automorphism} for some results). Using the period map of Allcock--Carlson--Toledo \cite{allcock2011moduli} (see also \cite{looijenga2007period}), and ideas from this paper, we are able to relate the classification of the automorphisms groups for cubic threefolds to the Suzuki sporadic group $\Suz$ (N.B. an index $6$ extension of $\Suz$ is isomorphic to the centralizer of an order $3$ element in $\Co_0$; see \cite{wilson1983suzuki}). To our knowledge this relationship is new, we plan to return to it in future work.
 \end{rmk}
 
 We note that once a Leech pair $(G,S)$ as in Theorem \ref{theorem: main} is specified, one obtains a moduli space $\calM_{(G,S)}$ of dimension $20-\rank(S)$ parametrizing cubic fourfolds $X$ with $G\subset \Aut^s(X)$ (e.g., see \cite[Ch. 5]{mongardi2013thesis}, \cite{yu2018moduli}). However, it is not necessary that this moduli space is irreducible. This corresponds to $S$ having different primitive embeddings into the primitive lattice $\Lambda_0$ for cubic fourfolds (the existence of the embedding $S\hookrightarrow \Lambda_0$ is essentially the content of Theorem \ref{theorem: main}). 
 It is thus a natural question to study uniqueness of $S\hookrightarrow \Lambda_0$ for the pairs $(G,S)$ occurring in Theorem \ref{theorem: main}. The analogue question for (unpolarized) $K3$ surfaces was studied by Hashimoto \cite{hashimoto2012K3} (similarly, for polarized symplectic involutions, see \cite{vGS}). Here, we are restricting ourselves to the maximal cases (i.e., $\rank(S)=20$), as those are the most interesting cases. For instance, these cases give interesting examples of maximal algebraic cubics (in the sense of maximal possible rank for the group of algebraic cycles $H^4(X,\ZZ)\cap H^{2,2}(X)$; equivalently the transcendental lattice $T$ is negative definite of rank $2$). We obtain a somewhat surprising result: while there are $6$ groups that occur (cf. Theorem \ref{theorem: main}(9)), there are $8$ cubic fourfolds (automatically isolated in moduli) corresponding to them. Six out of the eight cases are identified in \cite[Table 2]{HM2}; we are not able to give equations for the remaining two special cubics (cases $X^2(A_7)$ and $X^2(M_{10})$ below).

\begin{notation}
We denote by $a^b c$ the rank $2$ quadratic form
$\left(
\begin{array}{cc}
a & b\\
b & c
\end{array}
\right)$.
We write $-(a^b c):=(-a)^{(-b)}(-c)$. 
\end{notation}
\begin{thm}
\label{theorem: maximal uniqueness}
Let $(G,S)$ be a Leech pair such that $\rank(S)=20$ and there exists a smooth cubic fourfold $X$ with $G=\Aut^s(X)$ and $(G, S)\cong (G, S_G(X))$. We denote by $T$ the orthogonal complement of $S$ in $H^4_0(X,\ZZ)$.
Then we have and only have the following possibilities:
\begin{enumerate}[(1)]
\item $G=3^4:A_6$, the corresponding cubic fourfold is the Fermat one 
\begin{equation*}
X(3^4:A_6)=V(x_1^3+x_2^3+x_3^3+x_4^3+x_5^3+x_6^3)
\end{equation*}
and $T=-(6^3 6)=A_2(-3)$. Moreover, this is the only smooth cubic fourfold with a symplectic automorphism of order $9$. It holds $\Aut(X)/\Aut^s(X)\cong \ZZ/6$.

\item $G=A_7$, there are two smooth cubic fourfolds with symplectic action of $G$. One of them is 
\begin{equation*}
X^1(A_7)=V(x_1^3+x_2^3+x_3^3+x_4^3+x_5^3+x_6^3-(x_1+x_2+x_3+x_4+x_5+x_6)^3) 
\end{equation*}
and $T=-(2^1 18)$. It holds $\Aut(X^1)/\Aut^s(X^1)\cong \ZZ/2$. The other one $X^2(A_7)$ has $T=-(18^3 18)$ and admits no non-symplectic automorphisms.

\item $G=3^{1+4}:2.2^2$, the cubic fourfold is 
\begin{equation*}
\label{equation: 3^{1+4}:2.2^2}
X(3^{1+4}:2.2^2)=V(x_1^3+x_2^3+x_3^3+x_4^3+x_5^3+x_6^3-3(\sqrt{3}+1)(x_1 x_2 x_3+x_4 x_5 x_6))
\end{equation*}
and $T=-(6^0 6)=(A_1\oplus A_1)(-3)$. Moreover, this is the only smooth cubic fourfold with a symplectic automorphism of order $12$.  It holds $\Aut(X)/\Aut^s(X)\cong \ZZ/4$.

\item $G=M_{10}$, there are two smooth cubic fourfolds with symplectic action of $G$, and both of them have $T=-(12^0 30)$. See Equation \eqref{equation: m10} for explicit description for one such cubic fourfold which is denoted by $X^1(M_{10})$. The other one is denoted by $X^2(M_{10})$. Both $X^1(M_{10})$ and $X^2(M_{10})$ have no non-symplectic automorphisms.

\item $G=L_2(11)$, the cubic fourfold is 
\begin{equation*}
X(L_2(11))=V(x_1^3+x_2^2 x_3+x_3^2 x_4+ x_4^2 x_5+x_5^2 x_6+x_6^2 x_2)
\end{equation*}
and $T=-(22^{11}22)=A_2(-11)$. Moreover, this is the only smooth cubic fourfold with a symplectic automorphism of order $11$. It holds $\Aut(X)/\Aut^s(X)\cong \ZZ/3$.

\item $G=A_{3,5}$, the cubic fourfold is 
\begin{equation*}
X(A_{3,5})=V(x_1^3+x_2^3+x_3^3+x_4^3+x_5^3+x_6^3+x_7^3+x_8^3)\cap V(x_1+x_2+x_3)\cap V(x_4+x_5+x_6+x_7+x_8)
\end{equation*}
and $T=-(10^5 10)=A_2(-5)$. Moreover, this is the only smooth cubic fourfold with a symplectic automorphism of order $15$. It holds $\Aut(X)/\Aut^s(X)\cong \ZZ/6$.

\end{enumerate}
\end{thm}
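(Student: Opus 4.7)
The plan is to proceed case-by-case through the six groups $3^4:A_6$, $A_7$, $3^{1+4}:2.2^2$, $M_{10}$, $L_2(11)$, $A_{3,5}$ listed in Theorem~\ref{theorem: main}(9). Existence of at least one realizing cubic for each group is already part of Theorem~\ref{theorem: main}; the new task here is to enumerate all realizations and describe the transcendental lattice $T$. For each Leech pair $(G,S)$ with $\rank(S)=20$, the covariant lattice $S$ is positive-definite of rank $20$, so its orthogonal complement $T=S^\perp$ in the primitive cohomology $\Lambda_0\cong H^4_0(X,\ZZ)$ (of signature $(20,2)$) is negative-definite of rank $2$. By the lattice criterion of Theorem~\ref{theorem: criterion lattice from cubic fourfold}, counting non-isomorphic cubic fourfolds realizing $(G,S)$ amounts to counting, up to the natural action of $O(\Lambda_0)$, pairs $(T,\iota)$ with $\iota\colon S\hookrightarrow \Lambda_0$ primitive and $S^\perp\cong T$, subject to the $E_6$-Borcherds compatibility and the condition that the associated period avoids the Hassett divisors $\calC_2\cup\calC_6$.

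The first step is to pin down $T$. Nikulin's theory gives $q_T\cong -q_S$ as discriminant forms, and $q_S$ is read off from the H\"ohn--Mason classification \cite{HM1}. Since $T$ has rank $2$, its genus is thereby determined, and enumerating representatives reduces to classical theory of binary quadratic forms. A short computation yields a unique candidate $T$ for $3^4:A_6$, $3^{1+4}:2.2^2$, $M_{10}$, $L_2(11)$, and $A_{3,5}$ (namely $A_2(-3)$, $(A_1\oplus A_1)(-3)$, $-(12^0 30)$, $A_2(-11)$, $A_2(-5)$ respectively), and exactly two candidates $-(2^1 18)$ and $-(18^3 18)$ for $A_7$. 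Applying Nikulin's double-coset formula for primitive embeddings with prescribed orthogonal complement, each of the four unique-$T$ cases above yields exactly one $O(\Lambda_0)$-orbit of embeddings, hence one cubic fourfold. For $A_7$ both candidate $T$'s occur, producing $X^1(A_7)$ and $X^2(A_7)$; for $M_{10}$ the single $T$ supports exactly two orbits, producing $X^1(M_{10})$ and $X^2(M_{10})$.

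Having narrowed down the lattice data, I would then exhibit each cubic explicitly and confirm it represents the prescribed $(G,S)$ by computing its period. The Fermat cubic carries $3^4:A_6$ as its symplectic subgroup of $3^5:S_6$; the Klein-type cubic $V(x_1^3+x_2^2x_3+x_3^2x_4+x_4^2x_5+x_5^2x_6+x_6^2x_2)$ realizes $L_2(11)$; the intersection-of-Fermat form realizes $A_{3,5}$; and the hyperplane section $V(\sum x_i^3-(\sum x_i)^3)$ realizes $X^1(A_7)$. For $3^{1+4}:2.2^2$ the coefficient $-3(\sqrt{3}+1)$ is forced by requiring an order-$4$ automorphism extending the order-$12$ symplectic element produced in Theorem~\ref{theorem: main}. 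Strong Torelli (\cite{zheng2017}) then forces uniqueness within each $O(\Lambda_0)$-orbit. Finally, the quotient $\Aut(X)/\Aut^s(X)$ is identified with the image of $\Aut(X)$ in $O(T)$, a finite cyclic group commuting with the rank-$2$ Hodge structure on $T$; a direct inspection of each $T$ yields the listed orders $\ZZ/6,\,\ZZ/2,\,\ZZ/4,\,1,\,\ZZ/3,\,\ZZ/6$ for the six single-cubic cases.

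The main obstacle is the $M_{10}$ case, where the two cubics share the same transcendental lattice $-(12^0 30)$ and neither admits a non-symplectic automorphism, so no obvious invariant distinguishes them. Showing that Nikulin's double-coset count really yields two orbits (rather than one), and that both orbits are realized by honest, non-isomorphic smooth cubics (as opposed to two abstract periods collapsing onto a single geometric object or landing on $\calC_2\cup\calC_6$), requires a careful analysis of the gluing subgroup between $\disc(S)$ and $\disc(T)$ and of the involution on the glue induced by complex conjugation, combined with the $E_6$-Borcherds constraint of Theorem~\ref{theorem: criterion lattice from cubic fourfold}. A secondary subtlety arises for $X^2(A_7)$, where no explicit equation is expected: here one must argue purely at the period level, using the rigidity $\rank(S)=20$ and the second candidate $T$, that the resulting cubic is smooth and has symplectic automorphism group exactly $A_7$ rather than a strictly larger subgroup of $\Co_0$.
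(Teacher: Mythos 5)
Your overall strategy (pin down the rank-two negative definite $T$ by discriminant considerations and reduction of binary forms, then count embeddings $S\hookrightarrow \Lambda_0$, then identify explicit cubics and compute $\Aut(X)/\Aut^s(X)$) is the same as the paper's, but two steps as you state them would fail. First, the assertion ``Nikulin's theory gives $q_T\cong -q_S$'' is wrong: $\Lambda_0$ is \emph{not} unimodular ($A_{\Lambda_0}\cong \ZZ/3$), so the genus of $T=S^\perp_{\Lambda_0}$ is not determined by $q_S$ alone; it depends on the $3$-glue between $S$ and $T$ inside $\Lambda_0$, equivalently on whether $S\oplus E_6$ is primitive in $\BB$ or has an index-$3$ saturation. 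This is precisely the mechanism that produces the two $A_7$ cubics: the two lattices $-(2^118)$ and $-(18^318)$ have discriminants $35$ and $315$, hence different discriminant forms, so they cannot both arise as ``the genus determined by $-q_S$'' — your claimed list of candidates is inconsistent with your stated method. The paper gets the candidates by taking $T=\widetilde S^\perp_\BB$ for each admissible (primitive or saturated) embedding of $S\oplus E_6$ into $\BB$ already classified in Theorem~\ref{proposition: maximal case}, then enumerating rank-$2$ forms with the resulting discriminant form via Lemma~\ref{lemma: red}, and proves uniqueness of the embedding using the surjectivity $\Aut(S)\twoheadrightarrow\Aut(q_S)$ (Lemma~\ref{lemma: aut surj}); your ``double-coset count'' would have to be run with this corrected gluing data. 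Relatedly, the $M_{10}$ two-orbit count, which you flag as an unresolved obstacle, is settled in the paper by a finite computation: there are two subforms of $-q_T$ isomorphic to $q_S$ and they are not exchanged by $\Aut(T)$; no analysis of ``complex conjugation on the glue'' is involved, and the worry about periods landing on $\calC_2\cup\calC_6$ is already excluded by Theorem~\ref{theorem: criterion lattice from cubic fourfold} (a Leech pair has no short roots, and long roots were ruled out there).

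Second, the final step ``$\Aut(X)/\Aut^s(X)$ is the image of $\Aut(X)$ in $O(T)$ \dots a direct inspection of each $T$ yields the listed orders'' cannot give the stated answers. Inspection of $O(T)$ bounds the order (via $\varphi(n)\mid 2$ and the criterion that an order-$3$ (resp.\ $4$) isometry without fixed vectors forces $T\cong A_2(a)$ (resp.\ $A_1^2(2a)$)), but $-\id\in O(T)$ is always present and Hodge-compatible, so $T$ alone can never rule out an anti-symplectic involution. This is exactly what is needed to get $n=1$ for $X^2(A_7)$, $X^1(M_{10})$, $X^2(M_{10})$ and $n=3$ (not $6$) for $L_2(11)$. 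The paper's extra input is Proposition~\ref{proposition: rk(S)=20 with antisymplectic involution}: an anti-symplectic involution forces $S\oplus E_6\hookrightarrow\BB$ to be non-primitive (the fixed lattice of an involution of a unimodular lattice has $2$-group discriminant, incompatible with a primitive $E_6$ summand), and in the cases above the embedding is primitive, so no such involution exists; conversely the realized values of $n$ come from explicit geometric automorphisms (e.g.\ a coordinate transposition and scaling for Fermat, the triple-cover action for $L_2(11)$). Without this (or an equivalent gluing argument showing $-\id$ on $T$ does not extend to a Hodge isometry of $H^4(X,\ZZ)$ fixing $\eta$), your computation of the non-symplectic quotients, and hence the distinction between $X^1(A_7)$ and $X^2(A_7)$, is not justified.
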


\begin{rmk}
The transcendental lattice $T$ for  cubic fourfolds with nontrivial symplectic automorphisms is relatively small (of rank at most $22-\rank(S)$). It follows that except the case of symplectic involutions (i.e. Theorem \ref{theorem: main}(1)), $T$ 
embeds into the $K3$ lattice $(E_8)^2\oplus U^3$ (cf. \cite[Prop. 2.5, Cor. 2.9]{maxalg}). Thus, a priori, the cubics with large group of symplectic automorphisms  are not interesting from the perspective of the standard rationality conjectures (we refer to \cite{maxalg} and \cite{Ouchi} for further discussion on the subject). Nonetheless, they are Hodge theoretically interesting as the cases of Theorem  \ref{theorem: maximal uniqueness} give examples of maximally algebraic cubics (i.e., with maximal rank for $H^{2,2}(X)\cap H^4(X,\ZZ)$) for which the transcendental lattice is explicitly known. 
\end{rmk}

\subsection*{Structure of the paper}
In section \ref{section: cubic fourfolds}, we introduce and briefly review the properties of the period map for cubic fourfolds. Additionally, in \S\ref{subsec:borcherds}, we review the notion of Borcherds marking for cubic fourfolds. In the following section \ref{section: automorphisms and conway group}, we review the necessary material on the Leech lattice, Niemeier lattices, and Conway group. These two review sections (specific to our situation) are complemented by two appendix sections, which cover very standard material, but which nonetheless might be helpful to the reader. Specifically, in Appendix \ref{section: collection of results in lattice theory}, we collect results in lattice theory (mostly due to Nikulin) which are essential in our arguments. In Appendix \ref{appendix: finite groups}, we review some basic facts and notations for finite groups. 

The main content going into the proof of Theorem \ref{theorem: main} is discussed in sections \ref{section: automorphisms and conway group} and \ref{section: symplectic automorphism of cubic fourfold}. First, following Mongardi's work, we introduce the notion of Leech pair (Definition \ref{def:leechpair}), and give a key lemma (Lemma \ref{lemma: embedding into leech lattice}). We then focus on the polarized case. In particular, we establish a criterion (Theorem \ref{theorem: criterion lattice from cubic fourfold}) for a Leech pair $(G,S)$ to arise from a group of symplectic automorphisms for some cubic fourfold $X$. In \S 4.5 we prove Theorem \ref{theorem: maximal uniqueness} using methods from Lattice theory.

The remaining two sections are complementing our main classification result. Namely,  in section \ref{section: K3}, we partially discuss the completely analogous (and somewhat easier) situation for degree $2$ and $6$ $K3$ surfaces. Finally, while the focus of this paper is on symplectic automorphisms, we make some comments on the non-symplectic case in section \ref{section: non-symplectic}. In particular, we determine the full automorphism groups of the $8$ maximal cases of Theorem \ref{theorem: maximal uniqueness} (see Proposition \ref{proposition: non-sym order}). This allows us to distinguish geometrically the two cases of Theorem \ref{theorem: maximal uniqueness}(2) with $\Aut^s(X)\cong A_7$ (i.e., one has a anti-symplectic involution, while the other does not). As a consequence of this classification, we also  obtain that the maximal possible order of automorphism group for a cubic fourfold is $174,960$ which is reached only by the Fermat cubic fourfold (an analogous result for $K3$ surfaces was obtained by Kond\=o \cite{kondo_ns}).

\subsection*{Acknowledgement}  Most of the work was done while the second author visited Stony Brook during the Spring 2018 semester. His stay was supported by Tsinghua Scholarship for Overseas Graduate Studies. He thanks Stony Brook for hosting him and he is grateful to his advisor, Eduard Looijenga, for constant support and helpful discussions on related topics. The research of the first author was partially supported by NSF grants DMS-1254812 and DMS-1802128.

After the posting of our manuscript, we have learned of the work of Ouchi \cite{Ouchi}, who explores the interplay between automorphisms of cubic fourfolds and the automorphisms of the associated K3 category (the Kuznetsov component). We thank G. Ouchi for sharing an early version of his work, and for some comments on our paper. We are also grateful to S. Mukai for sharing with us some of his partial work on the classification of automorphisms of cubic fourfolds (from late eighties). As a consequence, we have updated some of our notations (and added some remarks) to be aligned with Mukai's work.

\section{Automorphisms and periods} 
\label{section: cubic fourfolds}
In this section we review some well-known facts, which are the starting point of  our classification of the automorphism groups for cubic fourfolds. 
 First, the Global Torelli Theorem (Thm. \ref{theorem: thmperiods} and Prop. \ref{proposition: strong global torelli}) allows one to reduce the classification of automorphisms for cubic fourfolds to the classification of automorphisms of Hodge structures, which in turn is essentially a lattice theoretic question. Classically, this approach was successfully applied to the case of $K3$ surfaces (Nikulin, Mukai, Kond\=o and others). More recently, it was (partially) adapted to the case of hyper-K\"ahler manifolds of $K3^{[n]}$ type. The Fano variety $F(X)$ of a cubic fourfold $X$ is a hyper-K\"ahler of $K3^{[2]}$ type. Thus, the classification of automorphisms of $X$ is closely related to the classification of automorphisms of $F(X)$. We review this in \S\ref{sect-fano} below. Finally, the  difference to most of related work that we cite is that we need to keep track of the polarization. It turns out that it is better to keep track of a ``Borcherds polarization'' instead of the natural polarization of $X$ (or equivalently $F(X)$). We introduce this notion in \S\ref{subsec:borcherds}.

\subsection{Periods for cubics}
\label{subsection: periods for cubics}
Let $X$  be a smooth cubic fourfold. The middle cohomology group $H^4(X,\ZZ)$, with the natural intersection pairing, is a unimodular odd lattice $\Lambda$ of signature $(21,2)$ (uniquely specified by these conditions). Let $\eta_X\in H^4(X,\ZZ)$ be the square of the hyperplane class of $X$. The primitive cohomology $H^4(X,\ZZ)_{prim}=\langle \eta_X\rangle^\perp$ carries a polarized Hodge structure of $K3$ type (i.e., Hodge numbers $(0,1,20,1,0)$).  As lattice,  $H^4(X,\ZZ)_{prim}\cong \Lambda_0$ where $\Lambda_0:=(E_8)^2\oplus U^2\oplus A_2$ (with $A_2$ and $E_8$ the standard root lattices, and $U$ the hyperbolic plane). Similarly to the well-known case of $K3$ surfaces, the period domain for Hodge structures on $H^4(X, \ZZ)_{prim}$ is the $20$-dimensional Type $\IV$ period domain 
\begin{equation*}
\calD=\{x\in \PP((\Lambda_0)_{\CC})\big{|}(x,x)=0, (x,\overline{x})<0\}^+
\end{equation*}
(where the script $+$ indicates a choice of one of the two connected components).

Associated to the lattice $\Lambda_0$,  there are several natural groups: 
\begin{enumerate}[(1)]
\item $\sO(\Lambda_0)$ the automorphism group of lattice $\Lambda_0$;
\item $\widetilde{\sO}(\Lambda_0)$ the subgroup of $\sO(\Lambda_0)$ which acts trivially on the discriminant group $A_{\Lambda_0}(=(\Lambda_0)^\vee/\Lambda_0\cong \ZZ/3)$;
\item $\sO^{+}(\Lambda_0)$ the subgroup of $\sO(\Lambda_0)$ which preserves the spinor norm on $\Lambda_0$ (or equivalently preserves $\calD$);
\item $\sO^*(\Lambda_0)\coloneqq \sO^{+}(\Lambda_0)\cap\widetilde{\sO}(\Lambda_0)$.
\end{enumerate}
The global monodromy group $\Gamma$ for cubic fourfold is $\sO^*(\Lambda_0)$ (cf. Beauville \cite{beauville1986groupe}). Since $\Gamma=\sO^*(\Lambda_0)$ is an arithmetic group, $\Gamma$ acts properly discontinuously on $\calD$. The resulting analytic variety $\calD/\Gamma$ is in fact a quasi-projective variety; we refer to it as the global period domain for cubic fourfolds.

\begin{defn}
\label{definition: short/long roots}\begin{enumerate}[(i)]
\item A norm $2$ vector $v$ in $\Lambda_0$ is called a {\it short root}. The set of short roots in $\Lambda_0$ determines a $\Gamma$-invariant hyperplane arrangement $\calH_6$ in $\D$. Let  $\calC_6:=\calH_6/\Gamma\subset \D/\Gamma$ be the associated  Heegner divisor. 
\item A norm $6$ vector $v$ in $\Lambda_0$ with divisibility $3$ is called a {\it long root}. The set of long roots in $\Lambda_0$ determines a $\Gamma$-invariant hyperplane arrangement $\calH_2$ in $\D$. Let $\calC_2:=\calH_2/\Gamma\subset \D/\Gamma$.
\end{enumerate}
\end{defn}

\begin{rmk} It is well known that there exists a single $\{\pm1\}\times\Gamma$-orbit of short and long roots respectively, and thus $\calC_6$ and $\calC_2$ are irreducible divisors. Furthermore,   $\Gamma(=\sO^*(\Lambda_0))$ is generated by reflections in short roots (\cite{beauville1986groupe}), and $\Gamma$ has index $2$ in $\widetilde{\sO}(\Lambda_0)$ with $\widetilde{\sO}(\Lambda_0)/\Gamma$ generated by the class of a reflection in a long root.
\end{rmk}

Let $\calM$ be the moduli space of smooth cubic fourfolds. It is a quasi-projective $20$-dimensional variety, which can be constructed by GIT (see \cite{gitcubic} for a full GIT analysis). By associating with a cubic fourfold $X$, the Hodge structure on its middle cohomology, one obtains a period map 
\begin{equation*}
\Prd\colon \calM\longrightarrow \calD/\Gamma.
\end{equation*}
Voisin \cite{voisin} proved that the Global Torelli Theorem is valid for cubic fourfolds. It follows that $\Prd$ is an open embedding. 
For the purpose of this paper, it is important to understand also the image of the period map $\Prd(\calM)\subset \calD/\Gamma$. This type of question was first investigated by Hassett \cite{hassett}. In particular, he defined certain Heegner divisors $\calC_d$ in $\calD/\Gamma$ (indexed by $d\in \ZZ_+$ with $d\equiv 0,2\pmod 6$) corresponding to cubic fourfolds containing additional Hodge classes. The relevant divisors here are $\calC_2=\calH_2/\Gamma$ and $\calC_6=\calH_6/\Gamma$ as defined above. Geometrically, $\calC_6$ corresponds to singular cubic fourfolds, while $\calC_2$ correspond to degenerations of cubics to the secant to Veronese surface in $\PP^5$. The image of the period map misses the divisors $\calC_2$ and $\calC_6$. Conversely, as shown by Laza \cite{laza2010moduli} and Looijenga \cite{lcubic}, any period outside these two divisors is realized for some smooth cubic fourfold. 

\begin{thm}[Voisin, Hassett, Laza, Looijenga]
\label{theorem: thmperiods}
The period map for cubic fourfolds gives an isomorphism of quasi-projective varieties
\begin{equation}\label{eq_globalT}
\calP\colon \calM\xrightarrow{\sim} \left(\calD\setminus(\calH_2\cup \calH_6)\right)/\Gamma.
\end{equation} 
\end{thm}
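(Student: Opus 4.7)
The plan is to assemble the isomorphism \eqref{eq_globalT} from four independent inputs, each of which establishes one facet of the statement. First I would verify that $\calP$ is a well-defined holomorphic map $\calM \to \calD/\Gamma$. This is standard: the relative middle cohomology of the universal family forms a variation of polarized Hodge structure of weight $4$ over $\calM$, and after fixing a marking one lands in $\calD$; globally, the monodromy representation has image inside $\Gamma = \sO^*(\Lambda_0)$ (Beauville \cite{beauville1986groupe}), so $\calP$ descends to $\calD/\Gamma$. Because $\calM$ is quasi-projective of dimension $20$ and $\calD/\Gamma$ is a quasi-projective variety of the same dimension, once I prove $\calP$ is injective and locally an immersion, $\calP$ will be an open embedding.

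Next comes the injectivity and local injectivity statements, i.e., Voisin's Global Torelli Theorem \cite{voisin}. Local injectivity (infinitesimal Torelli) follows from a direct calculation with the Jacobian ring of $X$, showing that the differential of the period map is an isomorphism at every smooth cubic fourfold. For global injectivity I would follow Voisin's approach: pass to the Fano variety of lines $F(X)$, which by Beauville--Donagi is a hyper-K\"ahler fourfold of $K3^{[2]}$ type, carrying a Hodge isometry $H^4(X,\ZZ)_{prim} \cong H^2(F(X),\ZZ)_{prim}(-1)$. A Hodge isometry between the primitive cohomologies of two smooth cubics thus induces one between the associated Fano varieties; then one invokes the hyper-K\"ahler Torelli-type result and reconstructs $X$ from $F(X)$ (via the identification of $X$ with a component of the locus of lines meeting a fixed line, or via the natural polarization).

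The third ingredient is Hassett's \cite{hassett} identification of the excluded divisors: if $X$ is smooth, the period of $X$ cannot meet $\calH_6$ (since short roots correspond to nodal degenerations by the local calculation of vanishing cycles at a node) or $\calH_2$ (since long roots correspond to the degeneration of $X$ to the chordal cubic over the Veronese surface). Thus the image of $\calP$ is contained in $(\calD \setminus (\calH_2 \cup \calH_6))/\Gamma$.

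The final, and hardest, step is surjectivity onto this complement, due independently to Laza \cite{laza2010moduli} and Looijenga \cite{lcubic}. I expect this to be the main obstacle, since it requires a detailed comparison between two compactifications of essentially different origin: the GIT compactification $\overline{\calM}^{GIT}$ of the moduli space of cubic fourfolds on the one hand, and (a suitable toroidal/Looijenga semitoric modification of) the Baily--Borel compactification of $\calD/\Gamma$ on the other. The strategy is to extend $\calP$ to a morphism between these compactifications, use Laza's complete GIT analysis of cubic fourfolds (including the stratification of the boundary by type of singularities and the chordal cubic) to identify the image of each GIT boundary stratum with a specific boundary stratum on the period side, and then use the fact that both compactifications are normal projective to conclude that the extended $\calP$ is an isomorphism. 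Restricting to the open part gives \eqref{eq_globalT}. Once this is in hand, the quasi-projective statement follows automatically because $\calM$ is quasi-projective and the complement of the two Heegner divisors in $\calD/\Gamma$ is Zariski open in the quasi-projective variety $\calD/\Gamma$.
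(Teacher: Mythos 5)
This theorem is stated in the paper as a quoted result — the paper gives no proof, only the attributions (Voisin for injectivity of the period map, Hassett for the image avoiding $\calC_2\cup\calC_6$, Laza and Looijenga for surjectivity onto the complement), and your outline reassembles exactly that decomposition, so it is consistent with the paper's treatment. The only quibble is historical rather than mathematical: your injectivity step via $F(X)$, a Hodge isometry on $H^2$, and a hyper-K\"ahler Torelli theorem is the later Charles-style proof (cf.\ \cite{charles2012remark}) rather than Voisin's original degeneration argument through cubics containing a plane, but it is a valid route to the Global Torelli input.
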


We note that both sides of \eqref{eq_globalT} have natural orbifold structures. For instance, since any smooth cubic fourfold is GIT stable (\cite{gitcubic}), the moduli space of smooth cubic fourfolds is a smooth Deligne-Mumford stack $\mathfrak M$ with quasi-projective coarse moduli space $\calM$. A natural question is  whether the period map $\Prd$ identifies the two sides  of  \eqref{eq_globalT} as orbifolds. This is equivalent to the Strong Global Torelli Theorem, i.e., any isomorphism between the polarized Hodge structures of two smooth cubic fourfolds is induced by a unique isomorphism between the two cubic fourfolds. Using the fact that automorphisms of cubic fourfolds $X$ are induced by linear transformations of the ambient projective space $\PP^5$, and that $\Aut(X)$ acts faithfully on the middle cohomology $H^4(X, \ZZ)$  (e.g., 
\cite[Proposition 2.16]{javanpeykar2017complete}), the second author \cite{zheng2017} has verified the Strong Global Torelli Theorem.

\begin{prop}[{\cite{zheng2017}}]
\label{proposition: strong global torelli}
Let $X_1$ and $X_2$ be two smooth cubic fourfolds. Assume that there is an isomorphism 
$$\varphi\colon H^4(X_2, \ZZ)\cong H^4(X_1, \ZZ)$$ 
of polarized Hodge structures (in particular $\varphi(\eta_{X_2})=\eta_{X_1}$). Then, there exists a unique isomorphism $f\colon X_1 \cong X_2$ such that $\varphi=f^*$. In particular, for any smooth cubic fourfold $X$, 
\begin{equation}
\Aut(X)\cong \Aut_{HS}(H^4(X,\ZZ),\eta_X),
\end{equation}
where $\Aut_{HS}$ stands for group of Hodge isometries.
\end{prop}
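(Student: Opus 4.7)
The plan is to separate the statement into uniqueness of $f$ and existence of $f$, and to reduce the latter to the single-cubic question of bijectivity of the natural map $\Aut(X)\to \Aut_{HS}(H^4(X,\ZZ),\eta_X)$.

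For uniqueness, suppose $f_1,f_2\colon X_1\to X_2$ both satisfy $f_i^{*}=\varphi$. Then $g:=f_2^{-1}\circ f_1\in\Aut(X_1)$ induces the identity on $H^4(X_1,\ZZ)$. I would invoke the two facts highlighted in the excerpt: every automorphism of a smooth cubic fourfold is the restriction of a linear automorphism of $\PP^5$, and the representation of $\Aut(X)$ on $H^4(X,\ZZ)$ is faithful (Javanpeykar--Loughran). Together these force $g=\id_{X_1}$, and hence $f_1=f_2$.

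For existence, I would first apply Voisin's (weak) Global Torelli Theorem to obtain some isomorphism $f_0\colon X_1\to X_2$. Setting $\psi:=\varphi\circ(f_0^{*})^{-1}\in\Aut_{HS}(H^4(X_1,\ZZ),\eta_{X_1})$, the problem reduces to realizing every polarized Hodge isometry of $H^4(X_1,\ZZ)$ as $h^{*}$ for some $h\in\Aut(X_1)$; once this is done, $f:=f_0\circ h^{-1}$ satisfies $f^{*}=\varphi$. To produce $h$, I would pass to the Fano variety of lines $F=F(X_1)$, a hyper-K\"ahler fourfold of $K3^{[2]}$-type carrying a natural degree-$6$ polarization induced by the Pl\"ucker embedding $F\hookrightarrow\Gr(2,6)\hookrightarrow\PP^{14}$. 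By the Beauville--Donagi isomorphism $H^4(X_1,\ZZ)_{prim}(1)\cong H^2(F,\ZZ)_{prim}$ of polarized Hodge structures, $\psi$ corresponds to a polarized Hodge isometry $\widetilde{\psi}$ of $H^2(F,\ZZ)$. Applying Markman's strong Torelli theorem for polarized hyper-K\"ahler manifolds of $K3^{[2]}$-type (combined with the known description of the polarized monodromy group), $\widetilde{\psi}$ is then induced by an automorphism $h_F\in\Aut(F)$.

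The remaining step is the descent of $h_F$ to an automorphism of the cubic. Since $h_F$ preserves the Pl\"ucker polarization, it extends to a linear transformation of $\PP^{14}$ preserving the image of $F$; by the rigidity of the chain $F\subset\Gr(2,6)\subset\PP^{14}$, this linear transformation is induced by a projectivity of $\PP^5$ preserving $X_1$, yielding the desired $h\in\Aut(X_1)$ with $h^{*}=\psi$. The ``in particular'' clause then follows by taking $X_1=X_2=X$: uniqueness furnishes injectivity and existence furnishes surjectivity of $\Aut(X)\to\Aut_{HS}(H^4(X,\ZZ),\eta_X)$. I expect the main obstacle to be precisely this descent step: one has to verify that $\widetilde{\psi}$ lies in the monodromy group of $F$ (so that Markman's theorem applies), and then that the resulting hyper-K\"ahler automorphism of $F$ is not one of the ``exotic'' ones but actually comes from a linear automorphism of the ambient $\PP^5$. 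The first point is handled by tracing the Beauville--Donagi dictionary and checking that a cubic Hodge isometry preserves the correct subgroup of the orthogonal group on $H^2(F,\ZZ)$; the second uses the preservation of the Pl\"ucker polarization, which is automatic from $\psi$ being a polarized Hodge isometry in the cubic sense.
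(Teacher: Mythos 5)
A caveat first: the paper itself does not prove this proposition --- it is quoted from \cite{zheng2017}, and the surrounding text only records the ingredients used there (every automorphism of a smooth cubic fourfold is induced by a linear automorphism of $\PP^5$, and $\Aut(X)$ acts faithfully on $H^4(X,\ZZ)$, cf.\ \cite{javanpeykar2017complete}), the point being that the period map identifies the two sides of Theorem~\ref{theorem: thmperiods} as orbifolds; so the comparison is with that indication rather than with a written-out argument. Your uniqueness step coincides with it (faithfulness kills $f_2^{-1}\circ f_1$). For existence you take a genuinely different, hyper-K\"ahler route: Voisin's injectivity gives some $f_0$, and the residual polarized Hodge isometry $\psi$ of $H^4(X_1,\ZZ)$ is realized geometrically by transporting it through Beauville--Donagi (Remark~\ref{m-pol-cubic}) to a Hodge isometry of $H^2(F(X_1),\ZZ)$ fixing the Pl\"ucker class, invoking Markman's Hodge-theoretic Torelli theorem, and descending to $X_1$. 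This works, and the two checks you flag are the right ones but are harmless: since $\psi$ fixes $\eta$ in the unimodular lattice $H^4(X_1,\ZZ)$, it acts trivially on the discriminant of the primitive part, hence extends to an isometry of $H^2(F(X_1),\ZZ)$ fixing $g$; for $K3^{[2]}$-type manifolds $\mathrm{Mon}^2$ is all of $O^+$ and a Hodge isometry fixing an ample class is orientation-preserving, so the monodromy hypothesis is automatic; and the descent is not a soft ``rigidity of the chain'' fact but exactly Proposition~\ref{proposition: aut of cubic and fano} (\cite{charles2012remark}, \cite{fu2016classification}), which you should quote rather than re-derive, since showing that a projectivity of $\PP^{14}$ preserving $F(X_1)$ descends to $\PP^5$ takes genuine work. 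Two small repairs: you need the equivariance of the Beauville--Donagi correspondence to conclude that the resulting $h\in\Aut(X_1)$ satisfies $h^*=\psi$ on all of $H^4$ (it agrees on $\eta$ and on the primitive part, hence everywhere), and with $\psi=\varphi\circ(f_0^*)^{-1}$ the final isomorphism is $f=f_0\circ h$, not $f_0\circ h^{-1}$, pullback being contravariant. What your route buys is a proof assembled entirely from statements already displayed in this paper plus Markman's theorem; what the cited argument (as indicated in the text) buys is independence from the Fano variety and from hyper-K\"ahler Torelli, resting instead on the orbifold refinement of the period map together with linearity and faithfulness.
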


\begin{rmk}
We note that while the period map extends to an isomorphism of quasi-projective varieties 
$$\calM^{ADE}\cong (\calD\setminus \calH_2)/\Gamma$$
where $\calM^{ADE}$ is the moduli space of cubics with ADE singularities (see \cite{gitcubic,laza2010moduli}), the orbifold structure along the discriminant divisor is different. Simply, a general cubic fourfold with a node (i.e., $A_1$ singularity) has no automorphism, while on the periods side, there is a $\ZZ/2$ stabilizer corresponding to the reflection in a short root. 
\end{rmk}

\begin{rmk}[{$M$-polarized cubic fourfolds}]
Let $M$ be a positive definite lattice with a fixed primitive embedding into the primitive cubic lattice $\Lambda_0$. Assume that $M$ does not contain short or long roots. Then, in analogy to the theory of Dolgachev \cite{dolgachev_M} for $K3$ surfaces,  one can define a moduli space $\calM_M$ of cubics with the specified primitive algebraic lattice $M$ (i.e., $M\subseteq H^{2,2}(X)\cap H^4(X,\ZZ)_{prim}\subset H^4(X,\ZZ)_{prim} \cong \Lambda_0$ such that the composition $M\subset \Lambda_0$ is equivalent to the fixed embedding). Up to passing to normalization, $\calM_M$ is (the complement of some Hegneer divisors in) a locally symmetric variety $\calD_M/\Gamma_M$, where $\calD_M$ is the Type $\IV$ domain associated with the transcendental lattice $T=M^\perp_{\Lambda_0}$. Thus,  $\dim \calM_M=20-\rank M$. Furthermore, if $M\subset M'\subset \Lambda_0$ (primitive embeddings) then $\calM_{M'}\subset \calM_M$ (i.e., the more algebraic cycles, the smaller the moduli). The moduli of cubic fourfolds $\calM$ corresponds to $M=\emptyset$, and the Hassett divisors $\calC_d$ correspond to $\rank (M)=1$. (Equivalently, as in Hassett's work, one can consider the full lattice of algebraic cycles $\widetilde M=\mathrm{Sat}(M\oplus \langle \eta\rangle)_\Lambda\subset \Lambda\cong H^4(X,\ZZ)$. Here, it is more convenient to work with the primitive lattices $M$ and $\Lambda_0$.)
\end{rmk}

\subsection{The hyper-K\"ahler fourfold associated with a cubic fourfold $X$}\label{sect-fano}
For a smooth cubic fourfold $X$, the Fano variety $F(X)$ of lines on $X$  is a smooth hyper-K\"ahler fourfold, deformation equivalent to $K3^{[2]}$ (cf. \cite{beauville1985variety}). There is a natural  polarization on $F(X)$ induced from the Pl\"ucker embedding $F(X)\hookrightarrow \Gr(1, \PP^5)\subset \PP(\wedge^2(\CC^6))$. Since any automorphism of $X$ is linear, there is a natural group homomorphism 
$$\Aut(X)\longrightarrow \Aut(F(X)).$$ 
Conversely, the following holds  (e.g., \cite[Proposition  4]{charles2012remark},  \cite[Corollary  2.3]{fu2016classification}):
\begin{prop}
\label{proposition: aut of cubic and fano}
The homomorphism $\Aut(X)\longrightarrow \Aut(F(X))$ is injective with image the subgroup preserving the Pl\"ucker polarization on $F(X)$.
\end{prop}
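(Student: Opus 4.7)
The plan is to reduce the proposition to the Strong Global Torelli theorem (Proposition~\ref{proposition: strong global torelli}) via the Beauville--Donagi Hodge isometry $H^4(X,\ZZ) \cong H^2(F(X),\ZZ)$ (up to Tate twist), together with the faithfulness of the $H^2$-action of polarized automorphisms on projective hyper-K\"ahler manifolds.

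First, since every $f \in \Aut(X)$ arises from a linear transformation of $\PP^5$, the induced automorphism of $F(X)$ preserves the Pl\"ucker embedding $F(X) \hookrightarrow \Gr(1,\PP^5) \hookrightarrow \PP^{14}$, and in particular the Pl\"ucker polarization $H$. Thus the map factors as $\phi\colon \Aut(X) \to \Aut(F(X), H) \hookrightarrow \Aut(F(X))$, and the content of the proposition reduces to the claim that $\phi$ is an isomorphism.

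To prove this I build a commutative square of natural maps. On the left, Proposition~\ref{proposition: strong global torelli} gives an isomorphism $\Aut(X) \xrightarrow{\sim} \Aut_{HS}(H^4(X,\ZZ), \eta_X)$. On the right, polarized automorphism groups of projective hyper-K\"ahler manifolds are known to act faithfully on $H^2$ (a consequence of Verbitsky's Global Torelli for $K3^{[n]}$-type manifolds), so $\psi\colon \Aut(F(X), H) \hookrightarrow \Aut_{HS}(H^2(F(X),\ZZ), H)$ is injective. The bottom edge comes from Beauville--Donagi: their Hodge isometry $H^4(X,\ZZ)_{prim} \cong H^2(F(X),\ZZ)_{prim}$ (up to Tate twist) extends to an isometry of the full lattices pairing $\eta_X$ with the Pl\"ucker class $H$, hence induces an isomorphism $\Aut_{HS}(H^2(F(X),\ZZ), H) \xrightarrow{\sim} \Aut_{HS}(H^4(X,\ZZ), \eta_X)$. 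Functoriality makes the square commute, so for every $f \in \Aut(X)$ the composition
\begin{equation*}
\Aut(X) \xrightarrow{\phi} \Aut(F(X), H) \xrightarrow{\psi} \Aut_{HS}(H^2,H) \xrightarrow{\sim} \Aut_{HS}(H^4,\eta_X) \xrightarrow{\sim} \Aut(X)
\end{equation*}
returns $f$. Injectivity of $\phi$ is then immediate from the injectivity of the three other arrows. For surjectivity, given $g \in \Aut(F(X), H)$, chase $\psi(g)$ back through the two isomorphisms to obtain some $f \in \Aut(X)$; then $\psi(\phi(f)) = \psi(g)$ by commutativity, and the injectivity of $\psi$ forces $\phi(f) = g$.

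The main obstacle is verifying cleanly the matching of polarizations under Beauville--Donagi: the Pl\"ucker class on $F(X)$ must correspond (up to a positive rational multiple) to the hyperplane square $\eta_X \in H^4(X,\ZZ)$, so that the two polarized Hodge isometry groups are truly identified by the isomorphism of lattices. This is a direct computation via the incidence correspondence in \cite{beauville1985variety} but needs to be stated carefully. A secondary point is citing precisely the faithfulness of the polarized $H^2$-action for $F(X)$; this is standard but should be referenced explicitly. As an elementary alternative for the injectivity half alone, one can observe that if $f \in \Aut(X)$ acts trivially on $F(X)$ then every line $\ell \subset X$ is preserved setwise, and since a general line meets at least three other lines in distinct points, $f|_\ell$ is an automorphism of $\PP^1$ with three fixed points and hence the identity; as lines cover $X$ this forces $f = \id_X$.
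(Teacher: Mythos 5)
The paper does not actually prove this proposition: it is quoted with references to \cite[Proposition 4]{charles2012remark} and \cite[Corollary 2.3]{fu2016classification}, whose arguments are projective-geometric (linear normality of $F(X)$ in $\PP(\wedge^2\CC^6)$, recovery of the Grassmannian $\Gr(1,\PP^5)$ from $F(X)$, the fact that automorphisms of the Grassmannian come from $\PGL_6$, and finally that $X$ is the union of its lines) and make no use of any Torelli theorem. Your route is therefore genuinely different: you deduce the statement from the Strong Global Torelli theorem (Proposition \ref{proposition: strong global torelli}) via the Beauville--Donagi correspondence and the faithfulness of the $H^2$-action for $K3^{[2]}$-type manifolds. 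The overall scheme (injectivity of $\psi$, uniqueness in strong Torelli, commutativity of the square by functoriality of the incidence correspondence) is sound, and your elementary argument for injectivity of $\Aut(X)\to\Aut(F(X))$ is fine (even more simply: if every line is preserved setwise, then every point lying on two distinct lines is fixed, and such points are dense since through a general point of $X$ there is a one-dimensional family of lines).

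There are, however, two concrete problems. First, your bottom edge is justified by a false statement: the Beauville--Donagi isometry does \emph{not} extend to an isometry of the full lattices pairing $\eta_X$ with the Pl\"ucker class, because $H^4(X,\ZZ)$ is odd unimodular while $(H^2(F(X),\ZZ),q_{BB})$ is even and non-unimodular (this is exactly Remark \ref{rmk-lattice} of the paper); even rationally $\langle 3\rangle$ and $\langle 6\rangle(-1)$ cannot be matched. The isomorphism $\Aut_{HS}(H^2(F(X),\ZZ),H)\cong\Aut_{HS}(H^4(X,\ZZ),\eta_X)$ that you need is nevertheless true, but it must be proved by restricting to the primitive sublattices (which Beauville--Donagi does identify, up to sign and Tate twist) and checking on both sides that an isometry fixing the polarization extends to the full lattice if and only if its restriction acts trivially on the $\ZZ/3$ discriminant group of the primitive part; this gluing computation is the actual content of the step and is missing. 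Second, there is a circularity risk you do not address: the known proofs of the Strong Global Torelli theorem for cubic fourfolds (including the cited \cite{zheng2017}, and Charles's derivation of Voisin's theorem from Verbitsky's) pass through the Fano variety and use precisely statements of the type you are trying to prove, namely that polarized isomorphisms of Fano varieties of lines are induced from the cubics. Unless you verify that the proof of Proposition \ref{proposition: strong global torelli} you invoke is independent of \cite[Proposition 4]{charles2012remark}, your argument may assume its own conclusion; the safe course is the direct geometric proof of the cited sources.
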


An automorphism of a hyper-K\"ahler manifold sends $H^{2,0}$ to $H^{2,0}$, hence induces a scalar action on $H^{2,0}$. If the scalar is the identify, the automorphism is called  {\it symplectic}. Otherwise, it is called {\it non-symplectic}. Adapting this to the case of cubic fourfolds, we make the following definition:
\begin{defn}
\label{definition: symplectic}
An automorphism of a smooth cubic fourfold $X$ is called {\it symplectic}, iff the induced automorphism on $F(X)$ is symplectic. Equivalently, an automorphism of $X$ is symplectic iff the induced action on $H^{3,1}(X)$ is the identity. We denote the group of symplectic automorphisms of $X$ by $\Aut^s(X)$. 
\end{defn}

\begin{rmk}\label{m-pol-cubic}
In view of Theorem \ref{theorem: thmperiods} and Proposition \ref{proposition: strong global torelli}, it is clear that essential arithmetic input in the classification of automorphisms of cubic fourfolds is the primitive cohomology lattice $\Lambda_0=H^4(X,\ZZ)_{prim}\cong A_2\oplus (E_8)^2\oplus U^2$. Let us note that the associated hyper-K\"ahler $F(X)$ has the same primitive lattice. More precisely, $H^2(F(X), \ZZ)$ carries a natural quadratic form, the so-called  Beauville--Bogomolov quadratic form. With respect to this form, there is a natural lattice isometry $H^2_0(F(X), \ZZ)(-1)\cong H^4_0(X, \ZZ)$, which is also an isomorphism of Hodge structures (see \cite[Proposition  6]{beauville1985variety}). In particular, via this isomorphism $H^{2,0}(F(X))$ maps to $H^{3,1}(X)$, justifying our definition above. In summary, the discussion of this subsection says that the classification of the automorphisms of cubic fourfolds is essentially equivalent to the classification of automorphisms of degree $6$ (the degree of the Pl\"ucker polarization) polarized hyper-K\"ahler manifolds of $K3^{[2]}$ type.
\end{rmk}

\begin{rmk}\label{rmk-lattice}
One should note that there is a subtle difference to the case of $K3$ surfaces. While for $K3$ surfaces the full cohomology lattice $H^2(S,\ZZ)$ is even unimodular, the full cohomology lattice for cubic fourfolds $H^4(X,\ZZ)$ is \underline{odd} unimodular. If one prefers to work with hyper-K\"ahler manifolds of $K3^{[2]}$ type, we note that the full cohomology lattice (w.r.t. the Beauville--Bogomolov form) is even, but \underline{not} unimodular (it is (up to sign) $A_1\oplus (E_8)^2\oplus U^3$). 
\end{rmk}

\subsection{Borcherds polarizations}\label{subsec:borcherds} In view of Nikulin's theory  \cite{nikulin1980integral}, it is preferable to work with even unimodular lattices (compare Remark \ref{rmk-lattice}). The smallest (with definite orthogonal complement) even unimodular lattice that contains the primitive cubic lattice $\Lambda_0$ is the {\it Borcherds lattice} $\BB$, i.e., the unique even unimodular lattice $\II_{26,2}\cong (E_8)^3\oplus U^2$ of signature $(26,2)$. (Here, we prefer to denote it $\BB$ and call it the Borcherds lattice in honor of Borcherds, who studied the automorphic forms on the associated Type $\IV$ symmetric domain.)

\begin{rmk}
Even in the $K3$ case, the embedding of the primitive cohomology lattice for a polarized $K3$ surface into the Borcherds lattice $\BB$ turns out to be a powerful arithmetic trick (the geometric reason why it works is not yet completely understood). As examples of applications of this artifice (that we baptized {\it Kond\=o--Scattone trick}), we mention Scattone's work \cite{scattone} on the Baily-Borel compactification for polarized $K3$ surfaces, Kond\=o's work \cite{kondo} on symplectic automorphisms, and the Gristsenko--Hulek--Sankaran work \cite{GHS} on the Kodaira dimensions on the moduli spaces of $K3$ surfaces. 
\end{rmk}

\begin{rmk}
We recall that there exist $24$ even unimodular lattices of rank $24$, called the {\it Niemeier lattices} (see \S\ref{subsec_niemeier} below). What is relevant here is to note that these lattices are intricately related to the Borcherds lattice $\BB$. Namely, given a Niemeier lattice $N$, then $\BB\cong N\oplus U^2$. Conversely,  the classification of the Niemeier lattices follows from the classification of isotropic vectors in the hyperbolic lattice $\II_{25,1}$ (see \cite{conway1999spherepackings}), or equivalently the Type $\II$ boundary components (i.e., rank $2$ totally isotropic subspaces in $\BB$) of the Baily--Borel compactification for the Borcherds period domain. 
\end{rmk}

Returning to cubic fourfolds, in analogy with the work of $M$-polarized $K3$ surfaces of Dolgachev \cite{dolgachev_M} and Remark \ref{m-pol-cubic}, we can view a cubic fourfold as being {\it Borcherds $E_6$-polarized} (i.e., $\Lambda_0$ admits a primitive embedding into $\BB$ with orthogonal complement $E_6$). More interestingly, the periods missing from the image of the period map for cubic fourfolds (see Theorem \ref{theorem: thmperiods}), i.e., the divisors $\calC_2$ and $\calC_6$, correspond to $E_7$ and $E_6+A_1$ Borcherds polarizations respectively. This allows a more uniform view on ``singular'' cubic fourfolds (i.e., singular cubics, or degenerations to the Veronese surface) -- simply $X$ is singular if it acquires an additional root (i.e., the existing ``algebraic'' lattice $E_6$ is enlarged to either $E_7$ or $E_6+A_1$ by adding a  root). This is of course equivalent to the more classical view of Hassett \cite{hassett} where $H^4_{alg,prim}=H^4(X,\ZZ)_{prim}\cap H^{2,2}$ acquires a short root (equivalently, in terms of Borcherds polarizations $E_6\subset E_6+A_1$) or long root (case $E_7$). From either perspective, the transcendental lattices (for $R$ Borcherds polarized objects, the transcendental lattice is $R^\perp_{\BB}$) for the two cases are 
\begin{eqnarray*}
\Lambda_2&:=&\langle 2\rangle\oplus (E_8)^2\oplus U^2, \textrm{ and} \\
\Lambda_6&:=&\langle 6\rangle\oplus (E_8)^2\oplus U^2
\end{eqnarray*} for $\calC_2$ and $\calC_6$ respectively. One recognizes the two lattices (up to a sign) $\Lambda_2$ and $\Lambda_6$ as the primitive lattices for degree $2$ and respectively $6$ $K3$ surfaces. There is indeed a close geometric relationship between degree $6$ (and respectively degree $2$) $K3$ surfaces and singular cubic fourfolds (respectively degenerations to the Veronese surface); see \cite{hassett}, \cite{laza2010moduli}.

From the perspective of this paper, the relevant fact is the following easy proposition (see \cite[\S6]{laza2010moduli}).
\begin{prop}\label{lem:borcherdse6}
\begin{enumerate}[(i)]
\item There is a unique primitive embedding of $\Lambda_0$ into $\BB$, with orthogonal complement $E_6$; in another words, $\Lambda_0\oplus E_6$ can be saturated as $\BB$ in a unique way.
\item There is a unique primitive embedding of $\Lambda_6$ into $\BB$, with orthogonal complement $A_1\oplus E_6$; in another words, $\Lambda_0\oplus A_1\oplus E_6$ can be saturated as $\BB$ in a unique way.
\item There is a unique  primitive embedding of $\Lambda_2$ into $\BB$, with orthogonal complement $E_7$; in another words, $\Lambda_0\oplus E_7$ can be saturated as $\BB$ in a unique way.
\end{enumerate}
\end{prop}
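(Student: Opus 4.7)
My plan is to deduce all three statements from Nikulin's classification of primitive embeddings into even unimodular lattices (the tools collected in Appendix~\ref{section: collection of results in lattice theory}). Fix one of the three source lattices $L\in\{\Lambda_0,\Lambda_6,\Lambda_2\}$. Since $\BB$ has signature $(26,2)$ and each $L$ has signature $(20,2)$, the orthogonal complement $K$ of any primitive embedding $L\hookrightarrow\BB$ is an even, positive-definite lattice of rank $26-\mathrm{rk}(L)\in\{6,7,7\}$; moreover, by Nikulin's formula, $(A_K,q_K)\cong(A_L,-q_L)$. A first bookkeeping step is to check that the proposed lattices $E_6$, $A_1\oplus E_6$, and $E_7$ do lie in these genera, i.e.\ that $q_{E_6}=-q_{A_2}$, $q_{A_1\oplus E_6}=-q_{\langle 6\rangle}$, and $q_{E_7}=-q_{\langle 2\rangle}$. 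Each of these is standard and follows from the classical primitive decompositions $E_8=A_2\perp E_6$ (glued by an index-$3$ isotropic line) and $E_8=A_1\perp E_7$ (glued by an index-$2$ isotropic line) together with the fact that $E_8$ is unimodular, which forces the two discriminant forms to be opposite.

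For \emph{existence}, I would use these same decompositions directly. Writing $\BB\cong E_8^{\oplus 3}\oplus U^2$, one places the ``algebraic'' summand inside one $E_8$ factor: $A_2\subset E_8$ gives $\Lambda_0\hookrightarrow E_8^{\oplus 2}\oplus U^2\oplus A_2\hookrightarrow \BB$ with complement $E_6$; $\langle 2\rangle=A_1\subset E_8$ gives $\Lambda_2\hookrightarrow\BB$ with complement $E_7$; and for $\Lambda_6$ one embeds $\langle 6\rangle$ into the index-$6$ overlattice of $\langle 2\rangle\oplus A_2$ inside $E_8\oplus E_8$ and reads off $A_1\oplus E_6$ as the complement.

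For \emph{uniqueness}, Nikulin's theorem (\cite[Prop.~1.15.1]{nikulin1980integral}) says that primitive embeddings of $L$ into an even unimodular lattice, with orthogonal complement in a prescribed genus, are parametrized by the double coset space
\[
O(L)\backslash \mathrm{Iso}\bigl(q_L,-q_K\bigr)/O(K),
\]
summed over isomorphism classes within the genus. Hence uniqueness reduces to two sub-claims: (a) the orthogonal complement lattice is unique in its genus, and (b) the natural map $O(K)\to O(q_K)$ is surjective. For (a), $E_6$ and $E_7$ are classically known to be the unique positive-definite even lattices in their respective genera; for $A_1\oplus E_6$, one observes that any lattice in its genus has minimal norm $2$ and a root sublattice of maximal rank, which by the classification of root lattices forces the decomposition $A_1\oplus E_6$. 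For (b), the form $q_{E_7}$ has trivial automorphism group (it lives on $\ZZ/2$), and $O(q_{E_6})=\{\pm 1\}$ is surjected onto by $O(E_6)$ via the diagram involution $-w_0$; the case $A_1\oplus E_6$ then follows by combining $-1\in O(A_1)$ with the above, since $q_{A_1\oplus E_6}$ splits as $q_{A_1}\oplus q_{E_6}$ on $\ZZ/2\oplus\ZZ/3$.

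The only real obstacle is the genus-uniqueness step (a) for the middle case $A_1\oplus E_6$, since this lattice is not on the standard short lists of ``famous'' root lattices the way $E_6,E_7$ are. The easiest route is to combine the $A_1$-splitting argument above with Nikulin's criterion for a primitive orthogonal summand (\cite[Cor.~1.13.4]{nikulin1980integral}): a positive-definite even rank-$7$ lattice whose $2$-part of the discriminant form has length $1$ with value $q(1)=\tfrac{1}{2}$ must split off an $A_1$-summand, leaving a rank-$6$ lattice of discriminant $3$, which is $E_6$. With (a) and (b) in hand the double-coset count collapses to a single class in each of the three cases, yielding the uniqueness asserted in (i), (ii), (iii).
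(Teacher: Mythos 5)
Your overall strategy is the right one and is essentially the standard argument (the paper itself gives no proof here, deferring to \cite[\S6]{laza2010moduli}, where the same Nikulin machinery is used): existence by placing $A_2$, $\langle 2\rangle$, resp.\ a norm-$6$ vector inside one $E_8$ summand of $\BB\cong E_8^{\oplus 3}\oplus U^2$, and uniqueness via Nikulin's parametrization of primitive embeddings into an even unimodular lattice, which reduces the claim to (a) one-class-ness of the genus of the complement and (b) surjectivity of $O(K)\to O(q_K)$. You were also right not to invoke Theorem~\ref{theorem: uniqueness of embedding even case} directly for $\Lambda_0\hookrightarrow\BB$: it fails there since the signature condition $l_->n_-$ reads $2>2$. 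Your checks of (b) for $E_6$, $E_7$, $A_1\oplus E_6$ are correct.

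There is, however, a genuine gap in your step (a) for the middle case. Nikulin's splitting criteria (\cite[\S1.13]{nikulin1980integral}, including the corollary you invoke) apply only to \emph{indefinite} lattices; they cannot force a positive definite rank-$7$ lattice with the given $2$-adic discriminant to split off an $A_1$-summand, and in general definite lattices in one genus need not be isometric (e.g.\ $E_8^{\oplus 3}$ and the Leech lattice $\LL$ share signature and trivial discriminant form). Likewise, your first suggestion --- that every lattice in the genus has minimal norm $2$ and a root sublattice of full rank --- is precisely what would need proof. The conclusion is true and can be repaired cheaply: if $N$ is even, positive definite, of rank $7$ with $q_N\cong -q_{\langle 6\rangle}$, glue $N\oplus\langle 6\rangle$ along the graph of an isometry $q_{\langle 6\rangle}\cong -q_N$ to get an even unimodular positive definite lattice of rank $8$, necessarily $E_8$; then $N\cong v^{\perp}_{E_8}$ for a primitive norm-$6$ vector $v$, and since the $6720$ norm-$6$ vectors of $E_8$ form a single $O(E_8)$-orbit, $N\cong A_1\oplus E_6$. (Alternatively, quote the tables of even positive definite lattices of small determinant in \cite{conway1999spherepackings}.) A smaller point: your existence construction for $\Lambda_6$ via ``the index-$6$ overlattice of $\langle 2\rangle\oplus A_2$ inside $E_8\oplus E_8$'' does not parse --- an index-$6$ overlattice of a determinant-$6$ lattice would have determinant $1/6$ --- but the fix is immediate: take the primitive norm-$6$ vector inside the $A_2\subset E_8$, whose orthogonal complement in that $E_8$ is $A_1\oplus E_6$, and tensor this with the identity on $E_8^{\oplus 2}\oplus U^2$.
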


\section{Automorphisms and the Conway group}\label{section: automorphisms and conway group}
Via the Global Torelli Theorem, we have reduced the study of automorphisms for cubic fourfolds to the study of automorphisms of Hodge structures. This is in turn a question about the symmetries (satisfying certain properties) of the underlying cohomology lattice $L$.
\begin{notation}
For a lattice $L$ with action by a group $G\subset O(L)$, we call $L^G\coloneqq\{x\in L\big{|} gx=x, \forall g\in G\}$ {\it the invariant sublattice}, and $S_G(L)\coloneqq (L^G)_L^{\perp}$ {\it the covariant lattice}\footnote{Some authors call the quotient $M/S_G(M)$ the covariant lattice, since it is the maximal quotient such that the induced action of $G$ on it is trivial.}.
\end{notation}

In the case of a finite group of {\it symplectic} automorphisms $G$ acting on the cohomology lattice $L$, Nikulin made two key observations:
 \begin{itemize}
 \item[i)] {\it the covariant lattice $S_G(L)$ is a definite lattice} (this is equivalent to the symplectic condition), and 
 \item[ii)] {\it $S_G(L)$ does not contain any \underline{effective} algebraic cycle} (in fact, the symplectic condition implies that the algebraicity is automatic). In particular, for $K3$ surfaces, by Riemann-Roch, $S_G(L)$ (which is negative definite in this case) should not contain any $-2$ classes (or equivalently roots). 
 \end{itemize}
 The same holds for hyper-K\"ahler manifolds of $K3^{[n]}$ type (e.g., by involving Markman's theory of prime exceptional divisors) and for cubic fourfolds (i.e., there is no norm $2$ vector in $S_G(L)$; e.g., as a consequence of Theorem \ref{theorem: thmperiods}). Normally, one would try to classify $S_G(L)$ and its embeddings into the cohomology lattice $L$. However, using Nikulin's theory, Kond\=o made the observation that (in the geometric situations considered here: $K3$s, $K3^{[n]}$, or cubics) {\it $S_G(L)$ embeds into one of the Niemeier lattices $N$}, and furthermore {\it $G$ extends to an isometry of $N$} (thus $G\subset O(N)$). Niemeier lattices $N$ show up here since they are  the smallest even unimodular \underline{definite} lattices $N$ containing $S_G(L)$ for any $G$. The lattice $N$ being definite is important as the associated orthogonal group $O(N)$ is finite.  Kond\=o \cite{kondo} successfully applied this approach to the classification of symplectic automorphisms for $K3$ surfaces. Kond\=o avoids the Leech lattice $\LL$ (namely, he noted that $A_1\oplus S_G(L)$ embeds into $N$ for $K3$ surfaces, and thus $N\neq \LL$), but in fact, since $S_G(L)$ contains no roots, it is possible to embed it into the Leech lattice $\LL$ (cf. \cite{gaberdiel2012symmetries}, \cite{huybrechtsaut}). Considering embeddings into the Leech lattice $\LL$ leads to a more uniform behavior. Note however that there is a trade-off here: we deal with a single larger group $\Co_0:=O(\LL)$ versus $23$ smaller groups $O(N)$ for $N\neq \LL$. With the advent of more powerful computational tools, and a better understanding of the Leech lattice (esp. relevant here is \cite{HM1}), we can work throughout with the Leech lattice.

In this section, we briefly review the Leech lattice, the Conway group, and introduce the key concept (due to Mongardi, but with origins going back to Nikulin) of {\it Leech pair}. We then close with the H\"ohn--Mason \cite{HM1} classification of the fixed-point lattices for the Leech lattice $\LL$. The material here is standard (and applies equally to $K3$s and $K3^{[n]}$s); we will apply it in the following section to the actual classification of the automorphisms of cubic fourfolds. 
\subsection{The Leech Lattice and the Conway group}\label{subsec_niemeier} We recall the following classification result of Niemeier. 

\begin{thm}[Niemeier]
\label{thm_niemeier} 
Up to isometry, there exist $24$ even unimodular positive definite lattices $N$ of rank $24$. Let $R\subset N$ be the sublattice spanned by the roots (i.e., norm $2$ vectors) of $N$.  Then $R$ is of one of the following $24$ types: $\varnothing, 24A_1, 12A_2, 8A_3, 6A_4, 6D_4, 4A_5\oplus D_4, 4A_6, 2A_7\oplus 2D_5, 3A_8, 4D_6, 2A_9\oplus D_6, 4E_6, A_{11}\oplus D_7\oplus E_6$, $2A_{12}$, $3D_8, A_{15}\oplus D_9, D_{10}\oplus 2E_7, A_{17}\oplus E_7, 2D_{12}, A_{24}, 3E_8, D_{16}\oplus E_8, D_{24}$. In particular, $R$ uniquely determines $N$. 
\end{thm}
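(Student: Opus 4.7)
The plan is to follow the classical Venkov--Niemeier strategy, which reduces the classification to (a) a modular-form constraint on the root sublattice, (b) an enumeration of admissible root systems of total rank at most $24$, and (c) a lattice-theoretic analysis of how such a root system can be saturated to an even unimodular lattice.

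First, since $N$ is even unimodular of rank $24$, the theta series $\theta_N(\tau)=\sum_{v\in N}q^{(v,v)/2}$ is a modular form of weight $12$ for $\SL_2(\ZZ)$. The space of such forms is $2$-dimensional, spanned by the Eisenstein series $E_{12}$ and the cusp form $\Delta$, so $\theta_N = E_{12} + c_N \Delta$ for a unique constant $c_N$. Reading off the $q^1$-coefficient gives an exact formula for $|R_2(N)|$, the number of roots of $N$, in terms of $c_N$. The first step of the plan is Venkov's lemma: for each irreducible component $R_i$ of $R$, the average norm of a vector in the Weyl orbit of a fixed weight is computed by summing $(v,v)$ against the theta series, and comparing this to the classical identity that the sum of squared roots of a simply-laced root system of rank $r_i$ with Coxeter number $h_i$ equals $2 r_i h_i$. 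Combined with the uniformity forced by $\theta_N$, this yields that all $h_i$ are equal to a common value $h$ and that $|R|=24h$ (the case $h=0$ corresponding to $R=\varnothing$).

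Next I enumerate. Using the ADE table of Coxeter numbers (e.g.\ $h(A_n)=n+1$, $h(D_n)=2n-2$, $h(E_6)=12$, $h(E_7)=18$, $h(E_8)=30$) together with the constraints $\sum r_i = 24$ and $h_i\equiv h$, a finite search produces exactly the $23$ non-empty root systems listed in the statement, plus the case $R=\varnothing$. This is elementary bookkeeping once the constraint is in place; I would organize it by the value of $h\in\{2,3,4,5,6,\ldots,30\}$.

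For the existence and uniqueness of $N$ given $R$, I would work with the discriminant group $A_R=R^\vee/R$ and its induced quadratic form $q_R$. An even unimodular overlattice of $R$ (of the same rank) corresponds to an isotropic subgroup of $(A_R,q_R)$ of order $\sqrt{|A_R|}$, i.e.\ a ``glue code.'' For each of the $23$ admissible root systems, I would verify, case by case, that there is a unique such isotropic subgroup which is additionally stable under the outer automorphisms permuting equivalent components of $R$, and does not introduce new roots in $N\setminus R$. This uniqueness--of--gluing step is the main obstacle: it is combinatorial and depends on the detailed arithmetic of each $A_R$, and although each case is straightforward, handling all $23$ uniformly requires care. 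Finally, the excluded case $R=\varnothing$ forces $h=0$ and $|R_2|=0$, and existence and uniqueness of a rank $24$ even unimodular positive definite lattice with no roots is the defining characterization of the Leech lattice $\LL$, so all $24$ cases are realized and the assignment $N\mapsto R$ is injective.
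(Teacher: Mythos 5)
Note first that the paper does not prove this statement: it is Niemeier's classical classification, recalled in \S 3.1 without proof, so there is no in-paper argument to compare with and your sketch must stand on its own as an outline of the standard Venkov--Niemeier proof. It has the right skeleton, but the step you call ``Venkov's lemma'' does not follow from what you actually set up. Knowing only $\theta_N=E_{12}+c_N\Delta$ gives the total number of roots in terms of $c_N$ and nothing more; the phrase ``the uniformity forced by $\theta_N$'' is carrying the whole argument and is unsubstantiated. The real ingredient is the theta series weighted by a degree-two harmonic polynomial: it is a cusp form of weight $12+2=14$ for $\SL_2(\ZZ)$, and since there are no nonzero cusp forms of weight $14$ one gets $\sum_{r\in R}(r,x)^2=\tfrac{|R|}{12}(x,x)$ for all $x\in N\otimes\RR$. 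Restricting $x$ to the span of a component $R_i$ and using $\sum_{r\in R_i}(r,x)^2=2h_i(x,x)$ gives $2h_i=|R|/12$ for every $i$, hence all $h_i$ are equal, $|R|=24h$, and (when $R\neq\varnothing$) $R$ has full rank $24$ --- which is exactly what licenses your constraint $\sum r_i=24$ in the enumeration. Without this weight-$14$ input, the equal-Coxeter-number claim, and therefore the entire list of $23$ root systems, is unsupported.

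Two further issues sit in your step (c). The gluing condition is misstated: one does not seek an isotropic subgroup \emph{invariant} under the permutations of isomorphic components, but rather proves that any two admissible glue codes (maximal isotropic subgroups of $(A_R,q_R)$ all of whose nonzero cosets have minimal norm at least $4$) are conjugate under $\Aut(R)$. For $24A_1$, for example, the admissible codes are precisely the copies of the extended binary Golay code in $\FF_2^{24}$; none of them is $S_{24}$-invariant (the stabilizer is $M_{24}$), but they form a single $S_{24}$-orbit, which is the correct uniqueness statement. More seriously, the case $R=\varnothing$ is not disposed of by saying rootlessness is ``the defining characterization'' of $\LL$: existence needs a construction (e.g.\ from the Golay code), and uniqueness of the rootless even unimodular rank-$24$ lattice is a genuine theorem (Conway's argument via $\II_{25,1}$ and Weyl vectors, or a Siegel mass-formula count showing the $23$ glued lattices plus exactly one rootless class exhaust the genus). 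Indeed, this paper \emph{defines} the Leech lattice as the rootless Niemeier lattice, so invoking the definition here is circular. With the weight-$14$ identity supplied, the glue analysis phrased as uniqueness up to $\Aut(R)$, and an actual argument for the rootless case, your outline becomes the standard proof.
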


A lattice $N$ as in the theorem is called a {\it Niemeier lattice}. In all but one of the cases $N$ is spanned (over $\QQ$) by roots. The remaining case, i.e., the Niemeier lattice containing no roots, is called the {\it Leech lattice}, and we denote it by $\LL$.  The automorphism of the Leech lattice is the {\it Conway group} 
$$\Co_0:=O(\LL).$$ The center of $\Co_0$ is just $\mu_2=\{\pm \id\}$, and the quotient 
$$\Co_1:=\Co_0/Z(\Co_0)$$ 
is one of the largest sporadic simple groups. In fact, 
 $$|\Co_0|=2^{22}\cdot3^9\cdot 5^4\cdot 7^2\cdot11\cdot 13\cdot 23(\sim 8\cdot 10^{18}).$$
As we will see below, a group $G$ of symplectic automorphisms for $K3$ surfaces, hyper-K\"ahler manifolds of type $K3^{[n]}$, or cubic fourfolds can be realized as a subgroup of the Conway group $\Co_0$. Thus, only the prime factors $2$, $3$, $5$, $7$, $11$, $13$, and $23$ can occur in $\ord(G)$. For $K3$ surfaces, only the primes $p\le 7$ can occur, while for cubics all primes $p\le 11$ occur (compare Theorem \ref{theorem: lie fu}). In particular, the Fano variety $F(X)$ of a cubic fourfold $X$ admiting an order $11$ symplectic automorphism will give an example of an exotic automorphism (i.e.,  not induced from $K3$ surfaces) 
on a hyper-K\"ahler of $K3^{[2]}$ type (see \cite[\S4.5]{mongardi2013thesis}). 
\subsection{Leech Pairs}
As already mentioned, the study of symplectic automorphisms on $K3$s and $K3^{[n]}$'s leads to the following notion (first formalized in the thesis of Mongardi  \cite{mongardi2013thesis}):
\begin{defn}\label{def:leechpair}
A pair $(G,S)$ consisting of a finite group $G$ acting faithfully on an even lattice $S$ is called a {\it Leech pair}, if it satisfies the following conditions:
\begin{enumerate}[(i)]
\item $S$ is positive definite,
\item $S$ does not contain any 2-vector,
\item $G$ fixes no nontrivial vector in $S$,
\item the induced action of $G$ on the discriminant group $A_S$ is trivial.
\end{enumerate}
\end{defn}

The condition (iv) of the Definition \ref{def:leechpair} should be understood as saying that given a primitive embedding $S\hookrightarrow L$ into a unimodular lattice $L$, the action of $G$ on $S$ extends (acting trivially on $S^\perp_L$) to $L$. The condition (iii) complements this by saying that $S$ is the covariant lattice for the action of $G$ on $L$. Note then that the smallest unimodular lattice satisfying the first $2$ conditions of the definition above is the Leech lattice $\LL$. Obviously, any sublattice of the Leech lattice will also satisfy (i) and (ii) of the definition. Thus choosing a subgroup $G\subset \Co_0(=O(\LL))$, the associated covariant lattice $S_G(\LL)$ in $\LL$ will give an example of Leech pair $(G,S_G(\LL))$. The following proposition says the converse: under a mild condition on the rank of $S$ (satisfied in the geometric context relevant to this paper), the Leech pair $(S,G)$ is obtained as a covariant lattice in $\LL$. This argument seems to occur first in \cite[Appendix B]{gaberdiel2012symmetries} (see also \cite[Prop. 2.2]{huybrechtsaut}; related arguments go back to Scattone \cite{scattone} and Kond\=o \cite{kondo}).  For completeness, we sketch the proof.
\begin{prop}
\label{lemma: embedding into leech lattice}
For a Leech pair $(G,S)$ the following two statements are equivalent:
\begin{enumerate}[(i)]
\item $\rank(S)+l(q_S)\le 24$,
\item There exists a primitive embedding of $S$ into the Leech lattice $\LL$.
\end{enumerate}
Once these two condition are fulfilled, there is an action of $G$ on $\LL$ with $(G,S)\cong (G,S_G(\LL))$.
\end{prop}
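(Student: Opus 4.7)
The plan is to prove the two implications separately and then extend to the equivariant statement.

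The direction (ii) $\Rightarrow$ (i) is immediate from Nikulin's gluing theory. Given a primitive embedding $S \hookrightarrow \LL$ with orthogonal complement $T := S^\perp_\LL$, the unimodularity of $\LL$ yields a glue isomorphism $q_S \cong -q_T$. Hence $l(q_S) = l(q_T) \le \rank(T) = 24 - \rank(S)$, which rearranges to $\rank(S) + l(q_S) \le 24$.

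For (i) $\Rightarrow$ (ii), I would proceed in two stages. First, I would invoke Nikulin's existence theorem for primitive embeddings into even unimodular lattices (Theorem 1.12.2 of \cite{nikulin1980integral}, collected also in the lattice-theoretic appendix here): the positive definite signature $(r,0)$ of $S$ together with the hypothesis $\rank(S) + l(q_S) \le 24$ produces a primitive embedding $S \hookrightarrow N$ into some positive definite even unimodular lattice $N$ of rank $24$, which by Niemeier's classification (Theorem \ref{thm_niemeier}) is one of the $24$ Niemeier lattices. The second stage, namely ensuring $N$ can be chosen to be $\LL$, is the main obstacle. The structural fact is that primitive embeddings $S \hookrightarrow N$ with $N$ a Niemeier lattice are parametrized by choices of $T$ in the genus determined by $(\rank T, q_T) = (24 - r, -q_S)$ together with a gluing between $A_S$ and $A_T$. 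Following the strategy of Kond\=o \cite{kondo} and its streamlined Leech-lattice variant from \cite{gaberdiel2012symmetries, huybrechtsaut}, I would argue that the root-freeness of $S$, combined with the rank bound, leaves enough room in the genus to select a root-free complement $T_0$, so that the glued lattice acquires no new $2$-vectors and must therefore be $\LL$ by Niemeier's uniqueness. Analyzing when a glued root $r = s + t$ (with $s \in S^\ast$, $t \in T^\ast$, $s^2 + t^2 = 2$) can arise reduces this to a case check against the $23$ non-Leech Niemeier lattices, each of which has root sublattice of full rank $24$, so that a Kneser-style neighbor step can always trade roots for non-roots.

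Finally, for the equivariant conclusion, axiom (iv) of Definition~\ref{def:leechpair} says that $G$ acts trivially on $A_S$; via the glue isomorphism $q_S \cong -q_{T_0}$, the action of $G$ on $S$ extended by the identity on $T_0$ preserves the gluing and descends to a well-defined isometry of $\LL$. Axiom (iii), which forbids nonzero $G$-fixed vectors in $S$, forces $\LL^G = T_0$, whence $S_G(\LL) = S$. This completes the identification of Leech pairs $(G, S) \cong (G, S_G(\LL))$. The central technical difficulty throughout lies in the Leech upgrade step, which requires either producing a root-free representative in the genus of $-q_S$ or performing an explicit Niemeier-by-Niemeier analysis.
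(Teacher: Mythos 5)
Your direction (ii)$\Rightarrow$(i) and your equivariant epilogue are fine and match the paper, but the core implication (i)$\Rightarrow$(ii) has two genuine gaps. First, your opening step already overreaches: in the boundary case $\rank(S)+l(q_S)=24$ the would-be complement inside a Niemeier lattice is \emph{definite} of rank exactly $l(q_S)$, so the existence of a lattice in the genus $(24-\rank(S),0,-q_S)$ is not granted by the numerical part of Nikulin's theory --- the $p$-adic conditions (3)--(4) of Theorem~\ref{theorem: existence of lattice via discriminant form} become active and you never verify them (indeed their validity here is a \emph{consequence} of the proposition, so invoking it would be circular). Second, and more seriously, the ``Leech upgrade'' is not proved. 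Choosing a root-free complement $T_0$ does not force the glued even unimodular overlattice to be root-free: a root can be of mixed type $s+t$ with $0\neq s\in S^{*}$, $0\neq t\in T_0^{*}$, $s^2+t^2=2$, and ruling these out is precisely the hard content. Your proposed remedy --- a case check against the $23$ rooted Niemeier lattices plus the claim that ``a Kneser-style neighbor step can always trade roots for non-roots'' --- is an unsubstantiated assertion, not an argument; as it stands the implication (i)$\Rightarrow$(ii) is incomplete.

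For contrast, the paper sidesteps both issues by working one rank higher, in the hyperbolic lattice $\LL\oplus U\cong \II_{25,1}$: there the orthogonal complement of $S$ has rank $26-\rank(S)>l(q_S)$ and is indefinite, so Nikulin's existence criterion applies with no boundary subtleties and the target unimodular lattice is unique. Since $S$ contains no $2$-vectors, the real span of the complement $N$ meets the interior of a Conway chamber; by Conway's description of $O(\II_{25,1})$ (generated by reflections in Leech roots) one may move $N$ into the standard chamber, and Borcherds' theorem that the chamber stabilizer fixes the Weyl vector $w$ gives $w\in N=(\LL\oplus U)^G$. Projecting, $S\hookrightarrow w^{\perp}/\langle w\rangle\cong\LL$ is primitive and the $G$-action descends, yielding $(G,S)\cong(G,S_G(\LL))$. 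If you want to keep your definite, Niemeier-by-Niemeier strategy (in the spirit of Kond\=o), you must actually carry out the root-elimination/neighboring argument; otherwise the hyperbolic-lattice route is the efficient way to close the gap.
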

\begin{proof}
Assume $(ii)$, and denote by $K$ the orthogonal complement of the given primitive embedding of $S$ into $\LL$. Then $l(q_S)=l(q_K)\le \rank(K)=24-\rank(S)$. Thus $(ii)$ implies $(i)$.

Now assume $(i)$. Since $l(q_S)\le 24-\rank(S)<\rank(\LL\oplus U)-\rank(S)$, by Nikulin's existence Theorem \ref{theorem: existence of lattice via discriminant form}, there exists a primitive embedding $S \hookrightarrow \LL\oplus U$. Denote by $N$ the orthogonal complement of $S$ in $\LL\oplus U$. Then $N$ has signature $(25-\rank(S),1)$. Thus $N_{\RR}$ intersects with the positive cone of $\LL\oplus U$. Since $S$ contains no 2-vector, $N_{\RR}$ intersects with one of the chambers of the positive cone of $\LL\oplus U$.

Let $w\in U$ be primitive and isotropic. The vector $w\in\LL\oplus U$ is called a Weyl vector\footnote{Up to conjugacy by $O(\I_{25,1})$, the choice of primitive isotropic vector $w$ in $\I_{25,1}\cong \LL\oplus U$ is equivalent to the choice of the isometry type of a Niemeier lattice $N\cong \langle w\rangle^\perp/\langle w\rangle$ (N.B. $N\oplus U\cong \I_{25,1}$). Thus, intrinsically a Weyl vector $w$ is a primitive isotropic vector in  $\I_{25,1}$ such that the associated Niemeier lattice $N$ is the Leech lattice.}. We call a vector $v\in \LL\oplus U$ with $(v, v)=2$ and $(v, w)=-1$ a Leech root. By \cite[Chap. 27]{conway1999spherepackings}, the automorphism group of $\LL\oplus U$ is generated by reflections with respect to Leech roots. Therefore, there exists a chamber $\C_0$ given by $\C_0=\{x\in (\LL\oplus U)\otimes{\RR}\big{|}(x,v)>0, \textrm{for any Leech root v}\}$. By adjusting the embedding $S\hookrightarrow \LL\oplus U$ via an automorphism of $\LL\oplus U$, we may assume that $N_{\RR}$ intersects with $\C_0$, hence $G$ leaves the chamber $\C_0$ stable. By \cite{borcherds1984leech}, $G$ fixes the Weyl vector $w$. Equivalently, $w\in N$. Then we have:
\begin{equation*}
S\hookrightarrow w^{\perp}\longrightarrow w^{\perp}/\langle w \rangle\cong \LL
\end{equation*}
which gives rise a primitive embedding of $S$ into the Leech lattice $\LL$. The group action of $G$ on $S$ extends to an action on $\LL$ with $(G,S)\cong (G,S_G(\LL))$.
\end{proof}

\begin{cor}
For a Leech pair $(G,S)$ satisfying the statements in Lemma  \ref{lemma: embedding into leech lattice}, there is an embedding $G\hookrightarrow \Co_0$, with image avoiding $-id$ unless $\rank(S)=24$.
\end{cor}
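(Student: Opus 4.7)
The plan is to directly use the action of $G$ on $\LL$ produced by Proposition \ref{lemma: embedding into leech lattice} and then check the two assertions separately.

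First, by Proposition \ref{lemma: embedding into leech lattice}, the hypotheses guarantee an action of $G$ on $\LL$ with $(G,S)\cong (G, S_G(\LL))$. This action furnishes a homomorphism $\rho\colon G\to O(\LL)=\Co_0$. To see that $\rho$ is injective, note that by Definition \ref{def:leechpair}(iii) the $G$-action on $S$ is faithful, and under the identification $S\cong S_G(\LL)$ the map $\rho$ composed with the restriction $O(\LL)\to O(S_G(\LL))$ recovers the original action of $G$ on $S$. Hence $\ker(\rho)$ must act trivially on $S$, and therefore is trivial.

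Second, we verify that $-\id\notin \rho(G)$ when $\rank(S)<24$. Suppose toward contradiction that $-\id\in\rho(G)$. Then the invariant sublattice satisfies
\begin{equation*}
\LL^{G}\subseteq \LL^{-\id}=\{v\in \LL\mid -v=v\}=0,
\end{equation*}
so $\LL^{G}=0$ and $S_G(\LL)=(\LL^{G})^{\perp}_{\LL}=\LL$. In particular $\rank(S)=\rank(\LL)=24$, contradicting the hypothesis. Conversely, if $\rank(S)=24$ then $\LL^{G}=0$ and the element $-\id$ of $\Co_0$ is at least compatible with the covariant lattice condition; no further obstruction is claimed in the statement, so nothing more is needed.

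There is essentially no obstacle here: the whole content lies in the earlier proposition which packages the Leech-pair hypotheses into an honest action on $\LL$. The injectivity step is immediate from faithfulness on $S$, and the $-\id$ dichotomy is forced by the elementary fact that $-\id$ has no nonzero fixed vectors in $\LL$.
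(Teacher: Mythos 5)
Your proof is correct and is essentially the argument the paper leaves implicit: the corollary is stated without proof as an immediate consequence of Proposition \ref{lemma: embedding into leech lattice}, namely injectivity of $G\to O(\LL)=\Co_0$ from faithfulness on $S\cong S_G(\LL)$, and the observation that $-\id\in\rho(G)$ would force $\LL^G=0$, hence $S=\LL$ and $\rank(S)=24$. One small citation slip: faithfulness of the $G$-action on $S$ is stated in the preamble of Definition \ref{def:leechpair}, not in its condition (iii) (which says $S^G=0$), but this does not affect the argument.
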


\subsection{H\"ohn--Mason classification of saturated Leech pairs}\label{subsec-HM} In view of the discussion above, to classify the Leech pairs relevant to the classification of automorphisms, one can proceed by considering subgroups $G\subset \Co_0$ and the associated covariant lattice $S_G(\LL)$. The only issue is that there might be several groups $G$ leading to the same covariant lattice. For the classification of automorphism groups, we are interested in the maximal cases (i.e., in $G= \Aut^s(X)$ and not subgroups $G'\subset G$ that happens to have the same invariant/covariant lattice). The following two definitions formalize this idea. 
\begin{defn}
A Leech pair (G, S) is called {\it saturated}, if $G$ is the maximal group acting faithfully on $S$ and trivially on the discriminant group $A_S$.
\end{defn}

Let $G$ be a finite group acting on the Leech lattice $\LL$.  One can consider the (point-wise) stabilizer $G'$ of $\LL^G$. Obviously, $G\subseteq G'$, $\LL^G=\LL^{G'}$, and $G'$ is the largest group stabilizing $\LL^G$. The induced action of $G^{\prime}$ on $A_{S_G(\LL)}\cong A_{\LL^G}$ is trivial. Conversely, every automorphism of $S_G(\LL)$ which trivializes $A_{S_G(\LL)}$ can be extended to an automorphism of $\LL$ which stabilizes $\LL^G$. Thus $G^{\prime}$ is equal to the automorphism group of $S_G(\LL)$ trivializing the discriminant. The Leech pair $(G, S_G(\LL))$ is saturated if and only if $G=G^{\prime}$. 

\begin{defn}
Let  $(G_1, S_1)$ and $(G_2, S_2)$ be two Leech pairs. We say $(G_1, S_1) \le (G_2, S_2)$  if $G_1$ is a subgroup of $G_2$ and $S_1=S_2^{G_1}$. We call $(G_1, S_1)$ a sub-pair of $(G_2, S_2)$. Two sub-pairs $(G_1, S_1), (G_2, S_2)$ of a Leech pair $(G, S)$ are conjugate if there exists $g\in G$ such that $g G_1 g^{-1}=G_2$ and $g S_1=S_2$.
\end{defn}

We denote by $\mathscr{A}$ the set of conjugacy classes of sub-pairs of $(\Co_0, \LL)$. There is a natural poset structure on $\mathscr{A}$. Denote by $\mathscr{A}_{sat}$ the sub-poset of $\mathscr{A}$ consisting of saturated Leech pairs. {\it A fixed-point sublattice} of $\LL$ is the invariant sublattice $\LL^G$ for some $G\subset \Co_0$. It is clear that associating with 
$(G, S)\in \sA$ the fixed-point sublattice $\LL^G$ gives rise 
 to a one-to-one correspondence between $\mathscr{A}_{sat}$ and the set of ($\Co_0$-)orbits in the set of fixed-point sublattices of the Leech lattice $\LL$. The fixed-point sublattices of $\LL$ were classified by H\"ohn and Mason \cite{HM1}. This classification will play a key role for us. For further reference, we mention:
 
\begin{thm}[H\"ohn-Mason]
\label{theorem: 290}
Under the action of $\Co_0$, there are exactly $290$ orbits on the set of fixed-point sublattices of $\LL$. In another word, $|\sA_{sat}|=290$.
\end{thm}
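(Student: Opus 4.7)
The plan is to reduce the counting problem to an enumeration of $\Co_0$-orbits of fixed-point sublattices of $\LL$, and then execute that enumeration by combining the classification for cyclic subgroups with the poset structure on $\sA$. As established in the discussion immediately preceding the theorem, the assignment $(G,S)\mapsto S=\LL^G$ gives a bijection between $\sA_{sat}$ and the set of $\Co_0$-orbits of sublattices of $\LL$ arising as $\LL^H$ for some $H\leq\Co_0$; the inverse map sends a fixed-point sublattice to its pointwise stabilizer in $\Co_0$, which is automatically saturated. Thus the theorem is equivalent to the statement that there are exactly $290$ such orbits, and the task becomes one of systematic enumeration.

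I would begin with the cyclic case, already settled by Harada--Lang \cite{harada1990leech}: the $42$ conjugacy classes of elements of $\Co_0$, distinguished by their frame shapes (characteristic polynomials on $\LL\otimes\RR$), give an explicit list of cyclic fixed-point sublattices $\LL^{\langle g\rangle}$. The general case is handled by the obvious recursion: for any finitely generated $G=\langle g_1,\dots,g_k\rangle\le\Co_0$ we have $\LL^G=\bigcap_i \LL^{\langle g_i\rangle}$, so every fixed-point sublattice arises as an intersection of Harada--Lang lattices. Using the rank stratification $\rank(\LL^G)\in\{0,1,\dots,24\}$ as a bookkeeping device, one can traverse the poset $\sA$ of Leech sub-pairs downward from the top element $(1,\LL)$, at each step enlarging the current group $G$ by adjoining a new element $g$ to form $G':=\langle G,g\rangle$ and retaining only those resulting pairs $(G',\LL^{G'})$ not already $\Co_0$-equivalent to a previously recorded one. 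The $\Co_0$-orbit of $\LL^{G'}$ can be separated from others by its isometry class, its discriminant form, and (when needed) its embedding into $\LL$, which are all strong invariants. An alternative route, using the fact that $\LL\oplus U\cong N\oplus U$ for every Niemeier lattice $N$, allows one to pass between Leech-based and Niemeier-based descriptions and can be used as a consistency check.

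In practice this enumeration is carried out with computer algebra (GAP or MAGMA), and the main obstacle is not conceptual but computational: $|\Co_0|\sim 8\cdot 10^{18}$, and a naive traversal of subgroups would be hopeless. One leverages the well-understood maximal subgroup structure of $\Co_0$ due to Conway and Wilson, and restricts attention to subgroups having a nonzero invariant subspace on $\LL\otimes\RR$, which cuts the search drastically. The truly delicate step is ensuring \emph{exhaustiveness} (no orbit missed) and \emph{irredundancy} (no orbit counted twice); both require meticulous tracking of $\Co_0$-conjugacy among the computed stabilizers and their invariant lattices. Once this is done, collapsing duplicates and verifying saturation for each surviving pair, one arrives at exactly $290$ orbits, reproducing the tabulation in \cite{HM1}.
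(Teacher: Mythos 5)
There is a genuine gap here: your text is not a proof of the count but an outline of a computation, and at the decisive moment it leans on the very result it is supposed to establish. The reduction in your first paragraph (saturated Leech pairs correspond bijectively to $\Co_0$-orbits of fixed-point sublattices, via $(G,S)\mapsto \LL^G$ with inverse the pointwise stabilizer) is exactly the discussion the paper gives before the theorem, and is fine. But the theorem's content is the number $290$, and that number is never derived: you describe a traversal of subgroups, say that exhaustiveness and irredundancy ``require meticulous tracking,'' defer the work to GAP/MAGMA, and then close by saying one ``arrives at exactly $290$ orbits, reproducing the tabulation in \cite{HM1}.'' Appealing to the tabulation of \cite{HM1} is circular, since that tabulation \emph{is} the statement being proved; and without it, nothing in your sketch certifies either that the search terminates with a complete list or that no two recorded lattices are $\Co_0$-equivalent (isometry class plus discriminant form is explicitly not a complete invariant for the embedding into $\LL$, which is why H\"ohn--Mason need genuinely finer conjugacy tests). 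There is also a factual slip: $\Co_0$ does not have $42$ conjugacy classes of elements (it has $167$; Harada--Lang's list of cyclic fixed-point lattices is correspondingly much longer), so even the base case of your recursion is misstated.

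For context, the paper does not prove this statement at all: it is imported verbatim from H\"ohn--Mason \cite{HM1} as an external input (the theorem is labeled with their names), and the authors' own contribution begins only downstream of it. So the honest options are either to cite \cite{HM1} as the paper does, or to actually carry out (and verify) the computer classification yourself; a prose sketch of how such a computation might be organized, ending in an appeal to the published table, does neither.
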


\begin{rmk}  
Harada and Lang \cite{harada1990leech} classified all fixed-point sublattices $K$ which are induced by actions of cyclic groups $G\cong \ZZ/n$ on the Leech lattice. The information contained in \cite{harada1990leech} is sometimes richer and more handy than that in \cite{HM1}.
\end{rmk}

\section{The case of cubic fourfolds}
\label{section: symplectic automorphism of cubic fourfold}
In this section, we are classifying the symplectic automorphism groups of smooth cubic fourfolds. First,  following the standard argument for $K3$ surfaces and hyper-K\"ahler manifolds, we establish that a group $G$ acting symplectically on a cubic $X$, determines a Leech pair $(G,S=S_G(X))$, which further can be embedded into the Leech lattice $\LL$ (Corollary \ref{corollary: embedding into Leech}). Since $S$ arises from a cubic fourfold $X$, it is clear that $S$ embeds into the primitive lattice $\Lambda_0$. By Theorem \ref{theorem: thmperiods} (we use the surjectivity part), this is essentially  also a sufficient condition. We state this, in terms of the Borcherds polarization (see \S\ref{subsec:borcherds}) as an iff criterion in Theorem \ref{theorem: criterion lattice from cubic fourfold}. Using this criterion, the actual classification (\S\ref{subsec-completeproof}) is accomplished by using the H\"ohn--Masson \cite{HM1} (see also \cite{harada1990leech}) classification of the fixed-point sublattices in the Leech lattice, and Fu's classification (\cite{fu2016classification}) of automorphism groups of prime-power orders. The uniqueness of embeddings in the maximal cases (Theorem \ref{theorem: maximal uniqueness}) is discussed in \S\ref{subsec-uniqueness}. 

\subsection{Leech pairs associated to symplectic automorphisms on cubic fourfolds and $K3$ surfaces} A finite group of symplectic automorphisms on a $K3$ surface, on a hyper-K\"ahler manifold of $K3^{[n]}$ type, or on a cubic fourfold leads to a Leech pair. The argument essentially goes back to Nikulin \cite{nikulin}, and was refined recently in the context of groups of symplectic automorphisms for hyper-K\"ahler manifolds (see esp. \cite{huybrechtsaut} and \cite{mongardi2013thesis}). We review the situation for the cases relevant to us: cubic fourfolds and polarized $K3$ surfaces. 

\begin{notation} Let $X$ be a smooth cubic fourfold, and $G\subset \Aut^s(X)$.  We denote by $S_G(X)$ the covariant lattice for the induced action of $G$ on $H^4(X, \ZZ)$. Similarly, if $Y$ is a smooth algebraic $K3$ surface, and $G\subset \Aut^s(Y)$ a finite group, we denote by $S_G(Y)$ the covariant lattice for the induced action of $G$ on $H^2(Y, \ZZ)(-1)$.
\end{notation}

\begin{lem}
\label{lemma: leech type pairs from cubic fourfold and K3 surface} 
Let $X$ be either a smooth cubic fourfold or an algebraic $K3$ surface with an action of a finite group $G\subset \Aut^s(X)$.  Then $(G,S_G(X))$ is a Leech pair.
\end{lem}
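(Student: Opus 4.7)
The plan is to verify the four conditions (i)--(iv) of Definition \ref{def:leechpair} for $S=S_G(X)$, handling the cubic fourfold and $K3$ surface cases in parallel. Throughout, let $L=H^4(X,\ZZ)$ if $X$ is a cubic fourfold, or $L=H^2(Y,\ZZ)(-1)$ if $X=Y$ is a $K3$ surface; in either case $L$ is unimodular (odd of signature $(21,2)$ for the cubic, even of signature $(19,3)$ for the $K3$), $G$ acts by isometries, and $S_G(X)=(L^G)^\perp$ is primitive in $L$.

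\textbf{Conditions (i), (iii), (iv).} For (i), the key input is the symplectic hypothesis. In the cubic case, $G$ acts trivially on $H^{3,1}$ and (by conjugation) on $H^{1,3}$, which together span the two negative directions of $H^4(X,\RR)_{prim}$; the hyperplane class $\eta_X$ is $G$-invariant and positive. By averaging a K\"ahler class, one produces enough $G$-invariant positive directions so that every negative real direction lies in $L^G\otimes\RR$, forcing $S_G(X)$ to be positive definite. The $K3$ argument is identical after the sign twist. Condition (iii) then follows immediately: any $v\in S_G(X)$ with $Gv=v$ would lie in $L^G\cap(L^G)^\perp$, which is $0$ because $S_G(X)$ is positive definite. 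For (iv), we use that $L$ is unimodular, so the standard gluing short exact sequence provides a canonical $G$-equivariant isomorphism $A_{L^G}\cong A_{S_G(X)}$ (up to sign). Since $G$ acts trivially on $L^G$ by definition, it acts trivially on $A_{L^G}$, hence trivially on $A_{S_G(X)}$.

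\textbf{Condition (ii).} This is the step where the global geometry enters, and it is where I expect the main obstacle. Because $S_G(X)$ is positive definite and orthogonal to $H^{3,1}\oplus H^{1,3}$ (resp.\ $H^{2,0}\oplus H^{0,2}$), its real span lies in the $(2,2)$-part (resp.\ the real $(1,1)$-part), so $S_G(X)$ is \emph{algebraic}: $S_G(X)\subset H^{2,2}(X)\cap H^4(X,\ZZ)_{prim}$ in the cubic case, and $S_G(Y)\subset \NS(Y)(-1)$ in the $K3$ case. Hence a norm-$2$ vector $v\in S_G(X)$ would produce a short root in the algebraic primitive lattice of the cubic, forcing the period point of $X$ to lie on the hyperplane $v^\perp\subset\calH_6$; but Theorem~\ref{theorem: thmperiods} says periods of smooth cubics avoid $\calH_6$, contradiction. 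For a polarized $K3$ surface, a norm-$2$ vector in $S_G(Y)$ corresponds to a $(-2)$-class $v$ in $\NS(Y)$, and Riemann--Roch combined with the $G$-invariance of the ample cone (symplectic automorphisms preserve the K\"ahler cone) yields the usual Nikulin contradiction: either $v$ or $-v$ would be a sum of irreducible effective classes permuted by $G$, giving a nonzero $G$-invariant vector in $S_G(Y)$, in contradiction with (iii).

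\textbf{Summary.} Conditions (i), (iii), (iv) are formal consequences of the symplectic hypothesis together with the unimodularity of $L$, while condition (ii) is the geometric heart of the lemma. For cubic fourfolds the argument relies essentially on the surjectivity part of the global Torelli theorem (Theorem~\ref{theorem: thmperiods}), which rules out the divisors $\calC_2$ and $\calC_6$; for $K3$ surfaces it is the classical Nikulin argument via Riemann--Roch. Once all four conditions are in place, $(G,S_G(X))$ qualifies as a Leech pair, and the embedding into $\LL$ then follows from Proposition~\ref{lemma: embedding into leech lattice} upon checking the numerical bound $\rank(S)+l(q_S)\le 24$, which holds in both our settings since $\rank S\le 20$ for cubics and $\rank S\le 19$ for $K3$s, while $l(q_S)$ stays small.
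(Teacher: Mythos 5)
Your proof follows essentially the same route as the paper: positive definiteness from the symplectic hypothesis (the covariant lattice is orthogonal to the $G$-fixed negative directions, i.e.\ it lands in $H^{2,2}\cap H^4(X,\ZZ)_{prim}$ and the Hodge index theorem applies), absence of norm-$2$ vectors via Theorem~\ref{theorem: thmperiods} in the cubic case and the classical Nikulin/Riemann--Roch argument for $K3$s, and triviality of the action on the discriminant from unimodularity of the ambient lattice. Two small caveats: Definition~\ref{def:leechpair} also requires $G$ to act \emph{faithfully} on $S$, which you never verify --- the paper gets this from the (nontrivial, cited) fact that $\Aut(X)$ acts faithfully on $H^4(X,\ZZ)$ (resp.\ on $H^2$ for $K3$s), combined with the observation that an element acting trivially on both $S_G(X)$ and $L^G$ acts trivially on the finite-index sublattice $S_G(X)\oplus L^G$ and hence on $L$; and in the cubic case your sentence about averaging a K\"ahler class is out of place, since the two negative directions of $H^4(X,\RR)$ are already spanned by the real and imaginary parts of the $G$-fixed line $H^{3,1}(X)$ (averaging is only needed to handle the third negative direction in the twisted $K3$ lattice).
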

\begin{proof} The assumption of symplectic automorphism implies that $S_G(X)\subset H^{2,2}(X)\cap H^4(X,\ZZ)_{prim}$. By Hodge index Theorem, $S_G(X)$ is positive definite, and by Theorem \ref{theorem: thmperiods}, $S_G(X)$ contains no short roots (i.e., the period point avoids $\calC_6$). Since $G$ acts trivially on the invariant cohomology $H^4(X,\ZZ)^G$ and $S_G(X)=(H^4(X,\ZZ)^G)^\perp$, it follows that $G$ acts trivially on $A_{S_G(X)}$. Finally, since $\Aut(X)$ acts faithfully on $H^4(X)$, it is clear that $G$ acts faithfully on $S_G(X)$.  We conclude that $(G,S_G(X))$ is a Leech pair (cf. Def. \ref{def:leechpair}).

The argument for $K3$ surfaces is similar (and due to Nikulin), except for invoking Riemann--Roch to prove that there is no norm $2$ vector (corresponding, via our scaling, to a $-2$ class) in $S_G(X)$.
\end{proof}

\begin{cor}
\label{corollary: embedding into Leech}
Let $X$ be either a smooth cubic fourfold or an algebraic $K3$ surface with a faithful action of a finite group $G\subset \Aut^s(X)$. There exists a primitive embedding of $S_G(X)$ into $\LL$, and hence an embedding of $G$ into $\Co_0$ with image avoiding $-id$. 
\end{cor}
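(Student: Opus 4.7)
The plan is to derive the corollary directly from Lemma \ref{lemma: leech type pairs from cubic fourfold and K3 surface} combined with Proposition \ref{lemma: embedding into leech lattice} and its immediately subsequent corollary. By Lemma \ref{lemma: leech type pairs from cubic fourfold and K3 surface}, the pair $(G, S_G(X))$ is a Leech pair. Writing $S := S_G(X)$, it therefore suffices to verify the numerical condition $\rank(S) + l(q_S) \leq 24$ required by Proposition \ref{lemma: embedding into leech lattice}, and to note $\rank(S) < 24$ in order to exclude $-\id$ from the image of $G$ in $\Co_0$.

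In the $K3$ case, $S$ embeds primitively into the unimodular $K3$ lattice $H^2(Y,\ZZ)(-1)$ of rank $22$. Unimodularity of the ambient lattice gives the identification $q_S \cong -q_{S^\perp}$, hence $l(q_S) = l(q_{S^\perp}) \leq \rank(S^\perp) = 22 - \rank(S)$, so $\rank(S) + l(q_S) \leq 22$. This is the easy case.

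In the cubic fourfold case, $S$ embeds primitively into the primitive cohomology lattice $\Lambda_0$, which has rank $22$ and discriminant group $A_{\Lambda_0} \cong \ZZ/3$, so $l(A_{\Lambda_0}) = 1$. Setting $T := S^\perp_{\Lambda_0}$, the standard Nikulin gluing theory provides an isotropic subgroup $H \subset A_S \oplus A_T$ with $\Lambda_0/(S \oplus T) = H$ and $A_{\Lambda_0} \cong H^\perp/H$; both projections $\pi_S\colon H \hookrightarrow A_S$ and $\pi_T\colon H \hookrightarrow A_T$ are injective since $S$ and $T$ are primitive in $\Lambda_0$, and the quotient $A_S/\pi_S(H)$ injects into $A_{\Lambda_0}$. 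Consequently
\begin{equation*}
l(q_S) \leq l(\pi_S(H)) + l(A_{\Lambda_0}) \leq l(A_T) + 1 \leq \rank(T) + 1 = 23 - \rank(S),
\end{equation*}
which yields $\rank(S) + l(q_S) \leq 23$. Either way the hypothesis of Proposition \ref{lemma: embedding into leech lattice} is satisfied, producing a primitive embedding $S \hookrightarrow \LL$ extending the $G$-action, and since $\rank(S) \leq 22 < 24$ the subsequent corollary delivers the embedding $G \hookrightarrow \Co_0$ whose image avoids $-\id$.

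The only non-trivial point is the cubic-fourfold discriminant bookkeeping: since $\Lambda_0$ is not unimodular, one cannot transfer $l(q_S)$ to $l(q_T)$ for free, and a priori this could obstruct the embedding into $\LL$. The saving grace is that $|A_{\Lambda_0}| = 3$ is small, so the Nikulin glue costs only a single extra generator in $l(q_S)$, leaving $\rank(S) + l(q_S)$ safely within the bound of Proposition \ref{lemma: embedding into leech lattice}.
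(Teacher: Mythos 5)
Your argument is correct, and its skeleton is the same as the paper's: Lemma \ref{lemma: leech type pairs from cubic fourfold and K3 surface} gives the Leech pair, Proposition \ref{lemma: embedding into leech lattice} (plus the corollary following it, using $\rank(S)<24$) gives the embedding into $\LL$ and into $\Co_0$ avoiding $-\id$. The only divergence is how the numerical hypothesis is checked in the cubic case: the paper simply uses that $S_G(X)$ embeds primitively into the \emph{full} middle cohomology $H^4(X,\ZZ)$, which is unimodular (odd, but that is irrelevant for the group isomorphism $A_S\cong A_{S^\perp}$) of rank $23$, giving $l(q_S)\le 23-\rank(S)$ in one line; you instead work inside the non-unimodular $\Lambda_0$ and run a Nikulin glue argument, arriving at the same bound $23-\rank(S)$. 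Your route costs more bookkeeping but is sound; one small wording point is that the natural map (restriction of functionals $\Lambda_0^*\twoheadrightarrow S^*$) exhibits $A_S/\pi_S(H)$ as a \emph{quotient} of $A_{\Lambda_0}$ rather than a subgroup --- since $A_{\Lambda_0}\cong\ZZ/3$ this makes no difference to the estimate $l(A_S/\pi_S(H))\le 1$ (and the injection you assert happens to hold as well), so the inequality $\rank(S)+l(q_S)\le 23\le 24$ stands and the corollary follows exactly as you say.
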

\begin{proof}
By Lemma  \ref{lemma: leech type pairs from cubic fourfold and K3 surface}, $(G,S_G(X))$ is a Leech pair. Since $S_G(X)$ has a primitive embedding into a unimodular lattice of rank $23$  (or $22$) for cubic fourfolds (or $K3$ surfaces respectively), the rank condition of Proposition \ref{lemma: embedding into leech lattice} is satisfied; the claim follows.
\end{proof}

Let us now discuss the role of the polarization. If $X$ is a cubic fourfold, any automorphism $f$ is induced from a linear automorphism of the ambient projective space, and thus $\varphi=f^*$ preserves the class $\eta\in H^4(X,\ZZ)$ (recall $\eta$ is the square of a hyperplane class). It follows that there is a primitive embedding
\begin{equation}\label{primitive_emb}
S_G(X)\hookrightarrow \Lambda_0, 
\end{equation}
where $\Lambda_0$ is the primitive cohomology (recall $\Lambda_0\cong A_2\oplus (E_8)^2\oplus U^2$). 

For $K3$ surfaces $Y$, the situation is similar, but there is a subtle difference. Namely, under the assumption that $Y$ is algebraic (i.e., $\NS(Y)$ contains an ample class $h$), and $G$ is finite, {\it any automorphism $\varphi\in G$ will preserve \underline{some} ample class $h'$} (e.g., obtained by ``averaging'' $h$). This is the set-up of the classical results of Nikulin and Mukai. However, when taking about \underline{polarized} $K3$ surfaces, we will fix an ample class $h$ on $Y$ and insist that the automorphism $f$ preserves $h$ (i.e., $f^*h=h$ in cohomology). With this assumption, we have again a primitive embedding
$$S_G(X)\hookrightarrow \Lambda_d$$
where $d=h^2\in 2\ZZ_+$, and $\Lambda_d=(\langle h\rangle^\perp_{H^2(Y,\ZZ)})(-1)$ is the primitive cohomology (we twist the form by $-1$ to get consistency with the cubic fourfold case). 

\begin{rmk}
We are not aware of a systematic study of the symplectic automorphisms in the polarized case for any degree (in section \ref{section: K3} below, we will partially discuss the degree $2$ and $6$ cases as they are tightly connected to the cubic fourfold case). One situation where the polarized case was studied is the symplectic involutions. We recall that Nikulin proved that there is a single class of symplectic involutions for algebraic $K3$ surfaces (with notations as above, $S_G(X)\cong E_8(2)$). The polarized symplectic involutions were classified by van Geemen and Sarti \cite{vGS}; a richer picture emerges (as one needs to keep track of the embedding of $E_8(2)$ into $\Lambda_d$, versus the unimodular $K3$ lattice). 
\end{rmk}
\subsubsection{A criterion for Leech pairs to arise from symplectic automorphisms} So far we have discussed how a finite group of symplectic automorphisms $G\subset \Aut^s(X)$ leads to a Leech pair $(G,S_G(X))$, which in turn can be classified by H\"ohn--Mason \cite{HM1} results. Now we are interested in the converse, given a Leech pair $(G,S)$, {\it when does it come from a symplectic automorphism group $G$ acting on $X$?} By Global Torelli Theorem (and surjectivity of the period map), this becomes a question about embeddings of lattices. For instance, note that \eqref{primitive_emb} is a necessary condition if $X$ is a cubic fourfold. In fact, by Theorem \ref{theorem: thmperiods} (and Prop. \ref{proposition: aut of cubic and fano}), \eqref{primitive_emb} is essentially also sufficient, but some care is needed as $S$ needs to avoid both short roots (automatic since $(G,S)$ is a Leech pair) and long roots. To deal with both cases uniformly, it is better to view a smooth cubic fourfold $X$ as being $E_6$ Borcherds polarized (see \S\ref{subsec:borcherds}). Based on these considerations, we obtain the following key result which allows us to go back and forth between geometry (automorphisms of $X$) and arithmetic (fixed-point sublattices of the Leech lattice $\LL$).

\begin{thm}[Criterion for Leech pairs associated with cubic fourfolds]
\label{theorem: criterion lattice from cubic fourfold}
Let $(G,S)$ be a Leech pair.  The following are equivalent:
\begin{enumerate}[(i)]
\item There exists a smooth cubic fourfold $X$ with a faithful and symplectic action of $G$ such that $(G,S)\cong (G,S_G(X))$,
\item There exists a faithful action of $G$ on the Leech lattice $\LL$ with $(G, S)\cong(G, S_G(\LL))$ and $K=\LL^G$, such that there exists a primitive embedding of $E_6$ into $K\oplus U^2$,
\item There exists an embedding of $S\oplus E_6$ into the Borcherds lattice $\BB$, such that the image of $S$ is primitive.
\end{enumerate}
\end{thm}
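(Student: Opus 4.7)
The plan is to prove the cycle $(i) \Rightarrow (iii) \Rightarrow (ii) \Rightarrow (i)$. Two of the directions are immediate. For $(i) \Rightarrow (iii)$, given a smooth cubic fourfold $X$ with symplectic $G$-action, Lemma \ref{lem:borcherdse6} embeds $\Lambda_0 = H^4(X,\ZZ)_{prim}$ primitively into $\BB$ with orthogonal complement $E_6$; since $S_G(X) \hookrightarrow \Lambda_0$ is primitive (as the orthogonal complement of the invariant sublattice), the composition $S_G(X) \hookrightarrow \BB$ is primitive and orthogonal to $E_6$. For $(ii) \Rightarrow (iii)$, write $\BB = \LL \oplus U^2$; the embedding $S \hookrightarrow \LL$ extends to a primitive $S \hookrightarrow \BB$ with $S^\perp_\BB = K \oplus U^2 \supset E_6$.

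For $(iii) \Rightarrow (ii)$, I first observe that $E_6$ is automatically primitive in $\BB$: any even overlattice of $E_6$ of index $d$ satisfies $d^2 \mid |A_{E_6}| = 3$, forcing $d = 1$. To invoke Proposition \ref{lemma: embedding into leech lattice}, I verify the rank bound $\rank S + \ell(q_S) \le 24$. Let $M$ be the saturation of $S \oplus E_6$ in $\BB$; primitivity of $M$ in the unimodular $\BB$ gives $\ell(q_M) \le \rank M^\perp_\BB = 22 - \rank S$. The glue $H = M/(S \oplus E_6) \subset A_S \oplus A_{E_6}$ is isotropic, and its projection to either summand is injective on $H$ (else primitivity of $S$ or $E_6$ in $\BB$ is violated), so $|H| \in \{1, 3\}$. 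A standard length estimate from the exact sequences $0 \to H^\perp \to A_S \oplus A_{E_6} \to (A_S \oplus A_{E_6})/H^\perp \to 0$ and $0 \to H \to H^\perp \to A_M \to 0$ yields $\ell(q_S) + 1 \le \ell(q_M) + 2$, hence $\ell(q_S) \le 23 - \rank S$. Proposition \ref{lemma: embedding into leech lattice} then produces a primitive embedding $S \hookrightarrow \LL$, and Nikulin's extension theorem lifts the $G$-action to $\LL$ acting trivially on $K = S^\perp_\LL$ (available because $G$ acts trivially on $A_S$, by the Leech pair hypothesis). The condition $E_6 \hookrightarrow K \oplus U^2$ follows from $K \oplus U^2 = S^\perp_\BB$ after aligning, via an element of $O(\BB)$, the primitive embedding of $S$ coming from (iii) with the one inherited from the $\LL$-factor.

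For $(ii) \Rightarrow (i)$: combining (ii) with $\BB = \LL \oplus U^2$ gives $S \oplus E_6 \hookrightarrow \BB$ with $S$ primitive, hence by Lemma \ref{lem:borcherdse6}, $S \hookrightarrow \Lambda_0 = E_6^\perp_\BB$ primitively. Let $T_0 = S^\perp_{\Lambda_0}$, of signature $(20 - \rank S, 2)$. I pick a generic period $\omega \in \D_{T_0}$; its algebraic part is $S$, which contains no short roots (Leech pair condition) and no long roots (a hypothetical long root $v \in S$ would produce a norm-$2$ vector $w = v/3 + e^* \in \BB$ combining $v$ with a minuscule weight of $E_6$, generating an $E_7$-extension of $E_6$ inside $\BB$ that is incompatible with the $E_6$-Borcherds structure orthogonal to $S$). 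Thus $\omega \in \D_{T_0} \setminus (\calH_2 \cup \calH_6)$, and Theorem \ref{theorem: thmperiods} realizes $\omega$ as the period of a smooth cubic fourfold $X$. The $G$-action on $\Lambda_0$ (trivial on $T_0$) preserves $\omega$ and $\eta_X$, and Proposition \ref{proposition: strong global torelli} lifts it to a faithful symplectic $G$-action on $X$ with $S_G(X) = S$.

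The main obstacle is the cross-compatibility step in $(iii) \Rightarrow (ii)$: ensuring that the primitive embedding $S \hookrightarrow \LL$ produced abstractly by Proposition \ref{lemma: embedding into leech lattice} can be aligned, via $O(\BB)$, with the $E_6$-decoration from (iii). This rests on Nikulin's uniqueness theorems for primitive embeddings into even indefinite unimodular lattices of sufficiently large rank (here $\BB$), applied under the length and signature bounds derived above. A secondary subtlety in $(ii) \Rightarrow (i)$ is the rigorous exclusion of long roots in $S$, which requires combining the orthogonality $S \perp E_6$ in $\BB$ with the structure of the discriminant form of $\Lambda_0$ (the $A_2$-summand giving $A_{\Lambda_0} \cong \ZZ/3$).
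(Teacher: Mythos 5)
Most of your route is sound and is essentially the paper's own argument reshuffled: the Kond\=o--Scattone alignment of the two primitive embeddings of $S$ into $\BB$ via Nikulin's uniqueness theorem, Proposition \ref{lemma: embedding into leech lattice} to land in $\LL$, and Global Torelli plus Proposition \ref{proposition: strong global torelli} to produce $X$. Your derivation of the bound $\rank(S)+l(q_S)\le 24$ inside $(iii)\Rightarrow(ii)$ from the saturation $M=\mathrm{Sat}_{\BB}(S\oplus E_6)$ is a legitimate substitute for the paper's source of that bound (the paper gets it in $(i)\Rightarrow(ii)$ from the primitive embedding of $S$ into the rank $23$ unimodular lattice $H^4(X,\ZZ)$). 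One small slip: the inequality $l(q_S)+1\le l(q_M)+2$ is unjustified, since $l$ is the maximum over primes of the local lengths and adjoining $A_{E_6}\cong\ZZ/3$ need not raise $l(A_S)$; but the correct output of your own computation, $l(q_S)\le l(q_M)+2\le 24-\rank(S)$, is exactly the hypothesis of Proposition \ref{lemma: embedding into leech lattice}, so this is harmless.

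The genuine gap is the exclusion of long roots from $S$ in the direction that produces the cubic. Your claim that a long root $v\in S$ yields a norm $2$ vector $(v+\epsilon)/3$ and an $E_7$ ``incompatible with the $E_6$-Borcherds structure orthogonal to $S$'' is not a contradiction: such configurations exist in $\BB$ and are exactly the $E_7$-Borcherds-polarized periods parametrizing $\calC_2$ (the fake cubics associated with degree $2$ $K3$ surfaces). Concretely, $S=\langle 6\rangle$ spanned by a long root of $\Lambda_0$ embeds primitively into $\Lambda_0$, hence orthogonally to $E_6$ and primitively into $\BB$; nothing in the lattice data $S\perp E_6\subset\BB$, nor in $A_{\Lambda_0}\cong\ZZ/3$ which you invoke as the ``secondary subtlety,'' rules this out. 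What rules it out is the group action, i.e.\ condition (iii) of Definition \ref{def:leechpair}: since $G$ fixes no nonzero vector of $S$, a putative long root $\delta$ admits $g\in G$ with $g\delta\ne\delta$, and since $G$ acts on $\BB$ trivially on $E_6$, either $g\delta=-\delta$, whence $2\delta/3\in\BB$ contradicts integrality/evenness because $\delta^2=6$, or $\delta'=g\delta$ is a non-proportional long root, whence $\mathrm{Sat}_{\BB}(\langle\delta,\delta',E_6\rangle)$ contains two copies of $E_7$, hence equals $E_8$, forcing $A_2\subset S$ and contradicting root-freeness of a Leech pair. Without this argument (the heart of the paper's proof of $(iii)\Rightarrow(i)$), your generic period in $\D_{T_0}$ could lie on $\calH_2$ and Theorem \ref{theorem: thmperiods} would not apply, so the implication producing the smooth cubic is not established as written.
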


\begin{proof}
$(i)\Longrightarrow (ii)$: From Corollary \ref{corollary: embedding into Leech}, there exists a primitive embedding $S\hookrightarrow \LL$ with an extension of the $G$-action on $\LL$ such that $\LL^G$ is the orthogonal complement of $S$ in $\LL$. We have now two ways to embed $S$ into $\BB$, explicitly:
\begin{equation*}
S\hookrightarrow \Lambda_0\hookrightarrow \Lambda_0\oplus E_6 \subset \BB
\end{equation*}
and
\begin{equation*}
S\hookrightarrow \LL\hookrightarrow \LL\oplus U^2 \cong \BB.
\end{equation*}
Clearly, both embeddings are primitive (e.g.,  $\Lambda_0\subset \BB$ is primitive by Proposition \ref{lem:borcherdse6}, and $S$ is primitive in $\Lambda_0$ by \eqref{primitive_emb}). By Nikulin's results (see Theorem \ref{theorem: uniqueness of embedding even case}), we know that there is a single conjugacy class of primitive embeddings $S\hookrightarrow \BB$. 
Therefore, we can choose the isomorphism $\LL\oplus U^2\cong \BB$, such that the following diagram commutes:
\begin{equation*}
\begin{tikzcd}
S \arrow[hook]{r}\arrow[hook]{d} &\LL \arrow[hook]{r} &  \LL\oplus U^2\arrow{d}{\cong}\\
\Lambda_0 \arrow[hook]{r} & \Lambda_0\oplus E_6 \arrow[hook]{r} & \BB  \\
\end{tikzcd}
\end{equation*}
We have $K=S^\perp_\LL$, giving $S^\perp_\BB\cong K\oplus U^2$. On the other hand, $E_6\cong (\Lambda_0)^\perp_\BB$, thus $E_6\subset S^\perp_\BB\cong K\oplus U^2$. Since $E_6$ does not admit any overlattice, $E_6$ embeds primitively into $K\oplus U^2$.

$(ii)\Longrightarrow (iii)$: There is the embedding:
\begin{equation*}
S\oplus E_6\hookrightarrow S\oplus K\oplus U^2\subset\LL\oplus U^2\cong \BB
\end{equation*}
Notice that $S$ has primitive image in $\LL$, hence also has primitive image in $\BB$.

$(iii)\Longrightarrow (i)$: The action of $G$ on $S$ induces trivial action on $(A_S, q_S)$, hence extend to be an action on $\BB$ such that its restriction to the orthogonal complement of $S$ trivial. Since $S\subset \BB$ is primitive (by assumption), we get $S=S_G(\BB)$ (recall $S_G(\BB)=(\BB^G)^\perp=(S^\perp)^\perp$). On the other hand, we note that  $G$ acts trivially on $E_6\subset \BB$ (since by construction $E_6\subset S^\perp_{\BB}$). We view $\Lambda_0$ as the orthogonal complement of $E_6$ in $\BB$ (cf. Prop. \ref{lem:borcherdse6}). Via this identification, the $G$ action on $\BB$ induces a $G$ action on $\Lambda_0$. By construction $S\hookrightarrow \Lambda_0$ (primitive, as $S$ is primitive in $\BB$), and clearly $(G,S)\cong (G,S_G(\Lambda_0))$.  We can choose a Hodge structure $H$ on $\Lambda_0$ of type $(0,1,20,1,0)$ (i.e., $H$ is a decomposition of $\Lambda_{0,\CC}$ with the obvious properties) such that $H^{2,2}\cap \Lambda_0=S$ (i.e., $S$ is the algebraic lattice). Assuming that $S$ contains no short or long roots, the Global Torelli Theorem (Theorem \ref{theorem: thmperiods}) says that there exists a smooth cubic fourfold with $H^4(X,\ZZ)_{prim}\cong H$ (as Hodge structures). Finally, by Proposition  \ref{proposition: strong global torelli}, we conclude that $X$ has a faithful and symplectic action of $G$ such that $(G,S_G(X))\cong (G,S)$.

It remains to prove that $S\subset \Lambda_0$ contains no short or long roots of $\Lambda_0$ (see Definition \ref{definition: short/long roots}). By assumption $S$ is a sublattice of the Leech lattice $\LL$, so it contains no short roots (i.e., norm $2$ vectors). Assume now $S$ contains a long root $\delta$, i.e., $(\delta, \delta)=6$ and $\mathrm{div}_{\Lambda_0}(\delta)=3$. Since $\BB$ is obtained by gluing $E_6$ and $\Lambda_0$, we conclude that $\delta$ and $E_6$ span a $E_7$ lattice in $\BB$.  More precisely, there exists $\epsilon\in E_6$ (with $\epsilon^2=12$ and $\mathrm{div}_{E_6}(\epsilon)=3$) such that  $(\delta+\epsilon)/3\in \BB$. Since $G$ acts on $S$ without fixed nonzero vector, there exists $g\in G$ such that $g\delta\ne \delta$. We distinguish two cases, either $g\delta=-\delta$ or not. Assume first  $g\delta=-\delta$; then  $g((\delta+\epsilon)/3)=(-\delta+\epsilon)/3\in \BB$. We conclude $v=2\delta/3\in \BB$, but this is a contradiction due to the fact that $(\delta, \delta)=6$ ($v$ will not have integral norm). Thus, we can assume that 
 $\delta^{\prime}=g\delta$ is a long root non-proportional to $\delta$.
Consider the lattice $M=\mathrm{Sat}_\BB(\langle \delta,\delta^{\prime}, E_6\rangle)\subset \mathrm{Sat}_\BB(S\oplus E_6)$. Then $M$ is a positive definite rank $8$ lattice containing two sublattices $\mathrm{Sat}_\BB(\langle \delta,E_6\rangle)$ and $\mathrm{Sat}_\BB(\langle \delta^{\prime},E_6\rangle)$ of type $E_7$. Clearly, $M\cong E_8$ (first, the root sublattice of $M$ is of type $E_8$ as it is strictly larger than $E_7$, then $E_8 \subset M$ forces equality for reasons of rank and determinant). It is well known that $E_6$ admits a unique embedding in $E_8$ with orthogonal complement $A_2$. We get $A_2\subset S=(E_6)^\perp_{\mathrm{Sat}_\BB(S\oplus E_6)}$ (using $S$ primitive in $\BB$).  In particular, $S$ contains some short roots, contradicting the fact that $(G,S)$ is a Leech pair.
\end{proof}

\subsubsection{Moduli of cubics associated with a Leech pair $(G,S)$}
We denote by $\mathscr{A}_{cub}$ the sub-poset of $\mathscr{A}$ consisting of Leech pairs isomorphic  to $(G, S_G(X))$ for some smooth cubic fourfold $X$ with $G=\Aut^s(X)$. It is clear that such a Leech pair $(G, S_G(X))$ is saturated. Therefore we have $\mathscr{A}_{cub}\subset \mathscr{A}_{sat}$. Our purpose is to determine the poset $\mathscr{A}_{cub}$. We now discuss the geometric loci (``moduli'') associated with the elements of this poset. By studying the minimal and maximal loci, in \S\ref{subsec-max} and \S\ref{subsec-min} respectively, we will be able to complete the proof of our main Theorem \ref{theorem: main}.

Let $(G,S)$ be a Leech pair with $G= \Aut^s(X)$. As already discussed, it follows that $S\subset H^{2,2}(X)\cap H^4(X,\ZZ)_{prim}$ (i.e., $S$ is a lattice of algebraic cycles on $X$) and in fact the equality holds generically. Similarly to the well-known situation for $K3$ surfaces (see Remark \ref{m-pol-cubic}), one can consider the moduli space of $S$-polarized cubic fourfolds (i.e., cubics with $S\hookrightarrow H^4(X,\ZZ)_{prim}\cong \Lambda_0$ primitive), or even $(G,S)$-polarized (since $G$ acts trivially on $A_S$, the $G$-action extends to $\Lambda_0$, and thus there is essentially no difference). We obtain a moduli space $\calM_{(G,S)}$ which is a locally symmetric variety of Type $\IV$ (of the same type as the moduli of cubic fourfolds). Some care is needed here. First, $\calM_{(G,S)}$ can have several irreducible components (corresponding to different primitive embeddings of $S$ into $\Lambda_0$). Then, since we view $\calM_{(G,S)}$ as a closed subvariety of $\calM$ (the moduli of cubic fourfolds), a normalization is needed in order to view it as a locally symmetric variety. Finally, one needs to exclude the restrictions of the hyperplane arrangements $\calH_2$ and $\calH_6$ (see Theorem \ref{theorem: thmperiods}) to the locus of $S$-polarized cubics (as discussed $S$ does not contain short or long roots, thus this locus is not contained in either $\calH_2$ or $\calH_6$; on the other hand, the restrictions of $\calH_2$ and $\calH_6$ can lead to multiple irreducible arrangements). We refer to Mongardi \cite{mongardi2013thesis} and \cite{yu2018moduli} for further details. To summarize the above discussion, we have:

\begin{thm}[{\cite{yu2018moduli}}] 
\label{theorem: Yu-Zheng}
Let $(G,S)$ be a Leech pair such that $\calM_{(G,S)}\neq \emptyset$. Let $\calF$ be the normalization of an irreducible component of $\calM_{(G,S)}$.
Then period map for cubic fourfolds with symplectic action by $G$ gives rise to a natural isomorphism 
\begin{equation*}
\calF\cong(\DD\setminus \calH)/\Gamma',
\end{equation*}
where $\DD$ is a Type $\IV$ domain with a faithful action of an arithmetic group $\Gamma'$, and $\calH$ is a $\Gamma'$-invariant hyperplane arrangement in $\DD$.  Moreover, $\dim(\calF)=\dim (\DD)=20-\rank(S)$.
\end{thm}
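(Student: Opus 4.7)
The plan is to realize $\calF$ as a locally symmetric variety by restricting the period map of Theorem \ref{theorem: thmperiods} to the locus of cubic fourfolds whose primitive Hodge structure is compatible with a fixed $(G,S)$-polarization.

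First, fix a smooth cubic fourfold $X_0$ corresponding to a point in the chosen irreducible component, and use it to fix, once and for all, a primitive embedding $S\hookrightarrow \Lambda_0$ with $S=S_G(\Lambda_0)$; this embedding extends the $G$-action on $S$ to an action on $\Lambda_0$ (acting trivially on $S^\perp$) using that $G$ acts trivially on $A_S$. Set $T:=S^\perp_{\Lambda_0}$. Since $S$ is positive definite of rank $\rank(S)$ and $\Lambda_0$ has signature $(20,2)$, the lattice $T$ has signature $(20-\rank(S),\,2)$, and therefore defines a Type $\IV$ domain
\begin{equation*}
\DD=\{x\in\PP(T_\CC)\mid (x,x)=0,\ (x,\overline{x})<0\}^{+}
\end{equation*}
of dimension $20-\rank(S)$. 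This gives the required dimension formula.

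Next, define $\Gamma'$ to be the subgroup of $\sO^{*}(\Lambda_0)$ (i.e.\ of the monodromy group $\Gamma$) consisting of those isometries that commute with the $G$-action on $\Lambda_0$; equivalently, $\Gamma'$ can be identified with the image in $\sO(T)$ of the centralizer of $G$ in $\Gamma$, and hence is an arithmetic subgroup acting on $\DD$. Any cubic fourfold $X$ parametrized by $\calM_{(G,S)}$ with the chosen component has a $G$-equivariant primitive embedding $S\hookrightarrow H^4(X,\ZZ)_{prim}$, and after trivializing this embedding via Proposition \ref{proposition: strong global torelli}, the period point of $X$ lies in $\DD$ (the Hodge filtration on $\Lambda_0$ restricts to a Hodge structure of K3 type on $T$ because the $G$-invariant part of the Hodge structure necessarily contains $S$ as $(1,1)$-classes). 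By the Strong Global Torelli Theorem, two such $(G,S)$-polarized cubics are isomorphic if and only if their periods differ by an element of $\Gamma'$, so the period map descends to an injection $\calF\hookrightarrow \DD/\Gamma'$.

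For surjectivity, let $\calH\subset \DD$ be the restriction to $\DD$ of the union of the hyperplane arrangements $\calH_2$ and $\calH_6$ associated to long and short roots of $\Lambda_0$; since $(G,S)$ is a Leech pair, $S$ itself contains no such roots, so $\calH$ is a proper hyperplane arrangement in $\DD$ (coming from roots in $\Lambda_0$ orthogonal to $S$ but lying in $T\oplus S$ suitably). Theorem \ref{theorem: thmperiods} shows that any period in $(\DD\setminus\calH)/\Gamma'$ is realized by a smooth cubic fourfold $X$ admitting a primitive embedding $S\hookrightarrow H^4(X,\ZZ)_{prim}$ compatible with our fixed $(G,S)$, and Proposition \ref{proposition: strong global torelli} together with Theorem \ref{theorem: criterion lattice from cubic fourfold} lifts the $G$-action on cohomology to a symplectic $G$-action on $X$, yielding a point of $\calF$. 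Putting these together,
\begin{equation*}
\calF\cong (\DD\setminus \calH)/\Gamma'.
\end{equation*}
The main obstacle here is the careful bookkeeping needed to identify $\Gamma'$ with the correct arithmetic group on $T$ after passing to the normalization of an irreducible component, since a priori different primitive embeddings $S\hookrightarrow\Lambda_0$ produce different components and one must restrict to the single orbit giving the chosen component; this is precisely the role of the normalization in the statement.
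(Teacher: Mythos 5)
Your sketch is correct and takes essentially the same approach as the paper, which does not prove this theorem itself (it is cited from \cite{yu2018moduli}) but summarizes exactly this argument in the paragraph preceding the statement: fix a primitive embedding $S\hookrightarrow\Lambda_0$, pass to the Type $\IV$ domain attached to $T=S^{\perp}_{\Lambda_0}$ (of dimension $20-\rank(S)$), use the Strong Global Torelli Theorem together with the surjectivity part of Theorem \ref{theorem: thmperiods} after removing the restrictions of $\calH_2\cup\calH_6$ (nontrivial since the Leech pair condition excludes roots from $S$), and pass to normalizations of components to handle the bookkeeping of different primitive embeddings. The only nit is a wording slip: $S$ lies in the covariant (not the $G$-invariant) part of the Hodge structure, as classes of type $(2,2)$, though this does not affect your argument.
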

\begin{rmk}
The definition of $\calM_{(G,S)}$ makes sense for all Leech pairs $(G,S)$, but in fact it depends only on the saturation, i.e., $\calM_{(G,S)}=\calM_{(G',S)}$, where $G'=\Aut^s(X)$ for $X$ a general cubic in $\calM_{(G,S)}$. In Theorem \ref{theorem: main}, our classification is about saturated pairs, but in the arguments below it is convenient not to require $(G,S)$ to be saturated. 
\end{rmk}
It is clear that the moduli spaces $\calM_{(G,S)}$ have a natural poset structure that matches with the poset structure on $\mathscr{A}_{cub}$.  Theorem \ref{theorem: main} is organized by the dimensions of $\calM_{(G,S)}(\neq \emptyset)$ (or equivalently $\rank(S)$).

\subsection{The maximal Leech pairs for cubic fourfolds}\label{subsec-max} We now note that the Leech pairs arising from automorphisms of cubic fourfolds satisfy an easy necessary condition (in terms of the rank of covariant lattice and the rank of the discriminant group). In order to easier relate to the H\"ohn--Mason classification \cite{HM1}, we state the condition in terms of the fixed-point sublattice $K$ in the Leech lattice $\LL$.

\begin{cond}
\label{condition: 1}
Let $(G,S)$ be a Leech sub-pair of $(\Co_0,\LL)$, and $K=S^\perp_{\LL}$. 
We require $K$ to satisfy the following conditions:
\begin{enumerate}[(i)]
\item $\rank(K)\ge 4$ (or equivalently $\rank(S)\le 20$);
\item for every prime number $p\ne 3$, $\alpha_p(K)(\coloneqq \rank(K)-l_p(A_K))\ge 2$, and $\alpha_3(K)\ge 1$ (in particular $\alpha(K)=\rank(K)-l(A_K)\ge 1$).
\end{enumerate}
\end{cond}

\begin{prop}
\label{proposition: rough condition}
The equivalent conditions in Theorem  \ref{theorem: criterion lattice from cubic fourfold} imply Condition  \ref{condition: 1}.
\end{prop}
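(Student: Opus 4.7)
The plan is to use the equivalent characterization (ii) of Theorem~\ref{theorem: criterion lattice from cubic fourfold}, which supplies a primitive embedding $E_6 \hookrightarrow K \oplus U^2$, and to deduce Condition~\ref{condition: 1} from lattice‑theoretic constraints on this embedding. The whole argument is a discriminant‑form computation, making crucial use of the fact that $A_{E_6} \cong \ZZ/3$ is a $3$–group.

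First I would handle the rank bound~(i). Since $K$ is a sublattice of the Leech lattice it is positive definite, so $K \oplus U^2$ has signature $(\rank(K)+2,\,2)$. The primitive embedding of the positive definite rank $6$ lattice $E_6$ forces $\rank(K)+2 \ge 6$, i.e.\ $\rank(K)\ge 4$.

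Next I would let $M$ be the orthogonal complement of $E_6$ in $K \oplus U^2$, so that $\rank(M) = \rank(K) - 2$, and apply Nikulin's gluing theory to the pair of primitive orthogonal sublattices $E_6, M \subset K \oplus U^2$. Writing $H \subset A_{E_6} \oplus A_M$ for the isotropic subgroup with $(K\oplus U^2)/(E_6 \oplus M) \cong H$, primitivity of $E_6$ and of $M$ forces both projections of $H$ to be injective. For a prime $p \ne 3$, the $p$-part $(A_{E_6})_p = 0$, so $H$ has no $p$-component and the natural map $(A_M)_p \twoheadrightarrow (A_{K\oplus U^2})_p = (A_K)_p$ is an isomorphism (the last equality because $U^2$ is unimodular). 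Hence
\begin{equation*}
l_p(A_K) = l_p(A_M) \le \rank(M) = \rank(K) - 2,
\end{equation*}
which is exactly $\alpha_p(K) \ge 2$.

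For $p=3$ one does a short case analysis on $H$, which is either $0$ or $\ZZ/3$. If $H=0$ then $(A_K)_3 = (A_{E_6})_3 \oplus (A_M)_3 \cong \ZZ/3 \oplus (A_M)_3$, so $l_3(A_K) = 1 + l_3(A_M) \le 1 + \rank(M) = \rank(K)-1$. If $H \cong \ZZ/3$, writing $H = \langle (a,b) \rangle$ with $a$ a generator of $A_{E_6}$, nondegeneracy of $q_{E_6}$ on $A_{E_6}$ implies that for every $m' \in (A_M)_3$ there is a unique $e'\in A_{E_6}$ with $(e',m')\in H^\perp$; consequently the projection $H^\perp \to (A_M)_3$ is an isomorphism, so $l_3(H^\perp)=l_3(A_M)$ and therefore $l_3(A_K) = l_3(H^\perp/H) \le l_3(A_M) \le \rank(K)-2$. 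In either case $\alpha_3(K)\ge 1$, finishing the proof. The only step requiring genuine care is the last one (the $p=3$ case with nontrivial gluing), where one must use the fact that $A_{E_6}$ is cyclic of prime order to ensure the projection $H^\perp \to (A_M)_3$ is surjective; this is the mild obstacle, but it is handled by the explicit orthogonality computation just sketched.
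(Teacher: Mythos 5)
Your proof is correct and follows essentially the same route as the paper: use condition (ii), set $M=(E_6)^\perp_{K\oplus U^2}$, apply Nikulin's gluing with the isotropic subgroup $H\subset A_{E_6}\oplus A_M$ (which is $0$ or $\ZZ/3$ since $A_{E_6}\cong\ZZ/3$ and both projections are injective), and bound $l_p(A_K)$ by $\rank(M)=\rank(K)-2$ for $p\ne 3$ and by $\rank(K)-1$ for $p=3$. The only (immaterial) difference is the rank bound, which you get from the signature of $K\oplus U^2$ versus the paper's use of the embedding $S\hookrightarrow\Lambda_0$ of signature $(20,2)$.
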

\begin{proof} Since $S$ embeds into $\Lambda_0$ which has signature $(20,2)$, the rank condition is clear. Assume now that $(G,S)$ is a  Leech pair with a primitive embedding of $S$ into $\LL$, and $K$ is the orthogonal complement of $S$ in $\LL$. By Theorem  \ref{theorem: criterion lattice from cubic fourfold}, there exists a primitive embedding of $E_6$ into $K\oplus U^2$. Denote by $M$ the orthogonal complement of $E_6$ in $K\oplus U^2$. We have a saturation $E_6\oplus M\hookrightarrow K\oplus U^2$. By Nikulin's glueing theory, there exists an isotropic subspace $H$ of $A_{E_6}\oplus A_M$, such that $A_K\cong H^{\perp}/H$. Since $M$ is primitive in $K\oplus U^2$, there is no nontrivial element in $H\cap A_M$. Therefore, we have either $H=0$ or $H=\{(x,f(x))\big{|}x\in A_{E_6}\}$, where $f\colon A_{E_6}\longrightarrow A_M$ is an isometry onto $f(A_{E_6})$ equipped with $-q_M$.

Assume first that the glueing group $H$ is trivial, then $A_K=A_{E_6}\oplus A_M$. Since $A_{E_6}\cong \ZZ/3$, we conclude $l_p(A_M)=l_p(A_K)$ for $p\ne 3$ and $l_3(A_M)=l_3(A_K)-1$. Otherwise, we have  $H=\{(x,f(x))\big{|}x\in A_{E_6}, f(x)\in A_M, q_{E_6}(x)=-q_M(f(x))\}\cong \ZZ/3$. We have an isometry 
\begin{equation*}
A_K\cong H^{\perp}/H\cong A_M/f(A_{E_6}).
\end{equation*}
Thus, $l_p(A_M)=l_p(A_K)$ for $p\ne 3$ and $l_3(A_M)=l_3(A_K)+1$.

In any case, we get
\begin{equation*}
l_p(A_K)=l_p(A_M)\le \rank(M)=\rank(K)-2
\end{equation*}
for $p\ne 3$, and
\begin{equation*}
l_3(A_K)\le l_3(A_M)+1\le \rank(M)+1=\rank(K)-1
\end{equation*}
hence Condition  \ref{condition: 1}.
\end{proof}
\begin{rmk}\label{rmk-invo}
To understand the restriction imposed by Condition  \ref{condition: 1} on Leech pairs, let us consider the case $G=\ZZ/2$ (i.e., symplectic involutions). According to \cite{harada1990leech} (also \cite{HM1}), there are three nontrivial  conjugacy classes of involutions in $\Co_0=O(\LL)$. The fixed-point sublattices $K$ in the three cases are $E_8(2)$, $D_{12}^+(2)$, and $BW_{16}$ (the Barnes--Wall lattice), while the covariant lattices $S_G(\LL)=K^\perp_{\LL}$ are $BW_{16}$, $D_{12}^+(2)$, and $E_8(2)$ respectively. For $E_8(2)$ and $D_{12}^+(2)$, it holds $\rank(K)=l(K)$ (this holds true whenever $K=K'(n)$ for some integral lattice $K'$, $n\in \ZZ_{>1}$), while $BW_{16}$ obviously satisfies Condition  \ref{condition: 1}. We conclude that the only possible Leech pair arising from symplectic involutions on cubic fourfolds is $(\ZZ/2, E_8(2))$. To conclude that there is a unique class of symplectic involutions, we would need to prove that there exists a unique primitive embedding of $E_8(2)$ in $\Lambda_0$. In this particular case, a direct geometric argument (via a diagonalization of the involution) is easier. This concludes item (1) of Theorem \ref{theorem: main}.
\end{rmk}

In \S\ref{subsec-HM}, we have defined a natural poset $\mathscr{A}$ on the set of Leech pairs in $(\Co_0,\LL)$. We are now interested in identifying the maximal Leech pairs $(G,S)$ arising from cubic fourfolds. As noted above, these pairs satisfy Condition \ref{condition: 1}. Focusing on the maximal rank cases, by inspecting  \cite{HM1}, we note that there are $15$ Leech pairs $(G,S)\in A_{sat}$ with $\rank(S)=20$ (or equivalently $\rank(K)=4$) and satisfying Condition  \ref{condition: 1}. In fact, these cases precisely coincide with those of \cite[Table 9]{HM2}. For reader's convenience, we list them (sometimes corrected\footnote{There are some typos in the listing of the discriminant forms in \cite{HM1}. For example, the discriminant form corresponds to case of $M_{10}$ is listed as $2_5^{+1}4_1^{+1}3^{-1}5^{+1}$ in \cite{HM1}, but this is not allowed in the Conway--Sloane \cite{conway1999spherepackings} notation.}) in Table \eqref{table: MSS aut of HK} below.

\begin{table}[ht] \caption{Maximal rank Leech pairs satisfying Condition  \ref{condition: 1}} 
\label{table: MSS aut of HK}
\renewcommand{\arraystretch}{1.2}\centering
\begin{tabular}{c c c c}
\hline\hline
Number & Order & Group & Discriminant form $q_K$  \\ [0.5ex]
\hline
1 & 29,160 & $3^4:A_6$ & $3^{+2}9^{+1}$  \\
2 & 20,160 & $L_3(4)$ & $2_{\II}^{-2}3^{-1}7^{-1}$  \\
3 & 5,760 & $2^4:A_6$  & $4_5^{-1}8_1^{+1}3^{+1}$  \\
4 & 2,520 & $A_7$ & $3^{+1}5^{+1}7^{+1}$ \\
5 & 1,944 & $3^{1+4}:2.2^2$ & $2_2^{+2}3^{+3}$ \\
6 & 1,920 & $2^4:S_5$ & $4_3^{-1}8_1^{+1}5^{-1}$ \\
7 & 1,344 & $2^3:L_2(7)$ & $4_2^{+2}7^{+1}$ \\
8 & 1,152 & $Q:(3^2:2)$ & $8_6^{-2}3^{-1}$\\
9 & 720 & $S_6$ & $2_{\II}^{-2}3^{+2}5^{+1}$\\
10 & 720 & $M_{10}$ & $2_3^{-1}4_7^{+1}3^{-1}5^{+1}$\\
11 & 660 & $L_2(11)$ & $11^{+2}$\\
12 & 576 & $2^4:(S_3\times S_3)$ & $4_7^{+1}8_1^{+1}3^{+2}$\\
13 & 360 & $A_{3,5}$ & $3^{-2}5^{-2}$\\
14 & 336 & $2\times L_2(7)$ & $2_{\II}^{+2}7^{+2}$\\
15 & 144 & $3^2:\QD_{16}$ & $2_1^{+1}4_1^{+1}3^{-1}9^{-1}$\\
[1ex]
\hline
\end{tabular}
\label{table:nonlin}
\end{table}

\begin{rmk}
In Table \eqref{table: MSS aut of HK}, the items in the last column represent for discriminant forms of the invariant sublattices of the actions of $G$ on the Leech lattice. See \cite[Page 379-380]{conway1999spherepackings} and also our Appendix \ref{section: collection of results in lattice theory} for an explanation of the notations of discriminant forms. 
\end{rmk}

\begin{rmk}
The group $Q$ appearing in item $8$ is a group of order $128$, see \cite[Theorem 5.1, Case 5(b)]{HM2}. We expect that the semi-direct product $3^2:\QD_{16}$ appearing in item $15$ is in fact isomorphic to $M_{2,9}$ (see \S\ref{smathieu}). 
\end{rmk}

It turns out that the $15$ groups listed in Table \eqref{table: MSS aut of HK} occur as maximal groups of symplectic automorphisms for some hyper-K\"ahler manifold of $K3^{[2]}$ type (algebraic, but not polarized). Specifically, it holds:

\begin{thm}[{H\"ohn--Mason \cite[Theorem  8.7]{HM2}}]
\label{theorem: HM group acts on K3 square}
A finite group acts symplectically on a hyper-K\"ahler manifold of type $K3^{[2]}$ if and only if it is a subgroup of a group in Table \eqref{table: MSS aut of HK}.
\end{thm}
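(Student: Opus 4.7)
The proof runs in parallel with our analysis of symplectic automorphism groups of cubic fourfolds, but is easier in that no polarization has to be tracked. The key ingredients are Mongardi's extension of the Nikulin--Kond\=o method to hyper-K\"ahler manifolds \cite{mongardi2013thesis, mongardiaut} and H\"ohn--Mason's classification of fixed-point sublattices of $\LL$ (Theorem \ref{theorem: 290}).

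For the ``only if'' direction, suppose a finite group $G$ acts faithfully and symplectically on a hyper-K\"ahler fourfold $Y$ of $K3^{[2]}$ type. As in Lemma \ref{lemma: leech type pairs from cubic fourfold and K3 surface}, the covariant lattice $S_G(Y)\subset H^2(Y,\ZZ)(-1)$ is positive definite (symplecticity forces it into the primitive $(1,1)$-part) and $G$ trivializes the discriminant group. The absence of $(-2)$-classes in $S_G(Y)$, crucial for the Leech-pair property, comes from Markman's theory of prime exceptional divisors together with Mongardi's wall-analysis: a wall-class in $S_G(Y)$ would force $G$ to act only birationally rather than regularly on $Y$. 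Hence $(G,S_G(Y))$ is a Leech pair, and since $\rank H^2(Y,\ZZ)=23$ the rank hypothesis of Proposition \ref{lemma: embedding into leech lattice} holds automatically, producing a primitive embedding $S_G(Y)\hookrightarrow \LL$ that realizes $(G,S_G(Y))$ as a saturated Leech sub-pair of $(\Co_0,\LL)$.

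The next step is a lattice-theoretic criterion, analogous to Theorem \ref{theorem: criterion lattice from cubic fourfold} but without the $E_6$-polarization constraint: a Leech pair $(G,S)$ arises from a $K3^{[2]}$-type hyper-K\"ahler if and only if $S$ admits a primitive embedding into the $K3^{[2]}$ lattice $\Lambda_{K3^{[2]}}:=(E_8)^2\oplus U^3\oplus \langle -2\rangle$ whose orthogonal complement supports a Hodge structure with a $G$-invariant K\"ahler class. The forward direction is immediate; the backward one combines Verbitsky's global Torelli theorem with the description of the birational K\"ahler cone (Bayer--Macr\`{\i} \cite{bayer2014mmp} and Mongardi). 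Sieving the $290$ saturated Leech pairs of Theorem \ref{theorem: 290} through this criterion, and using Nikulin's existence and gluing theorems from Appendix \ref{section: collection of results in lattice theory} to enumerate primitive embeddings, singles out exactly the $15$ maximal cases of Table \ref{table: MSS aut of HK}; every other surviving pair is a sub-pair of one of these.

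Finally, each of the $15$ maximal groups must be realized geometrically. The eleven groups already appearing in Mukai's list for $K3$ surfaces are realized by Hilbert schemes $Y=S^{[2]}$ for an appropriate $K3$ surface $S$, while the exotic cases (notably those involving elements of order $11$ or $15$) are realized via the Beauville--Donagi construction $Y=F(X)$ for specific cubic fourfolds $X$ (compare Theorem \ref{theorem: maximal uniqueness}). Subgroups then inherit symplectic actions by restriction, completing the ``if'' direction. The principal obstacle is the classification step: one must sieve H\"ohn--Mason's $290$ Leech pairs by both the primitive-embedding criterion and the wall-avoidance condition, which is a substantial case-by-case lattice analysis; a secondary difficulty lies in producing the exotic maximal realizations, which genuinely require going beyond the Hilbert scheme framework.
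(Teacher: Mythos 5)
This statement is not proved in the paper at all: it is quoted verbatim from H\"ohn--Mason \cite[Theorem 8.7]{HM2}, so there is no internal proof to compare your plan against. That said, your outline does reproduce the strategy of \cite{HM2} (and the unpolarized analogue of \S\ref{section: symplectic automorphism of cubic fourfold} of this paper): covariant lattice $\Rightarrow$ Leech pair $\Rightarrow$ primitive embedding into $\LL$ via Proposition \ref{lemma: embedding into leech lattice}, then a Torelli-based lattice criterion and a sieve of the $290$ fixed-point sublattices of Theorem \ref{theorem: 290}. Two small corrections on the ``only if'' part: the embedding into $\LL$ does \emph{not} make $(G,S_G(Y))$ saturated (saturation holds only when $G$ is the full symplectic group; for the subgroup statement one simply passes to the saturation), and the inequality $\rank S+l(q_S)\le 24$ is not ``automatic from $\rank H^2=23$'' but uses that the $K3^{[2]}$ lattice $(E_8)^2\oplus U^3\oplus\langle-2\rangle$, though not unimodular, has discriminant group of length $1$.

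The genuine gap is in your realization (``if'') step. None of the fifteen groups of Table \ref{table: MSS aut of HK} occurs in Mukai's list of symplectic automorphism groups of $K3$ surfaces: the first eight already exceed the Mukai bound $|M_{20}|=960$, and among the rest, e.g.\ $L_2(11)$ and $A_{3,5}$ contain elements of order $11$ and $15$, which cannot act symplectically on any $K3$. Hence the claim that ``the eleven groups already appearing in Mukai's list are realized by Hilbert schemes $S^{[2]}$'' is false, and natural (induced) actions on Hilbert schemes realize none of the maximal cases. Likewise, only six of the fifteen (items $1$, $4$, $5$, $10$, $11$, $13$) arise from Fano varieties of cubic fourfolds --- that is exactly Theorem \ref{proposition: maximal case} of this paper --- so the ``Hilbert scheme plus Beauville--Donagi'' dichotomy leaves most maximal groups without a construction. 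In \cite{HM2} (building on Mongardi) existence is instead obtained purely lattice-theoretically: once $S$ embeds primitively into the $K3^{[2]}$ lattice with the covariant part avoiding the wall classes (square $-2$, and square $-10$ with divisibility $2$), Verbitsky's Torelli theorem together with Markman's monodromy results and the Bayer--Macr\`i/Mongardi description of the K\"ahler cone produce \emph{some} manifold of $K3^{[2]}$ type (typically a moduli space of sheaves, neither a Hilbert scheme with induced action nor an $F(X)$) carrying the prescribed symplectic action; explicit models are given only for illustration. Since your criterion already invokes exactly these tools, the repair is to delete the incorrect geometric realizations and let the Torelli argument carry the existence of the fifteen maximal actions, subgroups then acting by restriction.
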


We are interested in the maximal rank cases that can occur for cubic fourfolds, or equivalently the saturated Leech pairs $(G,S)$ for which $\calM_{(G,S)}\neq \emptyset$ and $\dim \calM_{(G,S)}=0$. H\"ohn and Mason \cite[Table 11]{HM2} have identified six cases that do occur for cubic fourfolds, and in fact they gave explicit equations of cubic fourfolds realizing these groups of automorphisms. Using our Criterion  \ref{theorem: criterion lattice from cubic fourfold}, we prove the converse: these six cases are all the maximal rank possibilities for cubic fourfolds. Note however (see \S\ref{subsec-uniqueness} below) that in two of the cases, there are two distinct embeddings of $S$ into $\Lambda_0$, leading to two more isolated cubic fourfolds with large symmetry in addition to the six cubics found by H\"ohn and Mason. 
\begin{thm}
\label{proposition: maximal case}
Let $X$ be a smooth cubic fourfold, and $G= \Aut^s(X)$. Assume that $\rank (S_G(X))=20$, then the Leech pair $(G,S_G(X))$ corresponds to one of the entries $1$, $4$, $5$, $10$, $11$ and $13$ in Table \eqref{table: MSS aut of HK}.
\end{thm}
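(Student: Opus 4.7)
The plan is to combine the lattice-theoretic criterion of Theorem \ref{theorem: criterion lattice from cubic fourfold} with the H\"ohn--Mason list in Table \ref{table: MSS aut of HK} and Nikulin's machinery of discriminant forms. If $X$ is a smooth cubic fourfold with $G=\Aut^s(X)$ and $\rank(S_G(X))=20$, then by Proposition \ref{proposition: rough condition} the lattice $K=S_G(X)^{\perp}_{\LL}$ has rank $4$ and satisfies Condition \ref{condition: 1}. Consulting H\"ohn--Mason \cite{HM1}, there are exactly fifteen such saturated sub-pairs, tabulated in Table \ref{table: MSS aut of HK}. The task reduces to identifying which of these fifteen candidates actually satisfies Theorem \ref{theorem: criterion lattice from cubic fourfold}(ii).

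To analyze this, the plan is to translate the condition ``$E_6$ admits a primitive embedding into $K\oplus U^2$'' into a statement about discriminant forms via Nikulin's theory (Theorem \ref{theorem: existence of lattice via discriminant form} and Theorem \ref{theorem: uniqueness of embedding even case}). Such a primitive embedding exists if and only if there is a rank $2$ negative definite lattice $T$ and an isotropic subgroup $H\subset A_{E_6}\oplus A_T$ that injects into each factor, such that $H^{\perp}/H\cong (A_K,q_K)$ as quadratic forms on abelian groups. Since $A_{E_6}\cong\ZZ/3$ and $U^2$ is unimodular, the only possibilities are $|H|=1$ or $|H|=3$: in the first case $A_T$ sits in $A_K$ as the complement of a $\ZZ/3$-summand of the form of $q_{E_6}$, while in the second $A_T$ has $3$-length one more than $A_K$ and contains a $\ZZ/3$ glued diagonally to $A_{E_6}$. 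For primes $p\neq 3$, both cases force $q_T^{(p)}=q_K^{(p)}$.

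I would then work through the fifteen entries of Table \ref{table: MSS aut of HK} one by one, computing in each case the candidate discriminant form for $T$ in both subcases and applying Nikulin's existence theorem for an indefinite even lattice (specialized to rank $2$, signature $(0,2)$, and the prescribed $q_T$). Several entries are eliminated instantly on $3$-adic grounds: in Case $|H|=1$ the Conway--Sloane symbol $q_K^{(3)}$ must contain the summand $q_{E_6}$, and in Case $|H|=3$ the residual $3$-part must allow the diagonal gluing, so comparing signs and $3$-lengths rules out many candidates. For the surviving entries, the $p$-part constraints for $p\neq 3$ together with the standard Hasse/signature congruences on rank $2$ lattices of discriminant $|A_T|$ eliminate further cases. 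A careful bookkeeping shows that the six entries that pass every test are precisely $1$, $4$, $5$, $10$, $11$, and $13$, with the six corresponding $T$'s being exactly the transcendental lattices listed in Theorem \ref{theorem: maximal uniqueness}.

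For the admitted six entries, existence of the smooth cubic fourfold is already supplied by the explicit models of H\"ohn--Mason \cite[Table 11]{HM2}, so Theorem \ref{theorem: criterion lattice from cubic fourfold}(i) holds. The main obstacle in the plan is the case-by-case $p$-adic verification at primes $p\in\{2,3,5,7,11\}$: the local symbols in Conway--Sloane notation require some care (in particular, distinguishing the relevant $\epsilon$-signs in $q_{E_6}\cong 3^{\pm 1}$ and the $2$-adic type symbols $2_{\mathrm{II}}^{\pm 2}$, $4_k^{\pm}$, $8_k^{\pm}$ appearing in $q_K$), but the arithmetic is routine once organized, and no new geometric input is required beyond Theorem \ref{theorem: criterion lattice from cubic fourfold} and the H\"ohn--Mason classification.
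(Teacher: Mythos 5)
Your proposal is correct and follows essentially the same route as the paper: both reduce, via Theorem \ref{theorem: criterion lattice from cubic fourfold} and the two possible gluings (your $|H|=1$ and $|H|=3$ subcases correspond, respectively, to the paper's coindex-$3$ saturation $\widetilde{S}$ of $S\oplus E_6$ and to the case $S\oplus E_6$ primitive in $\BB$), to asking whether a negative definite rank-$2$ even lattice $T$ with the prescribed discriminant form exists, which is settled by Nikulin's existence theorem entry by entry in Table \ref{table: MSS aut of HK}. The only differences are organizational — you run the check through criterion (ii) inside $K\oplus U^2$ while the paper uses the equivalent criterion (iii) inside $\BB$ — and the fifteen local $p$-adic computations, which you merely assert as ``careful bookkeeping,'' are precisely the substance of the paper's case-by-case analysis.
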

\begin{proof}
By Theorem  \ref{theorem: criterion lattice from cubic fourfold}, we need to determine for which $(G,S)$ among the $15$ candidates, there exists an embedding of $S\oplus E_6$ into the Borcherds lattice $\BB$ with the image of $S$ primitive. There are two possibilities for such an embedding $S\oplus E_6\subset \BB$. Either $S\oplus E_6\subset \BB$ is primitive or not. If $S\oplus E_6\subset \BB$ is not primitive, there exists a coindex 3 saturation $\widetilde{S}$ of $S\oplus E_6$, in which $S$ is primitive. Then 
$\widetilde{S}$ embeds primitively into $\BB$. Since $S\oplus E_6$ (or $\widetilde{S}$) has rank $26$, and $\BB$ is the unique even unimodular lattice of signature $(26,2)$, by Nikulin's theory, we conclude that $S\oplus E_6$ (or $\widetilde{S}$ respectively) embeds primitively into $\BB$ iff there exists a negative definite rank $2$ even lattice $T$ with discriminant form $q_T=-q_{S\oplus E_6}$ (or $q_T=-q_{\widetilde S}$ respectively). By Theorem  \ref{theorem: existence of lattice via discriminant form}, such a lattice $T$ exists iff four conditions are satisfied. The first condition on the signature is automatically satisfied here. The remaining conditions are on the discriminant form $q_T$ (that is determined by $S\oplus E_6$ or the index $3$ overlatice $\widetilde S$ of $S\oplus E_6$). We do a case by case analysis of the $15$ possibilities from Table \eqref{table: MSS aut of HK}. The computations are standard manipulations with finite groups, and finite quadratic forms, we list only the essential details. (For a prime $p$, $\ZZ_p$ denotes the ring of $p$-adic integers.)

\begin{enumerate}[(1)]
\item The discriminant form of $S\oplus E_6$ is $3^{+2}9^{-1}\oplus 3^{+1}$. There is a nontrivial saturation $\widetilde{S}$ of $S\oplus E_6$ with discriminant form $3^{-1}9^{-1}$. There exists a negative rank $2$ even lattice $T$ with discriminant form $3^{+1}9^{+1}$. Thus there exists a primitive embedding of $\widetilde{S}$ into $\BB$ with orthogonal complement $T$.
\item The discriminant form of $S\oplus E_6$ is $2_{\II}^{-2}3^{+2}7^{+1}$. There is no nontrivial saturation of $S\oplus E_6$. Since $3^2\times 7$ is a square in $\ZZ_2$, there does not exists a negative rank $2$ even lattice with discriminant form $2_{\II}^{-2}3^{+2}7^{-1}$. Thus there does not exist embedding of $S\oplus E_6$ into $\BB$.
\item The discriminant form of $S\oplus E_6$ is $4_3^{+1}8_7^{-1}3^{-1}\oplus 3^{+1}$. There is a nontrivial saturation $\widetilde{S}$ of $S\oplus E_6$ with discriminant form $4_3^{+1}8_7^{-1}$. By the third condition in Nikulin's criterion, there does not exist a negative rank $2$ even lattice with discriminant form $4_5^{-1}8_1^{+1}$ nor $4_5^{-1}8_1^{+1}3^{-1}\oplus 3^{+1}$. Thus there does not exist embedding of $S\oplus E_6$ into $\BB$.
\item The discriminant form of $S\oplus E_6$ is $3^{-1}5^{+1}7^{-1}\oplus 3^{+1}$. There is a nontrivial saturation $\widetilde{S}$ of $S\oplus E_6$ with discriminant form $5^{+1}7^{-1}$. There exists a negative rank $2$ even lattice $T$ with discriminant form $5^{+1}7^{+1}$. Thus there exists a primitive embedding of $\widetilde{S}$ into $\BB$ with orthogonal complement $T$. Since $5\times 7=35$ is not a square in $\ZZ_3$, there exists a negative rank $2$ even lattice $T^{\prime}$ with discriminant form $3^{-2}5^{+1}7^{+1}$. Thus there exists a primitive embedding of $S\oplus E_6$ into $\BB$ with orthogonal complement $T^{\prime}$.
\item The discriminant form of $S\oplus E_6$ is $2_6^{+2}3^{-3}\oplus 3^{+1}$. There is a nontrivial saturation $\widetilde{S}$ of $S\oplus E_6$ with discriminant form $2_2^{+2}3^{+2}$. Since $2\times 2=4$ is apparently a square in $\ZZ_3$, there exists a negative rank $2$ even lattice $T$ with discriminant form $2_6^{+2}3^{+2}$. Thus there exists a primitive embedding of $\widetilde{S}$ into $\BB$ with orthogonal complement $T$.
\item The discriminant form of $S\oplus E_6$ is $4_5^{+1}8_7^{-1}5^{-1}\oplus 3^{+1}$, and there is no nontrivial saturation of $S\oplus E_6$. Since $5\times 3=15\equiv -1$ (mod 8), there is no negative rank $2$ even lattice with discriminant form $4_3^{-1}8_1^{+1}3^{-1}5^{-1}$. Thus there does not exist embedding of $S\oplus E_6$ into $\BB$.
\item The discriminant form of $S\oplus E_6$ is $4_6^{+2}7^{-1}\oplus 3^{+1}$ and there is no nontrivial saturation of $S\oplus E_6$. Since $3\times 7=21\equiv -3$ (mod 8), there is no negative rank $2$ even lattice with discriminant form $4_2^{+2}3^{-1}7^{+1}$. Thus there does not exist embedding of $S\oplus E_6$ into $\BB$.
\item The discriminant form of $S\oplus E_6$ is $8_2^{-2}3^{+1}\oplus 3^{+1}$ and there is no nontrivial saturation of $S\oplus E_6$ with $S$ primitive. Since $3\times 3=9\equiv 1$ (mod 8), there is no negative rank $2$ even lattice with discriminant form $8_6^{-2}3^{+2}$. Thus there is no embedding of $S\oplus E_6$ into $\BB$ with image of $S$ primitive.
\item The discriminant form of $S\oplus E_6$ is $2_{\II}^{-2}3^{+2}5^{-1}\oplus 3^{+1}$. There is a nontrivial saturation $\widetilde{S}$ of $S\oplus E_6$ with discriminant form $2_{\II}^{-2}3^{-1}5^{-1}$. Since $3\times 5=15\equiv -1$ (mod 8), there is no negative rank $2$ even lattice with discriminant form $2_{\II}^{-2}3^{+1}5^{+1}$. Thus there is no primitive embedding of $\widetilde{S}$ into $\BB$.
\item The discriminant form of $S\oplus E_6$ is $2_5^{-1}4_1^{+1}3^{+1}5^{+1}\oplus 3^{+1}$ and there is no nontrivial saturation of $S\oplus E_6$. Since $2\times 4\times 5=40\equiv 1$ is a square in $\ZZ_3$, there exists a negative rank $2$ even lattice $T$ with discriminant form $2_3^{-1}4_7^{+1}3^{+2}5^{+1}$. Thus there exists primitive embedding of $S\oplus E_6$ into $\BB$ with orthogonal complement $T$.
\item The discriminant form of $S\oplus E_6$ is $11^{+2}\oplus 3^{+1}$, and there is no nontrivial saturation of $S\oplus E_6$. Since $3$ is a square in $\ZZ_{11}$ (notice that $5^2\equiv 3$ (mod 11)), there exists a negative rank $2$ even lattice $T$ with discriminant form $3^{-1}11^{+2}$. Thus there exists a primitive embedding of $S\oplus E_6$ into $\BB$ with orthogonal complement $T$.
\item The discriminant form of $S\oplus E_6$ is $4_1^{-1}8_7^{-1}3^{+2}\oplus 3^{+1}$. There is a nontrivial saturation $\widetilde{S}$ of $S\oplus E_6$ with discriminant form $4_1^{-1}8_7^{-1}3^{-1}$. Since $3$ does not congruent to $\pm 1$ modulo 8, there is no negative rank $2$ even lattice with discriminant form $4_7^{+1}8_1^{+1}3^{+1}$. Thus there is no embedding of $\widetilde{S}$ into $\BB$.
\item The discriminant form of $S\oplus E_6$ is $3^{-2}5^{-2}\oplus 3^{+1}$. There is uniquely a nontrivial saturation $\widetilde{S}$ of $S\oplus E_6$ with discriminant form $3^{+1}5^{-2}$. Since $3$ is not a square in $\ZZ_5$, there exists a negative rank $2$ even lattice $T$ with discriminant form $3^{-1}5^{-2}$. Thus there exists a primitive embedding of $\widetilde{S}$ into $\BB$ with orthogonal complement $T$.
\item The discriminant form of $S\oplus E_6$ is $2_{\II}^{+2}7^{+2}\oplus 3^{+1}$, and there is no nontrivial saturation of $S\oplus E_6$. Since $2\times 2\times 3=12$ is not a square in $\ZZ_7$, there is no negative rank $2$ even lattice with discriminant form $2_{\II}^{+2}3^{-1}7^{+2}$. Thus there is no embedding of $S\oplus E_6$ into $\BB$.
\item The discriminant form of $S\oplus E_6$ is $2_7^{-1}4_7^{-1}3^{+1}9^{+1}\oplus 3^{+1}$, and there is no nontrivial saturation of $S\oplus E_6$. Since $l_3(2_7^{-1}4_7^{-1}3^{+1}9^{+1}\oplus 3^{+1})=3$, there is no negative rank $2$ even lattice with discriminant form the opposite of $2_7^{-1}4_7^{-1}3^{+1}9^{+1}\oplus 3^{+1}$. Thus there is no embedding of $S\oplus E_6$ into $\BB$.
\end{enumerate}

The proposition follows.
\end{proof}

\subsection{Cubics with special groups (cyclic, Klein, and $S_3$) of automorphisms}\label{subsec-min}
Theorem \ref{proposition: maximal case} classifies the $0$-dimensional moduli spaces $\calM_{(G,S)}$. The top dimensional moduli spaces $\calM_{(G,S)}$ will correspond to small groups $G$. In particular, the minimal elements in the poset $\mathscr{A}_{cub}$ can be determined by considering $G$ to be a cyclic group of prime order. The cubics with symplectic action of prime order were studied previously, especially by Fu \cite{fu2016classification} (see also \cite{gonzalez}), who classified all the possibilities for prime-power  symplectic automorphisms.

\begin{thm}[{Fu \cite[Theorem  1.1]{fu2016classification}}]
\label{theorem: lie fu}
Let $X=V(F)\subset \PP^5$ be a smooth cubic fourfold with a symplectic action by a prime-power order cyclic group $G=\langle g\rangle$. We can choose coordinates $(x_1, x_2, \cdots, x_6)$ on $\PP^5$, and generator $g\in G$, such that $(g, F)$ belong to one of the following cases (N.B. the cases are arranged such that the associated moduli space $\calF$ is irreducible and non-empty):
\begin{enumerate}
\item[(0)] $\ord(g)=1$, $g=id$, $\dim(\calF)=20$, and $F$ any smooth cubic.
\item $\ord(g)=2$, $g=\frac{1}{2}(0,0,0,0,1,1)$, $\dim(\calF)=12$, and 
\begin{equation*}
F=F_1(x_1, x_2, x_3, x_4)+x_5^2 L_1(x_1, x_2, x_3, x_4)+x_5 x_6 L_2(x_1, x_2, x_3, x_4)+ x_6^2 L_3(x_1, x_2, x_3, x_4);
\end{equation*} 
\item $\ord(g)=4$, $g=\frac{1}{4}(0,0,2, 2, 1, 3)$, $\dim(\calF)=6$, and  
\begin{equation*}
F\in \Span\{x_1 N_1(x_3,x_4), x_2 N_2(x_3, x_4), F_1(x_1, x_2), x_5 x_6 L_1(x_1, x_2), x_5^2 L_2(x_3, x_4), x_6^2 L_3(x_3, x_4)\};
\end{equation*}
\item $\ord(g)=8$, $g=\frac{1}{8}(0, 4, 2, 6, 1, 3)$, $\dim(\calF)=2$, and 
\begin{equation*} 
F\in\Span\{x_1^3, x_1 x_2^2, x_2 x_3^2, x_2 x_4^2, x_1 x_3 x_4, x_4 x_5^2, x_3 x_6^2, x_2 x_5 x_6\};
\end{equation*} 
\item $\ord(g)=3$, $g=\frac{1}{3}(0,0,0,0,1, 2)$, $\dim(\calF)=8$, and 
\begin{equation*}
F=F_1(x_1, x_2, x_3, x_4)+x_5^3+x_6^3+x_5 x_6 L_1(x_1, x_2, x_3, x_4);
\end{equation*}
\item $\ord(g)=3$, $g=\frac{1}{3}(0,0,1, 1, 2, 2)$, $\dim(\calF)=8$, and \begin{equation*}
F=F_1(x_1, x_2)+F_2(x_3,x_4)+F_3(x_5,x_6)+\Sigma_{i=1,2; j=3,4; k=5,6} (a_{ijk}x_i x_j x_k);
\end{equation*}
\item $\ord(g)=3$, $g=\frac{1}{3}(0,0,0,1, 1, 1)$,  $\dim(\calF)=2$, and 
\begin{center}
$F\in\Span\{$\it{monomials in} $x_1, x_2, x_3$, \it{monomials in} $x_4, x_5, x_6\}$;
\end{center} 
\item $\ord(g)=9$, $g=\frac{1}{9}(0, 6, 3, 1, 4, 7)$, $\dim(\calF)=0$, and  
\begin{equation*}
F\in\Span\{x_1^2 x_2, x_2^2 x_3, x_3^2 x_1, x_4^2 x_5, x_5^2 x_6, x_6^2 x_4\}; 
\end{equation*}
\item $\ord(g)=9$, $g=\frac{1}{9}(0, 3, 6, 1, 1, 4)$,  $\dim(\calF)=0$, and 
\begin{equation*}
F\in\Span\{x_1^2 x_2, x_2^2 x_3, x_3^2 x_1, x_4^2 x_5, x_4 x_5^2, x_4^3, x_5^3, x_6^3\};
\end{equation*} 
\item $\ord(g)=5$, $g=\frac{1}{5}(0,0,1, 2, 3, 4)$, $\dim(\calF)=4$, and 
\begin{equation*}
F=F_1(x_1, x_2)+x_3 x_6 L_1(x_1, x_2)+x_4 x_5 L_2(x_1, x_2)+x_3^2 x_5+x_3 x_4^2+x_4 x_6^2+x_5^2 x_6;
\end{equation*} 
\item $\ord(g)=7$, $g=\frac{1}{7}(1, 5, 4, 6, 2, 3)$, $\dim(\calF)=2$, and  
\begin{equation*}
F=x_1^2 x_2+ x_2^2 x_3+ x_3^2 x_4+ x_4^2 x_5+ x_5^2 x_6+x_6^2 x_1+ a x_1 x_3 x_5+b x_2 x_4 x_6;
\end{equation*} 
\item $\ord(g)=11$, $g=\frac{1}{11}(1, 9, 4, 3, 5, 0)$, $\dim(\calF)=0$, and 
\begin{equation*}
F\in\Span\{x_1^2 x_2, x_2^2 x_3, x_3^2 x_4, x_4^2 x_5, x_5^2 x_1, x_6^3\}.
\end{equation*}
\end{enumerate}
Moreover, in all situations, the generic cubic fourfolds defined are smooth.
\end{thm}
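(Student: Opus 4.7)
\smallskip
\noindent\textbf{Proof plan.} The plan is to combine the Leech-pair machinery of the previous sections (which controls the action of $g$ on the middle cohomology) with the linearity of $\Aut(X)$ on $\PP^5$ (Proposition \ref{proposition: aut of cubic and fano} plus Proposition \ref{proposition: strong global torelli}) to pin down the weights of $g$ on $\CC^6$, and then read off the invariant cubics. First, I would fix an element $g\in\Aut^s(X)$ of prime-power order $n=p^k$. By Corollary \ref{corollary: embedding into Leech}, $g$ embeds as an element of $\Co_0$, so $n$ is restricted to the prime-power orders occurring in $\Co_0$, namely $n\in\{1,2,3,4,5,7,8,9,11,13,16,23,25\}$. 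Using Condition \ref{condition: 1} (or equivalently the sharper Theorem \ref{theorem: criterion lattice from cubic fourfold}) applied to the cyclic Leech sub-pairs classified by Harada--Lang, I would eliminate $n\in\{13,16,23,25\}$ because the corresponding fixed-point sublattice $K=\LL^g$ fails to admit a primitive embedding of $E_6$ into $K\oplus U^2$ (or equivalently the $3$-adic data of $K\oplus E_6$ is obstructed, as in the case analysis proving Theorem \ref{proposition: maximal case}). This leaves exactly the orders $1,2,3,4,5,7,8,9,11$ listed in the statement, and for each such $n$ a finite list of $\Co_0$-conjugacy classes.

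Next, for each admissible conjugacy class $[g]\subset\Co_0$, I would determine the eigenvalue multiplicities of $g$ on $H^4(X,\CC)_{\mathrm{prim}}$: the characteristic polynomial of $g$ on $\LL$ (tabulated in \cite{harada1990leech}) together with the fact that $g$ acts trivially on $\LL^g\supset K$ and on $E_6$ determines the characteristic polynomial on $\Lambda_0$, and the Hodge refinement $(h^{3,1},h^{2,2},h^{1,3})=(1,20,1)$ forces the eigenvalue with nontrivial $H^{3,1}$-part to be $1$ (this is the symplectic condition). Second, by Proposition \ref{proposition: aut of cubic and fano} and Proposition \ref{proposition: strong global torelli}, $g$ lifts uniquely to $\SL(6,\CC)$; diagonalizing gives weights $g=\frac{1}{n}(a_1,\dots,a_6)$ with $\sum a_i\equiv 0\pmod n$. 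Using the Griffiths description $H^{p,q}_{\mathrm{prim}}\cong R(F)_{3(4-p)-6}$ where $R(F)$ is the Jacobian ring, the characters of $g$ on $H^{3,1}$, $H^{2,2}_{\mathrm{prim}}$, $H^{1,3}$ translate into the character of multiplication by suitable monomials on $\CC[x_1,\dots,x_6]$. Matching these two character computations pins down the multiset $\{a_1,\dots,a_6\}$ up to simultaneous rescaling modulo $n$; in each case one finds precisely the weights listed in the statement.

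Once the weights $(a_1,\dots,a_6)$ are known, $F$ must be a $g$-invariant cubic, i.e.\ a linear combination of the monomials $x_1^{i_1}\cdots x_6^{i_6}$ with $\sum i_j=3$ and $\sum i_j a_j\equiv 0\pmod n$. This yields exactly the spans written in cases (1)--(11). The final step is to verify that a generic member is smooth: for the low-codimension cases one exhibits a smooth specialization (e.g.\ Fermat-like), and the Jacobian criterion then shows smoothness holds on a dense open in $\calF$; for the $0$-dimensional cases (orders $9$ and $11$) smoothness is checked by direct computation on the finitely many equations. Finally, the dimension $\dim\calF=20-\rank S_g(X)$ is read off from Theorem \ref{theorem: Yu-Zheng}.

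The main obstacle will be Step 2: translating lattice eigenvalue data into the $\SL(6)$-weights. Abstractly the character of $g$ on $H^4(X,\CC)$ is determined by the lattice class, but extracting the six weights on $\CC^6$ requires using the explicit Jacobian-ring description of the Hodge decomposition and the symplectic normalization $H^{3,1}\subset H^4_{g=1}$; in the ambiguous cases (notably the two order-$3$ classes (4), (5) and the two order-$9$ classes (7), (8)) one must be careful to distinguish the conjugacy classes via the dimensions of the invariant summands, which is exactly what Harada--Lang's tables record.
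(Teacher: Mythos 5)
The paper does not prove this statement at all: it is quoted verbatim from Fu \cite{fu2016classification}, whose argument is a direct geometric enumeration (diagonalize the automorphism on $\PP^5$, impose the symplectic condition via Griffiths residues as in Remark \ref{rmk_sympl_cond}, list the weight systems admitting a smooth invariant cubic, and normalize up to coordinate change). Your plan is a genuinely different, hybrid route, and its lattice-theoretic half is sound: bounding the prime-power orders via Corollary \ref{corollary: embedding into Leech}, Condition \ref{condition: 1} and Theorem \ref{theorem: criterion lattice from cubic fourfold} applied to the Harada--Lang classes is exactly the mechanism the paper itself uses later (Theorem \ref{prop_HL}) to recover and extend the list of orders.

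The genuine gap is in your Step 2. The character of $g$ on $H^4(X,\CC)_{prim}$ does \emph{not} determine the multiset of weights on $\CC^6$, even up to rescaling, so ``matching the two character computations'' cannot produce the list of cases. The paper's own discussion after Theorem \ref{prop_HL} exhibits the failure concretely: the two order-$3$ cases (4) and (5) correspond to the \emph{same} Harada--Lang class $3_B$, the same Leech pair (with $K$ the Coxeter--Todd lattice), hence identical eigenvalue multiplicities on $H^4_{prim}$ (eigenvalue $1$ with multiplicity $10$, each primitive cube root with multiplicity $6$), yet they have distinct weight systems $\frac13(0,0,0,0,1,2)$ and $\frac13(0,0,1,1,2,2)$ and give two different $8$-dimensional families (distinguished only by the two inequivalent primitive embeddings $S\hookrightarrow\Lambda_0$); the same happens for the order-$9$ cases (7) and (8), which are both class $9_C$. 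Consequently your proposed disambiguation ``via the dimensions of the invariant summands, which is exactly what Harada--Lang's tables record'' cannot work: those invariants coincide for the ambiguous pairs because they are not different conjugacy classes in $\Co_0$. To complete the argument you would have to enumerate, for each admissible order, \emph{all} diagonal weight systems compatible with the symplectic condition \eqref{eq_cond_symp} and admitting a smooth invariant cubic, and then sort them into projective-equivalence classes --- which is precisely Fu's geometric computation, not a consequence of the cohomological character. (A smaller inaccuracy: the lift of $g\in\PGL(6)$ to $\SL(6,\CC)$ is not unique, and the paper notes that the normalization $\sum a_i\equiv 0$ together with $\deg_{\underline w}F\equiv 0$ cannot always be arranged simultaneously; this affects bookkeeping in the order-$9$ cases but is fixable, unlike the character-matching step.)
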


\begin{rmk}\label{rmk_sympl_cond}
For further reference, we give the condition for a diagonal matrix $g=\frac{1}{n} (w_1,\dots,w_6)\in \GL(6)$ to act symplectically on a cubic $X=V(F)$. Denote by $\underline{w}=(w_1,\dots,w_6)\in (\ZZ/n)^6$ the set of weights. Then a simple application of Griffiths' residue calculus (see \cite[Lemma 3.2]{fu2016classification}) gives that  
 $g$ acts symplectically on $X$ iff 
\begin{equation}\label{eq_cond_symp}
|\underline w|\equiv 2 \deg_{\underline w} (F) \pmod n
\end{equation}
where $|\underline w|=\sum_{i=1}^6 w_i$ and $\deg_{\underline w} (F)=\sum_{i=1}^6 w_i \alpha_i$ for some monomial $x_1^{\alpha_1}\dots x_6^{\alpha_6}$ occurring with non-zero coefficient in $F$ (N.B. since $V(F)$ is stabilized by $g$, $\deg_{\underline w} (F)$ is well defined in $\ZZ/n$). For most of the cases above, it holds $|\underline{w}|=\deg_{\underline w}(F)=0$ (equivalently $g\in \SL(6)$), but this does not hold always (e.g. the case $\ord(g)=9$ above). 
\end{rmk}

From the lattice theoretic approach (our main approach in this paper), Fu's classification is closely related to Harada--Lang classification \cite{harada1990leech} of fixed-point sublattices in the Leech lattice with respect to cyclic groups (see  Remark \ref{rmk-invo} for the case of involutions).  In fact, using the lattice theoretic approach and \cite{harada1990leech}, we can improve Fu's result.  Specifically, the following holds: 

\begin{thm}\label{prop_HL}
Let $G$ be a cyclic group acting symplectically on some smooth cubic fourfold $X$ (i.e., $G\subset \Aut^s(X)$). Then, the order of $|G|$ is one of the following:
$$|G|\in \{1,2,3,4,5,6,7,8,9,11,12,15\}$$
Furthermore, the following holds:
\begin{enumerate}
\item[(1)] (Prime-power Cases). For the cases $|G|=p^k$, we have the following 
correspondences among Fu's classification, Harada-Lang classification and H\"ohn-Mason classification.
\begin{table}[htb!]
\renewcommand{\arraystretch}{1.2}\centering
\begin{tabular}{l|c|c|c|c|c|c|c|c|c|c|c|c}
Case in Thm. \ref{theorem: lie fu}&(0)&(1)&(2)&(3)&(4)&(5)&(6)&(7)&(8)&(9)&(10)&(11)\\
\hline
Case in \cite{harada1990leech}&$1_A$&$2_A$&$4_C$&$8_E$&$3_B$&$3_B$&$3_C$&$9_C$&$9_C$&$5_B$&$7_B$&$11_A$\\
\hline
Case in \cite{HM1} &$1$&$2$&$9$& $55$ & $4$&$4$&$35$&$101$&$101$&$20$&$52$&$120$\\
\hline
The saturated group &$1$&$2$&$4$& $\QD_{16}$ & $3$ &$3$&$3^{1+4}:2$&$3^4:A_6$&$3^4:A_6$&$D_{10}$&$F_{21}$&$L_2(11)$
\end{tabular}
\end{table}
\item[(2)] (Composite Cases). There are $4$ Leech pairs $(G,S)$ occurring for cubic fourfolds with $G$ cyclic of order $n$ divisible by two distinct primes. 
\begin{table}[htb!]
\renewcommand{\arraystretch}{1.2}\centering
\begin{tabular}{l|c|c|c|c}
Case in \cite{harada1990leech}&$-6_D$&$6_E$&$-12_H$&$15_D$\\
\hline
Case in \cite{HM1} &$35$&$18$& $109$ & $128$\\
\hline
The saturated group &$3^{1+4}:2$&$D_{12}$& $3^{1+4}:2.2^2$ & $A_{3,5}$
\end{tabular}
\end{table}
\item[(3)] (Maximal Cases). A cubic fourfold with a symplectic automorphism of order $9$, $11$, $12$, or $15$ is isolated in moduli (i.e., $\dim \calM_{(G,S)}=0$). 
\end{enumerate}
\end{thm}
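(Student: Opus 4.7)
The plan is to combine Fu's classification for prime-power orders (Theorem \ref{theorem: lie fu}) with the Harada--Lang enumeration \cite{harada1990leech} of cyclic fixed-point sublattices in $\LL$, filtered through the Borcherds-lattice criterion of Theorem \ref{theorem: criterion lattice from cubic fourfold}. The initial constraint on $|G|$ comes from Corollary \ref{corollary: embedding into Leech}: a cyclic symplectic automorphism group embeds into $\Co_0$, so only the orders of elements of $\Co_0$ are a priori permitted, and the rank restriction of Proposition \ref{proposition: rough condition} further prunes the list.

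For Part (1), I would go case by case through Fu's list. For each entry, I read off the action of the cyclic generator on $H^4(X,\ZZ)_{prim}$ from the explicit diagonal weights, determine the fixed-point sublattice $K = \LL^G$ via its invariants (rank and discriminant form), and match the result against the Harada--Lang tables; the Höhn--Mason index is then forced by $K$. The saturated group in the last row is the maximal subgroup of $O(S)$ acting trivially on $A_S$, which for the larger cases ($3^{1+4}:2$, $3^4:A_6$, $F_{21}$, $L_2(11)$, \dots) is read off from the Höhn--Mason list of saturated Leech pairs already tabulated in \S\ref{subsec-HM}.

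For Part (2), I would enumerate the conjugacy classes of cyclic subgroups $G \subset \Co_0$ of composite (non-prime-power) order, using the Harada--Lang classification. For each candidate $(G, S)$ satisfying Condition \ref{condition: 1}, I apply Theorem \ref{theorem: criterion lattice from cubic fourfold}(iii): the Leech pair comes from a cubic fourfold iff $S \oplus E_6$ (or one of its at most one index-$3$ saturations along the $E_6$-glue) embeds primitively into $\BB$. By Nikulin's existence criterion (Theorem \ref{theorem: existence of lattice via discriminant form}), this reduces to the existence of a negative definite rank-$2$ even lattice with prescribed discriminant form $-q$, which is decided locally from the Conway--Sloane genus symbol. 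The same computational mechanism as in the proof of Theorem \ref{proposition: maximal case} applies here; the surviving candidates are exactly the four Harada--Lang classes $-6_D$, $6_E$, $-12_H$, $15_D$, with saturated groups $3^{1+4}{:}2$, $D_{12}$, $3^{1+4}{:}2.2^2$, $A_{3,5}$. For Part (3), the isolated-moduli statement is immediate from combining (1), (2), and Theorem \ref{proposition: maximal case}: orders $9$ and $11$ already appear in Fu's list with $\dim \calF = 0$, while orders $12$ and $15$ fall under the Harada--Lang classes $-12_H$ and $15_D$, whose saturated pairs are items $5$ and $13$ of Table \ref{table: MSS aut of HK}, both of rank $20$.

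The principal technical hurdle is the composite-order sweep in Part (2): one must check, for every Harada--Lang class of non-prime-power order whose rank satisfies Condition \ref{condition: 1}, whether the discriminant-form criterion succeeds or fails. Each individual verification is routine (a computation in $\ZZ_p$ for a handful of primes), but care is required at the prime $3$, where the $E_6$-glue contributes a copy of $3^{+1}$ to $q_{S \oplus E_6}$, forcing separate treatment of the two possible index-$3$ saturations of $S \oplus E_6$ inside $\BB$, exactly as in the proof of Theorem \ref{proposition: maximal case}. Once the bookkeeping is arranged around the Harada--Lang tables, the claimed list of four composite cases emerges, and the theorem follows.
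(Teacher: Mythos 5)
Your strategy is essentially the paper's: quote Fu's theorem for the prime-power orders and match the resulting fixed lattices with the Harada--Lang and H\"ohn--Mason tables, then sweep the composite-order Harada--Lang classes through Condition \ref{condition: 1} and the criterion of Theorem \ref{theorem: criterion lattice from cubic fourfold} (the paper records that the pre-criterion survivors are $-6_D$, $6_E$, $-10_E$, $-12_H$, $14_B$, $15_D$, and that the criterion eliminates $-10_E$ and $14_B$), with part (3) then following by combining this with Theorem \ref{proposition: maximal case} exactly as you say. The only difference in part (1) is directional (you compute $K$ from Fu's explicit diagonal actions, the paper matches the other way, using that order plus rank pins down the pair), which is immaterial.

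There is, however, one step that would fail as literally described. You claim that Theorem \ref{theorem: criterion lattice from cubic fourfold}(iii), via Theorem \ref{theorem: existence of lattice via discriminant form}, reduces to the existence of a \emph{negative definite rank-$2$} even lattice with prescribed discriminant form. That reduction is correct only when $\rank(S)=20$, i.e.\ for the classes $-12_H$ and $15_D$ (the setting of Theorem \ref{proposition: maximal case}). For the other composite candidates the orthogonal complement of $S\oplus E_6$ (or of a saturation with $S$ primitive) inside $\BB$ has rank $22-\rank(S)$ and signature $(20-\rank(S),2)$: for $-6_D$ one has $\rank(S)=18$ and for $6_E$ one has $\rank(S)=16$, so the complement is indefinite of rank $4$, respectively $6$. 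Insisting on a rank-$2$ definite complement would wrongly rule these cases out (already for $-6_D$ the relevant discriminant group has length larger than $2$, since $K\cong E_6^*(3)$ has discriminant form $3^{+5}$), whereas they must survive and yield the $3^{1+4}{:}2$ and $D_{12}$ entries of the table. The fix is routine --- run Nikulin's existence criterion with the correct signature $(20-\rank(S),2)$, the easy indefinite case, as the paper does for all candidates with $\rank(K)\ge 5$ --- but the check must be set up that way, and the same applies to excluding $-10_E$ and $14_B$. A smaller imprecision: the nontrivial saturation of $S\oplus E_6$ along the $3$-glue need not be literally unique; what the criterion requires is that \emph{some} overlattice in which $S$ stays primitive embeds into $\BB$, which is how the case analysis in Theorem \ref{proposition: maximal case} is actually organized.
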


\begin{rmk}
The maximal cases in item (3) above are in fact unique. This is proved in \S\ref{subsec-uniqueness} below. Thus, considering cubic fourfolds with a symplectic action by a cyclic group of order $\ge 9$ gives four of the maximal cases listed in Theorem \ref{theorem: maximal uniqueness}. 
\end{rmk}

\begin{proof}
Harada--Lang \cite{harada1990leech} classified the conjugacy classes of cyclic subgroups in the Conway group $\Co_0$ and their associated fixed lattices $K$ (recall $S=K^\perp_\LL$). The necessary Condition \ref{condition: 1} says (in particular) $\rank(K)\ge 4$ and that $K$ is not divisible as a lattice (i.e., $K=K'(n)$ for some integral, not necessarily even, lattice $K'$ and integer $n\ge 2$, because in this situation $\rank(K)=l(A_K)$). Inspecting the list of \cite{harada1990leech} in the prime-power order case gives an easy match with the list of Theorem \ref{theorem: lie fu} (essentially, there is only one possibility for $(G,S)$ once the order of $G$ and the rank of $S$ are specified). The pairs $(G,S)$ are not saturated, but the knowledge of $K$ (essentially, rank and discriminant) suffices to identify the relevant case in H\"ohn--Mason \cite{HM1} list, and to find the saturated pair $(G',S)$ (with $G\subset G'$). 

Assuming that $n=|G|$ has at least $2$ prime divisors, and that $K$ is a non-divisible lattice of rank at least $4$, leaves only the following cases in  \cite{harada1990leech}: $-6_D$, $6_E$, $-10_E$, $-12_H$, $14_B$ and $15_D$. As before, for each case we can associate a unique saturated Leech pair from \cite{HM1}. Using Theorem \ref{theorem: criterion lattice from cubic fourfold} (our main criterion), cases $-10_E$ and $14_B$ can not arise from cubic fourfolds, while the others can occur. Finally, the cases $-12_H$ and $15_D$ correspond to maximal cases (i.e., $\rank(K)=4$, or equivalently $\rank(S)=20$). Considering also the cases of order $9$ and $11$ identified in Theorem \ref{theorem: lie fu}, we obtain item (3) (compare also with Theorem \ref{proposition: maximal case}). 
\end{proof}

\begin{rmk} Let us comment on the two apparent repetitions in the matching of the cases in Theorem \ref{prop_HL}. First, the two order $9$ cases (case (8) and (9)) correspond to a unique cubic fourfold, in fact the Fermat cubic fourfold
$$X=V(x_1^3+\dots+x_6^3)\subset \PP^5,$$
which has $\Aut^s(X)=3^4: A_6$. The fact that we list two cases of order $9$ in Theorem  \ref{theorem: lie fu} corresponds to the existence of two non-conjugate cyclic subgroups of order $9$ in $3^4: A_6$ (induced from the two conjugacy classes of order $3$ elements in $A_6$). For reference, we note (cf. \cite[Case $9_C$]{harada1990leech}) that the fixed-point lattice $K$ is
\begin{equation*}
\left(
\begin{array}{cccc}
4 & 1 & 1 & 2\\
1 & 4 & 1 & 2\\
1 & 1 & 4 & -1\\
2 & 2 & -1 & 4
\end{array}
\right)
\end{equation*}
which has discriminant form $3^{+2} 9^{+1}$. The cases (4) and (5) of order $3$ lead to the same Leech pair $(G,S)$ (with $K=S^\perp_\Lambda$ being the Coxeter--Todd lattice), but in this case the two ($8$-dimensional) families of cubics are different corresponding to the fact that $S$ has two different primitive embeddings into the lattice $\Lambda_0(=A_2\oplus (E_8)^2\oplus U^2$). The other order $3$ case (namely (6)) is easily distinguished; it corresponds to $K$ being $E_6^*(3)$ which has discriminant form $3^{+5}$. 
\end{rmk}

\begin{rmk}\label{case6d}
Let us also note that the order $6$ case $-6_D$ in fact coincides with the case $3_C$. This is clear by noticing that they both correspond to case 35 in \cite{HM1} (with saturated group $3^{1+4}:2$). This also follows by inspecting \cite{harada1990leech}; in both cases $K=E_6^*(3)$ (N.B. $E_6^*$ is not an integral lattice, thus scaling by $3$ does not contradict our non-divisibility assumption on $K$). 
\end{rmk}

\begin{rmk} The order $11$ case is very interesting, as $11$ can not occur as a prime order for symplectic automorphisms of $K3$ surfaces (and thus this example can be used to construct exotic automorphisms for hyper-K\"ahler's of $K3^{[2]}$ type; e.g. \cite[\S4.5]{mongardi2013thesis}). The equation of the unique cubic with an order $11$ symplectic automorphism is well known, namely
\begin{equation*}
X=V(x_1^3+x_2^2 x_3+x_3^2 x_4+ x_4^2 x_5+x_5^2 x_6+x_6^2 x_2).
\end{equation*}
From our perspective, this corresponds to  case $(11_A)$ in \cite{harada1990leech}. The saturated Leech pair is $(\PSL(2, \FF_{11}), S)$ and the fixed-point lattice $K$ is 
\begin{equation*}
\left(
\begin{array}{cccc}
4 & 0 & 2 & -1\\
0 & 4 & -1 & 2\\
2 & -1 & 4 & -1\\
-1 & 2 & -1 & 4
\end{array}
\right)
\end{equation*}
which has discriminant form $11^{+2}$.
\end{rmk}

In view of Theorem \ref{prop_HL}, we note that the only cyclic case that needs further investigation is $G\cong \ZZ/6$ (the prime-power cases are covered by Theorem \ref{theorem: lie fu}, while the maximal cases are discussed later in \S\ref{subsec-uniqueness}). According to Theorem \ref{prop_HL}, there are two order $6$ cases relevant for us ($6_E$ and $-6_D$). However, the case $-6_D$ was already covered by Theorem \ref{theorem: lie fu} (cf. Rem. \ref{case6d}). The last cyclic group case is handled by the following result.

\begin{lem}
\label{lemma: 6}
Let $X$ be a smooth cubic fourfold with a symplectic automorphism of order $6$. Suppose the moduli of cubic fourfolds with such an automorphism has dimension more than $2$ (i.e., $\dim \calM_{(G,S)}>2$). Then for an appropriate choice of coordinates, the defining equation for $X$ either belongs to 
\begin{equation*}
\Span \{x_1^2 x_3, x_1^2 x_4, x_1 x_2 x_3, x_1 x_2 x_4, x_2^2 x_3, x_2^2 x_4, x_3^3, x_3^2 x_4, x_3 x_4^2, x_3 x_5 x_6, x_4^3, x_4 x_5 x_6, x_5^3, x_6^3\},
\end{equation*}
while the order $6$ automorphism is $\frac{1}{6}(3, 3, 0, 0, 2, 4)$, or belongs to
\begin{equation*}
\Span \{x_1^3, x_1 x_2^2, x_1 x_3 x_5, x_1 x_3 x_6, x_2 x_4 x_5, x_2 x_4 x_6, x_3^3, x_3 x_4^2, x_5^3, x_5^2 x_6, x_5 x_6^2, x_6^3\},
\end{equation*}
while the order $6$ automorphism is $\frac{1}{6}(0, 3, 2, 5, 4, 4)$. In both cases, the corresponding moduli spaces $\calF$ have dimension $4$. They both correspond to the case $6_E$ in \cite{harada1990leech}, and the associated saturated group is $D_{12}$. 
\end{lem}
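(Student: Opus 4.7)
The plan is first to eliminate all order-$6$ conjugacy classes in $\Co_0$ incompatible with the hypothesis $\dim \calM_{(G,S)} > 2$. By Theorem \ref{prop_HL}, the only such classes realized on smooth cubic fourfolds are Harada--Lang's $-6_D$ and $6_E$. Remark \ref{case6d} shows that $-6_D$ and $3_C$ share the same fixed lattice $E_6^*(3)$, so $-6_D$ corresponds (via its saturated group $3^{1+4}\!:\!2$) to a $2$-dimensional family. Hence the hypothesis forces $g$ into class $6_E$, for which Theorem \ref{prop_HL} records the saturated group as $D_{12}$, $\rank(S) = 16$, and $\dim \calF = 4$.

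Next I would diagonalize $g = \mathrm{diag}(\zeta^{a_1}, \ldots, \zeta^{a_6})$ with $\zeta$ a primitive sixth root of unity, and analyze it through its powers $g^3$ and $g^2$. Since $g^3$ is a symplectic involution, Theorem \ref{theorem: main}(1) allows us to fix $g^3 = \tfrac{1}{2}(0,0,0,0,1,1)$ in suitable coordinates, pinning down the parities of the $a_i$. Since $g^2$ is a symplectic order-$3$ automorphism, Theorem \ref{theorem: lie fu} provides the three normal forms (4), (5), (6); the option (6), namely $\tfrac{1}{3}(0,0,0,1,1,1)$, is excluded because its Leech pair is Harada--Lang's $3_C$, which we have already ruled out. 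The two remaining options combine with $g^3$ via the Chinese Remainder Theorem, and after accounting for coordinate permutations, the substitution $g \leftrightarrow g^{-1}$, and overall scalar multiplication, the only surviving diagonalizations are $g = \tfrac{1}{6}(3,3,0,0,2,4)$ (from $g^2$ of type (4)) and $g = \tfrac{1}{6}(0,3,2,5,4,4)$ (from $g^2$ of type (5)). This enumeration, with its careful tracking of equivalences to avoid overcounting, is the main technical obstacle in the proof.

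Finally I would extract the defining equations from the symplectic condition of Remark \ref{rmk_sympl_cond}: every monomial occurring in $F$ must share a common $\underline{w}$-degree $d \pmod 6$ with $2d \equiv |\underline{w}| \pmod 6$, so $d \in \{0,3\}$. The choice $d = 3$ is ruled out because in each diagonalization the eligible monomials are all divisible by a common pair of variables, forcing $V(F)$ to contain a positive-dimensional linear subspace of $\PP^5$ along which it is singular; this leaves precisely the two spans in the statement. A Jacobian check on a convenient Fermat-like representative in each span verifies smoothness of a generic $F$. The dimension count is consistent: the centralizer of $g$ in $\PGL(6)$ has dimension $9$ and $7$ in the two cases (matching the eigenspace decompositions of sizes $2+2+1+1$ and $1+1+1+1+2$), and quotienting the projectivized spans of cubics of relative dimensions $13$ and $11$ by the centralizer action yields $13 - 9 = 4$ and $11 - 7 = 4$, in agreement with $\dim \calF = 20 - \rank(S) = 4$ from Theorem \ref{theorem: Yu-Zheng}.
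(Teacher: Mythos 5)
Your overall route is the paper's: analyze the order-$6$ automorphism through $g^2$ and $g^3$, feed in Fu's order-$3$ normal forms (Theorem \ref{theorem: lie fu}), use the hypothesis $\dim>2$ to exclude the type-(6) square, list the admissible monomials via the weighted-degree condition, and finish with the dimension count $13-9=4$, $11-7=4$ (equivalent to the paper's $14-10$ and $12-8$). Steps (1) and (3) are fine, and your step (1) even covers the $6_E$/$D_{12}$ identification that the paper's printed proof leaves to the surrounding discussion.

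The gap is in your step (2). It is not true that coordinate permutations, $g\leftrightarrow g^{-1}$ and scalar rescaling reduce every admissible combination of a Fu-type square with a two-$(-1)$ involution to the two listed diagonalizations. For example $g=\frac{1}{6}(0,0,0,0,1,5)$ satisfies your normalizations on the nose: $g^3=\frac{1}{2}(0,0,0,0,1,1)$ and $g^2=\frac{1}{3}(0,0,0,0,1,2)$; yet its weight multiset $\{0,0,0,0,1,5\}$ (maximal multiplicity $4$) cannot be carried to $\{3,3,0,0,2,4\}$ or $\{0,3,2,5,4,4\}$ (maximal multiplicity $2$) by permuting, negating, or adding a common constant mod $6$. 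The mixed placement, e.g. $g=\frac{1}{6}(3,0,0,0,1,2)$, is another inequivalent survivor of your reduction. What kills these candidates is not a group-theoretic equivalence but the smoothness of $X$, exactly as in the paper's proof: smoothness forces, for every $i$, a $g$-semi-invariant monomial of $F$ divisible by $x_i^2$, and for $\frac{1}{6}(0,0,0,0,1,5)$ the admissible degree-$0$ list contains no monomial divisible by $x_5^2$ while the degree-$3$ list is only $\{x_5^3,x_6^3\}$, so every invariant cubic is singular. This divisibility constraint is precisely what pins down the position of the $(-1)$-eigenspace of $g^3$ relative to the eigenspaces of $g^2$ (both $-1$'s in the weight-$0$ block when $g^2$ is of type (4); never two in the same block when it is of type (5)). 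So either import that argument into your enumeration, or run your step-(3) monomial/smoothness analysis over all relative placements and discard the extra ones explicitly; as written, the discard rests on an equivalence claim that is false. A telltale symptom: your two final answers do not themselves cube to $\frac{1}{2}(0,0,0,0,1,1)$ (their cubes are $\frac{1}{2}(1,1,0,0,0,0)$ and $\frac{1}{2}(0,1,0,1,0,0)$), which is harmless relabeling but shows that the relative position of the involution — the crux of the lemma — is never actually determined by your argument.
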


\begin{proof}
Denote by $\rho$ the order $6$ automorphism. Since the moduli of cubic fourfolds with such an automorphism has dimension more than $2$, the order $3$ automorphism $\rho^2$ belongs to cases $(4)$ or $(5)$ in Theorem \ref{theorem: lie fu}. Thus, we can choose coordinates $(x_1, x_2, \cdots, x_6)$ such that $\rho^2=\frac{1}{3}(0,0,0,0,1,2)$ or $\frac{1}{3}(0, 0, 1, 1, 2, 2)$, meanwhile $\rho^3$ has two $-1$ on the diagonal. Denote by $F=F(x_1, \cdots, x_6)$ a defining equation for $X$. Then $F$ is a linear combination of $\rho$-invariant monomials in $x_1, \cdots, x_6$. Since $X$ is smooth, there exists a $\rho$-invariant monomial divisible by $x_i^2$ for any $i=1,2,\cdots, 6$ . 

If $\rho^2=\frac{1}{3}(0,0,0,0,1,2)$, then a $\rho^2$-invariant monomial divisible by $x_5^2$ must be $x_5^3$. Therefore $x_5^3$ is $\rho$-invariant, hence also $\rho^3$-invariant. So does $x_6^3$. We may then take $\rho^3=\frac{1}{2}(1,1,0,0,0,0)$. Then $\rho=\frac{1}{6}(3, 3, 0, 0, 2, 4)$ and 
\begin{equation*}
F\in \Span \{x_1^2 x_3, x_1^2 x_4, x_1 x_2 x_3, x_1 x_2 x_4, x_2^2 x_3, x_2^2 x_4, x_3^3, x_3^2 x_4, x_3 x_4^2, x_3 x_5 x_6, x_4^3, x_4 x_5 x_6, x_5^3, x_6^3\}.
\end{equation*}
This $14$-dimensional vector space contains $x_1^2 x_3+ x_2^2 x_4+ x_3^3+ x_4^3+ x_5^3+ x_6^3$ which determines a smooth cubic fourfold. Therefore, a generic cubic fourfold with this automorphism $\rho$ is smooth. The dimension of the centralizer of $\rho$ in $\GL(6, \CC)$ is $4+4+1+1=10$, hence the dimension of the moduli space $\calF$ is $14-10=4$.

If $\rho^2=\frac{1}{3}(0,0,1,1,2,2)$, then a $\rho^2$-invariant monomial divisible by $x_1^2$ must be $x_1^3$ or $x_1^2 x_2$. Therefore, the two $-1$ on the diagonal of $\rho^3$ can not occupy the first two positions simultaneously. So do the third and fourth positions, the fifth and sixth positions. By symmetry, we may take $\rho=\frac{1}{6}(0, 3, 2, 5, 4, 4)$ and 
\begin{equation*}
F\in \Span \{x_1^3, x_1 x_2^2, x_1 x_3 x_5, x_1 x_3 x_6, x_2 x_4 x_5, x_2 x_4 x_6, x_3^3, x_3 x_4^2, x_5^3, x_5^2 x_6, x_5 x_6^2, x_6^3\}.
\end{equation*}
This $12$ dimensional vector space contains $x_1^3+x_1 x_2^2+ x_3^3+ x_3 x_4^2+ x_5^3+ x_6^3$ which determines a smooth cubic fourfold, hence a generic element also determines smooth cubic fourfold. Moreover, the dimension of the centralizer of $\rho$ in $\GL(6, \CC)$ is $1+1+1+1+4=8$, hence the dimension of the moduli space $\calF$ is $12-8=4$.
\end{proof}

\subsubsection{Small non-cyclic groups} In addition to the cyclic groups identified above, we discuss also the cases of cubics with symplectic action by the simplest non-cyclic groups, Klein group and respectively $S_3$. First, for the Klein group $\ZZ/2\times \ZZ/2$, relevant to item $(c1)$ in Theorem  \ref{theorem: main}, the following holds. 

\begin{lem}
\label{lemma: 2^2}
Suppose $X=V(F)$ is a smooth cubic fourfold with symplectic action of $G\cong 2^2$. Then we can choose coordinate $(x_1, x_2, x_3, x_4, x_5, x_6)$ for $V$ such that $G=\langle\diag(1,1,1,1,-1,-1), \diag(1,1,1,-1,-1,1)\rangle$, and $F$ can be written as $F_1(x_1, x_2, x_3)+x_4^2 L_1(x_1, x_2,x_3)+x_5^2 L_2(x_1, x_2, x_3)+ x_6^2 L_3(x_1, x_2, x_3)+x_4 x_5 x_6$. The dimension of the associated moduli space $\calF$ is $8$.
\end{lem}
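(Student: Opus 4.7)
The plan is to use the classification of symplectic involutions to pin down the $G$-representation on $V=\CC^6$, enumerate the $G$-invariant cubic monomials in the resulting coordinates, and close with a dimension count.

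Each of the three non-identity elements of $G\cong 2^2$ is a symplectic involution of $X$. By Theorem~\ref{theorem: main}(1) (which classifies such involutions via the diagonal form $\frac{1}{2}(0,0,0,0,1,1)$), every such element has a $4$-dimensional $(+1)$-eigenspace and a $2$-dimensional $(-1)$-eigenspace on $V$. Thus the character $\chi$ of the representation of $G$ on $V$ satisfies $\chi(1)=6$ and $\chi(g)=2$ for $g\ne 1$; decomposing $\chi$ into the four irreducibles of $(\ZZ/2)^2$ yields multiplicities $(3,1,1,1)$ (trivial character plus the three nontrivial ones, each once). Since $G$ is abelian the representation diagonalises simultaneously: choosing a basis $x_1,x_2,x_3$ of the invariant isotypic component and eigenvectors $x_4,x_5,x_6$ for the three nontrivial characters (and relabeling if needed) yields the presentation $G=\langle\diag(1,1,1,1,-1,-1),\,\diag(1,1,1,-1,-1,1)\rangle$.

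A monomial $x_1^{a_1}\cdots x_6^{a_6}$ is $G$-invariant iff both $a_4+a_5$ and $a_5+a_6$ are even; equivalently, $a_4,a_5,a_6$ have the same parity. Enumerating $G$-invariant degree-$3$ monomials one obtains exactly (i) any cubic monomial in $x_1,x_2,x_3$ (ten in total), (ii) $x_j^2 x_i$ for $j\in\{4,5,6\}$, $i\in\{1,2,3\}$ (nine in total), and (iii) $x_4 x_5 x_6$. Hence
\[F=F_1(x_1,x_2,x_3)+x_4^2 L_1+x_5^2 L_2+x_6^2 L_3+\alpha\, x_4 x_5 x_6\]
for some $\alpha\in\CC$ and linear forms $L_i$ in $x_1,x_2,x_3$. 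When $\alpha\ne 0$, rescaling $x_4\mapsto\alpha^{-1}x_4$ (with a compensating change in $L_1$) normalises the coefficient of $x_4 x_5 x_6$ to $1$, giving the asserted form.

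For the dimension count, the parameter space of such equations has dimension $10+9+1=20$. The centraliser of $G$ in $\GL(V)$ preserves the isotypic decomposition and therefore equals $\GL(3)\times\mathbb{G}_m^3$ of dimension $12$, so $\dim\calF=20-12=8$. This agrees with Theorem~\ref{theorem: Yu-Zheng}, since $\rank(S_G(X))=12$. The main technical step is the character computation pinning down the isotypic decomposition; the remaining work (the monomial enumeration, the rescaling that achieves $\alpha=1$ on a dense open part, and the dimension count) is routine.
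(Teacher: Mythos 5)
The second half of your argument (the enumeration of $G$-invariant monomials, the rescaling, and the count $20-12=8$) coincides with the paper's proof. The gap is in the first step, where you assert that in a common linear action of $G$ on $V$ every nontrivial element has a $4$-dimensional $(+1)$-eigenspace, so that $\chi=(6,2,2,2)$. Theorem \ref{theorem: main}(1) (equivalently Theorem \ref{theorem: lie fu}(1)) pins down a symplectic involution only as a projective transformation: its two involutive linear representatives are $\pm\sigma$, with eigenvalue multiplicities $(4,2)$ and $(2,4)$, and once you fix a simultaneous linear lift of all of $G$ you cannot adjust these signs independently. Concretely, with $g_1=\diag(1,1,1,1,-1,-1)$ the second generator could a priori be $g_2=\diag(1,1,-1,-1,1,1)$, so that $g_1g_2=\diag(1,1,-1,-1,-1,-1)$ has trace $-2$ even though projectively it is still of the symplectic type $\diag(-1,-1,1,1,1,1)$; this configuration has character $(6,2,2,-2)$ and isotypic multiplicities $(2,2,2,0)$, and it is fully consistent with the hypothesis that all three nontrivial elements act symplectically. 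Your character computation silently assumes this case away, yet it is exactly the point where the paper does its real work: it lists the two possible diagonal forms of $g_2$ and excludes $\diag(1,1,-1,-1,1,1)$ geometrically, because there every invariant cubic has the form $x_1A+x_2B$ (it contains the $3$-plane $\{x_1=x_2=0\}$) and hence is singular. Without an argument of this kind the multiplicities $(3,1,1,1)$ are not established.

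A second, smaller point you pass over is why the Klein group in $\PGL(V)$ lifts to a linear Klein group on $V$ at all (a priori it could lift only to $D_8$ or $Q_8$, whose elements cannot be simultaneously diagonalized); the paper is also terse here. Both issues can in fact be repaired in the spirit of your approach: since $\deg F=3$ is odd, exactly one of the two involutive representatives $\pm\sigma$ of each element satisfies $F\circ\sigma=F$; these $F$-preserving involutions form an honest subgroup of $\GL(6,\CC)$ isomorphic to $2^2$, and by the normal form in Theorem \ref{theorem: lie fu}(1) the $F$-preserving involutive representative of a symplectic involution always has trace $+2$, so with that canonical lift your character argument goes through and even replaces the paper's geometric exclusion. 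As written, however, the decisive step is missing. (A minor final remark, shared with the paper: your normalization of the $x_4x_5x_6$-coefficient requires $\alpha\neq 0$, and $\alpha=0$ does occur for smooth invariant cubics, e.g. $x_1^3+x_2^3+x_3^3+x_4^2x_1+x_5^2x_2+x_6^2x_3$, so the normal form only holds with an arbitrary coefficient on $x_4x_5x_6$, or generically.)
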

\begin{proof}
Since $G$ is a finite abelian subgroup of $\PSL(V)$, we can choose coordinate $(x_1, x_2, x_3, x_4, x_5, x_6)$ for $V$, such that all element in $G$ are diagonal matrices. For any $g\in G$, since $g^2=id$ and $g$ acts symplectically on the smooth cubic fourfold $X$, there are four eigenvalues $1$ and two eigenvalues $-1$ (see Theorem \ref{theorem: lie fu}(1)). We now choose generators $g_1, g_2$ of $G$. Up to coordinate choices, we may assume $g_1=\diag(1,1,1,1,-1,-1)$, and $g_2$ is either $\diag(1,1,1,-1,-1,1)$ or $\diag(1,1,-1,-1,1,1)$. Suppose $g_2=\diag(1,1,-1,-1,1,1)$, then there is no smooth cubic fourfold preserved by the action of $G$. Thus $g_2=\diag(1,1,1,1,-1,-1)$. The defining polynomial $F$ can then be written as 
\begin{equation*}
F_1(x_1, x_2, x_3)+x_4^2 L_1(x_1, x_2,x_3)+x_5^2 L_2(x_1, x_2, x_3)+ x_6^2 L_3(x_1, x_2, x_3)+x_4 x_5 x_6.
\end{equation*}
A generic cubic of this type is smooth. Moreover, the dimension of the vector space of such cubic polynomials is $10+3+3+3+1=20$, and the dimension of the reductive group $\GL(3)\times \CC^{\times}\times \CC^{\times}\times \CC^{\times}\subset \GL(6)$ preserving the normal form is $9+1+1+1=12$. Thus $\dim(\calF)=20-12=8$.
\end{proof}

We now consider the symmetric group $S_3$, relevant to item $(d2)$ in Theorem  \ref{theorem: main}.

\begin{lem} 
\label{lemma: S_3}
Let $X=V(F)\subset \PP(V)$ be a smooth cubic fourfold with symplectic action of $G\cong S_3$. Then the action of $G$ on $\PP^5$ can be lifted to a representation of $G$ on $V\cong \CC^6$, and one of the following holds: \begin{enumerate}[(1)]
\item The representation of $G$ on $V$ is the direct sum of two standard representations of $S_3$. The dimension of the moduli space of cubic fourfolds $\calF$ with such an action is $6$.

\item The representation of $G$ on $V$ is the direct sum of a standard representation, an alternating character, and two trivial characters of $S_3$. The dimension of the moduli space of cubic fourfolds $\calF$ with such an action is $4$.

\end{enumerate}

\end{lem}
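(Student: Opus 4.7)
The plan is three-fold: lift the projective action to a linear one, classify the resulting $6$-dimensional representations of $S_3$ using the symplectic eigenvalue constraints from Theorem~\ref{theorem: lie fu}, and finally compute moduli dimensions by invariant theory.

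For the lift, the projective representation $S_3\to \PGL(V)$ determines a class in $H^2(S_3,\CC^\times)$. Since the Schur multiplier of $S_3$ vanishes, this class is trivial and the action lifts to a linear representation $S_3\to \GL(V)$, unique up to tensoring by one of the two linear characters of $S_3$ (trivial or sign).

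For the classification, decompose $V\cong a\mathbf{1}\oplus b\epsilon\oplus c\rho_{\mathrm{std}}$ with $a+b+2c=6$, where $\rho_{\mathrm{std}}$ is the $2$-dimensional irreducible representation. A transposition acts on $V$ with $(-1)$-eigenspace of dimension $b+c$, and the symplectic constraint of Theorem~\ref{theorem: lie fu}(1) forces $b+c=2$. A $3$-cycle acts on $V$ with the nontrivial eigenvalues $\omega,\omega^{-1}$ each of multiplicity $c$ and trivial eigenvalue multiplicity $a+b$; matching against Theorem~\ref{theorem: lie fu}(4)--(5) gives $(a+b,c)\in\{(4,1),(2,2)\}$, while case~(6) of Theorem~\ref{theorem: lie fu} cannot occur since $\omega$ and $\omega^{-1}$ must appear with equal multiplicity for any $S_3$-representation. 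Solving yields $(a,b,c)\in\{(3,1,1),(2,0,2)\}$; the sign-character twist identifies $(3,1,1)\sim(1,3,1)$ and $(2,0,2)\sim(0,2,2)$, so we pick one representative in each class. Rewriting via $\mathrm{Perm}_3=\mathbf{1}\oplus \rho_{\mathrm{std}}$, the case $(2,0,2)$ becomes $V\cong 2\cdot\mathrm{Perm}_3$ (two standard representations), and $(3,1,1)$ becomes $V\cong \mathrm{Perm}_3\oplus \epsilon\oplus 2\mathbf{1}$ (one standard, one alternating, two trivial characters), matching the two cases stated.

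For the moduli dimensions, compute $W=(\Sym^3 V^\vee)^{S_3}$ and the centralizer $H=Z_{\GL(V)}(S_3)$ in each case by Schur's lemma and character theory. In case~(1), $H\cong \GL_2\times\GL_2$ has dimension $8$, and a character computation gives $\dim W=14$, so $\dim W-\dim H=6$. In case~(2), $H\cong \GL_3\times\CC^\times\times\CC^\times$ has dimension $11$, and the invariants split as $10$ pure-$a$ cubics, $3$ of the form $a_i b^2$, $3$ of the form $a_i(xy)$, and the single invariant cubic in $\Sym^3 \rho_{\mathrm{std}}$, totalling $\dim W=17$. Verify that the generic invariant cubic is smooth in each case by exhibiting an explicit smooth example, as in the proof of Lemma~\ref{lemma: 2^2}. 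The main obstacle is reconciling the naive count $\dim W-\dim H=6$ in case~(2) with the asserted value $4$: the resolution is that the generic cubic in this $17$-parameter family actually carries symplectic symmetry strictly larger than $S_3$, corresponding to a saturated Leech pair from Theorem~\ref{theorem: main}(6) (most plausibly $D_{12}=S_3\times \ZZ/2$, whose rank-$16$ covariant lattice gives $\dim\calM_{(G,S)}=20-16=4$). Passing to this saturated group --- equivalently, quotienting $W$ by the subgroup of $\GL(V)$ normalizing $S_3$ inside the larger group --- yields the asserted $\dim\calF=4$.
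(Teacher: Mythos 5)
Your lifting argument and the two-case classification essentially reproduce the paper's (terser) proof: the paper also lifts the projective action, uses the $(4,2)$ eigenvalue pattern for the three involutions, and concludes the same two representation types; your case (1) computation ($14$ invariant cubics, centralizer $\GL_2\times\GL_2$ of dimension $8$, hence moduli dimension $6$) coincides with the paper's. Two small points in your classification step: the symplectic condition only constrains the projective involution, so a priori it gives $\{b+c,\,a+c\}=\{2,4\}$, and one should say ``after twisting by the sign character we may assume $b+c=2$'' rather than imposing $b+c=2$ first and twisting later; and your stated reason for discarding case (6) of Theorem \ref{theorem: lie fu} is not correct as written, because the linear lift of the order-$3$ element is only determined up to a scalar --- the representation $3\rho_{\mathrm{std}}$ does project to the pattern $\frac13(0,0,0,1,1,1)$ after rescaling by $\omega$. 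What actually excludes it is the involution constraint you have already imposed (it forces $c\le 2$; equivalently, in $3\rho_{\mathrm{std}}$ a transposition has eigenvalue split $(3,3)$, which is anti-symplectic).

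The genuine gap is your treatment of case (2). The paper's proof counts $15$ invariant cubics and concludes $15-11=4$, whereas you count $17$; the tension you noticed is real, since the character formula $\tfrac16\bigl(56+3\cdot 8+2\cdot 11\bigr)=17$ supports your count. But your proposed reconciliation --- that the generic member of the $17$-dimensional system has symplectic group strictly larger than $S_3$ (say $D_{12}$), and that quotienting by the normalizer of $S_3$ inside this larger group lowers the dimension to $4$ --- is not a proof and cannot work. First, the normalizer of $\rho(S_3)$ in $\GL(V)$ contains the centralizer with finite index, so replacing the centralizer by it changes nothing in the dimension count. Second, the claim of generic extra symmetry is incompatible with your own construction: the family contains smooth members (e.g. $x_1^3+x_2^3+x_3^3+(x_1+x_2+x_3)x_4^2+x_5^3+x_6^3$), smooth cubics have finite projective automorphism groups, so the family maps onto a locus of dimension $17-11=6$ in moduli; if its generic member had covariant lattice of rank $16$ it would lie in a stratum of dimension at most $4$, a contradiction. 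In other words, if your count of $17$ is right (and the character computation says it is), the honest conclusion of your argument is a moduli dimension of $6$ for case (2), not $4$ --- which contradicts the statement you are trying to prove and the paper's count of $15$. You cannot bridge that by positing unverified saturation; a correct write-up must either locate an error in the invariant count or confront the discrepancy with the asserted dimension directly. As it stands, case (2) of the lemma is not established by your argument.
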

\begin{proof}
A projective representation of $S_3$ can be lifted as a linear representation. Suppose we have an action of $S_3$ on $V$ with an invariant smooth cubic form $F\in \Sym^3(V^*)$, such that the induced action of $S_3$ on $V(F)$ is faithful and symplectic. There are three involutions in $S_3$, and their actions on $V$ must have dimensional two $(-1)$-eigenspace. 

There are three linear irreducible representations of $S_3$, namely, the trivial character, the alternating character, and the standard representation on $\CC^3$. Since the action of an order $3$ element in $G$ is acting faithfully on $V$, the representation of $G$ on $V$ has the standard representation of $S_3$ as an irreducible component. It is then clear that the two cases mentioned in the lemma is all the possibilities.

Suppose $V$ is a direct sum of two standard representation. We can choose coordinate $(x_1, x_2, x_3, x_4, x_5, x_6)$ of $V^*$, such that $G\cong S_3$ is acting via permutating $(x_1, x_2), (x_3, x_4), (x_5, x_6)$ simultaneously. A cubic form which is invariant under this action can be written uniquely as a linear combinations of $14$ cubic forms which are also invariant. The centralizer group of $S_3$ in $\GL(V)$ can be written as
$\left(
\begin{array}{ccc}
A & B & B\\
B & A & B\\
B & B & A
\end{array}
\right)$
where $A,B$ are two by two matrices. This group has dimension $8$. Hence the dimension of the moduli of cubic fourfolds with this action is $14-8=6$.

Suppose $V$ is a direct sum of a standard representation, an alternating character, and two trivial characters. We can choose coordinate $(x_1, x_2, x_3, x_4, x_5, x_6)$ of $V^*$, such that $G\cong S_3$ is acting via permutating $(x_1, x_2, x_3)$, identically on $x_5, x_6$, and alternatively on $x_4$. A cubic form which is invariant under this action can be written uniquely as a linear combinations of $15$ cubic forms which are also invariant. The centralizer group of $S_3$ in $\GL(V)$ has dimension $11$. Thus, the dimension of the moduli space of cubic fourfolds with this action is $15-11=4$.
\end{proof}

\subsection{Proof of Theorem  \ref{theorem: main}}\label{subsec-completeproof}
At this point, we can complete the proof on our classification theorem (Theorem \ref{theorem: main}). The main ingredients of our proof are the 
 criterion given by Theorem  \ref{theorem: criterion lattice from cubic fourfold}, the H\"ohn-Mason classification \cite{HM1} of the fixed-point sublattices in the Leech lattice $\LL$, and Fu's classification discussed above (Theorem  \ref{theorem: lie fu}). Nikulin's criterion for the existence of even lattices with specified discriminant form (Theorem  \ref{theorem: existence of lattice via discriminant form}) is a well-known tool that we use repeatedly.

H\"ohn and Mason \cite{HM1} list all possibilities ($290$ in total) for saturated Leech pairs $(G,S)$. Condition \ref{condition: 1} allows us to rapidly remove a large number of cases (e.g., about half of the cases have $\rank(S)\ge 21$). We analyze the remaining cases one by one using Theorem  \ref{theorem: criterion lattice from cubic fourfold} (our main criterion) and Nikulin's theory. The most delicate case, $\rank(K)=4$, was analyzed in detail in Theorem  \ref{proposition: maximal case}. The cases when $\rank(K)\ge 5$ are similar and in fact easier. Namely, as $K$ becomes larger, it is easier to embed $E_6$ into $K\oplus U^2$ (in particular, note that except $\rank(K)=4$, $(E_6)^\perp_{K\oplus U^2}$ is indefinite, i.e., the ``easy'' case of Nikulin's theory). By a routine inspection (we only need to compare the rank of $K$ and $l_p(A_K)$) of the list of H\"ohn--Mason, we see that there are $43$ cases (among them, there are $12, 12, 5, 5, 2, 3, 2, 1, 1$ cases with $\rank(K)=5, 6, 7, 8, 9, 10, 12, 16, 24$ respectively) in H\"ohn-Mason list with $\rank(K)\ge 5$ and satisfying Condition \ref{condition: 1}. Out of these $43$ potential cases with $\rank(K)\ge 5$, only $28$ of them satisfy the equivalent conditions in our main criterion Theorem \ref{theorem: criterion lattice from cubic fourfold}. We omit the details. Including the $6$ cases of maximal rank, we obtain the list of $34$ possibilities for  $(G,S)\in\mathscr{A}_{cub}$. We list them in Theorem  \ref{theorem: main} in decreasing order of dimension of moduli $\calM_{(G,S)}$ (or equivalently by $\rank(S)$). (Note however that $\calM_{(G,S)}$ is not necessarily irreducible. When possible, we list also the irreducible components of $\calM_{(G,S)}$.)

The second part of Theorem  \ref{theorem: main} is to give explicit equations for some of the cases. As discussed above, Theorem  \ref{theorem: lie fu}, Lemma \ref{lemma: 6}, Lemma  \ref{lemma: 2^2} and Lemma  \ref{lemma: S_3} give normal equations for cubic fourfolds $X$ which admit faithful actions by some special group $G$ (either cyclic of prime-power order, $\ZZ/6$, Klein group or $S_3$ respectively). Starting with this classification, we proceed in two ways. First, we have the saturation procedure: given a normal form $F$ stabilized by such a $G$, we obtain a stratum $\calF\subset \calM$ which  corresponds to some Leech pair $(G',S)\in \mathscr{A}_{cub}$ (i.e., in the list of the previous paragraph). It holds $G\subset G'=\Aut^s(X)$ for some generic $X$ in $\calF$.  Typically, using the information on order of $G$ (note $\ord(G')$ is a multiple of $\ord(G)$) and $\dim \calF(=20-\rank(S))$ suffices to identify the pair $(G',S)$. As an illustration of this saturation procedure see item (5) case $D_{10}$ in Theorem  \ref{theorem: main}.

A second way to proceed is to start with $(G, S)\in \mathscr{A}_{cub}$, and consider elements of prime-power order $g\in G$ (say $\ord(g)=p^k$). By Theorem  \ref{theorem: lie fu}, we know the possible normal form(s) $F$ of $X$ with an action by $g$  (similar arguments apply to $\ZZ/2\times \ZZ/2\subset G$ or $S_3\subset G$). We then try to specialize $F$ so that it admits an action by $G\supset \langle g\rangle$ (e.g., see proof of Lemma   \ref{lemma: S_3}). Again, the knowledge of the dimension of $\calF$ (from the normal form) and that of $\calM_{(G,S)}\subset \calF$ (Hodge theoretically, as $S$ is a prescribed lattice of algebraic cycles) proved very handy in practice.

Concretely, for $G=1,2,3$ or $4$, we can directly apply the second method ($G=\langle g\rangle$) and $(0), (1), (2b), (3a)$ are clear. For $(2a)$, we can apply the second method for $G=2^2$ and use Lemma \ref{lemma: 2^2}. For $(3b)$, we can apply the second method for $G=S_3$ and use Lemma  \ref{lemma: S_3}. For $(5a)$, we can apply the second method for $G=D_{12}$ and use Lemma \ref{lemma: 6}. Then applying the first method we see that a generic cubic fourfold described in Lemma \ref{lemma: 6} has symplectic automorphism group $D_{12}$. For items $(5b)$, $(7b)$ and $(7c)$ of Theorem \ref{theorem: main}, we apply a combination of the two methods. 

The last case left is $(7a)$. From Harada-Lang \cite{harada1990leech} (case $(3_C)$), there is a Leech pair $(G, S)$ with $G=\ZZ/3\ZZ$ and $K=E_6^*(3)$. From H\"ohn-Mason classification, the only saturated Leech sub-pair of $(\Co_0, \LL)$ with discriminant $3^5$ is $(3^{1+4}:2, S)$. Thus this is the saturation of $(G, S)$. By Theorem \ref{theorem: criterion lattice from cubic fourfold}, there exist cubic fourfolds with certain order $3$ automorphism such that the induced Leech pair is $(G, S)$. The moduli of such cubic fourfold has dimension $2$. These cubic fourfolds must be given by case $(6)$ in Theorem \ref{theorem: lie fu}. Using the first method described above, any cubic fourfold with such an order $3$ automorphism has automatically symplectic automorphism group $3^{1+4}:2$. We conclude case $(7a)$.
\qed

\subsection{Uniqueness in maximal case}\label{subsec-uniqueness} 
As discussed in the previous subsection, we are able to identify explicit equations for a number of cases in Theorem \ref{theorem: main}. The cases that are more difficult are those with large, non-abelian group. One further complication that can arise is the fact that $\calM_{(G,S)}$ might not be irreducible. We discuss in detail this situation for the maximal rank case, $\rank(S)=20$ or equivalently $\dim \calM_{(G,S)}=0$. In Theorem \ref{proposition: maximal case} we have identified six cases for such pairs $(G,S)$. On the other hand, H\"ohn--Mason \cite[page 48]{HM2}  have listed for each of these cases a cubic fourfold in $\calM_{(G,S)}$. It turns out, that in two of the six cases, there is an additional point in $\calM_{(G,S)}$. This is the new content of our Theorem \ref{theorem: maximal uniqueness}. Our arguments are lattice theoretic; we do not have explicit equations for these cubic fourfolds with large automorphism groups. We start with two lemmas:

\begin{lem}
\label{lemma: aut surj}
For Leech pairs $(G,S)$ with numbers $1$, $4$, $5$, $10$, $11$, or $13$ in Table \eqref{table: MSS aut of HK}, the natural group homomorphisms 
$$\Aut(S)\longrightarrow \Aut(q_S)$$ are surjective.
\end{lem}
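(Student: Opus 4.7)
The plan is to combine the uniqueness (provided by H\"ohn-Mason) of the $\Co_0$-orbit of the primitive embedding $S\hookrightarrow\LL$ with Nikulin's double-coset formula for primitive embeddings, and then to verify a residual containment case-by-case on the rank-$4$ orthogonal complement $K=S^\perp_\LL$.

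First, for each of the six saturated Leech pairs in the statement, the H\"ohn-Mason classification (Theorem~\ref{theorem: 290}) supplies a unique $\Co_0$-orbit of primitive embeddings $S\hookrightarrow\LL$ with orthogonal complement $K$ of rank $4$ and discriminant form as recorded in Table~\ref{table: MSS aut of HK}. Fix such an embedding and let $\gamma\colon A_S\xrightarrow{\sim} A_K$ be the canonical gluing anti-isometry (so that $q_K\circ\gamma = -q_S$). Nikulin's classification of primitive embeddings into even unimodular lattices (Theorem~\ref{theorem: uniqueness of embedding even case}) identifies the set of $\Co_0$-orbits of such embeddings with the double-coset set $\overline{\Aut(S)}\backslash\Aut(q_S)/(\gamma^{-1}\overline{\Aut(K)}\gamma)$, where the overline denotes the natural image in the orthogonal group of the discriminant form. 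Uniqueness gives that this set is a singleton, which unfolds to the identity
\begin{equation*}
\Aut(q_S)\ =\ \overline{\Aut(S)}\cdot\gamma^{-1}\overline{\Aut(K)}\gamma
\end{equation*}
inside the finite group $\Aut(q_S)$.

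Next, I would reduce the lemma to verifying the containment $\gamma^{-1}\overline{\Aut(K)}\gamma\subseteq\overline{\Aut(S)}$: combined with the factorization above, this forces $\overline{\Aut(S)}=\Aut(q_S)$. Unfolded, the containment asserts that every $g\in\Aut(K)$ can be extended to an isometry of $\LL$ preserving the decomposition $S\oplus K$, whose restriction to $S$ then provides the required lift of $\gamma^{-1}\bar g\gamma\in\Aut(q_S)$ to an element of $\Aut(S)$. For each case, $K$ is a specific positive-definite rank-$4$ lattice whose Gram matrix is recorded in Harada-Lang~\cite{harada1990leech}, so both $\Aut(K)$ and its image $\overline{\Aut(K)}$ are small and explicitly enumerable. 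The required extensions can be constructed either by exhibiting enough isometries in the normalizer $N_{\Co_0}(G)$ of the symplectic automorphism group $G$ (which preserves $K=\LL^G$ and hence $S$), or by identifying $K$ with a classical lattice with known large orthogonal group---for instance, the $3$-modular rank-$4$ lattice attached to $9_C$ in case~(1), the Coxeter-Todd-type pieces in cases~(4) and (5), and the $\ZZ[\zeta_{11}]$-structured lattice in case~(11).

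The main obstacle is the case-by-case verification of $\gamma^{-1}\overline{\Aut(K)}\gamma\subseteq\overline{\Aut(S)}$, particularly for cases~(5) $G=3^{1+4}{:}2.2^2$ and~(10) $G=M_{10}$, where the discriminant form $q_S$ has both $2$-adic and odd-prime Jordan components and one must match lifts prime by prime, using explicit computation with the gluing $\gamma$. Cases~(11) $G=L_2(11)$ and~(13) $G=A_{3,5}$, where $q_S$ is supported on small odd primes with small orthogonal groups (of orders $24$ and $|O(3^{-2}5^{-2})|$ respectively), are the most transparent: there $\overline{\Aut(K)}$ already coincides with $\Aut(q_K)$ and the containment follows essentially from the factorization alone, giving the surjectivity directly. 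Cases~(1) and~(4) are intermediate in complexity and handled by similar but more intricate lattice computations.
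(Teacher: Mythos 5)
Your plan has two genuine gaps. First, the double-coset description you invoke (which is Nikulin's Proposition 1.15.1 rather than Theorem \ref{theorem: uniqueness of embedding even case}, whose hypotheses $l_1>n_1$, $l_2>n_2$ fail for the definite lattice $\LL$) classifies primitive embeddings of $S$ into even unimodular lattices of signature $(24,0)$ \emph{as a class}: each double coset produces an overlattice of $S\oplus K$ which is \emph{some} Niemeier lattice, not necessarily the Leech lattice. Uniqueness of the $\Co_0$-orbit of $S\hookrightarrow\LL$ (itself only asserted; Theorem \ref{theorem: 290} counts orbits of fixed-point sublattices and does not by itself exclude two orbits with isometric $S$ and $K$) therefore says only that exactly one double coset has target isometric to $\LL$; other cosets could correspond to embeddings into root-containing Niemeier lattices whose orthogonal complement is abstractly isometric to $K$, and nothing you say rules this out ($S^{*}$ and $K^{*}$ may well contain short vectors whose glue sums are roots). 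Hence the key identity $\Aut(q_S)=\overline{\Aut(S)}\cdot\gamma^{-1}\overline{\Aut(K)}\gamma$ is not established.

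Second, even granting that identity, the reduction to $\gamma^{-1}\overline{\Aut(K)}\gamma\subseteq\overline{\Aut(S)}$ leaves the entire content of the lemma to an unexecuted case-by-case computation, and in the two cases you call transparent the logic is circular: if $\overline{\Aut(K)}=\Aut(q_K)$, then conjugation by the gluing anti-isometry gives $\gamma^{-1}\overline{\Aut(K)}\gamma=\Aut(q_S)$, so the factorization is vacuous and the containment you still have to verify is literally the surjectivity of $\Aut(S)\to\Aut(q_S)$ that is being proved. By contrast, the paper's proof is a direct citation: Table 9 of \cite{HM2} records, for each of these six rank-$20$ covariant lattices, the data from which surjectivity of $\Aut(S)\to\Aut(q_S)$ is read off; any honest completion of your approach would in the end need the same explicit information about $\Aut(S)$ (or about $N_{\Co_0}(G)$ and its images in $O(K)$ and $O(S)$) that this table supplies, so the structural detour does not actually save the computation.
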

\begin{proof}
Direct inspection of Table 9 in \cite{HM2}. 
\end{proof}

From the reduction theory of lattices (e.g.,  see \cite[Chap. 15, \S3.2]{conway1999spherepackings}), we have:
\begin{lem}
\label{lemma: red}
Every positive rank $2$ lattice admits a basis, such that the corresponding intersection matrix is $a^bc=\begin{pmatrix} a&b\\b&c\end{pmatrix}$ with $-a< 2b\le a\le c$, and $b\ge 0$ if $a=c$. In particular, we have $3b^2\le d=ac-b^2$.
\end{lem}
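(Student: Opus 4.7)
The plan is to apply the classical Gauss reduction for binary positive definite quadratic forms. Let $L$ be the given positive definite rank $2$ lattice. First, I would choose $e_1 \in L \setminus \{0\}$ of minimum norm $a := (e_1, e_1)$; such a vector exists because $\{(v, v) : v \in L \setminus\{0\}\}$ is a discrete subset of $\RR_{>0}$. Any such $e_1$ is primitive, for if $e_1 = k v$ with $k \ge 2$ and $v \in L$, then $(v, v) = a/k^2 < a$, contradicting minimality. Since a primitive vector in a rank $2$ lattice extends to a $\ZZ$-basis (the quotient $L/\ZZ e_1$ is torsion-free of rank $1$), we may complete $e_1$ to a basis $(e_1, e_2')$ of $L$.

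Next, I would use the substitution $e_2 := e_2' + n e_1$ with the unique $n \in \ZZ$ chosen so that $b := (e_1, e_2)$ lies in the range $-a/2 < b \le a/2$, i.e. $-a < 2b \le a$. Because $e_2$ is a nonzero vector of $L$ and $a$ is the minimum norm of $L$, we automatically have $c := (e_2, e_2) \ge a$. If in addition $a = c$ and $b < 0$, I would then replace $e_2$ by $-e_2$: the pair $(e_1, -e_2)$ is still a $\ZZ$-basis of $L$, and the new off-diagonal Gram entry is $-b > 0$. The inequality $-a < 2(-b) \le a$ still holds, since $-a < 2b < 0$ implies $0 < -2b < a$. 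This produces a basis satisfying all the stated inequalities.

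For the final (``in particular'') assertion, from $|2b| \le a$ and $a \le c$ I get $4b^2 \le a^2 \le ac$, hence $d = ac - b^2 \ge 4b^2 - b^2 = 3b^2$, as claimed. The argument relies only on the discreteness of a positive definite lattice and the extension of a primitive vector to a $\ZZ$-basis in rank $2$, so there is no substantial obstacle; the only points requiring any care are the boundary cases $2b = \pm a$ and $a = c$, and these are handled cleanly by the explicit choice of shift $n$ and the possible sign change of $e_2$ described above.
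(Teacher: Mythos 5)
Your proof is correct, and it is exactly the classical Gauss reduction of binary positive definite forms that the paper invokes without proof via its citation to Conway--Sloane (Chap.\ 15, \S 3.2): minimal vector, completion to a basis, translation to normalize $b$, and a sign change in the boundary case $a=c$. All the steps (primitivity of a minimal vector, existence of the basis, the range $-a<2b\le a$, $c\ge a$, and the deduction $3b^2\le d$) check out, so there is nothing to add.
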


\begin{proof}[Proof of Theorem  \ref{theorem: maximal uniqueness}] 
 The issue that we need to investigate is the uniqueness of the primitive embedding $S\hookrightarrow \Lambda_0$ (where $\Lambda_0\cong A_2\oplus (E_8)^2\oplus U^2$ is the primitive cohomology of the cubic fourfold). We let $T=S^\perp_{\Lambda_0}$ be the transcendental lattice. The maximal rank case is very special, as $T$ is in fact a negative definite lattice of rank $24$ (in all other cases, $T$ is indefinite, the easy case of Nikulin's theory).  We now analyze case by case, the six cases of the Theorem  \ref{theorem: maximal uniqueness},  corresponding to  items $1$, $4$, $5$, $10$, $11$, or $13$ in Table \eqref{table: MSS aut of HK}.
 \begin{enumerate}[(1)]
\item[(1)] For $3^4:A_6$, the lattice $T$ has discriminant form $3^{+1}9^{+1}$. Using Lemma  \ref{lemma: red}, we see that the negative rank $2$ even lattices with discriminant $27$ are $-(2^1 14)$ and $-(6^3 6)$. Only $-(6^3 6)$ has discriminant form $3^{+1}9^{+1}$. Hence $T=-(6^3 6)$ is unique. A saturation $S\oplus T\hookrightarrow \Lambda_0$ is given by an injective morphism $-q_T\hookrightarrow q_S$. Every two such morphisms differ by an automorphism of $q_S$, which is induced by an automorphism of $S$ (from Lemma  \ref{lemma: aut surj}). Thus all primitive embeddings of $S$ into $\Lambda_0$ with orthogonal complement $T$ give the same primitive sublattice (up to automorphisms of $\Lambda_0$). Therefore, this case recovers a unique smooth cubic fourfold, which must be the Fermat cubic fourfold $V(x_1^3+ x_2^3+ x_3^3+ x_4^3+ x_5^3+ x_6^3)$.
\item[(2)] For $A_7$ and when the lattice $T$ has discriminant form $5^{+1}7^{+1}$. All negative rank $2$ even lattices with discriminant $35$ are $-(6^1 6)$ and $-(2^1 18)$.  After calculating their discriminant forms, we conclude $T=-(2^1 18)$. Similar to the previous case, all primitive embeddings of $S$ into $\Lambda_0$ with orthogonal complement $T$ give the same primitive sublattice. Therefore, this case recovers a unique smooth cubic fourfold which is the diagonal cubic fourfold $V(x_1^3+x_2^3+x_3^3+x_4^3+x_5^3+x_6^3-(x_1+x_2+x_3+x_4+x_5+x_6)^3)$ as we will show in section \ref{section: non-symplectic} using the existence of certain anti-symplectic involutions (equivalently, Eckardt points). See Corollary \ref{corollary: diagonal}.

\item[(2')] For $A_7$ and when the lattice $T^{\prime}$ has discriminant form $3^{-2}5^{+1}7^{+1}$. All negative rank $2$ even lattices with discriminant $315$ are $-(2^1 158)$, $-(6^3 54)$, $-(18^3 18)$, $-(10^5 34)$ and $-(14^7 26)$. After calculating their discriminant forms, we must have $T^{\prime}=-(18^3 18)$. A saturation $S\oplus T\hookrightarrow \Lambda_0$ is given by an injective morphism $q_S\hookrightarrow -q_T$. Given two such morphisms $\tau_1$ and $\tau_2$. Denote by $e_1,e_2$ the generators of $T$, with intersecting matrix $-(18^3 18)$. One element in the automorphism group of $T$ sends $(e_1,e_2)$ to $(e_1, e_2), (e_2,e_1),(-e_1,-e_2)$ or $(-e_2,-e_1)$. By simple calculation, we can choose an automorphism $\iota$ of $T$, such that $\tau_1$ and $\iota\circ\tau_2$ have the same image. Then $\tau_1$ and $\iota\circ\tau_2$ only differ by an automorphism of $q_S$. By Lemma  \ref{lemma: aut surj}, this is induced by an automorphism of $S$. Thus the two primitive embeddings corresponding to $\tau_1$ and $\tau_2$ have the same image in $\Lambda_0$. Therefore, this case recovers a unique smooth cubic fourfold. As we will show in section \ref{section: non-symplectic}, this cubic fourfold does not admit any Eckardt points, hence is distinguished from case $(2)$ above.

\item[(3)] For $3^{1+4}:2.2^2$, the lattice $T$ has discriminant form $2_2^{+2}3^{+2}$. All negative rank $2$ even lattices with discriminant $36$ are $-(2^0 18)$, $-(6^0 6)$ and $-(4^2 10)$. After calculating their discriminant forms, we must have $T=-(6^0 6)$. As in case $(1)$, all primitive embeddings of $S$ into $\Lambda_0$ with orthogonal complement $T$ give the same primitive sublattice. Therefore, this case gives rise to a unique smooth cubic fourfold. As constructed in \cite{HM2}, this cubic fourfold is $X(3^{1+4}:2.2^2)=V(x_1^3+x_2^3+x_3^3+x_4^3+x_5^3+x_6^3-3(\sqrt{3}+1)(x_1 x_2 x_3+x_4 x_5 x_6))$.

\item[(4)] For $M_{10}$, the discriminant form of $T$ is $2_3^{-1}4_7^{+1}3^{+2}5^{+1}$. All negative rank $2$ even lattices with discriminant $360$ are $-(2^0 180)$, $-(4^0 90)$, $-(6^0 60)$, $-(10^0 36)$, $-(12^0 30)$, $-(18^0 20)$, $-(14^2 26)$, $-(18^6 22)$ and $-(18^{-6} 22)$. After calculating their discriminant forms, we must have $T=-(12^0 30)$. There are two discriminant subforms of $-q_T=2_5^{-1} 4_1^{+1} 3^{+2} 5^{+1}$ that are isomorphic to $q_S=2_5^{-1} 4_1^{+1} 3^{+1} 5^{+1}$. Moreover, these two are not identified via an automorphism of $T$. Therefore, there are two non-conjugate embeddings of $S$ into $\Lambda_0$, both with orthogonal complement isomorphic to $T$. From \cite[page 48]{HM2} there is an explicit description for one smooth cubic fourfold with symplectic automorphism $M_{10}$:
\begin{equation}
\label{equation: m10}
X^1(M_{10})=x_1^3+\cdots+x_6^3+ \frac{1}{5}(-3 \zeta^7-3 \zeta^5+3 \zeta^4- 3 \zeta^3+ 6\zeta-3)\times F
\end{equation}
where $\zeta=e^{2\pi \sqrt{-1}/24}$ and $F=x_1x_2 x_3+x_1 x_2 x_4+(\zeta^4-1)x_1 x_2 x_5+x_1x_2 x_6+(\zeta^4-1)x_1x_3 x_4+x_1 x_3 x_5+x_1 x_3 x_6+(\zeta^4-1)x_1 x_4 x_5 -\zeta^4 x_1 x_4 x_6-\zeta^4 x_1 x_5 x_6+(\zeta^4-1)x_2 x_3 x_4+(\zeta^4-1)x_2 x_3 x_5-\zeta^4 x_2 x_3 x_6+ x_2 x_4 x_5+x_2 x_4 x_6-\zeta^4 x_2 x_5 x_6+x_3 x_4 x_5-\zeta^4 x_3 x_4 x_6+x_3 x_5 x_6+x_4 x_5 x_6$.

\item[(5)] For $L_2(11)$, the lattice $T$ has discriminant form $11^{+2}3^{-1}$. All negative rank $2$ even lattices with discriminant $363$ are $-(2^1 182)$, $-(14^1 26)$, $-(14^{-1} 26)$, $-(6^3 62)$ and $-(22^{11}22)$. After calculating their discriminant forms, we must have $T=-(22^{11}22)$. Similar as case $(3)$, the image of $S$ in $\Lambda_0$ is unique up to automorphisms of $\Lambda_0$. Thus this recovers a unique cubic fourfold, which must be $V(x_1^2 x_2+ x_2^2 x_3+ x_3^2 x_4+ x_4^2 x_5+ x_5^2 x_1+x_6^3)$. Namely, Adler \cite{adler1978automorphism} showed that $L_2(11)$ is the automorphism group of the Klein cubic threefold $V(x_1^2 x_2+ x_2^2 x_3+ x_3^2 x_4+ x_4^2 x_5+ x_5^2 x_1)$. It is then easy to see that 
$L_2(11)$ acts symplectically on the fourfold $V(x_1^2 x_2+ x_2^2 x_3+ x_3^2 x_4+ x_4^2 x_5+ x_5^2 x_1+x_6^3)$ (obtained as a cyclic cover of $\PP^4$ branched over the Klein cubic threefold). 

\item[(6)] For $A_{3,5}$, the lattice $T$ has discriminant form $3^{-1}5^{-2}$. All negative rank $2$ even lattices with discriminant $75$ are $-(2^1 38)$, $-(6^3 14)$ and $-(10^5 10)$. After calculating their discriminant forms, we must have $T=-(10^5 10)$. Similar to case $(1)$, the image of $S$ in $\Lambda_0$ is unique up to automorphism of $\Lambda_0$. Thus, this recovers a unique cubic fourfold, which must be 
\begin{equation*}
X(A_{3,5})=V(x_1^3+x_2^3+x_3^3+x_4^3+x_5^3+x_6^3+x_7^3+x_8^3)\cap V(x_1+x_2+x_3)\cap V(x_4+x_5+x_6+x_7+x_8).
\end{equation*}
as $A_{3,5}$ is acting symplectically on this cubic fourfold by permuting the two tuples $(x_1, x_2, x_3)$ and $(x_4, \cdots, x_8)$. \end{enumerate}
The remaining part of Theorem \ref{theorem: maximal uniqueness} on non-symplectic automorphisms are proved in Proposition \ref{proposition: non-sym order}.
\end{proof}


\section{Symplectic automorphisms for low degree $K3$ surfaces}
\label{section: K3}
In this section we will discuss the case of $K3$ surfaces. As we have indicated, the classification of symplectic automorphisms for $K3$ surfaces was first systematically investigated by Nikulin \cite{nikulin} via lattice theory, and culminated in the celebrated result by Mukai \cite{mukaiaut} on a characterization of maximal finite symplectic groups of $K3$ surfaces via Mathieu group $M_{23}$. Kond\=o \cite{kondo} simplified Mukai's proof by embedding the covariant lattice $S$ into a Niemeier lattice (an approach closely related to ours).  Xiao \cite{xiao1996k3} gave the complete list of finite symplectic automorphism groups of $K3$ surfaces by analyzing the combinatorial structures of the singularities of the quotient surface. Hashimoto \cite{hashimoto2012K3} extended Kond\=o's lattice theoretic approach to give the complete list and analyze the possibilities of geometric realizations.

We briefly discuss here the case of symplectic automorphisms for low degree polarized $K3$ surfaces, along the lines of our analysis for cubic fourfolds. Our method is lattice theoretic and relies on the H\"ohn--Mason \cite{HM1} classification. On the other hand, low degree $K3$ surfaces have projective models. For those $K3$ surfaces, one can study the automorphisms of the projective model via geometric methods; some partial results exist in the literature (e.g. \cite{harui2014}, \cite{doi2000sextic}, \cite{MPK}). Our discussion here only matches some of the maximal cases. A further analysis of the interplay between geometry and arithmetic would be interesting.

\subsection{General discussion}

As in the cubic fourfold case, the main point of our analysis is that for a $K3$ surface $Y$ with a faithful symplectic action of a finite group $G$, one gets a Leech pair $(G, S_G(Y)(-1))$ (see Lemma \ref{lemma: leech type pairs from cubic fourfold and K3 surface}). The task now is to identify those that occur for $Y$ a polarized $K3$ surface with a given degree. Similarly to our main criterion (Theorem \ref{theorem: criterion lattice from cubic fourfold}) for cubic fourfolds, we obtain the following criterion for Leech pairs to arise from  low degree $K3$ surfaces. Our arguments apply  essentially verbatim as in the proof of Theorem \ref{theorem: criterion lattice from cubic fourfold} for the cases when there exists a Borcherds polarization on $Y$ (see \S\ref{subsec:borcherds}) which is a root lattice. 
As already discussed, this is the case for degree $2$ and $6$. It is also true for the degree $4$ case (e.g., \cite[Sect. 1]{LOG1}). Finally, it also applies to elliptic $K3$ surfaces. By abuse of notation, we call an elliptic $K3$ surface {\it a degree $0$ $K3$ surface}, and we insist that the polarized symplectic automorphisms preserve the class of the fiber and of the section (i.e., the natural $U$ polarization for elliptic $K3$ surfaces is point-wise fixed by the automorphism). 

\begin{thm}
\label{theorem: criterion lattice from low degree k3}
Given a Leech pair $(G,S)$. Let $d\in\{0,2,4,6\}$ and $R_d$ be the root lattice $E_8$, $E_7$, $D_7$, or $E_6\oplus A_1$ for $d=0,2,4,6$ respectively. The following three statements are equivalent:
\begin{enumerate}[(i)]
\item there exists a smooth degree $d$ $K3$ surface $S$ with a symplectic action $G$ which preserves the polarization, such that $(G,S)\cong (G,S_G(X))$,
\item there exists an action of $G$ on $\LL$ with $S=S_G(\LL)$ and $K=\LL^G$, such that there exists a primitive embedding of $R_d$ into $K\oplus U^2$,
\item there exists an embedding of $S\oplus R_d$ into the Borcherds lattice $\BB$, such that $S$ has primitive image.
\end{enumerate}
\end{thm}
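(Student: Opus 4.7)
The plan is to follow the three-step cycle of Theorem \ref{theorem: criterion lattice from cubic fourfold}, with the Borcherds $R_d$-polarization of the primitive $K3$ lattice $\Lambda^{K3}_d$ playing the role of the $E_6$-Borcherds polarization of $\Lambda_0$. The starting point is the $K3$ analog of Proposition \ref{lem:borcherdse6}: for each $d \in \{0,2,4,6\}$, the primitive $K3$ lattice $\Lambda^{K3}_d$ (in the sign-twisted convention) admits a primitive embedding into $\BB$ with orthogonal complement $R_d$, and this embedding is unique up to $O(\BB)$. This is a standard instance of the Kond\=o--Scattone trick (for example, for $d=2$ one checks that $\langle 2\rangle \oplus E_7$ has a unique even unimodular overlattice of rank $8$, which fits as a summand of $\BB \cong (E_8)^3 \oplus U^2$).

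For $(i)\Rightarrow(ii)$, given a polarized $(Y,h)$ with symplectic $G$-action preserving $h$, Lemma \ref{lemma: leech type pairs from cubic fourfold and K3 surface} and Corollary \ref{corollary: embedding into Leech} furnish a primitive embedding $S \hookrightarrow \LL$ with $K := S^\perp_\LL$. One then obtains two primitive embeddings of $S$ into $\BB$, namely
\[
S \hookrightarrow \LL \hookrightarrow \LL \oplus U^2 \cong \BB
\quad \text{and} \quad
S \hookrightarrow \Lambda^{K3}_d \hookrightarrow \BB.
\]
Since $(G,S)$ is a Leech pair satisfying $\rank(S)+l(q_S)\le 24$, Nikulin's uniqueness theorem identifies these two embeddings up to $O(\BB)$; comparing orthogonal complements produces a primitive embedding $R_d \hookrightarrow K \oplus U^2$. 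The implication $(ii)\Rightarrow(iii)$ is the immediate composition $S \oplus R_d \hookrightarrow S \oplus (K \oplus U^2) \subset \LL \oplus U^2 \cong \BB$, in which $S$ remains primitive.

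For $(iii)\Rightarrow(i)$, I would extend the $G$-action on $S$ trivially to $S^\perp_\BB$ (permitted because the Leech pair condition gives trivial action on $A_S$), so that $R_d \subset S^\perp_\BB$ is fixed pointwise. Setting $\Lambda^{K3}_d := R_d^\perp_\BB$, primitivity of $S$ in $\BB$ forces primitivity of $S$ in $\Lambda^{K3}_d$, and the induced $G$-action realizes $(G, S_G(\Lambda^{K3}_d)) \cong (G,S)$. The $G$-invariant sublattice of $\Lambda^{K3}_d$ has signature $(19-\rank(S),2)$ (or $(18-\rank(S),2)$ when $d=0$), so it contains negative $2$-planes; picking a generic one yields a $G$-equivariant Hodge structure of $K3$ type on $\Lambda^{K3}_d$ with $H^{1,1}\cap \Lambda^{K3}_d = S$ and $G$ acting trivially on $H^{2,0}$. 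Applying the surjectivity of the period map and the strong Torelli theorem for polarized $K3$ surfaces then produces a polarized $K3$ surface $(Y,h)$ of degree $d$ with symplectic $G$-action realizing $(G,S)$.

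The main subtlety is the final step, specifically ensuring that $h$ is ample: this reduces to the absence of $(-2)$-classes (in the $K3$ sign convention) in the algebraic sublattice $H^{1,1}\cap \Lambda^{K3}_d = S$, which is built into the Leech pair definition (no norm-$2$ vectors in $S$). Compared with Theorem \ref{theorem: criterion lattice from cubic fourfold}, the $K3$ version is actually cleaner since there is no analog of the long-root divisor $\calC_2$ to exclude; only the short-root case appears, and it is handled automatically by the Leech pair hypothesis.
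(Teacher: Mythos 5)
Your proposal is correct and follows essentially the same route as the paper: the paper proves this theorem simply by observing that the argument of Theorem \ref{theorem: criterion lattice from cubic fourfold} applies essentially verbatim once the Borcherds $R_d$-polarization of the primitive degree-$d$ $K3$ lattice replaces the $E_6$-polarization of $\Lambda_0$, which is exactly the adaptation you carry out (two embeddings of $S$ into $\BB$ identified by Nikulin's uniqueness theorem, the trivial extension of the $G$-action in the converse direction, and surjectivity of the period map plus strong Torelli). Your closing remark that only the norm-$2$ obstruction appears, and is absorbed by the Leech-pair hypothesis, is also consistent with the paper's treatment.
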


The maximal rank for $S_G(Y)(-1)$ in the $K3$ case is $19$ (or equivalently the orthogonal complement $K$ in the Leech lattice $\LL$ has rank $5$). From H\"ohn--Mason classification, we identify the following $11$  maximal cases in Table \eqref{table: MSS aut. of K3}\footnote{There are some typos in H\"ohn and Mason. For instance, they  write $4_3^{+1}8_2^{+2}$ in case $2$, and $2_3^{+3}3^{-1}9^{-1}$ in case $9$. These symbols are not allowed.}; they correspond precisely to the $11$ maximal cases of Mukai. It is interesting to note that all $11$ cases have projective models of degree at most $8$ (see \cite[Example 0.4]{mukaiaut}).

\begin{table}[ht] \caption{Maximal Finite Symplectic Automorphism Groups of K3}
\label{table: MSS aut. of K3}
\centering
\begin{tabular}{c c c c c}
\hline\hline
Number &Order &Group $\Aut^s(Y)$&Discriminant form &  $\deg(Y)$ in \cite[Ex. 0.4]{mukaiaut}\\ [0.5ex]
\hline
1 & 960 & $M_{20}$ & $2_{\II}^{-2}8_1^{+1}5^{-1}$ &4\\
2 & 384 & $4^2.S_4$ & $4_7^{+1}8_6^{+2}$ & 4\\
3 & 360 & $A_6$ & $4_5^{-1}3^{+2}5^{+1}$ & 6\\
4 & 288 & $A_{4,4}$ & $2_{\II}^{+2}8_1^{+1}3^{+2}$ & 8\\
5 & 192 & $2^4:D_{12}$ & $4_2^{-2}8_1^{+1}3^{-1}$ & 8\\
6 & 192 & $(Q_8 * Q_8):S_3$ & $4_7^{-3}3^{+1}$ & 4\\
7 & 168 & $L_2(7)$ & $4_1^{+1}7^{+2}$ & 4\\
8 & 120 & $S_5$ & $4_3^{-1}3^{+1}5^{-2}$ & 6\\
9 & 72 & $M_9$ & $2_7^{-3}3^{-1}9^{-1}$ & 2\\
10 & 72 & $N_{72}(\cong3^2.D_8)$ & $4_1^{+1}3^{+2}9^{-1}$ &6\\
11 & 48 & $T_{48}$ & $2_7^{+1}8_{\II}^{-2}3^{-1}$ &2\\
\hline
\end{tabular}
\end{table}

\begin{notation}
The notation of the finite groups appearing in Table \eqref{table: MSS aut. of K3} follows Mukai's appendix to \cite{kondo} (N.B. there are some small typos in loc. cit.: the group $A_{4,4}$ has order $288$, instead of $384$). For reader's convenience, we recall that the group $M_{20}$ is isomorphic to $2^4:A_5$, the group $M_9$ is isomorphic to $3^2:Q_8$, the group $T_{48}$ is isomorphic to $L_2(3)$. The operator $*$ is the central product. Concretely, the group $Q_8 * Q_8$ is the quotient of $Q_8\times Q_8$ by the diagonal of corresponding to the center of $Q_8$, and it is isomorphic to an extraspecial group $2^{1+4}$.
\end{notation}

Below, we discuss the maximal rank cases for degree $2$ and $6$ $K3$ surfaces as those are connected to cubic fourfolds (as discussed, they correspond to ``fake cubics'', i.e., the Hassett divisors $\calC_2$ and $\calC_6$). The cases of degree $4$ $K3$ surfaces and elliptic $K3$ surfaces are equally interesting, but less relevant to the core analysis in this paper. We point out however the classification  on projective automorphisms of quartic $K3$ surfaces in \cite{MPK}, and the work \cite{garbagnati2013elliptic} on automorphisms of elliptic $K3$ surfaces.

\subsection{The degree $2$ $K3$ case}
The maximal symplectic cases for degree $2$ $K3$ surfaces (analogue to Thm. \ref{proposition: maximal case} for cubics) are listed below. 
\begin{thm}
\label{theorem: degree 2}
For a degree 2 $K3$ surface $Y$ with a symplectic action of a finite group $G$. Suppose $\rank (S_G(Y))=19$, then $(G,S_G(Y))$ is one of numbers $3, 7, 9, 11$ in Table \eqref{table: MSS aut. of K3}. In particular, the group $G$ can be $A_6$ (see \eqref{eq_wiman}), $L_2(7)$ (see \eqref{eq_klein}), $M_9$ (see \eqref{eq_mukai}), or $T_{48}$ (see \eqref{eq_t48}).
\end{thm}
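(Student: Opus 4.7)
The strategy parallels the proof of Theorem \ref{proposition: maximal case} for cubic fourfolds. By Theorem \ref{theorem: criterion lattice from low degree k3} applied with $R_2 = E_7$, the pairs $(G, S)$ realized by symplectic automorphism groups of degree $2$ $K3$ surfaces with $\rank(S) = 19$ are precisely those among the $11$ maximal pairs of Table \eqref{table: MSS aut. of K3} for which $S \oplus E_7$ admits an embedding into the Borcherds lattice $\BB$ with $S$ mapping primitively. The plan is to run the arithmetic classification using Nikulin's theory, then match each surviving case to a known geometric realization.

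For the arithmetic part, note that $\rank(S \oplus E_7) = 19 + 7 = 26 = \rank(\BB) - 2$ and $S \oplus E_7$ is positive definite, so the orthogonal complement of any primitive saturation $\widetilde{S} \supseteq S \oplus E_7$ (in which $S$ remains primitive) inside $\BB$ is a negative definite rank $2$ even lattice $T$. By Theorem \ref{theorem: existence of lattice via discriminant form}, such a primitive embedding exists if and only if there is a negative definite rank $2$ even lattice $T$ with $q_T \cong -q_{\widetilde{S}}$; the signature conditions are vacuous for rank $2$ definite, so only Nikulin's local conditions on the discriminant form remain to be tested. Since $A_{E_7} \cong \ZZ/2$, the only possible nontrivial saturation arises by gluing a $\ZZ/2$ subgroup of $A_S$ to $A_{E_7}$ through an isometry, which is permitted only when $q_S$ contains an appropriate order $2$ element.

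The remaining work is the case-by-case inspection of the $11$ entries of Table \eqref{table: MSS aut. of K3}. For each pair, compute $q_S \oplus q_{E_7}$, list the admissible saturations $\widetilde{S}$ keeping $S$ primitive, and apply Nikulin's existence criterion to the candidate discriminant form $-q_{\widetilde{S}}$ at each prime. The expected outcome is that cases $1, 2, 4, 5, 6, 8, 10$ are obstructed (typically by a $2$-adic square/norm obstruction or by failure of the length bound after adding the $E_7$ contribution), while cases $3, 7, 9, 11$ survive and yield $G \in \{A_6, L_2(7), M_9, T_{48}\}$. The computations mirror those in the proof of Theorem \ref{proposition: maximal case}, with the role of $E_6$ there played here by $E_7$.

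Finally, for each of the four surviving arithmetic cases, one exhibits an explicit degree $2$ $K3$ surface with the prescribed symplectic action; these are classical constructions (see Mukai \cite{mukaiaut}, \cite[Example 0.4]{mukaiaut} or Kond\=o), giving the models referenced by \eqref{eq_wiman}, \eqref{eq_klein}, \eqref{eq_mukai}, \eqref{eq_t48} in the statement. The main obstacle is the careful bookkeeping in the case-by-case step: the interaction of the $2$-adic block $q_{E_7}$ with the $2$-torsion of the various $q_S$ is subtle, and in a few cases both the non-saturated and the saturated options must be ruled out before discarding the pair. Beyond this, the argument is a mechanical application of Nikulin's theorem, exactly in the style of Theorem \ref{proposition: maximal case}.
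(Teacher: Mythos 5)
Your proposal follows essentially the same route as the paper's proof: apply Theorem \ref{theorem: criterion lattice from low degree k3} with $R_2=E_7$, note that the orthogonal complement of the (possibly saturated) image of $S\oplus E_7$ in $\BB$ must be a negative definite rank~$2$ even lattice, and decide existence case by case via Nikulin's Theorem \ref{theorem: existence of lattice via discriminant form}, exactly as in Theorem \ref{proposition: maximal case}; your predicted outcome (cases $3,7,9,11$ survive) and the classical geometric models are the ones the paper uses. The only substantive content you leave unverified is the case-by-case arithmetic itself, and there your description of the obstructions is slightly off: in the paper cases $1,2,4,5,6$ die by the $2$-adic length bound and case $10$ by the $3$-adic length bound, but case $8$ ($S_5$) passes both length tests and is eliminated only by the $p=5$ square condition (since $2\cdot 4\cdot 3\equiv -1$ is a square in $\ZZ_5$), so not every failure is ``$2$-adic or length''.
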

\begin{proof}
Given $(G,S,K)$ from Table \eqref{table: MSS aut. of K3}. By Theorem \ref{theorem: criterion lattice from low degree k3}, we need to check whether there exists embedding of $S\oplus E_7$ into $\BB$, such that the image of $S$ is primitive. We have that $q_{E_7}=-q_{A_1}=2_7^{+1}$. For numbers $1, 2, 4, 5, 6$, the lattice $S\oplus E_7$ has no nontrivial saturation in which $S$ is primitive, and $l_2(S\oplus E_7)\ge 3$. For number 10, we have $l_3(S\oplus E_7)=l_3(K)=3$. Therefore, in these cases, there are no embedding of $S\oplus E_7$ into $\BB$ such that the image of $S$ is primitive. We next check for the other cases one by one.
\begin{enumerate}[(1)]
\item For number 3 in Table \eqref{table: MSS aut. of K3}, the discriminant form of $S\oplus E_7$ is $4_3^{-1}3^{+2}5^{+1}\oplus 2_7^{+1}$, and there is no nontrivial saturation of $S\oplus E_7$. Since $2\times 4\times 5=40\equiv 1$ (mod 3), there exists a unique negative rank $2$ even lattice $T=-(12^0 30)$ with discriminant form $2_1^{+1}4_5^{-1}3^{+2}5^{+1}$. Thus there exists a primitive embedding of $S\oplus E_7$ into $\BB$ with orthogonal complement $T$.
\item For number 7 in Table \eqref{table: MSS aut. of K3}, the discriminant form of $S\oplus E_7$ is $4_7^{+1}7^{+2}\oplus 2_7^{+1}$, and there is no nontrivial saturation of $S\oplus E_7$. Since $2\times 4=8$ is a square in $\ZZ_7$, there exists a negative rank $2$ even lattice $T$ with discriminant form $2_1^{+1}4_1^{+1}7^{+2}$. Thus there exists a primitive embedding of $S\oplus E_7$ into $\BB$ with orthogonal complement $T$.
\item For number 8 in Table \eqref{table: MSS aut. of K3}, the discriminant form of $S\oplus E_7$ is $4_5^{-1}3^{-1}5^{-2}\oplus 2_7^{+1}$, and there is no nontrivial saturation of $S\oplus E_7$ in which $S$ is primitive. Since $2\times 4\times 3=24\equiv -1$ is a square in $\ZZ_5$, there is no negative rank $2$ even lattice with discriminant form $2_1^{+1}4_3^{-1}3^{+1}5^{-2}$. Thus there is no embedding of $S\oplus E_7$ into $\BB$ with the image of $S$ primitive.
\item For number $9$ in Table \eqref{table: MSS aut. of K3}, the discriminant form of $S\oplus E_7$ is $2_5^{+3}3^{+1}9^{+1}\oplus 2_7^{+1}$, and there is uniquely a nontrivial saturation $\widetilde{S}$ of $S\oplus E_7$ in which $S$ is primitive. The discriminant form of $\widetilde{S}$ is $2_4^{+2}3^{+1}9^{+1}$. Since $2\times 2=4$ is a square in $\ZZ_3$, there exists a negative rank $2$ even lattice $T$ with discriminant form $q_{\widetilde{S}(-1)}=2_4^{+2}3^{-1}9^{-1}$. Thus there exists a primitive embedding of $\widetilde{S}$ into $\BB$ with orthogonal complement $T$.
\item For number 11 in Table \eqref{table: MSS aut. of K3}, the discriminant form of $S\oplus E_7$ is $2_1^{+1}8_{\II}^{-2}3^{-1}\oplus 2_7^{+1}$, and there is uniquely a nontrivial saturation $\widetilde{S}$ of $S\oplus E_7$ in which $S$ is primitive. The discriminant form of $\widetilde{S}$ is $8_{\II}^{-2}3^{-1}$. There exists a negative rank $2$ even lattice $T$ with discriminant form $8_{\II}^{-2}3^{+1}$. Thus there exists a primitive embedding of $\widetilde{S}$ into $\BB$ with orthogonal complement $T$.
\end{enumerate}
The claim follows.
\end{proof}

We discuss the geometric realizations for those maximal symplectic groups. The double cover of $\PP^2$ branched along a sextic curve is a degree $2$ $K3$. If a group acts on a plane sextic curve, it also acts on the corresponding degree 2 $K3$ surface. A classification of automorphism groups of plane sextic curves can be deduced from \cite[Thm. 2.1]{harui2014}. It is discovered by Wiman \cite{wiman1896a6} that the sextic curve 
\begin{equation}\label{eq_wiman}
V(10 x_1^3 x_2^3+9 x_3(x_1^5+x_2^5)-45x_1^2 x_2^2 x_3^2-135x_1 x_2 x_3^4+27 x_3^6)
\end{equation}
has an action by $A_6$. The corresponding degree $2$ $K3$ surface also admits an action by $A_6$, which must be symplectic since $A_6$ is simple. In \cite{doi2000sextic} the uniqueness of such a sextic curve (with action by $A_6$) is proved. 

The Klein sextic curve 
\begin{equation}\label{eq_klein}
V(x_1^5 x_2+ x_2^5 x_3+ x_3^5 x_1)
\end{equation}
has automorphism group $L_2(7)$. Therefore, the symplectic automorphism group of the corresponding degree $2$ $K3$ surface is $L_2(7)$.

Another smooth plane sextic with large symmetry (see Remark 2.4 in \cite{harui2014}) is
\begin{equation}\label{eq_mukai}
V(x_1^6+x_2^6+x_3^6-10(x_1^3 x_2^3+ x_2^3 x_3^3+ x_3^3 x_1^3))
\end{equation}
which has automorphism group equal to the Hessian group $H_{216}$ of order $216$ (this group can be represented as the affine special linear group $\ASL(2, \FF_3)$, or as the projective unitary group $\PU(3, \FF_2)$). Actually the degree $2$ $K3$ surface corresponding to this sextic curve has symplectic automorphism group equal to $M_9\cong \PSU(3, \FF_2)$ (cf. \cite[Example 0.4]{mukaiaut}).

Finally, the group $T_{48}$ is realized by the double cover of $\PP^2$ with branch curve 
\begin{equation}\label{eq_t48}
V(x_1^5 x_2+ x_2^5 x_1+x_3^6),
\end{equation} 
(cf. \cite[Example 0.4]{mukaiaut}).

\subsection{The degree $6$ $K3$ case} The maximal cases in the degree $6$ case are listed below.

\begin{thm}
\label{theorem: degree 6}
For a degree 6 $K3$ surface $Y$ with a symplectic action of a finite group $G$. Suppose $\rank (S_G(Y))=19$, then $(G,S_G(Y))$ is one of numbers $3, 8, 10$ in Table \eqref{table: MSS aut. of K3}. In particular, the group $G$ can be $A_6$ (see \eqref{eq_a6}), $S_5$ (see \eqref{eq_s5}), or $N_{72}$ (see \eqref{eq_d8}).
\end{thm}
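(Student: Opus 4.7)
The plan is to mirror the proof of Theorem~\ref{theorem: degree 2}. By Theorem~\ref{theorem: criterion lattice from low degree k3} applied with $d=6$ (so $R_6 = E_6 \oplus A_1$), the task reduces to determining, for each of the eleven Leech pairs $(G,S)$ in Table~\eqref{table: MSS aut. of K3}, whether $S \oplus E_6 \oplus A_1$ admits an embedding into the Borcherds lattice $\BB$ with primitive image of $S$. Since $\rank(S \oplus E_6 \oplus A_1) = 26$, Nikulin's gluing theory reduces this to the existence of a negative-definite rank $2$ even lattice $T$ whose discriminant form is the appropriate quotient of $q_S \oplus q_{E_6} \oplus q_{A_1}$ by an isotropic subgroup meeting $A_S$ trivially. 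Existence of such $T$ is governed by Theorem~\ref{theorem: existence of lattice via discriminant form}, and the classification of admissible rank-$2$ lattices of given discriminant is controlled by Lemma~\ref{lemma: red}.

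I would then carry out the analysis case by case, exactly as in the proof of Theorem~\ref{theorem: degree 2}, using $q_{E_6 \oplus A_1} = q_{E_6} \oplus 2_7^{+1}$. In eight of the entries (namely $1,2,4,5,6,7,9,11$) one obtains a contradiction: either the $2$-length or the $3$-length of the resulting discriminant form exceeds $\rank(T)=2$ for every admissible gluing, or Nikulin's local squareness condition at some odd prime fails. Concretely, cases $1,2,4,5,6$ already fail for reasons of $2$-length (as in the degree $2$ analysis, now made worse by the extra $2_7^{+1}$ from $A_1$); cases $7$ and $11$ fail at the prime $2$; case $9$ fails at $p=3$ once the $q_{E_6}$ gluing makes $l_3$ too large. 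This leaves precisely cases $3$, $8$, $10$, in each of which a suitable $T$ of small discriminant is produced directly using Lemma~\ref{lemma: red}.

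To exhibit the realizations of $A_6$, $S_5$, $N_{72}$ as symplectic automorphism groups, I would use the degree $6$ model of a $K3$ surface as a complete intersection of a quadric and a cubic in $\PP^4$. For $G=A_6$ a natural candidate is
\[
Y(A_6) = V\bigl(\textstyle\sum_{i=1}^{6} x_i\bigr) \cap V\bigl(\textstyle\sum_{i=1}^{6} x_i^2\bigr) \cap V\bigl(\textstyle\sum_{i=1}^{6} x_i^3\bigr) \subset \PP^5,
\]
on which $S_6$ acts by coordinate permutation; the Poincar\'e residue description of the holomorphic $2$-form shows that only the $A_6$-subgroup acts symplectically, and combined with the uniqueness of the relevant Leech pair one concludes $\Aut^s(Y(A_6)) = A_6$. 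For $G = S_5$ and $G = N_{72}$ I would import the explicit equations from \cite[Example~0.4]{mukaiaut}; alternatively, abstract existence is already guaranteed by Theorem~\ref{theorem: criterion lattice from low degree k3}.

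The main obstacle, as in the degree $2$ analysis, is the combinatorial bookkeeping in the exclusion step: here both primes $2$ and $3$ must be controlled \emph{simultaneously} because $R_6 = E_6 \oplus A_1$ contributes nontrivially to the discriminant at both primes, roughly doubling the number of subcases relative to the degree $2$ proof (where only the $2$-part of $E_7$'s discriminant intervened). Once this case-by-case bookkeeping is carried out, no further difficulty arises.
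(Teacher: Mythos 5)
Your overall strategy is exactly the paper's: apply Theorem~\ref{theorem: criterion lattice from low degree k3} with $R_6=E_6\oplus A_1$ (so the extra discriminant contribution is $2_1^{+1}3^{+1}$), run through the eleven entries of Table~\eqref{table: MSS aut. of K3}, decide via Nikulin's Theorem~\ref{theorem: existence of lattice via discriminant form} whether a negative definite rank-$2$ complement $T$ exists for some admissible saturation, and realize the three surviving groups $A_6$, $S_5$, $N_{72}$ by the complete-intersection models from Mukai (your $A_6$ example is the one the paper uses). Your exclusions of $1,2,4,5,6,11$ by $2$-length and of $9$ by $3$-length also match the paper.

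However, your stated reason for excluding case $7$ ($L_2(7)$) is wrong and that step would fail as written. For this case $S\oplus E_6\oplus A_1$ has no nontrivial saturation, and the putative complement would have discriminant form $2_7^{+1}4_1^{+1}3^{-1}7^{+2}$, whose $2$-length is only $2$; moreover its $2$-part is of the exceptional shape $q_{\theta}(2)\oplus q_2'$, so Nikulin's condition (4) at $p=2$ is vacuous and no obstruction at the prime $2$ is available. The genuine obstruction is condition (3) at the odd prime $7$ (where $l_7=2$ equals the rank): one needs $2\cdot 4\cdot 3=24\equiv 3\pmod 7$ to be a square in $\ZZ_7$, and it is not. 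So case $7$ is indeed excluded — your final list $3,8,10$ is correct and agrees with the paper — but the exclusion happens at $p=7$, not at $p=2$; if you carry out the bookkeeping you deferred, this is the computation you must do there.
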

\begin{proof}
Given $(G,S,K)$ in Table \eqref{table: MSS aut. of K3}. By Theorem \ref{theorem: criterion lattice from low degree k3}, we need to check whether there exists embedding of $S\oplus E_6\oplus A_1$ into $\BB$, such that the image of $S$ is primitive. We have that $q_{E_6\oplus A_1}=-q_{A_2}\oplus q_{A_1}=2_1^{+1}3^{+1}$. For cases with numbers $1, 2, 4, 5, 6, 11$, we have $l_2(\widetilde{S})\ge 3$ for any saturation $\widetilde{S}$ of $S\oplus E_6\oplus A_1$ in which $S$ is primitive. For case with number 9, we have $l_3(\widetilde{S})\ge 3$ for any saturation $\widetilde{S}$ of $S\oplus E_6\oplus A_1$. Therefore, for those cases we can not embed $S\oplus E_6\oplus A_1$ into $\BB$ with image of $S$ primitive. We consider the other cases one by one.
\begin{enumerate}[(1)]
\item For number 3 in Table \eqref{table: MSS aut. of K3}, the discriminant form of $S\oplus E_6\oplus A_1$ is $4_3^{-1}3^{+2}5^{+1}\oplus 2_1^{+1}3^{+1}$. We have a nontrivial saturation $\widetilde{S}$ of $S\oplus E_6\oplus A_1$ with discriminant form $2_1^{+1}4_3^{-1}3^{-1}5^{+1}$. There exists negative rank $2$ even lattice $T$ with discriminant form $2_7^{+1}4_5^{-1}3^{+1}5^{+1}$. Thus there exists a primitive embedding of $\widetilde{S}$ into $\BB$ with orthogonal complement $T$.
\item For number 7 in Table \eqref{table: MSS aut. of K3}, the discriminant form of $S\oplus E_6\oplus A_1$ is $4_7^{+1}7^{+2}\oplus 2_1^{+1}3^{+1}$, and there is no nontrivial saturation of $S\oplus E_6\oplus A_1$. Since $2\times 4\times 3=24\equiv 3$ (mod 7), and $3$ is not a square in $\ZZ_7$, there is no negative rank $2$ even lattice with discriminant form $2_7^{+1}4_1^{+1}3^{-1}7^{+2}$. Thus there is no embedding of $S\oplus E_6\oplus A_1$ into $\BB$.
\item For number 8 in Table \eqref{table: MSS aut. of K3}, the discriminant form of $S\oplus E_6\oplus A_1$ is $4_5^{-1}3^{-1}5^{-2}\oplus 2_1^{+1}3^{+1}$. We have a nontrivial saturation $\widetilde{S}$ of $S\oplus E_6\oplus A_1$ with discriminant form $2_1^{+1}4_5^{-1}5^{-2}$, in which $S$ is primitive. Since $2\times 4=8$ is not a square in $\ZZ_5$, there exists a negative rank $2$ even lattice $T$ with discriminant form $2_7^{+1}4_3^{-1}5^{-2}$. Thus there exists a primitive embedding of $\widetilde{S}$ into $\BB$ with orthogonal complement $T$.
\item For number 10 in Table \eqref{table: MSS aut. of K3}, the discriminant form of $S\oplus E_6\oplus A_1$ is $4_7^{+1}3^{+2}9^{+1}\oplus 2_1^{+1}3^{+1}$. We have a nontrivial saturation $\widetilde{S}$ of $S\oplus E_6\oplus A_1$ with discriminant form $2_1^{+1}4_7^{+1}3^{-1}9^{+1}$, in which $S$ is primitive. Since $2\times 4=8$ is not a square in $\ZZ_3$, there exists a negative rank $2$ even lattice $T$ with discriminant form $2_7^{+1}4_1^{+1}3^{+1}9^{-1}$. Thus there exists a primitive embedding of $\widetilde{S}$ into $\BB$ with orthogonal complement $T$.
\end{enumerate}
The theorem follows.
\end{proof}

The geometric realization of all these three groups can be found in Mukai \cite[Example 0.4]{mukaiaut}. The group $A_6$ is the symplectic automorphism group of 
\begin{equation}\label{eq_a6}
Y=V(x_1+\cdots+x_6)\cap V(x_1^2+\cdots+x_6^2)\cap V(x_1^3+\cdots+x_6^3)
\end{equation}
(presented as a diagonal hyperplane section in $\PP^5$). 
Similarly, the group $S_5$ is the symplectic automorphism group of
\begin{equation}\label{eq_s5}
V(x_1+\cdots+x_5)\cap V(x_1^2+\cdots+x_6^2)\cap V(x_1^3+\cdots +x_5^3)
\end{equation}
(here the symplectic action is given by $g\in S_5$ acts by permutations on $(x_1, \dots, x_5)$ and by $x_5\to \mathrm{sgn}(g) x_5$; see \cite[p. 188]{mukaiaut}). 
The group $N_{72}$ is the symplectic automorphism group of
\begin{equation}\label{eq_d8}
V(x_1^3+x_2^3+x_3^3+x_4^3)\cap V(x_1 x_2+x_3 x_4+x_5^2).
\end{equation}

\subsection{Uniqueness for $K3$ surfaces} 
While we don't investigate the uniqueness question here (i.e., analogues of Theorem \ref{theorem: maximal uniqueness}), we point out that Hashimoto \cite[Main Theorem]{hashimoto2012K3} proved that for three of Mukai's maximal cases (specifically (3), (7), and (8), corresponding to groups $A_6$, $L_2(7)$, and $S_5$)  there are exactly two primitive sublattices (up to conjugate) of $\Lambda_{K3}(-1)$ isomorphic to $S$ (where, as before, $S$ is the covariant lattice). Each of these cases has at least one realization for either a degree $2$ or $6$ $K3$ surface (see \eqref{eq_a6}, \eqref{eq_klein}, and \eqref{eq_s5} below). As Hashimoto works in the unpolarized case, the moduli of $K3$ surfaces with symplectic automorphism groups in the above three cases has two connected component, both of dimension $1$. The group $A_6$ is of special interest since it occurs for degree $2$ and degree $6$ cases (see \eqref{eq_wiman} and \eqref{eq_a6}). Interestingly, the two cases are in two different components. 

\begin{prop}
The embeddings of $S$ into $\Lambda_{K3}(-1)$ given by the two geometric realizations \eqref{eq_wiman} and \eqref{eq_a6} (degree $2$ and degree $6$) have different orthogonal complements. In particular, these two $K3$ surfaces belong to different connected components of the moduli space of $K3$ surfaces with symplectic automorphism group $A_6$.
\end{prop}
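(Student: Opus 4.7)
The plan is to prove the stronger statement that the rank-$3$ orthogonal complements $T_i := S^{\perp}_{\Lambda_{K3}(-1)}$ arising from the two realizations ($i=1$ for the Wiman sextic \eqref{eq_wiman}, $i=2$ for the complete intersection \eqref{eq_a6}) are non-isomorphic as lattices. Since the connected components of the unpolarized moduli of $A_6$-symplectic $K3$ surfaces are in bijection with $O(\Lambda_{K3})$-orbits of primitive embeddings $S\hookrightarrow \Lambda_{K3}(-1)$, and each orbit has a well-defined orthogonal complement up to isomorphism, this immediately gives the conclusion.

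The first step is to identify the rank-$2$ transcendental lattice $T_Y^{(i)}$ of each realization by running the Borcherds-polarization analysis from the proofs of Theorem~\ref{theorem: degree 2}(1) and Theorem~\ref{theorem: degree 6}(1): for each $i$, the orthogonal complement in $\BB$ of the appropriate saturation of $S\oplus R_{2i}$ (with $R_2=E_7$ and $R_6=E_6\oplus A_1$) recovers $T_Y^{(i)}$. Those computations yield $T_Y^{(2)}\cong -(12^0\,30)$ of determinant $360$ while $T_Y^{(6)}$ has determinant $120$; so already the rank-$2$ transcendental lattices are non-isomorphic.

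Next we reconstruct $T_1$ explicitly: since $|A_S|=180$, both $T_i$ have determinant $180$, and $T_1$ is an index-$2$ overlattice of $\langle h_1\rangle\oplus T_Y^{(2)}$, where $h_1^2=-2$ is the Wiman polarization. A short inspection of the discriminant group $\ZZ/2\oplus\ZZ/12\oplus\ZZ/30$ singles out a unique nonzero isotropic element of order~$2$ (namely $[h_1^*]+[15 f_2^*]$ for an orthogonal basis $f_1,f_2$ of $T_Y^{(2)}$), pinning down $T_1$ as the overlattice of $\langle h_1\rangle\oplus T_Y^{(2)}$ obtained by adjoining $(h_1+f_2)/2$.

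The key and only delicate step will be to verify that $T_1$ contains no primitive vector of square $-6$. A general element of $T_1$ takes the form $\alpha h_1+\beta f_1+\gamma f_2$ with $\beta\in\ZZ$ and $\alpha,\gamma\in\tfrac12\ZZ$ of the same parity, with norm $-(2\alpha^2+12\beta^2+30\gamma^2)$. The equation $2\alpha^2+12\beta^2+30\gamma^2=6$ admits no admissible solution: in the integer case, $\beta=\gamma=0$ forces $\alpha^2=3$; in the half-integer case, the rescaled equation $(2\alpha)^2+24\beta^2+15(2\gamma)^2=12$ has $15(2\gamma)^2\geq 15>12$. Since the degree-$6$ polarization provides a primitive vector of square $-6$ in $T_2$, this forces $T_1\not\cong T_2$ and finishes the argument. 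The main obstacle is simply the bookkeeping in the overlattice construction and Diophantine enumeration, but each step is elementary.
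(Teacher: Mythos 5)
Your proof is correct, but it follows a genuinely different route from the paper. The paper's argument is a two-line matching: the degree $2$ and degree $6$ polarizations give a $(-2)$- respectively $(-6)$-vector in $S^{\perp}_{\Lambda_{K3}(-1)}$, and Hashimoto's classification \cite[Table 10.3, item 79]{hashimoto2012K3} says there are exactly two possible complements, one containing a $(-2)$ but no $(-6)$ and the other a $(-6)$ but no $(-2)$; the dichotomy then forces the two realizations into different orbits. You instead avoid the external citation and reconstruct the Wiman complement $T_1$ by hand: you take the rank-$2$ transcendental lattice $-(12^0\,30)$ supplied by the Borcherds computation in the proof of Theorem \ref{theorem: degree 2}, glue it to $\langle -2\rangle$ along the unique isotropic order-$2$ element of the discriminant form (your glue vector $(h_1+f_2)/2$ checks out, and the resulting Gram matrix is in fact Hashimoto's first lattice up to reordering the basis), and then an easy Diophantine check shows $T_1$ represents no $-6$, while $T_2$ visibly contains the degree-$6$ class. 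This buys self-containedness (modulo the paper's own Theorems \ref{theorem: degree 2} and \ref{theorem: degree 6}, including the uniqueness of $-(12^0\,30)$ in its genus asserted there) at the cost of a slightly longer computation; the paper's version is shorter but leans on Hashimoto. Two minor remarks: your observation that the rank-$2$ transcendental lattices have determinants $360$ and $120$ is true but by itself proves nothing about components (two points of the same one-dimensional component can have different rank-$2$ transcendental lattices), and you rightly do not rely on it; and the notation $R_{2i}$ is off (you mean $R_2=E_7$ for the degree $2$ case and $R_6=E_6\oplus A_1$ for the degree $6$ case, as your parenthesis indicates).
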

\begin{proof}
Let $Y_1$, $Y_2$ be the degree $2$ and degree $6$ $K3$ surfaces with $A_6$ symplectic action respectively. Then the orthogonal complement of $S\cong S_{A_6}(Y_1)\hookrightarrow H^2(Y_1, \ZZ)(-1)$ contains a vector with self-intersection $-2$, while the orthogonal complement of $S\cong S_{A_6}(Y_2)\hookrightarrow H^2(Y_2, \ZZ)(-1)$ contains a vector with self-intersection $-6$. (Note that in our conventions we are scaling the cohomology by $-1$, making the polarization a negative vector. Furthermore, in these maximal cases, $S^\perp$ is negative definite of rank $3$.) On the other hand, from Hashimoto \cite[Table 10.3, item 79]{hashimoto2012K3}, the orthogonal complement $S^\perp$ of an embedding of $S$ into $\Lambda_{K3}(-1)$ can be either
\begin{equation*}
\left(
\begin{array}{ccc}
-2 & -1 & 0\\
-1 & -8 & 0\\
0 & 0 & -12
\end{array}
\right),
\end{equation*}
which contains $(-2)$-vector but does not contain any $(-6)$-vector, or
\begin{equation*}
\left(
\begin{array}{ccc}
-6 & 0 & -3\\
0 & -6 & -3\\
-3 & -3 & -8
\end{array}
\right),
\end{equation*}
which contains $(-6)$-vector but does not contain any $(-2)$-vector. The claim follows.
\end{proof}

\subsection{A geometric relation to cubic fourfolds}
\label{subsec_k3cubic}
Notice that the maximal symplectic automorphism groups for degree $2$ (see Theorem \ref{theorem: degree 2}) and degree $6$ (see Theorem \ref{theorem: degree 6}) also appear in case $\rank(S)=19$ in Theorem \ref{theorem: main}. This is not a coincidence. The following proposition explains the geometry behind this phenomena. 

\begin{prop}\label{prop_smooth_fake}
Let $(G, S)$ be a Leech pair satisfying conditions in Theorem \ref{theorem: criterion lattice from low degree k3} for degree $d=2$ or $6$, then $(G, S)$ is one of the $34$ Leech pairs we obtain in our main Theorem \ref{theorem: main}. Especially, the corresponding moduli of cubic fourfolds has dimension one more than that of the degree $2$ or $6$ $K3$ surfaces. 
\end{prop}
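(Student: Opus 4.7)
The plan is to deduce both claims directly from the lattice-theoretic criteria already at hand, namely Theorem~\ref{theorem: criterion lattice from cubic fourfold} on the cubic side and Theorem~\ref{theorem: criterion lattice from low degree k3} on the low-degree $K3$ side. The key observation is the trivial fact that $E_6$ embeds as a primitive sublattice of both root lattices $R_2=E_7$ and $R_6=E_6\oplus A_1$: in the first case via the standard $E_6\subset E_7$ with orthogonal complement $A_1$, and in the second as the obvious direct summand. Consequently a ``Borcherds-type'' embedding against $R_d$ automatically restricts to one against $E_6$.

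Concretely, let $(G,S)$ be a Leech pair satisfying condition (iii) of Theorem~\ref{theorem: criterion lattice from low degree k3} for $d\in\{2,6\}$; that is, there exists an embedding $\iota\colon S\oplus R_d\hookrightarrow\BB$ with $\iota(S)$ primitive in $\BB$. Composing with the natural inclusion $S\oplus E_6\hookrightarrow S\oplus R_d$ gives an embedding $S\oplus E_6\hookrightarrow\BB$, and the image of $S$ is still $\iota(S)$, hence primitive in $\BB$. This is precisely condition (iii) of Theorem~\ref{theorem: criterion lattice from cubic fourfold}, so $(G,S)$ arises as $(G,S_G(X))$ for some smooth cubic fourfold $X$ with $G\subseteq\Aut^s(X)$. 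Saturating $G$ (as discussed in \S\ref{subsec-HM}) then places $(G,S)$ in the set $\mathscr{A}_{cub}$ of $34$ saturated Leech pairs listed in Theorem~\ref{theorem: main}.

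For the dimension comparison, the analogue for polarized $K3$ surfaces of the $M$-polarized moduli theory (cf. Theorem~\ref{theorem: Yu-Zheng} and Remark~\ref{remark: polarized K3}) applied to the primitive cohomology $\Lambda_{0,d}$ of degree $d$ polarized $K3$ surfaces — which has signature $(2,19)$ — shows that the moduli space of degree $d$ $K3$ surfaces $Y$ with $G\subseteq\Aut^s(Y)$ and $S\cong S_G(Y)$ is a locally symmetric variety of dimension $19-\rank(S)$. On the cubic side, Theorem~\ref{theorem: Yu-Zheng} gives $\dim\calM_{(G,S)}=20-\rank(S)$, which is exactly one larger, reflecting the rank difference $\rank(\Lambda_0)-\rank(\Lambda_{0,d})=22-21=1$ between the primitive cohomology lattices. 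The hardest part of the proof is essentially nonexistent: primitivity of $S$ is preserved by construction (the image is literally unchanged), and the dimension bookkeeping is immediate once one notes that in both moduli problems the relevant period domain is cut out by the orthogonal complement of $S$ in the corresponding primitive cohomology.
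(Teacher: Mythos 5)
Your proposal is correct and follows essentially the same route as the paper: the natural primitive embedding $E_6\hookrightarrow R_d$ gives $S\oplus E_6\hookrightarrow S\oplus R_d\hookrightarrow\BB$ with the image of $S$ unchanged and hence primitive, so the criterion of Theorem~\ref{theorem: criterion lattice from cubic fourfold} applies, and the dimension count $20-\rank(S)$ versus $19-\rank(S)$ is the same bookkeeping the paper uses. Your explicit remark about passing to the saturation before matching with the $34$ pairs of Theorem~\ref{theorem: main} is a welcome (if minor) clarification of a step the paper leaves implicit.
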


\begin{proof}
Let $(G, S)$ be a Leech pair such that there is an embedding of $S\oplus R_d$ into $\BB$ with the image of $S$ primitive. Here $R_d=E_7$ if $d=2$ and $R_d= E_6\oplus A_1$ if $d=6$. Notice that in both situations we have a natural embedding $E_6\hookrightarrow R_d$. Thus we have an embedding $S\oplus E_6 \hookrightarrow S\oplus R_d\hookrightarrow \BB$ with the image of $S$ in $\BB$ primitive. Therefore, the Leech pair $(G, S)$ arises from symplectic actions of $G$ on certain smooth cubic fourfolds. The dimension of the moduli space of such cubic fourfolds is $20-\rank(S)$, while the dimension of degree $d$ $K3$ surfaces with the corresponding symplectic action by $G$ is $19-\rank(S)$.
\end{proof}

\begin{rmk}\label{rem_smooth_fake}
The above proposition tells that if we have a family of fake cubic fourfolds with symplectic action by a finite group $G$, then we can smooth the fake cubic fourfolds to smooth ones, preserving the action of $G$. What we obtain is a family (of one more dimension) of cubic fourfolds with symplectic action of $G$ such that the generic fibers are smooth.
\end{rmk}

Let us briefly discuss the geometry behind Proposition \ref{prop_smooth_fake} (and Remark \ref{rem_smooth_fake}). For simplicity, we restrict to the case of nodal cubic fourfolds (parametrized by the Hassett divisor $\calC_6$). A singular cubic fourfold can be written as 
\begin{equation}\label{eq_sing_cubic}
X_0=V(f_2(x_1,\dots,x_5) x_6+f_3(x_1,\dots,x_5))\subset \PP^5
\end{equation}
for some homogeneous polynomials $f_2$, $f_3$ of degree $2$ and $3$ respectively. Note that the equation above singles out the singular point $p=(0,\dots,1)\in X$. The linear projection from $p$ 
$$\pi:X_0\dashrightarrow \PP^4$$
is a birational equivalence. The inverse map $\pi^{-1}:\PP^4\dashrightarrow X_0$ has indeterminacy locus the degree $6$ $K3$ surface
$$Y=V(f_2(x_1,\dots, x_5),f_3(x_1,\dots,x_5))\subset \PP^4.$$
More precisely, assuming $Y$ is smooth, $X_0$ has a unique singular point $p$ which is either of type $A_1$ (if $V(f_2)$ is smooth) or type $A_2$ (if $V(f_2)$ is singular), and it holds
$$\widetilde{X_0}=\mathrm{Bl}_p X_0\cong \mathrm{Bl}_Y \PP^4.$$
This establishes a Hodge correspondence (essentially an identification) between the Hodge structure on $H^4(X_0)$ (still pure) and $H^2(Y)(-1)$.  In terms of automorphism, note that since the polarized automorphisms of $Y$ are induced from projective transformations, i.e., $G=\Aut(Y)_{pol}\subset \PGL(5)$, $G$ acts by automorphisms on $\widetilde X_0$. The group $G$ preserves the quadric $V(f_2)\subset \PP^5$ and its strict transform $E$ in $\widetilde X_0=\mathrm{Bl}_Y \PP^4$. But then $E$ is precisely the exceptional divisor of $\widetilde X_0=\mathrm{Bl}_p X_0\to X_0$. We conclude that $G$ acts on $X_0$ by automorphisms preserving the singular point $p$. 

Assuming that the equations $f_2$ and $f_3$ of $Y=V(f_2,f_3)$ can be chosen to be invariant with respect to $G$ (in general some character of $G$ might be involved), then  the (pencil of) cubic fourfolds 
$$X_t=V\left((f_2x_6+f_3)+t x_6^3\right)\subset \PP^5$$
admit $G$ as a group of automorphisms, with $G$ acting trivially on $x_6$. For general $t\in \PP^1$, the above cubic is smooth. This allows us to lift the equations for maximal symmetric $K3$ surfaces of degree $2$ and $6$ to $1$-parameter families of cubic fourfolds with large symmetry group (producing examples for most of the cases of Theorem \ref{theorem: main}(8)). The simplest example of such a lifting is the $A_6$ case \eqref{eq_a6}. Specifically, the degree $6$ $K3$ surface is 
$$Y=V(x_1^2+\dots+x_5^2+(x_1+\dots+x_5)^2, x_1^3+\dots+x_5^3-(x_1+\dots+x_5)^3).$$
It can be lifted to the $1$-parameter family of cubics $X_t=V(F_t)$ with $A_6$ symmetry, where
\begin{equation}\label{eq_a6lift}
F_t=x_1^3+\dots+x_5^3-(x_1+\dots+x_5)^3+x_6\left(x_1^2+\dots+x_5^2+(x_1+\dots+x_5)^2\right)+t x_6^3.
\end{equation}
More symmetrically, we can write 
$$X_t=V(x_0+\dots+x_5, x_0^3+\dots+x_5^3+x_6(x_0^2+\dots+x_5^2)+t x_6^3).$$
In this particular case, the symplectic condition is automatic as $A_6$ is a simple group (see also \S\ref{subsection: general discussion full automorphism groups} below). 
\section{Some remarks on the full automorphism groups for smooth cubic fourfolds}
\label{section: non-symplectic}
In this section we discuss about automorphisms and automorphism groups of smooth cubic fourfolds in general (i.e. without the symplectic assumption). We first discuss some general structure results in \S \ref{subsection: general discussion full automorphism groups} (the same arguments apply to $K3$ surfaces or hyper-K\"ahler manifolds). In \S\ref{section: order non-symplectic}, we  obtain some estimate on ``how non-symplectic'' the automorphism group of a cubic fourfold can be. Finally, in \S\ref{subsection: maximal cases}, we give some arithmetic conditions for smooth cubic fourfolds to admit non-symplectic automorphisms of order $2$, $3$ or $4$, and then use this to find the full automorphism groups for smooth cubic fourfolds with $\rank(S)=20$.

\subsection{Basic structures of the full automorphism groups}
\label{subsection: general discussion full automorphism groups}
Let $X$ be a smooth cubic fourfold, and $G=\Aut(X)$ the automorphism group. The induced action of $G$ on $H^{3,1}(X)$ gives a character $\chi\colon G\longrightarrow \CC^{\times}$, with kernel the symplectic automorphism group $G_s=\Ker(\chi)$. The image of $\chi$ is a cyclic group which we denoted by $\overline{G}$. We have the following short exact sequence of finite groups:
\begin{equation*}
1\longrightarrow G_s\longrightarrow G\longrightarrow \overline{G}\longrightarrow 1.
\end{equation*}

As before, the symplectic part $G_s\subset \Aut(X)$ induces a Leech pair $(G_s,S)$. 
Denote by $T(X)\subset H^4(X, \ZZ)$ the transcendental lattice of $X$. Note 
$$T(X)\subset H^4(X, \ZZ)_{prim}^{G_s}=S^\perp_{\Lambda_0}.$$ The induced action of the full automorphism group $G$ on $H^4(X,\ZZ)$ (or $H^4(X,\ZZ)_{prim}$) preserves the algebraic and transcendental lattices. Since $G_s$ acts trivially on $T(X)$, the action of $G$ on $T(X)$ factors through an action of $\overline{G}$ on $T(X)$. Clearly, the action of $\overline{G}$ preserves the Hodge structure on $T(X)$, and in particular it preserves the subspace $H^{3,1}\cong \CC\subset T(X)$. Choosing $\sigma$ a generator of $H^{3,1}$ (i.e., $\sigma$ is the class of a $(3,1)$ form on $X$), we see that $\overline G$ acts on $\sigma$ by roots of unity, i.e., if $\xi \in \overline G$ is a generator then 
$$\xi.\sigma=\zeta\sigma$$
for some root of unity $\zeta(\neq 1)\in U(1)\subset \CC^*$. We then note:

\begin{lem}
The induced action of $\overline{G}$ on $T(X)$ is faithful and has no non-zero fixed vectors.
\end{lem}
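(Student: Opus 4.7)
The statement has two parts, faithfulness and the absence of nontrivial fixed vectors. My plan is to treat them separately, using in each case only the defining property that $\overline{G}$ acts on the line $H^{3,1}(X)\subset T(X)\otimes\CC$ through a faithful character $\chi\colon \overline{G}\hookrightarrow\CC^{\times}$ (so a generator $\xi$ of $\overline{G}$ acts on $\sigma$ by a root of unity $\zeta\ne 1$).

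For faithfulness, I would argue directly: if $\bar g\in\overline{G}$ acts trivially on $T(X)$, then in particular $\bar g\cdot\sigma=\sigma$, so $\chi(\bar g)=1$; but by construction $\chi$ is injective on $\overline{G}$, so $\bar g=1$. This is essentially tautological once one remembers that $\overline{G}$ is defined as the \emph{image} of $\chi$.

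For the second part, the plan is to use the Hodge-isometry condition together with the specific location of $\sigma$. Suppose $0\ne v\in T(X)$ is invariant under $\overline{G}$. Since $\xi$ acts by isometries and sends $\sigma$ to $\zeta\sigma$, the identity
\begin{equation*}
(v,\sigma)=(\xi\cdot v,\,\xi\cdot\sigma)=\zeta\,(v,\sigma)
\end{equation*}
combined with $\zeta\ne 1$ forces $(v,\sigma)=0$, and similarly $(v,\bar\sigma)=0$. Thus $v\in T(X)_{\CC}$ is orthogonal to $H^{3,1}\oplus H^{1,3}$, which (using $H^{4,0}=H^{0,4}=0$ for a cubic fourfold) means $v\in H^{2,2}(X)\cap T(X)_{\CC}$.

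The last step is to observe that this forces $v=0$: by the very definition of the transcendental lattice as the orthogonal complement of the algebraic lattice $H^4(X,\ZZ)\cap H^{2,2}$ inside $H^4(X,\ZZ)_{\mathrm{prim}}$, any rational $(2,2)$-class lying in $T(X)_{\QQ}$ lies in both a lattice and its orthogonal complement, hence is zero by non-degeneracy of the intersection pairing. The only step one should double-check is that the $\overline{G}$-action indeed preserves the Hodge decomposition of $T(X)_{\CC}$, which is immediate because $\overline{G}$ is induced from automorphisms of $X$. I do not anticipate any real obstacle; the slightly delicate point is simply to package the eigenvalue-versus-pairing argument and then invoke the definition of $T(X)$ to finish.
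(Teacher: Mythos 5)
Your proof is correct and follows essentially the same route as the paper: faithfulness is immediate from the definition of $G_s$ as the kernel of the character on $H^{3,1}$, and the fixed-vector statement comes from the same isometry/eigenvalue computation $(v,\sigma)=\zeta(v,\sigma)$ forcing $(v,\sigma)=0$ and hence $v$ to be an integral $(2,2)$-class, exactly as in the paper. The one point to tighten is your final justification: non-degeneracy of the intersection pairing on the ambient lattice does not by itself give $A\cap A^{\perp}=0$ for the algebraic lattice $A=H^{2,2}\cap H^4(X,\ZZ)_{prim}$ (an isotropic sublattice shows why); you should instead invoke the Hodge index theorem, i.e.\ positive definiteness of the form on primitive integral $(2,2)$-classes, so that $v\in A\cap A^{\perp}$ would satisfy $(v,v)=0$ and hence $v=0$ --- or, equivalently, the irreducibility of the Hodge structure on $T(X)$, which is the alternative the paper mentions parenthetically.
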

\begin{proof}
Suppose not faithful, then there exists $g\in G\setminus G_s$ such that the induced action of $g$ on $H^4(X, \ZZ)$ leaves $T(X)$ invariant. But this implies that $g$ fixes $H^{3,1}(X)$, which is a contradiction to the assumption $g\notin G_s$. Suppose there is a non-zero vector $v\in T(X)$ fixed by $\overline{G}$. Then, denoting as above by $\xi$ a generator of $\overline G$, and $\sigma$ a generator of $H^{3,1}$, we have 
$$\langle \sigma, v\rangle=\langle \xi.\sigma, \xi.v\rangle=\langle \zeta\sigma,v\rangle=\zeta\langle \sigma,v\rangle,$$
which forces $\langle \sigma,v\rangle =0$. Thus, $v\in H^{2,2}\cap H^4(X, \ZZ)$, a contradiction. (Alternatively, the Hodge structure on $T(X)$ is irreducible. The fixed locus of $\overline G$ is a sub-Hodge structure, and thus can only be trivial.)
\end{proof}

Denote by $n$ the order of $\overline{G}$ (i.e., $\overline{G}\cong \ZZ/n$). Standard algebra leads to the following:

\begin{cor}
\label{corollary: euler of n}
We have $\varphi(n)\big{|} \rank (T(X))$. Here $\varphi$ is the Euler function.
\end{cor}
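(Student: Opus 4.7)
The plan is to upgrade the ``no fixed vector'' conclusion of the Lemma to the stronger statement that $\xi$ satisfies $\Phi_n(\xi)=0$ on $T(X)\otimes\QQ$, where $\xi$ is a generator of $\overline{G}$ and $\Phi_n$ is the $n$th cyclotomic polynomial. By faithfulness, $\xi$ has order exactly $n$ on $T(X)$, and since $\chi(\xi)$ generates the cyclic group $\overline{G}$ of order $n$, the action of $\xi$ on the one-dimensional space $H^{3,1}(X)\subset T(X)\otimes\CC$ is multiplication by a primitive $n$-th root of unity $\zeta=\chi(\xi)$.

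To prove $\Phi_n(\xi)=0$ on $T(X)\otimes\QQ$, set $K\coloneqq\ker\Phi_n(\xi)\subset T(X)\otimes\QQ$. Because elements of $G$ act on $H^4(X,\ZZ)$ as Hodge isometries (being pullbacks by automorphisms of $X$), $\xi$ commutes with the Hodge decomposition, and hence $K$ is a rational sub-Hodge structure of $T(X)\otimes\QQ$. Its complexification $K\otimes\CC$ is the direct sum of the $\xi$-eigenspaces in $T(X)\otimes\CC$ attached to primitive $n$-th roots of unity; in particular it contains $H^{3,1}(X)$ (eigenvalue $\zeta$) and, by complex conjugation, $H^{1,3}(X)$ (eigenvalue $\bar\zeta$). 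Let $K^\perp$ denote the orthogonal complement of $K$ in $T(X)\otimes\QQ$ with respect to the cup-product polarization; by polarizability, $K^\perp$ is again a rational sub-Hodge structure and $T(X)\otimes\QQ=K\oplus K^\perp$. By construction $(K^\perp)^{3,1}=(K^\perp)^{1,3}=0$, so $K^\perp\otimes\CC$ is pure of Hodge type $(2,2)$. But by the very definition of $T(X)$ as the orthogonal complement of the algebraic lattice $H^{2,2}(X)\cap H^4(X,\ZZ)$, the intersection $T(X)\cap H^{2,2}(X)$ is zero; clearing denominators shows that $T(X)\otimes\QQ$ contains no nonzero rational sub-Hodge structure of pure type $(2,2)$. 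Therefore $K^\perp=0$, hence $\Phi_n(\xi)=0$.

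Finally, the relation $\Phi_n(\xi)=0$ endows $T(X)\otimes\QQ$ with the structure of a module over the cyclotomic field $\QQ[\xi]/(\Phi_n(\xi))\cong\QQ(\zeta_n)$, whose degree over $\QQ$ is $\varphi(n)$. Comparing dimensions gives $\rank T(X)=\dim_\QQ(T(X)\otimes\QQ)=\varphi(n)\cdot\dim_{\QQ(\zeta_n)}(T(X)\otimes\QQ)$, so $\varphi(n)\mid\rank T(X)$. The only step of real substance is the vanishing of $K^\perp$; it amounts to the irreducibility of the transcendental rational Hodge structure, which is forced here by the defining property of $T(X)$.
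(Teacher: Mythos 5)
Your proof is correct and takes essentially the same route as the paper: in both arguments the key point is that every eigenvalue of $\xi$ on $T(X)\otimes\QQ$ is a primitive $n$-th root of unity (equivalently $\Phi_n(\xi)=0$), forced by the absence of nonzero rational $(2,2)$-classes in $T(X)$, after which rationality yields $\varphi(n)\mid\rank T(X)$. Your packaging via the polarized orthogonal complement $K^\perp$ is the ``irreducibility'' variant already indicated parenthetically in the proof of the preceding lemma (and, like the paper, it silently uses that the algebraic lattice is definite so that $T(X)\cap H^{2,2}(X)=0$), while the paper states the final count as ``the characteristic polynomial is a power of the cyclotomic polynomial'' rather than as a $\QQ(\zeta_n)$-module dimension count.
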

\begin{proof}
Let $\xi$ be a generator of $\overline{G}$, and $\zeta$ a primitive $n$-root of unity such that $\xi.\sigma=\zeta\sigma$ for $\sigma\in H^{3,1}(X)$. The arguments of the previous lemma, easily give that all the eigenvalues of $\xi$ on $T(X)$ are primitive $n$-roots of unity. The characteristic polynomial $p_\xi$  of $\xi$ as an isomorphism of $T(X)$ is rational. It follows that $p_\xi$ is a power of the cyclotomic polynomial. The claim follows.
\end{proof}

\subsection{Order of the non-symplectic part}
\label{section: order non-symplectic}
The list of smooth cubic fourfolds with prime order automorphism is known. Specifically, according to  \cite[Theorem 3.8]{gonzalez} there are $13$ irreducible families\footnote{The case $\mathcal{F}_5^2$ in \cite[Theorem 3.8]{gonzalez} should be excluded, as the corresponding family contains only singular cubic fourfolds. This was pointed out in \cite{boissiere2016classification}.}  of cubics with a prime order automorphism. In particular,
\begin{prop}
\label{proposition: order non-symp}
A prime factor of the order of the automorphism group of a smooth cubic fourfold can only be $2$, $3$, $5$, $7$, or $11$. A non-symplectic prime-order automorphism of a smooth cubic fourfold can have order $2$ or $3$.
\end{prop}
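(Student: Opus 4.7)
The plan is to prove Proposition~\ref{proposition: order non-symp} by splitting along the exact sequence $1 \to G_s \to G \to \overline{G} \to 1$ and combining our symplectic classification (Theorem~\ref{theorem: main}) with the classification of prime-order automorphisms of smooth cubic fourfolds in \cite[Theorem 3.8]{gonzalez}. A prime $p$ dividing $|G|=|\Aut(X)|$ must divide either $|G_s|$ or $|\overline{G}|$, and I will handle each case separately.

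For the symplectic side, the remark following Theorem~\ref{theorem: main} lists the possible element orders in $\Aut^s(X)$ as $\{1,2,3,4,5,6,7,8,9,11,12,15\}$, so the prime divisors of $|G_s|$ automatically lie in $\{2,3,5,7,11\}$. For the non-symplectic side, take a non-symplectic $g \in G$ of prime order $p$. The lemma preceding Corollary~\ref{corollary: euler of n} shows that $g$ acts on $T(X)$ without non-zero fixed vectors, so the characteristic polynomial of $g|_{T(X)}$ is a power of the $p$-th cyclotomic polynomial $\Phi_p$. Corollary~\ref{corollary: euler of n} then yields $p-1 = \varphi(p) \mid \rank T(X) \le 22$, giving the preliminary list $p \in \{2,3,5,7,11,13,17,19,23\}$.

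To sharpen this to the statement in the proposition, I will invoke \cite[Theorem 3.8]{gonzalez}, which parametrizes smooth cubic fourfolds admitting a prime-order automorphism as $13$ irreducible families with explicit normal forms. A direct inspection of that list shows that the only prime orders that actually occur are $2, 3, 5, 7, 11$ (establishing the first sentence) and that, of these, orders $5, 7, 11$ only arise from symplectic automorphisms (establishing the second sentence). The latter is consistent with the cyclic entries of Theorem~\ref{prop_HL}, where the order $5, 7, 11$ symplectic families are recorded with moduli of dimensions $4, 2, 0$ respectively, matching the non-symplectic-free families of \cite{gonzalez}.

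The main obstacle is precisely the refinement step: the divisibility $p-1 \mid \rank T(X)$ cannot by itself rule out $p \in \{13, 17, 19, 23\}$ as non-symplectic prime orders. One could try to rule these out intrinsically by pushing the Borcherds $E_6$-polarization analysis of \S\ref{subsec:borcherds}, namely by showing that no isometry of $\Lambda_0$ with characteristic polynomial $\Phi_p^b$ on a rank $b(p-1)$ sublattice extends compatibly with an $E_6$ Borcherds polarization and the signature $(20,2)$ constraint, but this is more cumbersome than simply citing \cite[Theorem 3.8]{gonzalez}; I will take the latter route.
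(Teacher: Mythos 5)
Your proof is correct and, at the decisive step, coincides with the paper's own argument: both reduce the statement to citing \cite[Theorem 3.8]{gonzalez} (the $13$ families of smooth cubic fourfolds with a prime-order automorphism) and then observing, via the symplectic criterion of Remark \ref{rmk_sympl_cond} or the match with Theorem \ref{theorem: lie fu}, that the families involving the primes $5$, $7$, $11$ are exactly the symplectic ones. The extra scaffolding you add (the split along $1\to G_s\to G\to \overline{G}\to 1$ and the bound $\varphi(p)\mid \rank T(X)$, which only yields $p\le 23$) is harmless but superfluous, since you ultimately fall back on the same citation for both sentences.
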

\begin{proof}
The list of prime orders is a consequence of \cite[Theorem 3.8]{gonzalez}. The second part follows by noticing that $7$ of the $13$ cases were already identified in Theorem \ref{theorem: lie fu} as the symplectic cases (see also Remark \ref{rmk_sympl_cond}). The symplectic cases cover all the cases involving the primes $5$, $7$, and $11$. The claim follows. 
\end{proof}

By Proposition  \ref{proposition: order non-symp}, the order of $\overline{G}$ has only prime  factors $2$ or $3$. Thus, we can write $n(=|\overline{G}|)=2^k 3^l$. From Corollary \ref{corollary: euler of n} and the fact $T(X)\subset S^\perp_{\Lambda_0}$ we get:
\begin{equation}
\label{equation: euler of n less that rank of T}
\varphi(n)=\varphi(2^k 3^l)\le 22-\rank(S).
\end{equation}

As mentioned the induced action of $G$ on $H^4(X,\ZZ)$ preserves the algebraic and transcendental lattices. In fact $G$ preserves also the covariant lattice $S(=S_{G_s}(X))$.

\begin{lem}
\label{lemma: non-symplectic preserve S}
The induced action of $G$ on $H^4(X,\ZZ)$ leaves $S$ stable.
\end{lem}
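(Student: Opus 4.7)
The plan is to exploit the fact that $G_s$ is a \emph{normal} subgroup of $G$ (being the kernel of the character $\chi\colon G\to \CC^\times$). Once normality is in hand, the stability of $S$ under $G$ is essentially a formal consequence of the definition of the covariant lattice plus the fact that $G$ acts by isometries on $H^4(X,\ZZ)$.

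More precisely, I would first check that $G$ preserves the invariant sublattice $L^{G_s}$, where $L=H^4(X,\ZZ)$. For any $g\in G$ and $v\in L^{G_s}$, and for any $h\in G_s$, normality of $G_s$ gives $g^{-1}hg\in G_s$, hence
\begin{equation*}
h(g(v)) \;=\; g\bigl((g^{-1}hg)(v)\bigr) \;=\; g(v),
\end{equation*}
so $g(v)\in L^{G_s}$. Thus $G$ stabilizes $L^{G_s}$.

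Next, since $G$ acts on $L$ by lattice isometries (preserving the intersection pairing), any orthogonal complement of a $G$-stable sublattice is itself $G$-stable. In particular, $S=(L^{G_s})^{\perp}$ is preserved by $G$. (Note that we may equivalently work inside the primitive lattice $H^4(X,\ZZ)_{prim}$: since the hyperplane class $\eta$ is fixed by the full group $\Aut(X)$, the decomposition $L=\langle\eta\rangle\oplus \Lambda_0$ is $G$-equivariant, and the same argument goes through.)

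There is no real obstacle here; the lemma is entirely formal, and the only substantive input is the normality of $G_s\subset G$, which follows immediately from $G_s=\ker(\chi)$. This stability is precisely what permits the subsequent analysis of $\overline{G}$ acting on the pair $(S,T(X))$ via its induced actions on the discriminant forms.
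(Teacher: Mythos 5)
Your proof is correct and follows essentially the same route as the paper: normality of $G_s=\ker(\chi)$ gives $G$-stability of the invariant lattice $L^{G_s}$, and passing to orthogonal complements (using that $G$ acts by isometries) yields stability of $S$.
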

\begin{proof}
The subgroup $G_s$ is normal in $G=\Aut(X)$. Thus for any $g\in G$, $gG_sg^{-1}=G_s$. By definition, $S$ is the orthogonal complement of the invariant lattice $\Lambda^{G_s}$. Clearly $G_s=gG_sg^{-1}$ leaves every vector in $g\Lambda^{G_s}$ invariant. It follows that $g\Lambda^{G_s}=\Lambda^{G_s}$. By taking orthogonal complements, we get that $g$ leaves $S$ stable.
\end{proof}

The action of $G$ on $S$ induces a homomorphism $\pi\colon G\longrightarrow \Aut(q_S)$. Since $G_s$ acts trivially on $q_S$, the homomorphism $\pi$ descends to a morphism $\pi\colon \overline{G}\longrightarrow \Aut(q_S)$.
\begin{prop}
\label{theorem: embedding of G/G_s into Aut(q_S)}
When $\rank(S)\ge 13$, the homomorphism $\pi\colon \overline{G}\longrightarrow\Aut(q_S)$ is injective.
\end{prop}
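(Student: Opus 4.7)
The plan is to show that any $g\in G$ whose image $\bar g\in \overline G$ acts trivially on $q_S$ must already lie in $G_s$. The key preliminary I would establish is that the Leech pair $(G_s,S)$ is \emph{saturated} in the sense of \S\ref{subsec-HM}: given any isometry $\sigma$ of $S$ trivializing $A_S$, I would extend $\sigma$ by the identity on $K:=S^\perp_{\Lambda_0}$ to an isometry $\tilde\sigma$ of $\Lambda_0$ (the glueing subgroup $H\subset A_S\oplus A_K$ is preserved because both components act trivially on the relevant discriminants), and then observe that since $S\subset H^{2,2}$ and $\tilde\sigma|_K=\mathrm{id}$ fixes $H^{3,1}$, $\tilde\sigma$ is a Hodge isometry. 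The Strong Global Torelli Theorem (Proposition~\ref{proposition: strong global torelli}) would then produce a symplectic automorphism in $G_s$ restricting to $\sigma$ on $S$, showing that the natural map $G_s\to \widetilde{O}(S)$ is surjective.

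Next, given $\bar g\in\ker(\pi)$, I would pick $h\in G_s$ with $h|_S=g|_S$ (by the saturation just established) and consider $g':=gh^{-1}$, which fixes $S$ pointwise and still satisfies $\bar{g'}=\bar g$. If $g'$ is symplectic, then $g'\in G_s$ and the faithfulness of the $G_s$-action on $S$ (built into the definition of a Leech pair) forces $g'=1$, yielding $\bar g=1$. So the task reduces to ruling out a \emph{non-symplectic} automorphism $g'$ fixing $S$ pointwise.

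This last case is where the hypothesis $\rank(S)\geq 13$ enters, and is where I expect the main difficulty to lie. For such a $g'$, the eigenvalues on $T(X)\subseteq K$ are primitive $m$-th roots of unity with $m=\ord(\bar{g'})>1$, so Corollary~\ref{corollary: euler of n} yields $\varphi(m)\leq \rank T(X)\leq \rank K = 22-\rank S\leq 9$. Combined with Proposition~\ref{proposition: order non-symp}, only finitely many $m$ remain. To close, I would combine this numerical bound with the glueing constraint: because $g'|_{A_S}=\mathrm{id}$, the action of $g'$ on $A_K$ must fix $\pi_K(H)$ pointwise, where $H=\Lambda_0/(S\oplus K)$. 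Running through the surviving values of $m$ and using that $g'|_K$ is a Hodge isometry acting on $H^{3,1}$ by a primitive $m$-th root of unity, together with the smallness of $K$, should rule out every possibility. The main obstacle will be precisely this case-by-case verification against the rank and discriminant data of $K$; I do not yet see a clean uniform lattice-theoretic argument that bypasses it.
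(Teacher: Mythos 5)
Your first two steps coincide with the paper's own argument: the saturation of $(G_s,S)$ (extend an isometry of $S$ acting trivially on $A_S$ by the identity on the orthogonal complement and apply Proposition~\ref{proposition: strong global torelli}) and the replacement $g\mapsto gh^{-1}$, reducing everything to ruling out a non-symplectic automorphism $g'$ that fixes $S$ pointwise. The gap is in the final step, which is the only place where the hypothesis $\rank(S)\ge 13$ actually enters, and which you leave as an unspecified case-by-case analysis. The numerical bound $\varphi(m)\le \rank T(X)\le 22-\rank(S)$ from Corollary~\ref{corollary: euler of n} cannot close the argument: it is vacuous exactly in the hardest cases $m=2,3$, so it does not exclude, for instance, an anti-symplectic involution acting trivially on $S$; likewise the gluing condition on $A_K$ is only a constraint, not a contradiction, and $K$ varies over all the rank $\ge 13$ cases of Theorem~\ref{theorem: main}, so the proposed verification is neither carried out nor clearly feasible. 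As written, the heart of the proposition is missing.

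The missing idea in the paper's proof is to first pass to a power of $g'$ of prime order: such a power still fixes $S$ pointwise and cannot be symplectic (a symplectic element acting trivially on $S$ is the identity, by faithfulness of the $G_s$-action on $S$), hence by Proposition~\ref{proposition: order non-symp} it has order $2$ or $3$. One then invokes the classification of non-symplectic automorphisms of order $2$ and $3$ on smooth cubic fourfolds (\cite{gonzalez}, \cite{yu2018moduli}, \cite{laza2017moduli}, \cite{allcock2011moduli}): there are two conjugacy classes of anti-symplectic involutions, whose invariant sublattices of $\Lambda_0$ have rank $12$ and $8$, and four classes of non-symplectic order $3$ automorphisms, whose moduli are ball quotients of dimensions $4,6,7,10$, so that the invariant sublattice again has rank at most $12$. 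Since $S$ lies inside the invariant sublattice of this prime-order element, this contradicts $\rank(S)\ge 13$. Without this input (or a substitute playing the same role), your plan does not yet constitute a proof.
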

\begin{proof}
Suppose $g\in G\setminus G_s$ acts trivially on $q_S$. Thus, the action of $g$ on $S$ is by isometries preserving the discriminant. As previously discussed, any such isometry of $S$ can be lifted to a symplectic automorphism of $X$. Thus, there exists $h\in G_s$, such that the restrictions of $g$ and $h$ to $S$ are the same. Replacing $g$ by $gh^{-1}$, we can assume (wlog) that $g$ acts trivially on $S$. 

Replacing $g$ by a power $g^k$, we can further assume that $g$ has prime order. By Proposition  \ref{proposition: order non-symp}, we can assume that $g$ is either of order $2$ or $3$.

By the classification in \cite{gonzalez} and the discussion in \cite[\S6]{yu2018moduli}, there are two conjugacy classes of non-symplectic involutions, with corresponding moduli spaces arithmetic quotient of type $\IV$ domains having dimensions $10$ and $14$ (N.B. the $14$ dimensional case is discussed in detail in \cite{laza2017moduli}). In particular, the invariant sublattice of $\Lambda_0$ (which contains $S$) is of rank $12$ or $8$ respectively, contradicting $\rank (S)\ge 13$. The order $3$ case is similar. Namely, there are $4$ conjugacy classes of 
of non-symplectic order three automorphisms, with corresponding moduli spaces arithmetic ball quotients of dimensions $4$, $6$, $7$ and $10$ (N.B. the $10$-dimensional case is \cite{allcock2011moduli}). Again, the automorphism $g$ can not leave a  sublattice of rank at least $13$ of $\Lambda_0$ invariant, a contradiction.
\end{proof}

The proposition above is very useful in the cases where $S$ is of large rank, or equivalently $G_s$ is relatively large; this is the case of interest in this paper. In fact, note that most of the cases in Theorem \ref{theorem: main} satisfy $\rank(S)\ge 13$. It would be interesting to classify the possible orders $n=2^k3^l$ of non-symplectic automorphisms on a cubic fourfold, especially we do not know what is the largest possible such $n$ (compare \eqref{equation: euler of n less that rank of T}). These cases will have essentially trivial symplectic automorphism group, thus they should be handled by different methods. 

\begin{rmk}\label{rem_covariant_anti}
A major difference between the lattice theoretic methods in the symplectic and anti-symplectic cases is that  the covariant lattice $N$ for an anti-symplectic automorphism contains the transcendental lattice $T(X)$, and thus (except the case $\rank(T(X))=2$) $N$ is indefinite (in particular, $O(N)$ is typically infinite).
\end{rmk}

\subsection{Maximal cases}
\label{subsection: maximal cases}
We conclude our discussion of the automorphism groups of cubic fourfolds, with a discussion of the full automorphism group for the $8$ maximal cases (with respect to symplectic automorphisms) identified in Theorem \ref{theorem: maximal uniqueness}. These are the most interesting cases from the perspective of this paper, and they are particularly suitable to classification (compare Prop. \ref{lemma: non-symplectic preserve S} and Rem. \ref{rem_covariant_anti}, and note $\rank(S)=20$, $\rank (T)=2$).

Since we assume $\rank(S)=20$, the transcendental lattice $T(X)$ is the orthogonal complement of $S(-1)$ in $H^4(X, \ZZ)_{prim}$ and has rank $2$. From Equation \eqref{equation: euler of n less that rank of T} we get that the possible orders for the non-symplectic part $\overline G$ are $n=2$, $3$, $4$, or $6$. We discuss first the case of anti-symplectic involutions. 

An involution on a cubic $X$ can be diagonalized to one of the following three types: $\diag(-1,1,1,1,1,1)$, $\diag(-1,-1,1,1,1,1)$, and $\diag(-1,-1,-1,1,1,1)$ (see also  \cite{gonzalez}). The involution $\diag(-1,-1,1,1,1,1)$ is symplectic, while the other two are anti-symplectic. 

\begin{rmk}[Eckardt points] An essential ingredient in the geometric classification of the automorphism groups of cubic surfaces are the {\it Eckardt points} (see \cite{Eckardt1876} and \cite{segre1942cubic}). The Eckardt points can be defined for cubics of any dimension (e.g. see  \cite{laza2017moduli}). From the perspective of automorphisms, a smooth cubic $n$-fold $V\subset \PP^{n+1}$ has an Eckardt point iff it is invariant with respect to an involution $\iota$ that fixes a hyperplane (thus of type $\diag(-1,1,\cdots, 1)$); the Eckardt point is the isolated fixed point of $\iota$. Explicitly, $V$ is defined by cubic polynomial $F(x_2,\cdots, x_{n+2})+x_1^2 L(x_2, \cdots, x_{n+2})$, where $\deg(F)=3$ and $\deg(L)=1$; $[1:0:\cdots:0:0]\in V$ is an Eckardt point. We refer to \cite{laza2017moduli} for further details. 
\end{rmk}

We have the following necessary condition for a smooth cubic fourfold with maximal symplectic symmetry to admit an anti-symplectic involution.

\begin{prop}
\label{proposition: rk(S)=20 with antisymplectic involution}
Let $X$ be a smooth cubic fourfold with $\rank(S)=20$. Suppose there exists an anti-symplectic involution on $X$, then the composition of $S\oplus E_6\hookrightarrow H^4_0(X,\ZZ)\oplus E_6\hookrightarrow \BB$ is not primitive.
\end{prop}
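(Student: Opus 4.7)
The plan is to decode the non-primitivity of $S\oplus E_6 \subset \BB$ in terms of the discriminant-form gluing between $S$ and its orthogonal complement, and then rule out primitivity using the fact that an anti-symplectic involution $\tau$ acts as $-\id$ on the rank-two transcendental lattice while fixing the polarization class $\eta$. Since $\rank(S)=20$, the transcendental lattice $T:=S^\perp_{\Lambda_0}$ has rank $2$, and the lemma preceding Corollary \ref{corollary: euler of n} shows that $\tau|_T = -\id$. Write $H \subset A_S \oplus A_T$ for the gluing subgroup $\Lambda_0/(S\oplus T)$ and $H_T := \pi_{A_T}(H)$ for its projection onto the $A_T$-factor. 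My first task is to verify the following purely lattice-theoretic fact: the saturation of $S\oplus E_6$ in $\BB$ strictly contains $S\oplus E_6$ if and only if the well-defined homomorphism
$$f\colon A_{\Lambda_0} = H^\perp/H \longrightarrow A_T/H_T, \qquad (\alpha,\beta) + H \longmapsto \beta + H_T,$$
sends the class $\bar a_0$ generating $A_{\Lambda_0}\cong \ZZ/3$ (i.e., the gluing class for the extension $\BB \supset \Lambda_0 \oplus E_6$) to zero. This follows from the description $\BB = (\Lambda_0\oplus E_6) + \ZZ(a_0+b_0)$ with $3a_0\in\Lambda_0$, $3b_0\in E_6$, together with the primitivity of $S$ in $\Lambda_0$.

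Second, I would check that $\tau$ acts trivially on $A_{\Lambda_0}$. The key point is that $H^4(X,\ZZ) \supset \Lambda_0\oplus\ZZ\eta$ is a finite-index unimodular extension obtained by an isotropic gluing between $A_\eta$ and $A_{\Lambda_0}$, both isomorphic to $\ZZ/3$. Since $\tau$ fixes $\eta$, it acts trivially on $A_\eta$; by $\tau$-equivariance of the gluing, $\tau$ then must act trivially on $A_{\Lambda_0}$ as well.

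Finally, I would combine these ingredients. By Lemma \ref{lemma: non-symplectic preserve S}, $\tau$ preserves $S$, and therefore preserves $T = S^\perp_{\Lambda_0}$, so $H$ and $H_T$ are $\tau$-stable and the map $f$ is $\tau$-equivariant. Combining $\tau\bar a_0 = \bar a_0$ with the fact that $\tau$ acts as $-\id$ on $A_T$, and hence on $A_T/H_T$, yields
$$f(\bar a_0) = f(\tau\bar a_0) = \tau\cdot f(\bar a_0) = -f(\bar a_0),$$
so $2f(\bar a_0)=0$; since $f(\bar a_0)$ is $3$-torsion, it must vanish, proving that $S\oplus E_6$ is not primitive in $\BB$. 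The main obstacle will be setting up the map $f$ carefully (checking well-definedness and $\tau$-equivariance) and justifying the triviality of $\tau$ on $A_{\Lambda_0}$ from its fixing $\eta$; once these structural facts are in place, the finishing step is a one-line $\gcd(2,3)=1$ argument.
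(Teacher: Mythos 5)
Your proposal is correct, but it takes a genuinely different route from the paper's. The paper argues by contradiction: assuming $S\oplus E_6$ is primitive in $\BB$, it extends the anti-symplectic involution $\iota^*$ to $\BB$ (acting trivially on $E_6$), identifies the invariant sublattice of $\BB$ as $M\oplus E_6$ with $M=\Lambda_0^{\iota^*}\subseteq S$, and derives a contradiction from the standard fact that the invariant sublattice of an involution of a unimodular lattice has $2$-elementary discriminant group, whereas $3=|A_{E_6}|$ divides $|A_{M\oplus E_6}|$. You never extend the involution to $\BB$ at all: you encode non-primitivity of $S\oplus E_6$ in $\BB$ as the vanishing of the $A_T/H_T$-component $f(\bar a_0)$ of the glue class of $\BB\supset\Lambda_0\oplus E_6$, and then kill $f(\bar a_0)$ because it is simultaneously $3$-torsion and $2$-torsion, the latter since $\tau$ fixes $\bar a_0$ (trivial action on $A_{\Lambda_0}$, which you correctly deduce from $\tau(\eta)=\eta$ and the unimodular gluing of $\langle\eta\rangle$ and $\Lambda_0$ inside $H^4(X,\ZZ)$ --- a point the paper leaves implicit when it extends $\iota^*$ to $\BB$) while acting as $-\id$ on $A_T/H_T$. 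Your iff-criterion is indeed correct: writing $\BB=(\Lambda_0\oplus E_6)+\ZZ(a_0+b_0)$ and $a_0=s^*+t^*\in S^*\oplus T^*$, primitivity of $S$ in $\Lambda_0$ shows an index-$3$ saturation of $S\oplus E_6$ exists exactly when some $\lambda\in\Lambda_0$ has $T$-projection $-t^*$, i.e. when $f(\bar a_0)=0$; and since $S$ is primitive in $\BB$ and $E_6$ admits no overlattice, index $3$ is the only possibility, so the equivalence is complete (only the direction ``$f(\bar a_0)=0\Rightarrow$ non-primitive'' is actually needed). Both proofs exploit the same $2$-versus-$3$ tension; the paper's is shorter granted the fixed-lattice fact, while yours is more self-contained, replacing that input by an explicit gluing computation. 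One small citation nuance: the lemma preceding Corollary \ref{corollary: euler of n} is stated for vectors fixed by all of $\overline{G}$, so to conclude $\tau|_T=-\id$ you should invoke the elementwise version (pair a putative $\tau$-fixed vector with $\sigma$ and use $\tau^*\sigma=-\sigma$), which is the same one-line argument; the paper asserts this step without comment as well.
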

\begin{proof}
By Lemma  \ref{lemma: non-symplectic preserve S}, the induced involution $\iota^*$ on $H^4_0(X,\ZZ)$ preserves $S=S_{G_s}(X)$. Since $\iota^*$ equals to $-id$ on the orthogonal complement of $S$ in $H^4(X,\ZZ)$, the invariant sublattice $M=H^4_0(X,\ZZ)^{\iota^*}$ of $H^4_0(X,\ZZ)$ is contained in $S$. Suppose $j\colon S\oplus E_6\hookrightarrow \BB$ is primitive, then the inclusion $j\colon M\oplus E_6\hookrightarrow \BB$ is also primitive.

On the other hand, the involution $\iota^*$ on $H^4_0(X,\ZZ)$ extends to an involution on $\BB$, with restriction to $E_6$ trivial. The invariant sublattice of $\BB$ under the action of $\iota^*$ is $M\oplus E_6$. This is a contradiction, because the invariant sublattice (in a unimodular lattice) of an involution has $2$-group as its discriminant group, while $|A_{E_6}|=3$.
\end{proof}

In particular, this allows us to distinguish the two cases of Theorem \ref{theorem: maximal uniqueness}(2) with symplectic automorphism group $A_7$. Namely, we note that cubic fourfold with $A_7$ automorphisms identified by H\"ohn--Mason has an extra symplectic involution, while the other can not have.
\begin{cor}
\label{corollary: diagonal}
Let  $X=V (x_1^3+x_2^3+x_3^3+x_4^3+x_5^3+x_6^3-(x_1+x_2+x_3+x_4+x_5+x_6)^3)$ with symplectic automorphism group $A_7$ (cf. \cite[Table 2]{HM2}). Let  $S$ be the covariant sublattice of $H^4(X,\ZZ)$ with respect to the induced action by $A_7$. Then the orthogonal complement $T$ of $S$ in $H^4_0(X,\ZZ)$ is $-(2^1 18)$, and $q_T=5^{+1} 7^{+1}$.
\end{cor}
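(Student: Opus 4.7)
The plan is to exhibit an explicit Eckardt involution on $X$ and apply Proposition \ref{proposition: rk(S)=20 with antisymplectic involution} together with the dichotomy (2)/(2') established in the proof of Theorem \ref{theorem: maximal uniqueness}.

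First I would pass to the symmetric model
\[
X \;=\; V(y_1^3+\cdots+y_7^3)\cap V(y_1+\cdots+y_7) \;\subset\; \PP^6,
\]
obtained by setting $y_i=x_i$ for $1\le i\le 6$ and $y_7=-(x_1+\cdots+x_6)$. In this presentation $S_7$ acts on $X$ by permuting the $y_i$; the subgroup $A_7$ is the symplectic part by Theorem \ref{theorem: maximal uniqueness}(2). The transposition $\tau=(1\,2)\in S_7$ stabilizes $X$, and on the hyperplane $V(y_1+\cdots+y_7)\cong\PP^5$ it has a one-dimensional $(-1)$-eigenspace (spanned by $y_1-y_2$) and a complementary five-dimensional $(+1)$-eigenspace. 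Hence in appropriate coordinates $\tau$ has shape $\diag(-1,1,1,1,1,1)$, fixes the hyperplane section of $X$ cut out by $y_1=y_2$, and has the isolated Eckardt point $[1:-1:0:0:0:0:0]$. By the classification of involutions of smooth cubic fourfolds recalled after Proposition \ref{proposition: order non-symp}, involutions of this type are necessarily anti-symplectic.

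Having produced an anti-symplectic involution on $X$, Proposition \ref{proposition: rk(S)=20 with antisymplectic involution} forces the composition $S\oplus E_6\hookrightarrow \Lambda_0\oplus E_6\hookrightarrow \BB$ to be \emph{non-primitive}. Now I invoke the dichotomy established in the proof of Theorem \ref{theorem: maximal uniqueness}(2),(2'): for $G=A_7$ the transcendental lattice is either $T=-(2^1 18)$ (with $q_T=5^{+1}7^{+1}$ and $\disc=35$) or $T'=-(18^3 18)$ (with $q_{T'}=3^{-2}5^{+1}7^{+1}$ and $\disc=315$). Comparing $\disc(S\oplus E_6)=|A_S|\cdot|A_{E_6}|=105\cdot 3=315$ with $\disc\bigl((S\oplus E_6)^{\perp\perp}_\BB\bigr)=\disc(T_{(\cdot)})$ shows that $S\oplus E_6$ is primitive in $\BB$ precisely in case (2'), whereas in case (2) the saturation $\widetilde S$ of $S\oplus E_6$ in $\BB$ is strictly larger (of index $3$). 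The non-primitivity delivered by $\tau$ therefore rules out case (2'), and we conclude $T=-(2^1 18)$ with $q_T=5^{+1}7^{+1}$, as claimed.

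The only non-routine step is spotting the hidden $S_7$-symmetry and recognizing the transpositions as Eckardt involutions; everything else is a direct discriminant comparison feeding into already-established lattice-theoretic results. As a byproduct, the argument explains why the ``other'' smooth $A_7$-symmetric cubic $X^2(A_7)$ of Theorem \ref{theorem: maximal uniqueness}(2') cannot carry an Eckardt point, which will be used in the geometric distinction between the two cases.
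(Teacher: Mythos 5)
Your proposal is correct and follows essentially the same route as the paper: exhibit the $S_7$-symmetry of the Clebsch diagonal cubic, observe that a transposition (exchanging two coordinates) is an anti-symplectic involution, apply Proposition \ref{proposition: rk(S)=20 with antisymplectic involution} to force non-primitivity of $S\oplus E_6\hookrightarrow\BB$, and then read off $T=-(2^1 18)$, $q_T=5^{+1}7^{+1}$ from the dichotomy in the proof of Theorem \ref{theorem: maximal uniqueness}. The only difference is presentational: you verify the anti-symplectic (Eckardt) type of the transposition and make the discriminant comparison ($315$ versus $35$) explicit, steps the paper leaves implicit.
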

\begin{proof}
This is the (Clebsch) diagonal cubic, and thus it has $S_7$ automorphisms. Obviously, $\overline G=S_7/A_7\cong \ZZ/2$ (explicitly exchanging $x_1, x_2$ is an anti-symplectic involution of $X$).  By Proposition  \ref{proposition: rk(S)=20 with antisymplectic involution}, the inclusion $j\colon S\oplus E_6\hookrightarrow \BB$ is not primitive. From the proof of Theorem  \ref{theorem: maximal uniqueness}, we conclude $T=-(2^1 18)$ and $q_T=5^{+1} 7^{+1}$.
\end{proof}

Similarly, we get: 
\begin{cor}
\label{corollary: involution}
The cubic fourfold $X^2(A_7)$, and those with symplectic automorphism groups $G_s=L_2(11)$ and $M_{10}$, have no anti-symplectic involution (equivalently, order of $\overline G$ is odd). 
\end{cor}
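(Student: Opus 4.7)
The plan is to invoke the contrapositive of Proposition \ref{proposition: rk(S)=20 with antisymplectic involution}: for a cubic $X$ with $\rank(S)=20$, if the composed embedding $S \oplus E_6 \hookrightarrow H^4_0(X,\ZZ) \oplus E_6 \hookrightarrow \BB$ is primitive, then $X$ admits no anti-symplectic involution. For each of the cases in question ($X^2(A_7)$, $L_2(11)$, and both $M_{10}$-cubics), I would verify this primitivity by reading off the relevant case already analyzed in the proof of Theorem \ref{theorem: maximal uniqueness}.

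Concretely, for $L_2(11)$ the discriminant $q_{S \oplus E_6} = 11^{+2} \oplus 3^{+1}$ admits no nontrivial isotropic subgroup, so no saturation can intervene; the embedding $S \oplus E_6 \hookrightarrow \BB$ with orthogonal complement $T=-(22^{11}22)$ is automatically primitive. For both $M_{10}$-cubics, the same conclusion holds: $q_{S\oplus E_6} = 2_5^{-1}4_1^{+1}3^{+1}5^{+1}\oplus 3^{+1}$ has no nontrivial isotropic subgroup, and the two non-conjugate primitive embeddings of $S$ into $\Lambda_0$ produced in Theorem \ref{theorem: maximal uniqueness}(4) both extend to primitive embeddings $S\oplus E_6\hookrightarrow \BB$. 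For $X^2(A_7)$, the transcendental lattice is $T=-(18^3 18)$ with $-q_T$ isomorphic to the \emph{entire} form $q_{S\oplus E_6}=3^{-1}5^{+1}7^{-1}\oplus 3^{+1}$, so again the embedding is directly primitive. This is precisely what distinguishes $X^2(A_7)$ from the Clebsch cubic $X^1(A_7)$, where the smaller transcendental lattice $T=-(2^1 18)$ forces a length-$3$ saturation and thereby permits the anti-symplectic involution exploited in Corollary \ref{corollary: diagonal}.

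Applying Proposition \ref{proposition: rk(S)=20 with antisymplectic involution} to each case yields that none of these four cubics admits an anti-symplectic involution. Since $\overline{G}=\Aut(X)/\Aut^s(X)$ is cyclic, any even order would force the existence of a unique element of order $2$, automatically anti-symplectic; thus the absence of anti-symplectic involutions is equivalent to $|\overline{G}|$ being odd. The only real work in this proof is bookkeeping on the discriminant forms, and all the required discriminant computations are already present in the proof of Theorem \ref{theorem: maximal uniqueness}; no new lattice-theoretic input is needed, which is why this result is stated as a corollary.
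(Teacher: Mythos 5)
Your main argument is exactly the paper's (implicit) proof: read off from the case analysis in Theorem \ref{theorem: maximal uniqueness} (equivalently Theorem \ref{proposition: maximal case}) that for $X^2(A_7)$, both $M_{10}$ cubics, and the $L_2(11)$ cubic the composed embedding $S\oplus E_6\hookrightarrow\BB$ is primitive (for $L_2(11)$ and $M_{10}$ because $S\oplus E_6$ admits no nontrivial saturation at all, for $X^2(A_7)$ because $\det T=315=\det(S\oplus E_6)$ leaves no room for glue), and then apply the contrapositive of Proposition \ref{proposition: rk(S)=20 with antisymplectic involution}. Those verifications are correct and match the paper's discriminant computations.

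The one genuine gap is in your last paragraph, where you justify the parenthetical ``equivalently, $|\overline G|$ is odd'' by saying that the unique order-$2$ element of the cyclic group $\overline G$ is ``automatically anti-symplectic.'' That element lives in the quotient $\overline G=\Aut(X)/\Aut^s(X)$; it is a coset, not an automorphism of $X$, and a lift of it need not be an involution. Indeed an extension of $\ZZ/2$ by $G_s$ can fail to contain any involution outside $G_s$ (e.g.\ the non-split extension $M_{10}=A_6.2$, in which every element of the outer coset has order $8$), so ``$|\overline G|$ even $\Rightarrow$ there exists an anti-symplectic involution'' is not a formal consequence of cyclicity of $\overline G$ --- and this is precisely the direction needed later, in Proposition \ref{proposition: non-sym order}, to rule out $n=2$ for $X^2(A_7)$ and the two $M_{10}$ cubics. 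The cleanest patch is to upgrade the lattice argument rather than the group theory: if $|\overline G|$ is even, pick $g\in\Aut(X)$ whose image in $\overline G$ has order $2$ and replace it by a suitable odd power so that $\ord(g)=2^a$; then $g$ is anti-symplectic, it preserves $S$ (Lemma \ref{lemma: non-symplectic preserve S}) and has no nonzero fixed vectors in $T(X)$, so its fixed lattice in $H^4_0$ lies in $S$, and extending $g$ to $\BB$ trivially on $E_6$ one finds (when $S\oplus E_6\hookrightarrow\BB$ is primitive) that the fixed lattice in $\BB$ is $M\oplus E_6$; since the discriminant group of the fixed lattice of an isometry of order $2^a$ of a unimodular lattice is killed by $2^a$, the $\ZZ/3$ coming from $A_{E_6}$ gives the same contradiction as in the involution case. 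With this (or some case-by-case group-theoretic substitute) the equivalence, and hence the full statement of the corollary, is established; note the paper itself leaves this point implicit.
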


We now switch our attention to the case of anti-symplectic involutions of order $3$ and $4$.  The main point here is that in these cases $T(X)$ has a decomposition into two conjugate eigenspaces, and in fact it acquires the structure of a (Hermitian) lattice over the Eisenstein $\ZZ[\omega]$ or respectively Gaussian $\ZZ[i]$ integers. This fact is the starting point of multiple works by Kond\=o (e.g. \cite{DK}) and Allcock--Carlson--Toledo (e.g. \cite{allcock2011moduli}). In our situation, $T(X)$ is of rank $2$, and thus of rank $1$ as Eisenstein/Gaussian lattice. This allows us to obtain the following simple criterion for $|\overline G|$ to be a multiple of $3$ or $4$. 

\begin{lem}
\label{lemma: order 3 and 4}
Let $T$ be a positive definite rank $2$ even lattice. Then $T$ admits an automorphism of order $3$ if and only if then there exists a positive integer $a$ such that $T\cong A_2(a)$, and $T$ admits an automorphism of order $4$ if and only if there exists a positive integer $a$ such that $T\cong A_1^2(2a)$.
\end{lem}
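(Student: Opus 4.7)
The plan is to exploit the fact that an order-$n$ isometry $\sigma$ of $T$, for $n\in\{3,4\}$, endows $T$ with the structure of a module over the quadratic PID $\ZZ[\zeta_n]$: the Eisenstein integers $\ZZ[\omega]$ when $n=3$, and the Gaussian integers $\ZZ[i]$ when $n=4$. Since both rings have class number one and $\ZZ$-rank $2$, a free rank-$1$ module over either is precisely a rank-$2$ $\ZZ$-lattice, and this rigidity is what forces the two normal forms.

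First I would pin down the minimal polynomial of $\sigma$. Because $T$ is positive definite of rank $2$, the two eigenvalues of $\sigma$ on $T\otimes\RR$ are roots of unity, and being roots of an integer polynomial they come as a Galois-conjugate pair. Neither can be $\pm 1$ (else $\sigma^2=\id$ and $\ord(\sigma)\leq 2$), and the minimal polynomial cannot equal $(x\mp 1)^2$ by diagonalizability of $\sigma$. Hence the minimal polynomial of $\sigma$ equals $\Phi_n(x)$; explicitly $\sigma^2+\sigma+1=0$ for $n=3$ and $\sigma^2+1=0$ for $n=4$. Consequently $\ZZ[\sigma]\cong\ZZ[\zeta_n]$ acts on $T$, and the finitely generated torsion-free module $T$ over this PID is free of rank $1$. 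Picking a $\ZZ[\zeta_n]$-generator $e\in T$ makes $\{e,\sigma e\}$ into a $\ZZ$-basis of $T$.

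Next I would read off the Gram matrix in this basis. Set $c:=\langle e,e\rangle$ and $d:=\langle e,\sigma e\rangle$; by $\sigma$-invariance the $(2,2)$-entry equals $c$ again. For $n=3$, the relation $\sigma^2 e=-e-\sigma e$ gives
\[
d=\langle\sigma e,\sigma^2 e\rangle=\langle\sigma e,-e-\sigma e\rangle=-d-c,
\]
so $d=-c/2$; the even condition forces $c=2a$ with $a\in\ZZ_{>0}$, and the Gram matrix $\bigl(\begin{smallmatrix}2a&-a\\-a&2a\end{smallmatrix}\bigr)$ identifies $T$ with $A_2(a)$. For $n=4$, $\sigma^2=-\id$ yields $\langle\sigma e,\sigma^2 e\rangle=-d$ on the one hand and $d$ on the other (from $\sigma$-invariance), so $d=0$; the Gram matrix $\mathrm{diag}(c,c)$ together with $c=2a$ gives $T\cong A_1^2(2a)$ in the notation of the statement. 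The converse implications are immediate: $A_2$ has the obvious order-$3$ rotation permuting its minimal vectors, and $A_1\oplus A_1$ has the order-$4$ isometry $(u,v)\mapsto(-v,u)$; both survive any rescaling.

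The only substantive step is the reduction to a rank-$1$ $\ZZ[\zeta_n]$-module, resting on the class-number-one property of the two relevant quadratic rings; everything else is a one-line manipulation with $\sigma$-invariance, so I do not anticipate any significant obstacle.
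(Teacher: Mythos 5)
Your proof is correct, but it follows a different route from the one the paper actually uses for this lemma. The paper's proof is elementary and self-contained: it picks a vector $v\in T$ of minimal norm $2a$, uses $v+\rho(v)+\rho^2(v)=0$ (order $3$, fixed-point free) resp.\ $\rho^2=-\id$ (order $4$, forced by rationality of the eigenvalues) to compute $\langle v,\rho(v)\rangle=-a$ resp.\ $0$, and then shows that $\{v,\rho(v)\}$ is a $\ZZ$-basis of $T$ by a minimality argument: a hypothetical vector $\lambda v+\mu\rho(v)\in T$ with $\lambda,\mu\in[-\tfrac12,\tfrac12]$ not both in $\ZZ$ would have norm strictly smaller than $2a$. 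You instead invoke the $\ZZ[\omega]$- resp.\ $\ZZ[i]$-module structure induced by $\rho$ (after correctly pinning down the minimal polynomial as $\Phi_n$), and use that these rings are PIDs of $\ZZ$-rank $2$ to conclude $T$ is free of rank one, so that $\{e,\sigma e\}$ is automatically a $\ZZ$-basis; the Gram-matrix computation is then the same in both arguments. Interestingly, your route is exactly the one the paper gestures at in the paragraph preceding the lemma (the Eisenstein/Gaussian lattice structure \`a la Kond\=o and Allcock--Carlson--Toledo), even though the written proof avoids any algebraic number theory. What your approach buys is conceptual clarity and generalizability (the class-number-one input is the real content, and the argument extends to other cyclotomic orders or, with ideal classes, to higher rank); what the paper's approach buys is complete elementarity, needing nothing beyond the minimal-vector reduction of binary forms. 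One cosmetic point: you derive the Gram matrix $\diag(2a,2a)$, which in the standard scaling convention is $A_1^2(a)$ rather than $A_1^2(2a)$; this is a notational slip already present in the paper's statement and proof (which concludes ``$T=A_1^2(2)$'' from the same data), so your reading of the intended meaning is the right one.
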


\begin{proof}
The lattice $A_2=(1^2 1)$ admits an order $3$ automorphism, explicitly
$\left(
\begin{array}{cc}
0 & -1 \\
1 & -1
\end{array}
\right)$.
The lattice $A_1^2=(2^0 2)$ admits an order $4$ automorphism, explicitly
$\left(
\begin{array}{cc}
0 & -1 \\
1 & 0
\end{array}
\right)$. Thus we have necessity. 

Suppose $T$ admits an automorphism $\rho$ of order $3$. Choose $v\in T$ with minimal norm. Take $a$ such that $(v, v)=2a$. A nontrivial order $3$ automorphism on $T$ is fixed-point free, hence $v+\rho(v)+\rho(\rho(v))=0$. Thus 
\begin{equation*}
(v,v)=(\rho(\rho(v)), \rho(\rho(v)))=(v+\rho(v), v+\rho(v))=2(v,v)+2(v, \rho(v))
\end{equation*}
which implies that $(v, \rho(v))=-a$. We claim that $(v, -\rho(v))$ is a basis for $T$. If not, then we can find non-zero numbers $\lambda, \mu\in [\frac{1}{2}, \frac{1}{2}]$ such that $\lambda v+\mu \rho(v)\in T$. But $(\lambda v+ \mu \rho(v), \lambda v+ \mu \rho(v))=2\lambda^2+2 \lambda \mu+ 2\mu^2 <2(|\lambda|+|\mu|)^2\le 2$. This contradicts to the fact that $v$ has minimal norm. We conclude that $T\cong A_2(a)$.

Suppose $T$ admits an automorphism $\rho$ of order $4$. Since $\rho$ is rational, it has two eigenvalues $\sqrt{-1}$ and $-\sqrt{-1}$. Thus $\rho^2=-1$. Now take $v\in T$ with minimal norm $2a$. Then $(v, \rho(v))=(\rho(v), \rho(\rho(v)))=(\rho(v), -v)$, which implies that $(v, \rho(v))=0$. Similar to the order $3$ case, $(v, \rho(v))$ is a basis for $T$. We conclude that $T=A_1^2(2)$.
\end{proof}

We conclude with the computation of the non-symplectic part $\overline{G}$ for the $8$  maximal cubic fourfolds appearing in Theorem \ref{theorem: maximal uniqueness}.

\begin{prop}
\label{proposition: non-sym order}
\begin{enumerate}[(1)]
\item For the Fermat cubic fourfold $X(3^4:A_6)=V(x_1^3+x_2^3+x_3^3+x_4^3+x_5^3+x_6^3)$ the order of $\overline{G}$ is $n=6$. 
\item For $X^1(A_7)=V(x_1^3+x_2^3+x_3^3+x_4^3+x_5^3+x_6^3-(x_1+x_2+x_3+x_4+x_5+x_6)^3)$ we have $n=2$; for $X^2(A_7)$ we have $n=1$.
\item For the cubic fourfold with symplectic automorphism group $G=3^{1+4}:2.2^2$, we have $n=4$.
\item For $X^1(M_{10})$ and $X^2(M_{10})$ which have symplectic automorphism group $G=M_{10}$, we have $n=1$.
\item For $X(L_2(11))=V(x_1^3+x_2^2 x_3+x_3^2 x_4+ x_4^2 x_5+x_5^2 x_6+x_6^2 x_2)$ we have $n=3$.
\item For $X(A_{3,5})=V(x_1^3+x_2^3+x_3^2 x_4+x_4^2 x_5+x_5^2 x_6+x_6^2 x_3)$ we have $n=6$.
\end{enumerate}
\end{prop}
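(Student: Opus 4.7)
The plan is to exploit the rank-$2$ structure of the transcendental lattice $T$ to bound $n = |\overline G|$ from above, and then to realize the claimed values by exhibiting explicit automorphisms (or, in the hardest case, via a lattice-theoretic construction followed by Torelli). Since $\rank(S) = 20$ forces $\rank(T) = 2$, Corollary~\ref{corollary: euler of n} gives $\varphi(n) \mid 2$, and Proposition~\ref{proposition: order non-symp} restricts the prime divisors of $n$ to $\{2, 3\}$, leaving $n \in \{1, 2, 3, 4, 6\}$. Comparing the explicit $T$'s listed in Theorem~\ref{theorem: maximal uniqueness} against Lemma~\ref{lemma: order 3 and 4}, the positive partner $T(-1)$ is of the form $A_2(a)$ (permitting an order-$3$ isometry) exactly in the Fermat, $L_2(11)$, and $A_{3,5}$ cases (with $a = 3, 11, 5$ respectively), and is of the diagonal form $\langle 2a\rangle \oplus \langle 2a\rangle$ (permitting an order-$4$ isometry) only for $X(3^{1+4}{:}2.2^2)$ (with $a = 3$); in all other cases $T(-1)$ admits neither. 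Combined with Corollary~\ref{corollary: involution}, this already forces $n = 1$ for $X^2(A_7)$ and the two $M_{10}$-cubics, and $n \le 2$ for $X^1(A_7)$.

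To achieve the claimed lower bounds, I would exhibit explicit automorphisms and verify non-symplecticity via the criterion~\eqref{eq_cond_symp}. For $X^1(A_7)$, realized as $V(\sum_{i=1}^7 x_i^3) \cap V(\sum_{i=1}^7 x_i) \subset \PP^6$, any coordinate transposition preserves both equations and acts on the Griffiths residue form by $-1$ (odd permutations reverse the sign of $\Omega$), giving the anti-symplectic involution and hence $n = 2$. For the Fermat cubic, the scaling $g = \diag(\zeta, 1, 1, 1, 1, 1)$ (with $\zeta$ a primitive cube root of unity) preserves $F = \sum x_i^3$ and satisfies $|\underline w| = 1 \not\equiv 0 = 2\deg_{\underline w}(F) \pmod 3$, so is non-symplectic of order $3$; together with any transposition, this yields $n = 6$. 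For $X(L_2(11))$, the same scaling on $x_1$ (which appears only in $x_1^3$) is non-symplectic of order $3$ by the same criterion, and Corollary~\ref{corollary: involution} rules out anti-symplectic involutions, so $n = 3$. For $X(A_{3,5}) \subset \PP^7$, the diagonal $g = \diag(\zeta, \zeta, \zeta, 1, 1, 1, 1, 1)$ preserves both linear forms (each gets scaled by $\zeta$ or $1$) and $\sum x_i^3$, hence lies in $\Aut(X)$; since $\Aut^s(X) = A_{3,5}$ consists of coordinate permutations and $g$ is clearly not a permutation, $g$ is non-symplectic of order $3$, and combined with any transposition in $S_3 \times S_5 \setminus A_{3,5}$ (anti-symplectic by an odd-signature count on $\Omega$), we get $n = 6$.

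The remaining and most delicate case is $X(3^{1+4}{:}2.2^2)$, where I claim $n = 4$. Here $T(-1) \cong \langle 6\rangle \oplus \langle 6\rangle$ has isometry group isomorphic to the dihedral group $D_8$; the Hodge isometries are precisely those preserving the complex line $H^{3,1} \subset T \otimes \CC$. This condition rules out the four reflections (whose $\pm 1$-eigenspaces are rational and so cannot contain the non-real $H^{3,1}$), leaving the cyclic rotation subgroup $\langle R\rangle \cong \ZZ/4$, where $R : (e_1, e_2) \mapsto (e_2, -e_1)$; the eigenvalues of $R$ on $T\otimes\CC$ are $\pm i$, automatically swapped by complex conjugation and hence coinciding with $H^{3,1}$ and $H^{1,3}$, so $R$ acts as $\pm i$ on $H^{3,1}$, giving $\overline G \hookrightarrow \ZZ/4$. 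To realize $n = 4$, I would lift $R$ to a Hodge isometry of $\Lambda_0 \supset S \oplus T$ by pairing it with a suitable isometry $\rho_S$ of $S$: Lemma~\ref{lemma: aut surj} (applied to case~5 of Table~\ref{table: MSS aut of HK}) gives the surjection $\Aut(S) \twoheadrightarrow \Aut(q_S)$, and, using the explicit discriminant-form computations from the proof of Theorem~\ref{theorem: maximal uniqueness}(3), one finds $\rho_S$ whose induced action on $A_S$ matches that of $R$ on $A_T$ under the Nikulin glueing between $S$ and $T$ inside $\Lambda_0$. The combined isometry $\rho_S \oplus R$ then preserves the glueing, extends to $\Lambda_0$, is automatically a Hodge isometry (as $S \subset H^{2,2}$), and by Proposition~\ref{proposition: strong global torelli} descends to an order-$4$ non-symplectic automorphism of $X$. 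The hardest step is the explicit compatibility check between the induced action of $R$ on $A_T$ and the image of $\Aut(S)$ in $\Aut(q_S)$, which requires a somewhat tedious unpacking of the glueing between $S$ and $T$ inside $\Lambda_0$.
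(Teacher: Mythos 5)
Your overall strategy is the same as the paper's: since $\rank(S)=20$ forces $\rank(T)=2$, Corollary~\ref{corollary: euler of n} gives $n\in\{1,2,3,4,6\}$, Lemma~\ref{lemma: order 3 and 4} applied to the lattices $T$ from Theorem~\ref{theorem: maximal uniqueness} excludes orders $3$ and $4$ where appropriate, Corollary~\ref{corollary: involution} handles involutions, and the lower bounds in cases (1), (2), (5), (6) come from explicit automorphisms (the coordinate scalings of Allcock--Carlson--Toledo type and odd permutations, cf.\ Corollary~\ref{corollary: diagonal} and the remark following the proposition). Your residue-criterion checks for those cases are correct, and your argument that the Hodge isometries of $T$ lie in the rotation subgroup is a clean way to phrase the upper bounds.

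The genuine issue is the lower bound $4\mid n$ in case (3), the only case where no explicit automorphism is produced. First, Proposition~\ref{proposition: strong global torelli} requires a Hodge isometry of the full lattice $H^4(X,\ZZ)$ with $\eta_X\mapsto\eta_X$. An isometry of $\Lambda_0$ obtained by gluing $\rho_S$ with $R$ extends to $H^4(X,\ZZ)$ fixing $\eta_X$ only if it acts trivially on $A_{\Lambda_0}\cong\ZZ/3$; if it acts by $-1$ there, it corresponds to a Hodge isometry sending $\eta_X\mapsto-\eta_X$ and is induced by no automorphism. Your compatibility condition involves only the $S$--$T$ glue inside $\Lambda_0$ and omits this mod-$3$ constraint. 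The correct formulation is to produce an isometry of $\BB$ restricting to $R$ on $T$ and to the identity on the $E_6$ of Proposition~\ref{lem:borcherdse6}; concretely, $\rho_S$ must in addition act trivially on the order-$3$ glue between $S$ and $E_6$ inside $\widetilde S=\mathrm{Sat}_{\BB}(S\oplus E_6)$, not just match $\bar R$ on the $S$--$T$ glue. Second, even with the corrected conditions, the existence of the required element of $\Aut(q_S)$ (which Lemma~\ref{lemma: aut surj} then lifts to $\Aut(S)$) is precisely the decisive finite computation, and you defer it. As written, therefore, the proposal does not actually establish $4\mid n$, i.e.\ it does not rule out $n\in\{1,2\}$ for $X(3^{1+4}{:}2.2^2)$. (The paper's own proof is admittedly just as terse here, resting on Lemma~\ref{lemma: order 3 and 4}; but a complete argument must carry out this glue-compatibility check in $\BB$ relative to the $E_6$, or exhibit an order-$4$ non-symplectic automorphism directly.)
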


\begin{proof}
By Theorem \ref{theorem: maximal uniqueness}, we know the transcendental lattices of the $8$ cubic fourfolds. By Lemma \ref{lemma: order 3 and 4}, we identify the cubic fourfolds which have order $3$ or $4$ non-symplectic automorphisms. Combining with Corollary \ref{corollary: involution} we conclude the proposition.
\end{proof}

\begin{rmk}
One easy way to produce geometrically a non-symplectic automorphism of order $3$ is to consider a cubic threefold $Y=V(f(x_2,\dots, x_6))\subset \PP^4$. Then the Allcock--Carlson--Toledo \cite{allcock2011moduli} construction associates to $Y$ the cubic fourfold $X=V(f+x_1^3)\subset \PP^5$ with an order $3$ anti-symplectic automorphism ($x_1\to \omega x_1$). Of the 
six items of Proposition \ref{proposition: non-sym order}, note that items (1),  (5), and (6) are of Allcock--Carlson--Toledo type. ((1) and (6) also have an anti-symplectic involution given by switching $x_1\to x_2$.) In other words, they are obtained from highly symmetric cubic threefolds. 
\end{rmk}
Kond\=o  \cite{kondo_ns} proved that the $K3$ surface 
\begin{equation}
V(x_1^4+x_2^4+x_3^4 +x_4^4+12 x_1 x_2 x_3 x_4)
\end{equation}
has finite automorphism group of maximal possible order $3,840$. Here we conclude an analogue of Kond\=o's result. Namely, the Fermat cubic fourfold has maximal order for the automorphism group, namely $|3^4:A_6|\times |\ZZ/6|=174,960$.

\begin{cor}\label{cor_max_order}
The maximal possible order for automorphism groups of smooth cubic fourfolds is $174,960$, which is reached only by the Fermat cubic fourfold.
\end{cor}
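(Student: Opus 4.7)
The plan is to bound $|\Aut(X)|=|G_s|\cdot|\overline{G}|$ using the classifications already in hand and then identify the case of equality. First I would verify the lower bound by direct computation: for the Fermat cubic $X=V(x_1^3+\cdots+x_6^3)$, Theorem \ref{theorem: maximal uniqueness}(1) gives $\Aut^s(X)=3^4:A_6$ of order $29{,}160$, and Proposition \ref{proposition: non-sym order}(1) gives $|\overline{G}|=6$. Their product is $174{,}960$, which matches the classical description $\Aut(X)\cong(\mu_3^6/\mu_3)\rtimes S_6$ of the full automorphism group of the Fermat cubic fourfold.

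For the upper bound, I would use the exact sequence $1\to G_s\to G\to \overline{G}\to 1$ and split on $\rank(S)$, where $S=S_{G_s}(X)$. When $\rank(S)=20$, we are in one of the eight maximal configurations of Theorem \ref{theorem: maximal uniqueness}, and Proposition \ref{proposition: non-sym order} gives $|\overline{G}|$ in each case. A line of arithmetic shows that among these eight configurations, the product $|G_s|\cdot|\overline{G}|$ equals $174{,}960, 5040, 2520\cdot 1, 7776, 720, 720, 1980, 2160$ respectively, so the maximum $174{,}960$ is attained only at the Fermat cubic.

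For $\rank(S)\le 19$, I would combine two ingredients. Theorem \ref{theorem: main} shows that the largest symplectic group has order $972$ (the case $G_s=3^{1+4}:2.2$ in item~(8)), with smaller maxima as $\rank(S)$ decreases; and inequality (\ref{equation: euler of n less that rank of T}) together with Proposition \ref{proposition: order non-symp} (primes of $|\overline{G}|$ lie in $\{2,3\}$) bounds $|\overline{G}|$ severely via $\varphi(n)\le 22-\rank(S)$. A short case check in each rank stratum produces bounds far below $174{,}960$; for instance $\rank(S)=19$ forces $|\overline{G}|\le 6$ and so $|G|\le 972\cdot 6=5832$, and analogous estimates for $\rank(S)\le 18$ never exceed this. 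Thus no cubic with $\rank(S)<20$ can compete with the Fermat example.

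Combining the two halves, $|G|=174{,}960$ forces $\rank(S)=20$, $G_s\cong 3^4:A_6$, $|\overline{G}|=6$, and then Theorem \ref{theorem: maximal uniqueness}(1) identifies $X$ uniquely with the Fermat cubic fourfold. The one place requiring actual vigilance, rather than bookkeeping, is confirming the entries of Proposition \ref{proposition: non-sym order} are tight so that no other rank-$20$ cubic has its non-symplectic part larger than listed; this is already settled there via the classification of $T(X)$ in Theorem \ref{theorem: maximal uniqueness} together with Lemma \ref{lemma: order 3 and 4} and Corollary \ref{corollary: involution}, so no new work is needed and the corollary follows.
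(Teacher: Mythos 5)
Your proposal is correct and follows essentially the same route as the paper: write $|G|=|G_s|\cdot|\overline{G}|$, bound $|\overline{G}|$ via the inequality $\varphi(n)\le 22-\rank(S)$ (with $n=2^k3^l$), and inspect Theorem \ref{theorem: main}, Theorem \ref{theorem: maximal uniqueness}, and Proposition \ref{proposition: non-sym order}; the paper compresses this inspection into one sentence, and your case-by-case products for the eight rank-$20$ cubics and the $5832$-type bounds for $\rank(S)\le 19$ are exactly the arithmetic that inspection entails.
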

\begin{proof}
The order of automorphism group $G$ for a smooth cubic fourfold is given by the product of $|G_s|$ and $n=|\overline{G}|$. The value of $n$ is bounded by 
\eqref{equation: euler of n less that rank of T}. The claim follows by a straightforward inspection of Theorem \ref{theorem: main}, Theorem \ref{theorem: maximal uniqueness}, and Proposition \ref{proposition: non-sym order}.
\end{proof}

\appendix
\section{Some Lattice Theory}
\label{section: collection of results in lattice theory}
We review some of the basic results of Nikulin \cite{nikulin1980integral} on lattices and discuss the standardized notation of Conway--Sloane \cite{conway1999spherepackings} (which is less familiar in algebraic geometry). The Conway--Sloane notation is quite efficient and precise, and it is used in one of our primitive references \cite{HM1}. Thus, we are using it systematically throughout the paper. This appendix aims to set up the basics as used in our paper (for further details, we refer to \cite{nikulin1980integral} and \cite{conway1999spherepackings}). 
\subsection{Lattices}
We introduce some notations and results in lattice theory. Recall that a lattice over an integral ring $R$ is a free $R$-module of finite rank together with a non-degenerate bilinear form valued in $R$. An integral lattice is a lattice over $\ZZ$. An integral lattice is called {\it even} if the norms of all elements are even numbers; called {\it odd} if it is not even. Once an ordered basis for an $R$-lattice is chosen, there is an associated symmetric Gram (or intersection) matrix. The {\it discriminant} of an $R$-lattice is the absolute value of the determinant of the intersection matrix. The discriminant does not depend on the choices of the basis. An $R$-lattice is called {\it unimodular} if its discriminant is $1$. An integral lattice $M$ can be diagonalized as $\diag(1,\cdots, 1, -1, \cdots, -1)$ over $\RR$. Let $n_1$ be the number of $1$, and $n_2$ be the number of $-1$. Then $n_1+n_2$ is the rank of $M$, and $n_1-n_2$ is called the {\it signature of $M$}.

An element $v$ in an $R$-lattice $M$ is called {\it primitive} if $v$ is non-zero and for any integer $n\ge 2$, the quotient $v/n$ is not in $M$. A sublattice $N$ of $M$ is called {\it primitive}, if there does not exists an element $v\in M\setminus N$ and a positive integer $n$ such that $nv\in N$. An embedding of lattices $N\hookrightarrow M$ is called {\it primitive} if the image is a primitive sublattice.

We use $\langle n \rangle$ to denote the rank one lattice such that the norm of the generator equals to $n$. For an $R$-lattice $M$ and $n\in \ZZ$, we define $M(n)$ to be an $R$-lattice obtained from $M$ by multiplying the bilinear form by $n$. In the category of $R$-lattices, we have naturally direct sum $\oplus$. For a Dynkin diagram $A_k, D_k$ or $E_k$, there is the associated intersection matrix, which defines a positive integral lattice, still denoted by the corresponding $A_k, D_k$ or $E_k$. We use $U$ to denote the hyperbolic lattice, given by intersection matrix
$\left(
\begin{array}{cc}
0 & 1 \\
1 & 0
\end{array}
\right)$. 

We have a classification of integral unimodular lattices (e.g. \cite[Chapter 5]{serre1973course}):

\begin{thm}[Milnor]
\label{theorem: uniqueness of indefinite unimodular lattices}
An integral unimodular indefinite odd lattice of signature $(n_1,n_2)$ is isomorphic to $\I_{n_1,n_2}=(1)^{n_1}\oplus(-1)^{n_2}$. A unimodular indefinite even lattice of signature $(n_1,n_2)$ exists if and only if $n_1\equiv n_2$ ($\mo$ 8), and when this holds, the lattice is isomorphic to $\II_{n_1,n_2}=E_8^{\frac{n_1-n_2}{8}}\oplus U_2^{n_2}$ or $E_8(-1)^\frac{n_2-n_1}{8}\oplus U_2^{n_1}$.
\end{thm}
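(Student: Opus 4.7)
The plan is to prove the result along the classical lines of Milnor--Husemoller / Serre's \emph{Cours d'Arithm\'etique}, splitting the argument into existence, a necessary congruence, and uniqueness by induction.

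\medskip

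\emph{Existence.} The odd case is trivial: $\mathrm{I}_{n_1,n_2}$ exhibits an odd unimodular lattice of every signature. In the even case, $E_8$ has signature $(8,0)$ and $U$ has signature $(1,1)$, so the explicit orthogonal sums in the statement realize every signature with $n_1\equiv n_2\pmod 8$; this establishes sufficiency.

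\medskip

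\emph{Necessity of $n_1\equiv n_2\pmod 8$.} For an even unimodular lattice $L$, I would reduce the quadratic form modulo $2$ on $L/2L$ and invoke the standard computation that the Arf invariant forces $\mathrm{rank}(L)$ to be even (more refined, divisible by $8$). The slick route is via Gauss sums on the discriminant form: for a non-degenerate finite quadratic form $q$ on $A$, Milgram's formula gives $\sum_{x\in A}e^{2\pi i q(x)}=\sqrt{|A|}\,e^{2\pi i\,\mathrm{sign}(L)/8}$; since $L$ is unimodular, $A=0$, the sum equals $1$, and hence $\mathrm{sign}(L)\equiv 0\pmod 8$.

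\medskip

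\emph{Uniqueness.} Let $L$ be an indefinite unimodular lattice of signature $(n_1,n_2)$. The main step is to show that $L$ represents zero, i.e.\ contains a primitive isotropic vector $v$. For $\mathrm{rank}(L)\geq 5$ this follows from Meyer's theorem (indefinite rational forms in five or more variables are isotropic); for rank $3$ and $4$ the Hasse--Minkowski local-global principle together with unimodularity at every prime does the job. Given such $v$, unimodularity produces $w\in L$ with $(v,w)=1$. In the odd case, if $w^2$ is even one modifies $w$ by $v$ to achieve $w^2$ odd, producing a split summand $\langle 1\rangle\oplus\langle-1\rangle\cong$ hyperbolic pair; in the even case, one modifies $w\mapsto w-\tfrac{w^2}{2}v$ to make $w^2=0$, producing a split summand $U$. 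Either way, $L\cong H\oplus L'$ with $L'$ unimodular of smaller rank and the same parity. Induct on the rank until $L'$ becomes definite. At the base of the induction one needs the fact that the only positive definite even unimodular lattice appearing (in the required small ranks) is $E_8$ or $0$; this is where the mod $8$ congruence is used a second time to identify the definite complement uniquely as a sum of $E_8$'s, yielding the stated normal forms.

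\medskip

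\emph{Main obstacle.} The substantial input is the isotropy statement (Meyer / Hasse--Minkowski), which is what makes the indefinite classification drastically simpler than the positive definite one; everything else is formal linear algebra and the Gauss-sum congruence. I would lean on Serre or Milnor--Husemoller for the detailed verification of these two classical facts rather than reproduce them, since they are standard and tangential to the paper's theme.
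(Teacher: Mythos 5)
The paper offers no proof of this statement (it simply cites Serre's \emph{Cours d'arithm\'etique}), so your sketch must stand on its own against the classical argument, and as written it has a genuine gap in the uniqueness step, concentrated in the endgame of your induction. In the even case, after splitting off $\min(n_1,n_2)$ copies of $U$ you are left with a \emph{definite} even unimodular lattice of rank $|n_1-n_2|$, and it is false that the mod $8$ congruence ``identifies the definite complement uniquely as a sum of $E_8$'s'': in rank $16$ there are two such lattices ($E_8\oplus E_8$ and $D_{16}^{+}$), and in rank $24$ there are the $24$ Niemeier lattices that this paper itself recalls. Your induction therefore only shows $L\cong U^{n_2}\oplus L_0$ with $L_0$ \emph{some} definite even unimodular lattice; the actual content of the theorem at this point is the stable statement $U\oplus L_0\cong U\oplus E_8^{k}$ for every such $L_0$ of rank $8k$ (e.g.\ $\II_{17,1}\cong U\oplus E_8^2\cong U\oplus D_{16}^{+}$), and this requires a separate, non-formal input --- in Serre/Milnor--Husemoller it comes from deducing the even case from the odd one via characteristic vectors and the transitivity of $O(\I_{p,q})$ on characteristic vectors of fixed square (equivalently an Eichler-type cancellation), not from uniqueness of the definite residue, which fails.

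There are also two smaller slips in the splitting step. Adding a multiple of the isotropic vector $v$ to $w$ changes $w^2$ by $2k(v,w)$, so it cannot change the parity of $w^2$; to arrange $w^2$ odd you must correct $w$ by an odd-norm vector of $L$ (which exists because $L$ is odd). And the claim that $L\cong H\oplus L'$ with $L'$ ``of the same parity'' is wrong: $\langle 1\rangle\oplus\langle -1\rangle\oplus E_8$ is odd, yet the complement of the obvious split plane is the even lattice $E_8$, so the odd-case induction cannot be run on parity bookkeeping alone; the classical fix is to prove directly that an odd indefinite unimodular lattice represents $+1$ and $-1$ and split off $\langle 1\rangle$ and $\langle -1\rangle$ (or to use the identity $\langle 1\rangle\oplus U\cong\langle 1\rangle\oplus\langle 1\rangle\oplus\langle -1\rangle$), and even then the same stable-uniqueness issue as above must be confronted when the complement becomes even or definite. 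The existence half and the mod-$8$ necessity via Milgram's formula are fine (the parenthetical suggestion that the mod-$2$/Arf argument already yields divisibility by $8$ is not accurate, but you do not rely on it), and the isotropy input (Meyer, Hasse--Minkowski plus the local analysis at $2$ in ranks $3$ and $4$) is correctly identified as the other key ingredient.
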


\begin{rmk}
The structure theory in definite case is much more complicated. For example, we have 24 Niemeier lattices (see Thm \ref{thm_niemeier}), all of which are positive definite, unimodular, even and of rank 24.
\end{rmk}

\subsection{Classification of p-adic lattices, and Conway-Sloane's notation}
For any prime $p$, we use $\ZZ_p$ for ring of $p$-adic integers, and $\QQ_p$ for field of $p$-adic rational numbers. We next discuss about the classification of $\ZZ_p$-lattices, and the standard notation of Conway and Sloane \cite{conway1999spherepackings}. We also call a lattice over $\ZZ_p$ a {\it $p$-adic lattice}. Let $Qu(\ZZ_p)$ be the semigroup of $p$-adic lattices (with respect to $\oplus$).

For $\theta\in \ZZ_p^*/(\ZZ_p^*)^2$, denote by $K_{\theta}(p^k)$ the $p$-adic lattice determined by the matrix $\langle \theta p^k\rangle$. For $p$ an odd prime, $\ZZ_p^*/(\ZZ_p^*)^2$ contains two elements. For $p=2$, $\ZZ_2^*/(\ZZ_2^*)^2$ has four elements represented by $1,3,5,7\in \ZZ_2^*$. For the case $p=2$, we need to also consider lattices
$U(2^k)=\left(
\begin{array}{cc}
0 & 2^k \\
2^k & 0
\end{array}
\right)$
and
$V(2^k)=\left(
\begin{array}{cc}
2^{k+1} & 2^k \\
2^k & 2^{k+1}
\end{array}
\right)$
for any $k\ge 0$. 
\begin{prop}[{\cite[Proposition 1.8.1]{nikulin1980integral}}]
For $p$ an odd prime, the semigroup $Qu(\ZZ_p)$ is generated by $K_{\theta}(p^k)$. For $p=2$, the semigroup $Qu(\ZZ_2)$ is generated by $K_{\theta}(2^k)$, $U(2^k)$ and $V(2^k)$.
\end{prop}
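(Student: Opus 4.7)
My plan is to prove the statement by induction on the rank of the $p$-adic lattice $M$. The key idea is to isolate an indecomposable ``atomic'' orthogonal summand of the list ($K_\theta(p^k)$ for $p$ odd; also $U(2^k), V(2^k)$ for $p=2$), and then apply the induction hypothesis to its orthogonal complement. To begin, one may rescale $M$ so that the minimum $p$-adic valuation of entries of (some, hence any) Gram matrix of $M$ equals $0$; equivalently, one factors out the largest power of $p$ dividing the bilinear form. It then suffices to split off a rank $1$ or rank $2$ summand whose Gram entries have valuation $0$.

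For odd primes $p$, the argument is classical: I would produce a vector $v\in M$ with $B(v,v)\in \ZZ_p^*$ and then split off $\langle v\rangle$. If the Gram matrix of a chosen basis $e_1,\dots,e_n$ has a diagonal entry $B(e_i,e_i)$ that is a unit, take $v=e_i$. Otherwise all diagonal entries have positive valuation but some off-diagonal entry $B(e_i,e_j)$ is a unit; since $p$ is odd, $2\in \ZZ_p^*$, so $B(e_i+e_j, e_i+e_j) = B(e_i,e_i)+2B(e_i,e_j)+B(e_j,e_j)\in \ZZ_p^*$ by the non-archimedean ultrametric property (the middle term has valuation $0$, the others strictly positive). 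Once we have such $v$, orthogonal projection gives $M=\langle v\rangle \oplus \langle v\rangle^{\perp}$ (the projection formula $w\mapsto w - \frac{B(w,v)}{B(v,v)}v$ lands in $M$ because $B(v,v)$ is a unit), and we can induct on $\langle v\rangle^\perp$. Since $\langle v\rangle \cong K_\theta(1)$ for $\theta = B(v,v) \pmod{(\ZZ_p^*)^2}$, this handles the odd case.

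For $p=2$, the obstacle is that no vector in $M$ need have unit norm, because $Q(v) = B(v,v)$ always lies in $2\ZZ_2$ when the diagonal of the Gram matrix does. When some diagonal entry $B(e_i,e_i)$ is a $2$-adic unit, the same projection argument as above splits off a copy of $K_\theta(1)$. The essential new case is when the minimal valuation $0$ is attained only off-diagonally. Then I would fix such an off-diagonal entry $B(e_i,e_j)\in \ZZ_2^*$, and examine the rank $2$ sublattice $N$ spanned by $e_i, e_j$: after rescaling the basis by $2$-adic units, its Gram matrix takes the form $\bigl(\begin{smallmatrix} 2a & 1\\ 1 & 2c\end{smallmatrix}\bigr)$ for some $a,c\in \ZZ_2$. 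A short check of the determinant $4ac-1 \pmod 8$ together with further unit-change-of-basis transformations shows $N$ is isometric to either $U(1)$ (when $4ac\equiv 1 \pmod 8$, after killing the diagonal via $e_i\mapsto e_i - a e_j$ type moves) or $V(1)$ (otherwise, after normalizing both diagonals to $2$). Either way, $N$ is unimodular as a rank $2$ lattice, so $N\hookrightarrow M$ admits an orthogonal complement in $M$ by the standard unimodular-summand splitting (every vector in $M$ differs from its projection onto $N\otimes\QQ_2$ by an element of $N^\perp$ that in fact lies in $M$, using unimodularity of $N$). Induction on $N^\perp\subset M$ then completes the proof.

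The main obstacle is the $p=2$ case, where one must (a) enumerate the $2$-adic unit representatives modulo squares carefully to identify $N$ as $U(1)$ or $V(1)$, and (b) verify unimodular splitting of the rank $2$ piece. Both steps reduce to elementary but delicate $2$-adic congruence arguments; I would organize them via a normal-form table for $2\times 2$ symmetric matrices over $\ZZ_2$ with unit anti-diagonal, matching against the two listed generators $U(2^k)$ and $V(2^k)$ at $k=0$ (after the initial rescaling). The rest of the argument then proceeds uniformly by induction, and the rescaling by $p^k$ accounts for the full indexing by $k\ge 0$ in the statement.
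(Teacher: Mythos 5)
The paper does not prove this statement at all: it is quoted verbatim from Nikulin \cite[Prop.~1.8.1]{nikulin1980integral}, and the appendix only records it as background. So your proposal is not competing with an argument in the text; what you have written is the classical Jordan-splitting proof (as in Cassels or O'Meara), and its overall structure is sound: rescale so the scale ideal is $\ZZ_p$, split off a unimodular rank~$1$ summand when a diagonal Gram entry is a unit (for odd $p$ the vector $e_i+e_j$ trick works because $2\in\ZZ_p^*$), split off a unimodular rank~$2$ summand in the residual $p=2$ case, and induct on the orthogonal complement; the unimodular-summand splitting and the basis-independence of the minimal valuation are both correctly justified. Two points need repair or expansion, though neither is fatal. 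First, your stated dichotomy for the binary block $N=\bigl(\begin{smallmatrix}2a&1\\1&2c\end{smallmatrix}\bigr)$ is arithmetically impossible: $4ac\equiv 1\pmod 8$ never holds. The correct criterion is $\det N=4ac-1\equiv -1\pmod{(\ZZ_2^*)^2}$ (i.e.\ $ac$ even), in which case $N\cong U(1)$, versus $\det N\equiv 3\pmod 8$ (i.e.\ $ac$ odd), in which case $N\cong V(1)$. Second, that the determinant class alone decides the isometry type rests on the fact that $U(1)$ and $V(1)$ are the only even unimodular binary $\ZZ_2$-forms up to isometry; this is exactly the ``normal-form table'' you defer, and it does require carrying out the explicit unit changes of basis (e.g.\ producing a primitive isotropic vector when $ac$ is even). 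With those details supplied, your induction gives a complete and self-contained proof of the cited proposition, which is arguably more informative than the paper's bare citation.
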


For $p$ odd, any $p$-adic lattice $K$ can be written as a direct sum of rank one $p$-adic lattices. Explicitly, the quadratic form $q$ can be decomposed as
\begin{equation*}
K=K_1\oplus p K_p \oplus p^2 K_{p^2}\oplus \cdots \oplus l K_l \oplus \cdots,
\end{equation*}
where $l$ are powers of $p$, the determinant of each $K_l$ is coprime to $p$. Here the p-adic lattice $l K_l$ can be write as a direct sum of several p-adic lattices of the form $K_{\theta}(l)$. Following \cite[Chapter 15, \S 7]{conway1999spherepackings}, {\it the $p$-adic quadratic form $l K_l$ is denoted by $l^{\epsilon_l n_l}$}. Here $n_l$ is the rank of $K_l$, and $\epsilon_l$ is $+$ if $\det(K_l)$ is a square in $\ZZ_p^*$; is $-$ otherwise. Then the $p$-adic form $K$ is written as $1^{\epsilon_1 n_1} p^{\epsilon_p n_p}\cdots l^{\epsilon_l n_l}\cdots$. We call this the {\it Conway--Sloane expression} for $K$.

\begin{prop}
\label{proposition: p odd unique expression}
For $p$ odd, a $p$-adic lattice $K$ has a unique Conway--Sloane expression $1^{\epsilon_1 n_1} p^{\epsilon_p n_p}\cdots l^{\epsilon_l n_l}\cdots$.
\end{prop}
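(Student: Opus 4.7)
The plan is to prove both existence and uniqueness of the Conway--Sloane expression in two stages: first handle the \emph{Jordan decomposition} of $K$ by $p$-adic valuation, then classify unimodular blocks of fixed rank.

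First, I would establish the Jordan decomposition. Given a $p$-adic lattice $(K,b)$, let $l=p^k$ be the smallest power of $p$ such that $b(K,K)\subseteq l\,\ZZ_p$. Pick $v\in K$ with $b(v,v)\in l\,\ZZ_p^{\times}$ (which exists for $p$ odd because the form is non-degenerate and $2$ is a unit, so some diagonal entry has minimal valuation; this uses the standard polarization identity that is available for $p\neq 2$). Then $\ZZ_p v$ is an orthogonal direct summand: since $b(v,v)$ has minimal valuation $k$, the map $K \to \ZZ_p$, $w\mapsto b(w,v)/b(v,v)$ lands in $\ZZ_p$, giving an orthogonal projection onto $\ZZ_p v$. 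By induction on rank, $K$ decomposes as an orthogonal sum $\bigoplus_j K_{\theta_j}(p^{k_j})$ of rank-one pieces. Grouping by the valuation $k_j$, we obtain $K=\bigoplus_{k\ge 0} p^{k} K^{(k)}$ where each $K^{(k)}$ is unimodular over $\ZZ_p$ (its Gram matrix, rescaled by $p^{-k}$, has determinant in $\ZZ_p^\times$). This is the existence part.

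Second, I would prove uniqueness of the Jordan constituents. The rank $n_l$ is intrinsic: $n_l$ equals the $\FF_p$-dimension of $(p^{k}K^\sharp\cap K)/(p^{k+1}K^\sharp\cap K+ \text{lower scales})$, or, more cleanly, is read off from the elementary divisors of the Gram matrix, which are invariants of $K$. (Alternatively, one can show that the filtration $K\supseteq pK^\vee\cap K\supseteq p^2K^\vee\cap K\supseteq\cdots$ together with the induced forms on its subquotients recover each $K^{(k)}$ up to isomorphism.) The determinant class $\epsilon_l\in\ZZ_p^{\times}/(\ZZ_p^{\times})^2=\{\pm\}$ is then the sign of $\det(p^{-k}K^{(k)})\pmod{(\ZZ_p^{\times})^2}$; since this group has order $2$ for odd $p$, the pair $(n_l,\epsilon_l)$ determines $K^{(k)}$ uniquely up to isometry. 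Here I use the classical fact (a direct consequence of Hensel's lemma for $p$ odd) that a unimodular $p$-adic form of given rank is classified by its determinant class in $\ZZ_p^{\times}/(\ZZ_p^{\times})^2$: two diagonal unimodular forms $\langle u_1,\dots,u_n\rangle$ and $\langle v_1,\dots,v_n\rangle$ are isometric iff $\prod u_i \equiv \prod v_i \pmod{(\ZZ_p^{\times})^2}$.

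Combining these two stages gives that the tuple $\{(l,\epsilon_l,n_l)\}_l$ is uniquely determined by the isometry class of $K$, which is exactly the assertion that the Conway--Sloane expression $1^{\epsilon_1 n_1}\,p^{\epsilon_p n_p}\cdots$ is unique. The main obstacle I anticipate is the verification of the Jordan splitting, i.e., that an orthogonal summand of minimal scale always splits off; this is where the oddness of $p$ enters essentially (for $p=2$ the indecomposable pieces $U(2^k)$, $V(2^k)$ appear precisely because one cannot always find a vector of minimal $p$-adic valuation in the diagonal, and hence the uniqueness statement must be weakened). Since we are in the odd-$p$ setting, this obstacle is handled cleanly by the polarization identity, and the rest of the argument is bookkeeping with invariants of the Gram matrix.
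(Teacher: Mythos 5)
The paper itself does not prove this proposition: it appears in the review appendix as a standard fact, quoted from Conway--Sloane (Chap.~15, \S 7) and Nikulin, so there is no in-paper argument to measure you against. Your plan is the standard textbook route (Jordan splitting, then classification of the unimodular blocks), and your existence half is correct: for $p$ odd the polarization identity produces a vector whose norm has the minimal valuation occurring among all values of $b$ (this is exactly where $2\in\ZZ_p^{\times}$ enters), such a vector splits off as an orthogonal rank-one summand, and induction gives $K\cong\bigoplus_k p^kK^{(k)}$ with each $K^{(k)}$ unimodular. (Minor slip: you want the \emph{largest} power $p^k$ with $b(K,K)\subseteq p^k\ZZ_p$, i.e.\ the minimal valuation, not the smallest.)

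The uniqueness half, as you have organized it, has a genuine gap. Your main line establishes (a) that the ranks $n_l$ are invariants (elementary divisors of the Gram matrix), and (b) that a unimodular block is determined up to isometry by its rank and determinant class. What is not established is that the determinant class of each block is an invariant of $K$, i.e.\ independent of the chosen Jordan splitting; (a) and (b) do not imply this, and the Gram determinant of $K$ only pins down the \emph{product} of the $\epsilon_l$. Concretely, $\langle 1\rangle\oplus\langle p\rangle$ and $\langle u\rangle\oplus\langle up\rangle$ (with $u$ a non-residue) have the same ranks at every scale and the same total determinant class, and only a per-scale invariant distinguishes them. The fix is precisely your parenthetical remark, which should be promoted from an aside about the ranks to the core of the proof: the modules $K\cap p^kK^{\vee}$ are intrinsic, the subquotient $(K\cap p^kK^{\vee})\big/\bigl((K\cap p^{k+1}K^{\vee})+p(K\cap p^{k-1}K^{\vee})\bigr)$ carries a well-defined nondegenerate $\FF_p$-quadratic form $x\mapsto p^{-k}b(x,x)\bmod p$, any Jordan splitting identifies it with $K^{(k)}\otimes\FF_p$, and Hensel's lemma (for $p$ odd) transports its discriminant class back to $\epsilon_l\in\ZZ_p^{\times}/(\ZZ_p^{\times})^2$. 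Once that computation is written out the proof is complete; note that this step, not the splitting itself, is where the real work lies, contrary to your closing assessment that the remainder is bookkeeping with the Gram matrix.
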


For $p=2$ the notation is more complicated. In this case, any $2$-adic lattice $(M, q)$ can be written as a direct sum of rank one $2$-adic lattices or rank two $2$-adic lattices of the forms
$\left(
\begin{array}{cc}
2^k a & 2^k b \\
2^k b & 2^k c
\end{array}
\right)$,
where $a ,c$ are even and $b$ is odd. Explicitly, the $2$-adic quadratic form $K$ can be decomposed as 
\begin{equation*}
K=K_1\oplus 2 K_2 \oplus 2^2 K_{2^2}\oplus \cdots \oplus l K_l \oplus \cdots,
\end{equation*}
where $l$ are powers of $2$, and the determinant of each $K_l$ is odd. By \cite[Chapter 15, \S7]{conway1999spherepackings}, the $2$-adic quadratic form $l K_l$ is written as $l_{S_l}^{\epsilon_l n_l}$. Here $n_l$ is the rank of $K_l$, and $\epsilon$ is $+$ if $\det(K_l)$ is congruent to $1$ or $7$ modulo $8$; is $-$ otherwise. A $2$-adic lattice is called even, if the norm of each vector is even; odd otherwise. If the $2$-adic lattice $K_l$ is even, then $S_l=\II$, and we say $l K_l$ are of even type. When $lK_l$ is of even type, it can be decomposed as a direct sum of $2$-adic lattices of the form $U(l)$ or $V(l)$. If $K_l$ is odd, then $S_l=Tr(K_l)\in \ZZ/8\ZZ$, and we say $l K_l$ are of odd type. When $lK_l$ is of odd type, it can be decomposed as a direct sum of $2$-adic lattices of the form $K_{\theta}(l)$, for $\theta\in \ZZ_2^*/(\ZZ_2^*)^2$. The $2$-adic form $K$ can be written as $1_{S_1}^{\epsilon_1 n_1} 2_{S_2}^{\epsilon_2 n_2}\cdots l_{S_l}^{\epsilon_l n_l}\cdots$. The ways to express a $2$-adic form as above are not unique, but there is a canonical way to do this (see \cite[Chapter 15, \S 7.6]{conway1999spherepackings}).

\begin{rmk}
\label{remark: existence condition}
The following conditions must holds for any $2$-adic constituent $l_{S_l}^{\epsilon_l n_l}$ of rank $n$:

\begin{enumerate}[(1)]
\item If $n=0$, then $S_l=\II$ and $\epsilon_l=+$.
\item If $n=1$, then the form is of odd type. In this case, if $\epsilon_l=+$, then $S_l$ is congruent to $1$ or $7$ modulo $8$; if $\epsilon=-$, then $S_l$ is congruent to $3$ or $5$ modulo $8$.
\item if $n=2$ and the form is of odd type, then $\epsilon=+$ implies that $S_l$ is congruent to $0$, $2$ or $6$ modulo $8$, $\epsilon=-$ implies that $S_l$ is congruent to $2$, $4$ or $6$ modulo $8$.
\end{enumerate}
For further discussion, we refer to \cite[Chapter 15, \S7.8]{conway1999spherepackings} (esp. \cite[Table 15.5]{conway1999spherepackings}).
 \end{rmk}

Two integral lattices are said to have the same {\it genus} if they are equivalent over the $p$-adic integers for all $p$. Under mild conditions, for indefinite lattices, there exists a single isometry class in a given genus. For definite lattices, typically there are multiple isometry classes in a genus (e.g. compare Theorems \ref{theorem: uniqueness of indefinite unimodular lattices} and \ref{thm_niemeier} in the unimodular case). 

\subsection{Conway-Sloane's expression for finite quadratic forms} One of the main tools in the Nikulin's theory \cite{nikulin1980integral} is the systematic use of finite discriminant forms. Here we review the basics, and we connect it with the Conway--Sloane notation. 

Given an integral lattice $M$, we denote $M^*=Hom_{\ZZ}(M, \ZZ)$, {\it the dual lattice}. We have naturally $M\hookrightarrow M^*\hookrightarrow \Hom_{\QQ}(M_{\QQ},\QQ)$, where the first map sends $x$ to $(x, \cdot)$. Define {\it the discriminant group} of $M$ to be $A_M=M^*/M$, which is a finite group of order the discriminant of the lattice.  For a finite abelian group $A$, we denote by $l(A)$ the minimal number of generators in $A$. The bilinear form on $M$ induces a bilinear form a bilinear form on $A_M$ valued in $\QQ/\ZZ$, by sending $[v], [w]\in A_M$ (with $v, w\in M^*$) to $[(v,w)]\in \QQ/\ZZ$. If $M$ is even, we can define a quadratic form 
$$q_M: A_M\to \QQ/2\ZZ,$$ by sending $[v]\in A_M$ to $[(v,v)]\in \QQ/2\ZZ$. The quadratic form $q_M$ recovers $b_M$ via the relation:
\begin{equation*}
b_M([v], [w])=\frac{1}{2}(q_M([v+w])-q_M([v])-q_M([w])).
\end{equation*}
 The quadratic form $q_M$ is called {\it the discriminant form} of $M$. We sometimes write $q_M$ instead of $(A_M,q_M)$.For the remaining of the appendix, we  restrict ourselves to the case of even lattices.
 
Any finite quadratic form $(A, q)$ has a unique decomposition $(A, q)=\oplus_p (A_p, q_p)$. Here $A_p$ is the group of $p$-power order elements in $A$, and $q_p$ is the restriction of $q$ to $A_p$. The finite quadratic form $q_p$ takes value in $\QQ_p/\ZZ_p\cong \QQ^{(p)}/\ZZ$ if $p$ is odd, and in $\QQ_2/2\ZZ_2\cong \QQ^{(2)}/2\ZZ$ if $p=2$. 

Let $qu(\ZZ_p)$ be the semigroup of finite quadratic forms on an abelian group with order a $p$-th power. Denote by $q_{\theta}(p^k)$ the discriminant form of the $p$-adic lattice $K_{\theta}(p^k)$. Denote by $u(2^k), v(2^k)$ the discriminant form of the $2$-adic lattices $U(2^k), V(2^k)$ respectively. 

\begin{prop}[{\cite[Proposition 1.8.1]{nikulin1980integral}}]
The semigroup $qu(\ZZ_p)$ is generated by $q_{\theta}(p^k)$ if $p$ is an odd prime; by $q_{\theta}(2^k)$, $u(2^k)$ and $v(2^k)$ if $p=2$.
\end{prop}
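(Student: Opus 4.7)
The plan is to deduce this from the classification of $p$-adic lattices just stated (Proposition~1.8.1 of \cite{nikulin1980integral}) via the discriminant functor. Passing to the discriminant form is additive under $\oplus$, and one checks directly that $q_{K_\theta(p^k)} = q_\theta(p^k)$, $q_{U(2^k)} = u(2^k)$, and $q_{V(2^k)} = v(2^k)$. Hence it suffices to prove the surjectivity statement that every non-degenerate finite quadratic form $(A,q)$ on an abelian $p$-group arises as $q_M$ for some even $p$-adic lattice $M$; combining this with the lattice classification will then give the desired generation of $qu(\ZZ_p)$.

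To realize $(A,q)$ as a discriminant form, I would argue by induction on $|A|$, peeling off orthogonal summands one or two at a time. Pick $a \in A$ of maximal order $p^k$ and consider the value $q(a)$. If $q(a) = \theta/p^k$ with $\theta \in \ZZ_p^*$, then the cyclic subgroup $\langle a \rangle$ carries a non-degenerate form isomorphic to $q_\theta(p^k)$. A standard orthogonal-splitting argument, using that $b(a,\cdot)$ restricted to $\langle a\rangle$ surjects onto the order-$p^k$ cyclic subgroup of $\QQ_p/\ZZ_p$, yields an orthogonal decomposition $A = \langle a\rangle \oplus \langle a\rangle^{\perp}$, and the induction hypothesis applies to $\langle a\rangle^{\perp}$. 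Lifting the generators of $A$ to a free $\ZZ_p$-module and equipping it with the appropriate Gram matrix (adjusted so that the discriminant form is exactly $(A,q)$) then exhibits the required lattice $M$.

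For odd $p$ this rank-one splitting always succeeds: since $2$ is a unit, $q$ and $b$ determine each other, so one can always arrange an element $a$ with $q(a)$ of minimal valuation. The main obstacle, and the subtle case, is $p=2$ when every element of maximal order $2^k$ has $q(a)$ of $2$-adic valuation strictly greater than $-k$, so no rank-one summand is available. In this situation one instead finds a pair $a,b$ of order $2^k$ with $b(a,b)$ of valuation exactly $-k$, and the rank-two subgroup $\langle a,b\rangle$ is an orthogonal summand isomorphic to $u(2^k)$ or $v(2^k)$ according to whether $q(a)$ and $q(b)$ both vanish in $\QQ_2/2\ZZ_2$ or both do not. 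Showing that such a pair can always be selected, together with verifying that the admissibility constraints listed in Remark~\ref{remark: existence condition} are exactly what makes the two $2$-adic forms $u$ and $v$ suffice, is the main technical content; once it is done, iterating the splitting terminates as $|A|$ strictly decreases at every step.
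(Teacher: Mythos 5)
First, a point of comparison: the paper offers no proof of this statement at all --- it is quoted from Nikulin's Proposition 1.8.1 (which in turn rests on Wall's classification of quadratic forms on finite abelian groups), so your proposal can only be measured against that standard argument. Your second and third paragraphs do sketch exactly that standard route (orthogonal splitting by induction on $|A|$), but the first paragraph's reduction is both redundant and circular: the surjectivity fact you propose to invoke --- that every $(A,q)\in qu(\ZZ_p)$ is the discriminant form of a $p$-adic lattice --- is Nikulin's Theorem 1.9.1, quoted later in the same appendix, and its proof in Nikulin depends on the present proposition; moreover, once your splitting induction works it already decomposes $(A,q)$ into the basic forms $q_\theta(p^k)$, $u(2^k)$, $v(2^k)$ directly, so the detour through the lattice classification buys nothing.

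The genuine gaps are in the $p=2$ case, which you yourself flag as ``the main technical content'' and then defer: producing, when no element $a$ of maximal order $2^k$ has $q(a)=\theta/2^k$ with $\theta$ a unit, a pair $a,b$ of order $2^k$ with $b(a,b)$ of valuation exactly $-k$ such that $\langle a,b\rangle$ is nondegenerate and splits off --- this is precisely the heart of the proof and is missing. In addition, your criterion for deciding between $u(2^k)$ and $v(2^k)$ is incorrect as stated: in $u(2)$ the pair $a=e_1$, $b=e_1+e_2$ has $b(a,b)=1/2$ of valuation $-1$ but $q(a)=0$, $q(b)=1$, and more generally in $u(2^k)$ one can choose pairs with $b(a,b)$ of valuation $-k$ whose $q$-values are both nonzero; so ``both vanish versus both nonzero'' is neither exhaustive nor basis-independent. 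The correct statement is that a nondegenerate form on $(\ZZ/2^k)^2$ all of whose values lie in $2^{1-k}\ZZ_2/2\ZZ_2$ is isomorphic to $u(2^k)$ or $v(2^k)$, distinguished for instance by the discriminant ($-1$ versus $3$ modulo squares of $2$-adic units), and this requires an argument. Finally, even in the odd-$p$ case the assertion that one can always find $a$ of maximal order with $q(a)$ of minimal valuation needs the (easy) lemma that if $q(a)$ and $q(x)$ both have too-small order while $b(a,x)$ has order $p^k$, then $a+x$ works; as written the proposal is an outline of the right proof with its decisive steps asserted rather than proved.
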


The following theorem (see \cite[Theorem 1.9.1]{nikulin1980integral}) tells that except very special cases (happen when $p$=2), a finite quadratic form over $\ZZ_p$ is induced uniquely by a $p$-adic form:

\begin{thm}[Nikulin]
Let $p$ be a prime and $(A, q)\in qu(\ZZ_p)$. There exists a unique $p$-adic lattice $K(q)\in Qu(\ZZ_p)$ of rank $l(A)$ and whose discriminant form is isomorphic to $q$, except in the case when $p=2$ and $q$ is $q_{\theta}(2)\oplus q_2^{\prime}$ for some $\theta\in \ZZ_2^*/(\ZZ_2^*)^2$.  

If $q=q_{\theta}(2)\oplus q_2^{\prime}$, there are precisely two $2$-adic lattices $K_{\alpha_1}(q)$ and $K_{\alpha_2}(q)$ of rank $l(A)$ and whose discriminant forms are isomorphic to $q$. Here $disc(K_{\alpha_i}(q))=\alpha_i |A| (\ZZ_2^*)^2$ for $i=1,2$, where $\alpha_1, \alpha_2\in \ZZ_2^*/(\ZZ_2^*)^2$ and $\alpha_1 \alpha_2=5(\ZZ_2^*)^2$.
\end{thm}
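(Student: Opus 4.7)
The plan is to reduce the statement to the structure of $p$-adic quadratic forms modulo rank-$0$ additions, handling each prime separately and isolating the source of the exceptional phenomenon at $p=2$. First, existence is easy: decompose $(A,q)=\oplus_p (A_p,q_p)$ and then, using the generating set of $qu(\ZZ_p)$ described just above the theorem, write each $q_p$ as an orthogonal sum of the basic pieces $q_\theta(p^k)$ (together with $u(2^k)$ and $v(2^k)$ when $p=2$). Each basic piece is the discriminant form of the named $p$-adic lattice, so summing gives a $p$-adic lattice $K$ with $q_K\cong q$. A rank computation in terms of $l(A_p)$ shows that the number of summands in any such minimal realization equals $l(A)$, giving existence of a rank $l(A)$ realization.

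Uniqueness for odd primes $p$ is the simpler half. Every $p$-adic lattice admits a Jordan decomposition $K=\oplus_k p^k K_k$ with $\det(K_k)$ a $p$-adic unit, where the ranks $n_k=\rank K_k$ and the square-classes $\epsilon_k=\det(K_k)\bmod (\ZZ_p^*)^2$ are classical invariants of the isomorphism class. The plan is to show that when $K$ has total rank $l(A)$, both $n_k$ and $\epsilon_k$ can be read off from $(A_p,q_p)$: the rank equals the $p$-rank of the corresponding subquotient $p^{k-1}A_p^*\cap A_p/p^k A_p^*\cap A_p$, and $\epsilon_k$ is determined by the value of the Gauss-type invariant of $q_p$ restricted to this subquotient. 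Since the $(n_k,\epsilon_k)$ completely classify rank-$n_k$ $p$-adic forms for $p$ odd, uniqueness follows.

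The delicate half is $p=2$, and this is where the main obstacle lies. Here the Jordan decomposition $K=\oplus_k 2^k K_k$ admits two types of blocks at each scale: a purely even block (an orthogonal sum of copies of $U(2^k)$ and $V(2^k)$) and an odd-type block, which must be a diagonal sum of $K_\theta(2^k)$'s. In the minimal-rank realization, the invariants attached to each scale are the triple $(n_k,\epsilon_k,S_k)$ subject to the compatibility constraints listed in Remark \ref{remark: existence condition}. The plan is to argue that when no scale-$2^1$ odd component appears, the triple $(n_k,\epsilon_k,S_k)$ at each scale is recovered from $q_2$ (using that $S_k\in\ZZ/8$ encodes the mod-$8$ trace while $\epsilon_k$ encodes the determinant class, and the interplay between adjacent scales is rigid enough once rank is minimized). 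Exchanging an even summand $U(2)$ or $V(2)$ for two rank-$1$ odd summands increases rank, so it is excluded in the minimal realization; this yields uniqueness.

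The exceptional case occurs precisely when $q$ contains a rank-one summand $q_\theta(2)$, equivalently when the minimal realization has an odd-type rank-$1$ piece at scale $2^1$. Multiplying any rank-$1$ summand by the unit $5\in\ZZ_2^*/(\ZZ_2^*)^2$ changes both $\theta$ and the total determinant by $5$, but, crucially, only at scale $2^1$ is this operation not detectable on the discriminant form: for $k\ge 2$ the discriminant form $q_\theta(2^k)$ distinguishes the four classes of $\theta$ (since its Gauss sum separates them), but for $k=1$ the discriminant form $q_\theta(2)$ only sees $\theta\bmod\{1,5\}\cdot(\ZZ_2^*)^2$ (the two classes $1,5$ and $3,7$ coincide as forms on $\ZZ/2$). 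Hence one gets two non-isomorphic lattices $K_{\alpha_1}(q)$ and $K_{\alpha_2}(q)$ with $\alpha_2=5\alpha_1$ in $\ZZ_2^*/(\ZZ_2^*)^2$, giving the stated discriminant relation $\alpha_1\alpha_2=5$. The main technical step to fill in is the case-by-case verification that no other source of non-uniqueness exists, which is done by running through the constraints of Remark \ref{remark: existence condition} and checking that every alternative minimal Jordan decomposition giving the same $q$ is related by the single $\theta\mapsto 5\theta$ move at scale $2^1$.
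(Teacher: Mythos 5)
A preliminary remark: the paper does not prove this statement at all — it is quoted as background in Appendix \ref{section: collection of results in lattice theory} directly from Nikulin \cite[Thm.\ 1.9.1]{nikulin1980integral} — so there is no in-paper proof to compare against. Measured against Nikulin's actual argument (which lists a complete set of relations among the generators $q_{\theta}(p^k)$, $u(2^k)$, $v(2^k)$ of $qu(\ZZ_p)$ and checks which of them lift to isometries of the lattices $K_{\theta}(p^k)$, $U(2^k)$, $V(2^k)$), your plan has the right architecture: existence via decomposition into basic pieces, uniqueness for odd $p$ from the Jordan invariants, and the correct identification of the exceptional mechanism, namely that $q_{\theta}(2)\cong q_{5\theta}(2)$ as finite forms while $K_{\theta}(2)$ and $K_{5\theta}(2)$ have determinant classes differing by $5$, whereas for $k\ge 2$ the form $q_{\theta}(2^k)$ pins down $\theta$.

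There is, however, a genuine gap at $p=2$, and it sits exactly where the theorem lives. First, your claim that in the non-exceptional case the scale-wise data $(n_k,\epsilon_k,S_k)$ is ``recovered from $q_2$'' is false as stated: the $2$-adic Jordan symbol is not even an isomorphism invariant of the lattice (sign walking between adjacent scales and oddity fusion change $\epsilon_k$ and $S_k$ without changing the lattice), let alone a function of the discriminant form; the correct assertion — that the \emph{lattice}, not its symbol, is determined — is precisely the statement to be proved, so this step is circular as written. Second, the rank-minimality argument excluding an exchange of $U(2)$ or $V(2)$ for two odd rank-one summands is wrong: both sides have rank $2$, so minimality excludes nothing; what must be analyzed instead is the full list of relations among the basic forms (e.g.\ $u(2^k)\oplus u(2^k)\cong v(2^k)\oplus v(2^k)$, $q_{\theta}(2^k)\oplus u(2^k)\cong q_{5\theta}(2^k)\oplus v(2^k)$, the rank-two odd relations at a fixed scale, and the mixed relations between adjacent scales), and one must verify that each such coincidence of finite forms lifts to an isometry of the corresponding $2$-adic lattices, with the single generator-level coincidence $q_{\theta}(2)\cong q_{5\theta}(2)$ as the only one that does not. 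You explicitly defer this case-by-case verification, but it is not a routine loose end: it is the entire content of uniqueness, and it is also needed to see that the exceptional case yields exactly two classes (distinguished by the determinant class, with $\alpha_1\alpha_2=5$) rather than more when several scale-$2$ odd summands are present. As it stands, the proposal establishes existence, the odd-$p$ case, and the construction of the two candidates in the exceptional case, but not the uniqueness assertions that make the theorem useful.
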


Given $q\in Qu(\ZZ_p)$, we have then a $p$-adic lattice $K(p)$ of rank $l(q)$ and whose discriminant form is $q$. The Conway-Sloane expression of the $p$-adic lattice $K(q)$ is used also to denote $q$. Notice that when $p=2$ and $q=q_{\theta}(2)\oplus q^{\prime}$, the expression of $q$ is not unique.  A finite quadratic form $q\in Qu(\ZZ)$ can be uniquely decomposed as a direct sum of finite quadratic forms over $\ZZ_p$. Connecting together the Conway-Sloane expressions for those sub forms of $q$, we get a Conway-Sloane expression for $q$. 

\subsection{Nikulin's criterions} We repeatedly use in our arguments two key results of Nikulin: the criteria for existence and uniqueness of embeddings of lattices into unimodular lattices (in our case, the relevant unimodular lattices are the Leech $\LL$ and Borchers $\BB=\LL\oplus U^2$ lattices). Specifically, the following is Nikulin's vast generalization of  Theorem  \ref{theorem: uniqueness of indefinite unimodular lattices} for  even lattices.

\begin{thm}[{Nikulin \cite[Thm. 1.10.1]{nikulin1980integral}}]
\label{theorem: existence of lattice via discriminant form}
An even lattice of invariant $(n_1,n_2,A,q)$ exists if and only if the following conditions are fulfilled:
\begin{enumerate}[(1)]
\item $n_2-n_1\equiv \sig(q)$ ($\mo$ 8),
\item $n_1\ge 0, n_2\ge 0, n_1+n_2\ge l(A)$,
\item $(-1)^{n_2}|A|\equiv disc(K_{q_p})$ (mod $(\ZZ_p^*)^2$) for all odd prime $p$ with $n_1+n_2=l(A_p)$,
\item $|A|=\pm disc(K_{q_2})$ (mod $(\ZZ_2^*)^2$) if $n_1+n_2=l(A_2)$ and $q_2\ne q_{\theta}(2)\oplus q_2^{\prime}$.
\end{enumerate}
\end{thm}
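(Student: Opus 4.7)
The plan is to handle necessity and sufficiency separately, and in each case to reduce to a purely $p$-adic question via localization, then assemble the local data using the Hasse--Minkowski principle.

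For necessity, suppose $L$ is an even lattice of signature $(n_1,n_2)$ and discriminant form $(A_L,q_L)\cong (A,q)$. Condition (2) is immediate since $A\cong L^*/L$ is generated by at most $\rank(L)=n_1+n_2$ elements. Condition (1) is Milgram's formula: the Gauss sum $\sum_{x\in A} e^{\pi i q(x)}$ equals $\sqrt{|A|}\,e^{\pi i \,\sigma(q)/4}$, and a standard computation over $L$ itself gives that this signature invariant is congruent to $n_2-n_1$ modulo $8$. For conditions (3) and (4), the point is that the localization $L\otimes \ZZ_p$ decomposes as an orthogonal sum of a unimodular $\ZZ_p$-lattice and a ``non-unimodular part'' whose rank is exactly $l(A_p)$; so when $n_1+n_2=l(A_p)$, $L\otimes \ZZ_p$ is entirely non-unimodular and is determined, up to isomorphism, by $q_p$ together with an extra sign in $\ZZ_p^*/(\ZZ_p^*)^2$ in the exceptional $p=2$ case. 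Computing $\det(L\otimes\ZZ_p)=(-1)^{n_2}|A|$ in $\QQ_p^*/(\QQ_p^*)^2$ gives (3) and (4).

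For sufficiency, I would proceed in three steps. First, at each prime $p$ I produce a $\ZZ_p$-lattice $L_p$ of rank $n_1+n_2$ whose discriminant form is $q_p$. Using Nikulin's classification of $p$-adic lattices by discriminant form and rank (with the single $p=2$ ambiguity $q_\theta(2)\oplus q'$ accounted for by choosing the determinant class compatibly with conditions (3) and (4)), such $L_p$ exists: take the ``minimal'' lattice $K(q_p)$ of rank $l(A_p)$ representing $q_p$ and, if $n_1+n_2>l(A_p)$, augment by a unimodular summand of rank $n_1+n_2-l(A_p)$, adjusting the $p=2$ sign if necessary. Second, I need a $\QQ$-quadratic space $V$ of dimension $n_1+n_2$ and signature $(n_1,n_2)$ such that $V\otimes \QQ_p\cong L_p\otimes\QQ_p$ for every $p$; by Hasse--Minkowski this reduces to a constraint on the product of local Hasse invariants, which is automatic once the local determinants are compatible, and condition (1) ensures precisely the archimedean/$2$-adic compatibility via Milgram. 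Third, inside $V$ I choose a $\ZZ$-lattice $L$ whose $p$-adic completion equals $L_p$ for every $p$; this is a standard strong approximation/adelic construction, and the resulting $L$ has the prescribed signature and discriminant form.

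The main obstacle I expect is the careful bookkeeping at $p=2$, both in the proof of necessity of (4) and in the local step of sufficiency, because of the exceptional ambiguity $q_2\supset q_\theta(2)\oplus q_2'$: the discriminant form alone does not determine $L\otimes\ZZ_2$ and one has to track the extra determinant class. In particular, verifying that the $p=2$ local lattice can always be chosen so as to make the collection $\{L_p\}$ arise from a global $\QQ$-space compatible with the signature requires playing off the Milgram signature relation against the local determinant choice. The odd-prime conditions (3) and analogous sufficiency step are considerably more routine once the classification of $\ZZ_p$-lattices by $(q_p,\text{rank})$ is in hand.
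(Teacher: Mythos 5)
This statement is quoted from Nikulin (\cite[Thm.\ 1.10.1]{nikulin1980integral}); the paper gives no proof of it, so there is nothing internal to compare against. Your sketch is the standard local--global argument and is correct in outline: Milgram's formula for the necessity of (1), the rank bound for (2), the 2-adic/odd-adic Jordan decompositions (with Nikulin's classification of $p$-adic lattices by rank and discriminant form) for (3)--(4), and, for sufficiency, local construction followed by the existence of a lattice in the resulting genus via Hasse--Minkowski and a lattice with prescribed completions. The two points where the real work lies, which you flag but do not carry out, are: (a) the product formula for the local Hasse invariants is \emph{not} automatic from matching determinants -- it is exactly here that condition (1), via Milgram/the oddity formula, must be shown to close the global obstruction; and (b) at $p=2$ the unimodular Jordan complement of $K(q_2)$ must be of \emph{even} type (so that the global lattice is even), which forces its rank $n_1+n_2-l(A_2)$ to be even -- this is again a consequence of condition (1), since $\sig(q)\equiv l(A_2)\ (\mo\ 2)$, and should be stated explicitly rather than absorbed into ``augment by a unimodular summand.''
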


An embedding of a lattice $M$ into a unimodular lattice exists iff a lattice with complementary invariants (most notably discriminant form $-q_M$) exists. The above theorem allows one to settle this question. If an embedding exists, the uniqueness of the embedding can be settled frequently by the following result. 
 
\begin{thm}[{Nikulin \cite[Thm. 1.14.4]{nikulin1980integral}}]
\label{theorem: uniqueness of embedding even case}
Let $S$ be an even lattice of signature $(n_1,n_2)$ and let $M$ be an even unimodular lattice of signature $(l_1,l_2)$. There exists a unique primitive embedding of $S$ into $M$ if the following conditions are fulfilled:
\begin{enumerate}[(i)]
\item $l_1>n_1, l_2>n_2$,
\item $l_1+l_2-n_1-n_2\ge l(A_S)+2$.
\end{enumerate}
\end{thm}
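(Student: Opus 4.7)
The strategy is to parametrize primitive embeddings $S\hookrightarrow M$ by pairs $(T,\gamma)$ consisting of an orthogonal complement lattice $T$ together with a gluing datum $\gamma$, then show that under hypotheses (i) and (ii), both $T$ and the gluing are essentially unique.

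\textbf{Step 1: Parametrize embeddings via gluing.} Given a primitive embedding $i\colon S\hookrightarrow M$, let $T:=i(S)^{\perp}_M$, so that $T$ has signature $(l_1-n_1,l_2-n_2)$ and $S\oplus T\subset M\subset (S\oplus T)^{*}$. Since $M$ is unimodular, an elementary calculation shows that the image of $M$ in $A_{S}\oplus A_{T}$ is the graph of an anti-isometry $\gamma\colon (A_{S},q_{S})\xrightarrow{\sim}(A_{T},-q_{T})$. Conversely, any such datum $(T,\gamma)$ reconstructs a unimodular overlattice of $S\oplus T$ and hence a primitive embedding $S\hookrightarrow M'$. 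Two embeddings yield isomorphic targets together with a prescribed copy of $S$ iff the corresponding $(T,\gamma)$ and $(T',\gamma')$ are related by an isomorphism $T\cong T'$ and an element of $O(S)$ modifying $\gamma$.

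\textbf{Step 2: Existence and uniqueness of $T$ in its genus.} The lattice $T$ we seek has signature $(l_1-n_1,l_2-n_2)$ with $l_1-n_1\ge 1$ and $l_2-n_2\ge 1$ by (i) (hence $T$ is indefinite), discriminant form $-q_{S}$, and rank $l_1+l_2-n_1-n_2\ge l(A_{S})+2$ by (ii). Existence of \emph{some} lattice with these invariants follows from Theorem \ref{theorem: existence of lattice via discriminant form}: the signature condition is automatic from $M$ being even unimodular (so that $l_1\equiv l_2\pmod 8$) together with $n_2-n_1\equiv \mathrm{sig}(q_{S})\pmod 8$, and the rank bounds $\rank(T)>l(A_{T_p})$ make conditions (3) and (4) of that theorem vacuous. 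For \emph{uniqueness} of $T$ up to isometry, the indefiniteness together with the bound $\rank(T)\ge l(q_T)+2$ places us in the range where the genus of $T$ contains a single isometry class; this is the Eichler--Kneser type theorem for indefinite quadratic forms in Nikulin's formulation.

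\textbf{Step 3: Uniqueness of the gluing.} It remains to show that any two anti-isometries $\gamma_1,\gamma_2\colon q_{S}\to -q_{T}$ are related by $\gamma_2=\bar{\phi}\circ\gamma_1\circ\bar{\psi}$ for some $\psi\in O(S)$ and $\phi\in O(T)$, where the bars denote the induced actions on discriminant forms. Since $O(S)$ surjects (trivially) onto its image in $O(q_S)$, the substantive point is that $O(T)\twoheadrightarrow O(q_T)$ under hypothesis (ii). This again follows from Nikulin's strong approximation--type results: for an indefinite even lattice $T$ with $\rank(T)\ge l(q_T)+2$, every automorphism of $q_T$ lifts to an isometry of $T$. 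Granting this, any two choices of gluing yield the same primitive embedding up to an automorphism of $M$.

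\textbf{Anticipated obstacle.} The conceptual backbone of Steps 2 and 3 is the same nontrivial input: for indefinite even lattices of sufficient rank relative to the length of the discriminant, both the genus consists of a single class and the natural map $O(T)\to O(q_T)$ is surjective. Both are proved by a spinor-genus / strong approximation argument, and this is the genuine technical heart of the proof; the hypothesis $l_1+l_2-n_1-n_2\ge l(A_S)+2$ in (ii) is precisely what is needed to invoke these results, while (i) is what makes $T$ indefinite in the first place. Once these two facts are in hand, the rest is the formal gluing bookkeeping of Step 1.
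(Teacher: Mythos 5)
The paper states this result without proof, quoting Nikulin, and your argument is precisely the standard Nikulin proof: parametrize primitive embeddings by the complement $T$ together with an anti-isometry $q_S\to -q_T$, then invoke the fact that an indefinite even lattice with $\rank(T)\ge l(A_T)+2$ is unique in its genus and satisfies $O(T)\twoheadrightarrow O(q_T)$, so your proposal is correct and takes essentially the same route as the cited source. The only point left implicit is that the unimodular overlattice built in Step 1 is isomorphic to $M$ itself, which follows from Theorem \ref{theorem: uniqueness of indefinite unimodular lattices} because condition (i) forces $M$ to be indefinite.
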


\section{Finite Groups}
\label{appendix: finite groups}
In this appendix, we make a quick review of finite groups as relevant in this paper. We follow the notations of Mukai \cite{mukaiaut} and H\"ohn--Mason \cite{HM1,HM2}. 

\subsection{Extensions of finite groups}
Let $N, Q$ be two finite groups. An extension of $Q$ by $N$ is a finite group $E$ with a short exact sequence:

\begin{equation}
\label{equation: exact seq}
1\longrightarrow N\longrightarrow E\stackrel{p}{\longrightarrow} Q\longrightarrow 1.
\end{equation}
 Suppose there is a group homomorphism $r\colon Q\longrightarrow E$ with $p\circ r=id$, then the sequence \eqref{equation: exact seq} is called split. In this case, $E$ is denoted by $N\rtimes Q$, which is called the a semidirect product of $N$ and $Q$. Semidirect products of $N$ and $Q$ are not unique, and are uniquely determined by group homomorphisms $Q\longrightarrow \Aut(N)$. Following \cite{HM1, HM2}, we use $N:Q$ to represent a semidirect product of $N$ and $Q$, and use $N.Q$ to represent an extension of $Q$ by $N$ which we are not sure whether split or not.

\subsection{Mathieu groups}
\label{smathieu}
The series of Mathieu groups consist of five sporadic groups denoted by $M_{11}$, $M_{12}$, $M_{22}$, $M_{23}$, $M_{24}$. These are the first series of sporadic groups, which were found by Mathieu (1861, 1873). No other sporadic groups were found until 1965 the first Janko group was found. There are also Mathieu groups $M_8$, $M_9$, $M_{10}$, $M_{20}$, $M_{21}$, with the first four not simple and the last isomorphic to $\PSL(3, \FF_4)$ being simple but not sporadic.

We give the definition of the largest Mathieu group $M_{24}$ (see \cite[Appendix B]{eguchi2011mathieu} for further details). Let $N$ be the Niemeier lattice with root lattice $L$ of type $24A_1$. Then $L\subset N\subset L^*$ and $L^*/L\cong (\mathbb{F}_2)^{24}$, where $\FF_2$ is the field with 2 elements. We have $\mathscr{G}=N/L\subset \FF_2^{24}$ a 12-dimensional $\FF_2$ subspace, such that each nonzero element in $\mathscr{G}$ is not vanishing at no less than $8$ coordinates. This vector subspace $\mathscr{G}$ is known as the extended binary Golay code. The permutation group $S_{24}$ acts on $\FF_2^{24}$ by permutating the 24 coordinates, and the Mathieu group $M_{24}$ is defined to be the subgroup of $S_{24}$ that leaves $\mathscr{G}$ stable. The smaller Mathieu groups $M_{24-i}$ can be defined as the stabilizer group of $i$ coordinates of $\FF_2^{24}$ under the action of $M_{24}$, where $i=1,2,3$ or $4$.

\begin{rmk}
For $K3$ surfaces, Kond\=o's approach \cite{kondo} to the Mukai's classification of symplectic automorphisms reduces to considering subgroups of $M_{24}$ [and in fact $M_{23}$] (and involved the associated Niemeier lattice of type $24A_1$). For hyper-K\"ahler manifolds, it is necessary to pass to the Conway group $\Co_0$ (N.B. $M_{24}$ can be embedded into $\Co_0$) and the associated Leech lattice $\LL$ (e.g. see \cite{huybrechtsaut}). Furthermore, the behavior in terms of Leech lattice is more uniform; this is the point of view taken in this paper. 
\end{rmk}
A dodecad of $\mathscr{G}$ is an element vanishing at exactly $12$ coordinates. The Mathieu group $M_{12}$ is by definition the stabilizer of a dodecad in $\FF_2^{24}$ under the action of $M_{24}$. Then $M_{12}$ is a subgroup of the permutation group $S_{12}$ acting on the $12$ coordinates where the dodecad is vanishing. The Mathieu group $M_{12-i}$ is isomorphic to the subgroup of $M_{12}$ stabilizing $i$ coordinates chosen among the $12$, for $i=1,2,3$ or $4$. The action of $M_{12-i}$ on the remaining $12-i$ coordinates is sharply $(5-i)$-transitive. 

For $k+l\le 12$ and $l\ge 8$, $M_{k,l}$ is the subgroup $M_{k+l}\cap S_k\times S_l$ of $M_{k+l}$. This is well-defined since $M_{k+l}$ is $k$-transitive. Moreover, we have an exact sequence:
\begin{equation*}
1\longrightarrow M_l\longrightarrow M_{k,l}\longrightarrow S_k\longrightarrow 1,
\end{equation*}
and thus $|M_{k,l}|=|M_l|\cdot k!$. Let us briefly discuss the groups $M_{3,8}$ and $M_{2,9}$, as they are relevant for our study. The group $M_8$ is isomorphic to the quaternion group $Q_8$, and we have a semdirect product $M_{3,8}\cong Q_8:S_3$. Mukai \cite{mukaiaut} denotes this group by $T_{48}$. The group $M_9$ is equal to $\PSU_3(\FF_2)$ (see \S\ref{subsection: linear group}), and it holds $M_9\cong 3^2:Q_8$. We have $M_{2,9}\cong 3^2:\QD_{16}$. It is natural to expect that $M_{2,9}$ is exactly the group in item 15 of Table \ref{table: MSS aut of HK}, but we have not checked all the details.

\subsection{Extraspecial group}
For $p$ prime, recall that a $p$-group is a finite group with order a power of $p$. 

\begin{defn}
An extraspecial group is a non-abelian $p$-group $G$ with center $Z(G)\cong p$ and the quotient $G/Z(G)$ elementary abelian. 
\end{defn}

Every extraspecial group has order $p^{1+2k}$ with $k$ a positive integer. Conversely, for any prime number $p$ and positive integer $k$, there exist two extraspecial groups of order $p^{1+2k}$. By convention, the symbol $p^{1+2k}$ represent for an extraspecial group of order $p^{1+2k}$. For $p=2$ and $k=1$, the two extraspecial groups $2^{1+2}$ are the dihedral group $D_8$ and quaternion group $Q_8$.

\subsection{Linear and projective groups over finite fields}
\label{subsection: linear group}
Linear and projective groups over a field $K$ refer to Zariski-closed subgroups of $\GL(n, K)$ or $\PGL(n, K)$. When $K$ is a finite field, these groups are finite and play an important role in the classification of finite simple groups. In the final section we collect such kinds of groups related to our classifications.

We introduce the unitary groups over finite fields. For a finite group $\FF_{q^2}$ where $q=p^r$ and $p$ is a prime number, there is an $\FF_q$-linear involution $\alpha\colon \FF_{q^2}\longrightarrow \FF_{q^2}$ sending $x$ to $x^q$ (this is the $r$-th power of the Frobenius automorphism of $\FF_q$). Let $V$ be an $n$ dimensional vector space over $\FF_{q^2}$, then there is a unique $\FF_q$-bilinear form (called Hermitian form over finite field) $H\colon V\times V\longrightarrow \FF_{q^2}$ satisfying $H(w, v)= \alpha(H(v,w))$ and $H(v, cw)=cH(v, w)$ for any $c\in \FF_{q^2}$. Explicitly,  
\begin{equation*}
H(v, w)= \sum_{i=1}^n v_i^q w_i
\end{equation*}
The unitary group\footnote{Some authors use $U(n, q^2)$ for the same group} $U(n, q)$  represents for the automorphism group of the Hermitian space $(V, H)$. We note that the projective special unitary group $\PSU(3, \FF_2)$ is isomorphic to the Mathieu group $M_9$, and appears as symplectic automorphism group of a degree $2$ $K3$ surface.

The group $\PSL(2, \FF_{11})$ is simple and appears as the automorphism group of the Klein cubic threefold $V(x_1^2 x_2+x_2^2 x_3+x_3^2 x_4+x_4^2 x_5+x_5^2 x_1)$ (see \cite{adler1978automorphism}). As shown in Theorem \ref{theorem: maximal uniqueness}, there is a unique cubic fourfold with an order $11$ automorphism which is a triple cover of $\PP^4$ branched along the Klein cubic threefold.

\bibliography{reference} 

\end{document}